\newcommand{\leftrarrows}{\mathrel{\raise.75ex\hbox{\oalign{%
  $\scriptstyle\leftarrow$\cr
  \vrule width0pt height.5ex$\hfil\scriptstyle\relbar$\cr}}}}
\newcommand{\lrightarrows}{\mathrel{\raise.75ex\hbox{\oalign{%
  $\scriptstyle\relbar$\hfil\cr
  $\scriptstyle\vrule width0pt height.5ex\smash\rightarrow$\cr}}}}
\newcommand{\Rrelbar}{\mathrel{\raise.75ex\hbox{\oalign{%
  $\scriptstyle\relbar$\cr
  \vrule width0pt height.5ex$\scriptstyle\relbar$}}}}
\def\leftrightarrowsfill@{\arrowfill@\leftrarrows\Rrelbar\lrightarrows}
\newcommand{\xleftrightarrows}[2][]{\ext@arrow 3399\leftrightarrowsfill@{#1}{#2}}
\numberwithin{equation}{section} \DeclareMathSizes{2}{10}{12}{13}
\newcommand*{\doublerightarrow}[2]{\mathrel{
  \settowidth{\@tempdima}{$\scriptstyle#1$}
  \settowidth{\@tempdimb}{$\scriptstyle#2$}
  \ifdim\@tempdimb>\@tempdima \@tempdima=\@tempdimb\fi
  \mathop{\vcenter{
    \offinterlineskip\ialign{\hbox to\dimexpr\@tempdima+1em{##}\cr
    \rightarrowfill\cr\noalign{\kern.5ex}
    \rightarrowfill\cr}}}\limits^{\!#1}_{\!#2}}}
\newcommand*{\triplerightarrow}[1]{\mathrel{
  \settowidth{\@tempdima}{$\scriptstyle#1$}
  \mathop{\vcenter{
    \offinterlineskip\ialign{\hbox to\dimexpr\@tempdima+1em{##}\cr
    \rightarrowfill\cr\noalign{\kern.5ex}
    \rightarrowfill\cr\noalign{\kern.5ex}
    \rightarrowfill\cr}}}\limits^{\!#1}}}
\newtheorem{thm}{Proposition}[section]
\newtheorem{Thm}[thm]{Theorem}
\newtheorem{rem}[thm]{Remark}
\newtheorem{cor}[thm]{Corollary}
\newtheorem{lem}[thm]{Lemma}
\newtheorem{defn}[thm]{Definition}
\title{On representation categories of $A_\infty$-algebras and $A_\infty$-coalgebras}
\author{Abhishek Banerjee \footnote{Department of Mathematics, Indian Institute of Science, Bangalore, India, Email: abhishekbanerjee1313@gmail.com} $\qquad\qquad\qquad$  Anita Naolekar \footnote{Stat-Math Unit, Indian Statistical Institute, Bangalore, India, Email: anita@isibang.ac.in}}
\date{}
\begin{document}

\maketitle 

\medskip

\begin{abstract} In this paper, we use the language of monads, comonads and Eilenberg-Moore categories  to describe a categorical framework for $A_\infty$-algebras and $A_\infty$-coalgebras, as well as $A_\infty$-modules and $A_\infty$-comodules over them respectively. The resulting formalism leads us to investigate relations between representation categories of $A_\infty$-algebras and $A_\infty$-coalgebras. In particular, we relate $A_\infty$-comodules and $A_\infty$-modules by considering a rational pairing between an $A_\infty$-coalgebra $C$ and an $A_\infty$-algebra $A$. The categorical framework also motivates us to introduce $A_\infty$-contramodules over an  $A_\infty$-coalgebra $C$. 
\end{abstract}

\medskip
MSC(2020) Subject Classification: 18C15, 18C20, 18G70

\medskip
Keywords: $A_\infty$-algebras, $A_\infty$-coalgebras, monads, comonads

\medskip

\medskip

\small
\hypersetup{linktocpage} 
\tableofcontents

\normalsize
\medskip

\section{Introduction} 

A monad on a category $\mathcal C$ consists of an endofunctor $\mathbb U:\mathcal C\longrightarrow \mathcal C$ along with natural transformations
$\mathbb U\circ\mathbb U\longrightarrow \mathbb U$ and $1\longrightarrow \mathbb U$ satisfying conditions similar to an ordinary ring. As with a ring, one can study a monad by looking at its representation category, i.e., its category of modules. The role of modules over the monad $\mathbb U$ is played by the classical
Eilenberg-Moore category $EM_{\mathbb U}$ (see \cite{EM65}, \cite{EMoore}). For instance, Borceux, Caenepeel and Janelidze \cite{BCJ} have shown that 
Eilenberg-Moore categories can be used to study a categorical form of Galois descent, extending from the usual notion of Galois extension of commutative rings. The dual notion of comonad resembles that of a coalgebra, and we have Eilenberg-Moore categories of comodules over them. For instance, Eilenberg-Moore categories of comodules can be used to study notions such as relative Hopf modules, or Doi-Hopf modules (see  \cite{BC}).

\smallskip
In this paper, we consider $A_\infty$-versions of monads, comonads and their representation categories.  We develop Eilenberg-Moore categories for $A_\infty$-monads and $A_\infty$-comonads. We study a bar construction for $A_\infty$-monads. We also study versions of  distributive laws that enable us to lift $A_\infty$-monads to the Eilenberg-Moore category, or extend
$A_\infty$-monads to the Kleisli category, of an ordinary monad. This gives us a categorical framework for $A_\infty$-algebras and $A_\infty$-coalgebras, as well as $A_\infty$-modules and $A_\infty$-comodules over them respectively, using the language of monads, comonads and Eilenberg-Moore categories. We are then motivated by this formalism to consider rational pairings between $A_\infty$-coalgebras and $A_\infty$-algebras, and also to introduce $A_\infty$-contramodules. 

\smallskip
We recall that an $A_\infty$-algebra $A$ consists of a graded vector space $A=\underset{n\in\mathbb Z}{\bigoplus}A_n$ and a sequence $\{m_k:A^{\otimes k}\longrightarrow A\}_{k\geq 1}$ of homogeneous linear maps  satisfying higher order associativity conditions. The $A_\infty$-algebras were introduced by Stasheff \cite{Stasheff1}, \cite{Stasheff2}, and applied to the study of  homotopy associative structures on $H$-spaces. In later years, $A_\infty$-structures have found widespread applications in algebra, topology, geometry as well as in physics (see, for instance, \cite{Adams}, \cite{BV}, \cite{KF}, \cite{GJ}, \cite{TK},  \cite{BK}, \cite{KM}, \cite{KS}, \cite{May}, \cite{Smirnov}, \cite{St3}). 

\smallskip
We  investigate relations between three ``module like'' categories associated to $A_\infty$-(co)algebras: modules, comodules and contramodules. We define a rational pairing between an $A_\infty$-coalgebra $C$ and an $A_\infty$-algebra $A$. This gives us an adjunction between the category of $A_\infty$-modules over $A$ and the $A_\infty$-comodules over $C$. This extends the classical theory that embeds the comodules over a coalgebra as a coreflective subcategory of the modules over its dual algebra (see, for instance, \cite[Chapter 1]{BWis}). 
 
 \smallskip On the other hand, if $C'$ is an ordinary coassociative coalgebra, we know that there are two different representation categories of $C'$. One is the usual category of $C'$-comodules and the other is the category of $C'$-contramodules, introduced by Eilenberg and Moore in \cite[IV.5]{EMoore}. At a categorical level, both are dual to the notion of modules over an algebra, which makes the study of contramodules as fundamental as that of comodules. Accordingly, our monadic approach motivates us to introduce $A_\infty$-contramodules over an $A_\infty$-coalgebra $C$. We also introduce a contratensor product that gives us an adjunction between categories of $A_\infty$-comodules and $A_\infty$-contramodules.  For the contratensor product in the case of contramodules over a coassociative coalgebra, we refer the reader to \cite{semicontra}.

\smallskip We should mention that our approach  to $A_\infty$-structures in this paper using monads and comonads is inspired by B\"{o}hm, Brzezi\'{n}ski and Wisbauer, who provided a unifying framework for studying modules, comodules and contramodules over ordinary algebras and coalgebras in \cite{BBW}. As such, we hope that this is the first step towards studying notions such as mixed distributive laws, entwining structures and Galois properties over $A_\infty$-algebras and $A_\infty$-coalgebras, as well as relative Hopf modules, Doi-Hopf modules, and Yetter-Drinfeld modules. The literature on the latter is vast and has been developed widely by several authors (see, for instance, \cite{BBR1}, \cite{BBR2}, \cite{Brz-}, \cite{BC}, \cite{Cn0}, \cite{Cn1}, \cite{Cn2}, \cite{Cn3}). 

\smallskip
We now describe the paper in more detail. We begin in Section 2 with an abelian category $\mathcal C$ that is $K$-linear (for a field $K$) and satisfies (AB5). An $A_\infty$-monad $(\mathbb U,\Theta)$ consists of a sequence $\mathbb U=\{\mathbb U_n\}_{n\in \mathbb Z}$ of endofunctors on $\mathcal C$ along with a collection $\Theta$ of natural transformations
\begin{equation}\Theta=\{\mbox{$\theta_k(\bar{n}):\mathbb U_{\bar{n}}:=\mathbb U_{n_1}\circ ...\circ \mathbb U_{n_k}\longrightarrow \mathbb U_{n_1+...+n_k+(2-k)}=\mathbb U_{\Sigma \bar{n}+(2-k)}$ $\vert$ $k\geq 1$, $\bar{n}=(n_1,...,n_k)\in \mathbb Z^k$} \}
\end{equation} satisfying certain relations. We say that $(\mathbb U,\Theta)$ is even if $\theta_k(\bar{n})=0$ whenever $k$ is odd. Dually, an $A_\infty$-comonad $(\mathbb V,\Delta)$ consists of a sequence  $\mathbb V=\{\mathbb V_n\}_{n\in \mathbb Z}$ of endofunctors and a collection $\Delta$ of natural transformations \begin{equation}\Delta=\{\mbox{$\delta_k(\bar{n}):\mathbb V_{\Sigma \bar{n}+(2-k)}:= \mathbb V_{n_1+...+n_k+(2-k)}\longrightarrow \mathbb V_{n_1}\circ ...\circ \mathbb V_{n_k}=\mathbb V_{\bar{n}}$ $\vert$ $k\geq 1$, $\bar{n}=(n_1,...,n_k)\in \mathbb Z^k$} \}
\end{equation}  satisfying dual relations.  Our first result is that if $\{(\mathbb U_n,\mathbb V_n)\}_{n\in\mathbb Z}$ is a sequence of adjoint functors, then 
$\mathbb U=\{\mathbb U_n\}_{n\in\mathbb Z}$ can be equipped with the structure of an even $A_\infty$-monad (resp. an even $A_\infty$-comonad) if and only if $\mathbb V=\{\mathbb V_n\}_{n\in \mathbb Z}$ can be equipped with the structure of an even $A_\infty$-comonad (resp. an even $A_\infty$-monad). Additionally, if $\mathbb U=\{\mathbb U_n\}_{n\in\mathbb Z}$ is an even $A_\infty$-monad, we describe a process that gives a collection $\mathbb V^\oplus
=\{\mathbb V^\oplus_n\}_{n\in \mathbb Z}$ of subfunctors of $\mathbb V=\{\mathbb V_n\}_{n\in \mathbb Z}$ carrying the structure of an $A_\infty$-comonad $(\mathbb V^\oplus, \Delta^\oplus)$ that is locally finite. In other words, for each $k\geq 1$, $T\in \mathbb Z$, the induced morphism
\begin{equation}
\mathbb V_{T+(2-k)}^\oplus= \mathbb V_{n_1+...+n_k+(2-k)}^\oplus\longrightarrow \underset{\bar{n}\in\mathbb Z^k, \Sigma\bar{n}=T}{\prod}\mathbb V_{n_1}^\oplus\circ ...\circ \mathbb V_{n_k}^\oplus
\end{equation} factors through the direct sum $ \underset{\bar{n}\in\mathbb Z^k, \Sigma\bar{n}=T}{\bigoplus}\mathbb V^\oplus_{n_1}\circ ...\circ \mathbb V_{n_k}^\oplus$.

\smallskip
If $(\mathbb U,\Theta)$ is an $A_\infty$-monad, an $A_\infty$-module $(M,\pi)$ consists of a collection $M=\{M_n\}_{n\in\mathbb Z}$ of objects of $\mathcal C$ along with morphisms \begin{equation}
\pi=\{\mbox{$\pi_k(\bar{n},l):\mathbb U_{\bar{n}}M_{l}\longrightarrow M_{\Sigma\bar{n}+l+(2-k)}$ $\vert$ $k\geq 1$, $\bar{n}=(n_1,...,n_{k-1})
\in \mathbb Z^{k-1}$, $l\in \mathbb Z$ } \}
\end{equation}
 satisfying certain conditions described in Definition \ref{D2.4}. This gives the Eilenberg-Moore category $EM_{(\mathbb U,\Theta)}$ of $A_\infty$-modules over $(\mathbb U,\Theta)$. We also describe a canonical functor $\mathcal C\longrightarrow EM_{(\mathbb U,\Theta)}$. Further, if the functors
$\{\mathbb U_n\}_{n\in \mathbb Z}$ preserve direct sums, this extends to a functor $\hat{\mathbb U}:\mathcal C^{\mathbb Z}\longrightarrow EM_{(\mathbb U,\Theta)}$ from the category $\mathcal C^{\mathbb Z}$ of $\mathbb Z$-graded objects over $\mathcal C$. The $A_\infty$-comodules over an $A_\infty$-comonad $(\mathbb V,\Delta)$ are defined in a dual manner. If  $\{(\mathbb U_n,\mathbb V_n)\}_{n\in\mathbb Z}$ is a sequence of adjoint functors, we show (see Proposition \ref{P2.7}) that the category of even $A_\infty$-modules over $(\mathbb U,\Theta)$ is isomorphic to the category of even $A_\infty$-comodules over $(\mathbb V,\Delta)$. We also define $\infty$-morphisms $\alpha=\{\mbox{ $\alpha_k(\bar{n},l):\mathbb U_{\bar{n}}M^1_l\longrightarrow M^2_{\Sigma\bar{n}+l+(1-k)}$}\}$ between $A_\infty$-modules $(M^1,\pi^1)$, $(M^2,\pi^2)$ (see Definition \ref{D2.8}). Then, we show in Theorem \ref{T2.10} that even $A_\infty$-modules over $(\mathbb U,\Theta)$ with odd $\infty$-morphisms between them correspond to 
even $A_\infty$-comodules over $(\mathbb V,\Delta)$ with odd $\infty$-morphisms between them. 

\smallskip
We study locally finite $A_\infty$-comodules over $A_\infty$-comonads in Section 4. Let $(\mathbb V,\Delta)$ be an $A_\infty$-comonad such that each $\mathbb V_n$ is exact. If $P=\{P_n\}_{n\in \mathbb Z}$ is an $A_\infty$-comodule, we describe a process of restricting the structure maps $\rho_k(\bar{n},l):P_{\Sigma\bar{n}+l+(2-k)}\longrightarrow \mathbb V_{\bar{n}}P_{l} $  of $P$ to a subobject $P^\oplus$ such that for each $k\geq 1$,
$l\in \mathbb Z$, $T\in\mathbb Z$, the induced morphism
\begin{equation}
\underset{\bar{n}\in\mathbb Z^{k-1}, \Sigma\bar{n}+l=T}{\prod} \rho_k^\oplus(\bar{n},l):P^\oplus_{T+(2-k)}\longrightarrow \underset{\bar{n}\in\mathbb Z^{k-1}, \Sigma\bar{n}+l=T}{\prod}\mathbb V_{\bar{n}}P^\oplus_l
\end{equation}
factors through the direct sum $\underset{\bar{n}\in\mathbb Z^{k-1}, \Sigma\bar{n}+l=T}{\bigoplus}\mathbb V_{\bar{n}}P^\oplus_l$. Further, we show (see Theorem \ref{T2.3.6}) that this construction gives a right adjoint to the  inclusion of locally finite $A_\infty$-comodules into the Eilenberg-Moore category $EM^{(\mathbb V,\Delta)}$ of $A_\infty$-comodules over $(\mathbb V,\Delta)$.

\smallskip
In Section 5, we develop the bar construction for an $A_\infty$-monad $(\mathbb U,\Theta)$. This gives a comonad $Bar(\mathbb U,\Theta)$ which is graded and equipped with a coderivation. If $(\mathbb W,\delta_1,\delta_2)$ is a conilpotent dg-comonad (see Definition \ref{conil7}), we show that morphisms from $(\mathbb W,\delta_1,\delta_2)$ to  $Bar(\mathbb U,\Theta)$ correspond to families of natural transformations satisying conditions similar to the classical case of twisting morphisms. In Section 6, we study distributive laws between 
an $A_\infty$-monad $(\mathbb U,\Theta)$ and an ordinary monad $\mathbb S$. We show how these distributive laws can be used to lift $(\mathbb U,\Theta)$  to an $A_\infty$-monad
$(\widetilde{\mathbb U},\widetilde{\Theta})$ on the Eilenberg-Moore category $EM_{\mathbb S}$ of $\mathbb S$. Similarly, we show how distributive laws can be used to extend 
$(\mathbb U,\Theta)$  to an $A_\infty$-monad
$(\widehat{\mathbb U},\widehat{\Theta})$ on the Kleisli category $Kl(\mathbb S)$ of $\mathbb S$.

\smallskip
In Section 7, we relate this formalism of monads and comonads to $A_\infty$-algebras and $A_\infty$-coalgebras. Motivated by a classical construction of Popescu \cite{Pop} (see also Artin and Zhang \cite{AZ}) on the noncommutative base change of an algebra, we construct examples of $A_\infty$-monads and $A_\infty$-comonads starting with a $K$-linear Grothendieck category $\mathcal C$. If $A$ is an $A_\infty$-algebra and $\mathbb F$ is a monad on $\mathcal C$ that preserves direct sums,  the collection of functors $\mathbb U^{A,\mathbb F}_{\mathcal C}=\{\mathbb U_{\mathcal C,n}^{A,\mathbb F}:=A_n\otimes \mathbb F(\_\_):\mathcal C\longrightarrow \mathcal C\}_{n\in \mathbb Z}
$ is an $A_\infty$-monad on $\mathcal C$. Additionally, if $A$ is even (i.e., the operations $m_k:A^{\otimes k}\longrightarrow A$ vanish whenever $k$ is odd) and $\mathbb F$
has a right adjoint $\mathbb G$, the collection of functors $\mathbb V^{A,\mathbb G}_{\mathcal C}=\{\mathbb V_{\mathcal C,n}^{A,\mathbb G}:=\underline{Hom}(A_n, \mathbb G(\_\_)):\mathcal C\longrightarrow \mathcal C\}_{n\in \mathbb Z}
$ is an $A_\infty$-comonad on $\mathcal C$ (see Proposition \ref{scuP5.2}). We also mention classes of $A_\infty$-algebras that satisfy this evenness condition and have been studied extensively in the literature (see \cite{DoVa}, \cite{HeLu},  \cite{Kelx}, \cite{LPWZ}).

\smallskip Henceforth, we fix  $\mathcal C=Vect$, the category of $K$-vector spaces. If $A$ is a $\mathbb Z$-graded vector space, we show that the collection of functors $\mathbb U^A=\{\mathbb U_n^A:=A_n\otimes \_\_:Vect\longrightarrow Vect\}_{n\in \mathbb Z}
$ can be equipped with the structure of an $A_\infty$-monad if and only if $A$ can be equipped
with the structure of an $A_\infty$-algebra. In that case, the Eilenberg-Moore category $EM_{\mathbb U_A}$ of $A_\infty$-modules over $\mathbb U_A$ corresponds to $A_\infty$-modules over the $A_\infty$-algebra $A$.  On the other hand, if $C$ is an $A_\infty$-coalgebra that is even, it  follows from our earlier result in Theorem \ref{T2.3} that the right adjoints  $\{Hom(C_{-n},\_\_):Vect\longrightarrow Vect\}_{n\in \mathbb Z}
$ can be equipped with the structure of an $A_\infty$-monad. This is the key observation that motivates our definition of $A_\infty$-contramodules in Section 9. 

\smallskip
By a pairing of an $A_\infty$-algebra $A$ and an $A_\infty$-coalgebra $C$, we will mean an $\infty$-morphism of $A_\infty$-algebras $f= \{f_k: A^{\otimes k}\longrightarrow C^*\}_{k\geq 1}$ from $A$ to the graded dual $C^*$ of $C$. We show that such a pairing induces a functor $\iota(C,A):\mathbf M^C\longrightarrow {_A}\mathbf M$ from right $C$-comodules to left $A$-modules. Additionally, when the pairing is rational in the sense of Definition \ref{D6.5qh}, the functor $\iota(C,A)$ embeds $\mathbf M^C$ as a full subcategory of ${_A}\mathbf M$. Our main result in Section 8 is that a rational pairing  $f= \{f_k: A^{\otimes k}\longrightarrow C^*\}_{k\geq 1}$ of an $A_\infty$-algebra with an $A_\infty$-coalgebra gives rise to an adjunction (see Theorem \ref{T6.8nh})
\begin{equation} 
{_A}\mathbf M(\iota(C,A)(M'),M)\cong \mathbf M^C(M',R_f(M))
\end{equation} for any $M'\in \mathbf M^C$ and $M\in{_A}\mathbf M$.

\smallskip
We introduce $A_\infty$-contramodules in Section 9. If $A'$ is an ordinary algebra, we note that the structure map $A'\otimes M'\longrightarrow M'$ of a module $M'$ may be expressed equivalently as a morphism $M'\longrightarrow [A',M']:=Vect(A',M')$.  Accordingly, a contramodule over an ordinary coalgebra $C'$, as defined by Eilenberg and Moore in \cite[IV.5]{EMoore}, consists of a space $M''$ along with a structure map $Vect(C',M'')=[C',M'']\longrightarrow M''$ satisfying certain coassociativity conditions. In particular, if $V$ is any vector space, then $[C',V]$ is a $C'$-contramodule. In a categorical sense, it may be said therefore that  both comodules and contramodules dualize the notion of module over an algebra. Even though the study of contramodules in the literature is not as developed as that of comodules, the topic has seen a lot of renewed interest in recent years  (see, for instance, \cite{BBR}, \cite{BPS}, \cite{Pst2}, \cite{BBW}, \cite{semicontra}, \cite{Pmem}, \cite{P2},   \cite{Sha}, \cite{Wis}). 

\smallskip
Accordingly, our notion of an $A_\infty$-contramodule over an $A_\infty$-coalgebra $C$ consists of a graded vector space $M\in Vect^{\mathbb Z}$ along with a collection of structure
maps
\begin{equation}
t^M_k: Vect^{\mathbb Z}(C^{\otimes k-1},M)=[C^{\otimes k-1},M]\longrightarrow M\qquad k\geq 1
\end{equation} satisfying certain conditions set out in Definition \ref{D7.1wc}. We show that there is a canonical functor $[C,\_\_]:Vect^{\mathbb Z}\longrightarrow \mathbf M_{[C,\_\_]}$ from graded vector spaces to the category $\mathbf M_{[C,\_\_]}$ of $A_\infty$-contramodules over $C$. Further, we describe a canonical functor that relates $A_\infty$-contramodules to the Eilenberg-Moore category of $A_\infty$-modules over the $A_\infty$-monad  $\{Hom(C_{-n},\_\_):Vect\longrightarrow Vect\}_{n\in \mathbb Z}
$ described in Section 7. There is also a faithful functor $\kappa^C$ that takes $ \mathbf M_{[C,\_\_]}$ to $A_\infty$-modules over $C^*$. 

\smallskip
The final result of Section 9 relates $A_\infty$-contramodules to $A_\infty$-comodules, using contratensor products. For contramodules over coassociative coalgebras, the contratensor product was defined in \cite{semicontra}, along with its right adjoint. We let $C$, $D$ be $A_\infty$-coalgebras. Let $N$ be a space equipped with an $A_\infty$-left $C$-coaction as well as an $A_\infty$-right $D$-coaction (a $(C,D)$-bicomodule for instance) satisfying certain conditions. Using $N$, we define a contratensor product  (see \eqref{ctrtens})
\begin{equation}
\_\_\boxtimes_C N : \mathbf M_{[C,\_\_]}\longrightarrow \mathbf M^D
\end{equation}  We conclude with Theorem \ref{T8.4co}, which gives an adjunction of functors
  \begin{equation} 
  \mathbf M^D(M\boxtimes_CN, Q)\cong \mathbf M_{[C,\_\_]}(M,[N,Q]^D)
  \end{equation} for $M\in \mathbf M_{[C,\_\_]}$ and $Q\in \mathbf M^D$. We also mention that for the convenience of the reader and in order to fix notation, we have collected the basic definitions of $A_\infty$-algebras, $A_\infty$-coalgebras, $A_\infty$-modules and $A_\infty$-comodules in an appendix in the final section of this paper (see, for instance, \cite{BK}, \cite{Kenji}).

\smallskip

\medskip

{\bf Acknowledgements:} We are grateful to Jim Stasheff for valuable discussions on $A_\infty$-algebras and $A_\infty$-coalgebras.

\section{$A_\infty$-monads, $A_\infty$-comonads and adjoints}

Let $K$ be a field. Throughout this section and the rest of this paper, we let $\mathcal C$ be a $K$-linear abelian category. We will suppose that $\mathcal C$ satisfies the (AB5) axiom. This means  in particular that for any family $\{M_i\}_{i\in I}$ of objects in $\mathcal C$, we have an inclusion $\underset{i\in I}{\bigoplus}M_i\hookrightarrow \underset{i\in I}{\prod}M_i$. For any tuple $\bar{n}=(n_1,...,n_k)\in\mathbb Z^k$, we set $|\bar{n}|:=k$ and $\Sigma \bar{n}:=n_1+...+n_k$.  We also write  $\bar{n}^{op}$ for the opposite tuple
$(n_k,...,n_1)\in \mathbb Z^k$. On the other hand, for $T\in \mathbb Z$ and $k\geq 1$, we denote by $\mathbb Z(T,k)$ the set of all $\bar{n}\in \mathbb Z^k$ satisfying
$\Sigma\bar{n}=T$. 

\smallskip
 For any collection $\mathbb U=\{\mathbb U_n:\mathcal C\longrightarrow \mathcal C\}_{n\in \mathbb Z}$ of endofunctors on $\mathcal C$ indexed by $\mathbb Z$ and $\bar{n}=(n_1,...,n_k)\in \mathbb Z^k$, we will denote by $\mathbb U_{\bar{n}}$ the composition 
$\mathbb U_{\bar{n}}:=\mathbb U_{n_1}\circ ...\circ \mathbb U_{n_k}$. We start by considering an $A_\infty$-monad on $\mathcal C$. 

\begin{defn}\label{D2.1} An $A_\infty$-monad $(\mathbb U,\Theta)$ on $\mathcal C$ consists of the following data:

\smallskip
(a) A collection $\mathbb U=\{\mathbb U_n:\mathcal C\longrightarrow \mathcal C\}_{n\in \mathbb Z}$ of endofunctors on $\mathcal C$.

\smallskip
(b) A collection $\Theta$ of natural transformations 
\begin{equation}\label{2.1trf} \Theta=\{\mbox{$\theta_k(\bar{n}):\mathbb U_{\bar{n}}=\mathbb U_{n_1}\circ ...\circ \mathbb U_{n_k}\longrightarrow \mathbb U_{n_1+...+n_k+(2-k)}=\mathbb U_{\Sigma \bar{n}+(2-k)}$ $\vert$ $k\geq 1$, $\bar{n}=(n_1,...,n_k)\in \mathbb Z^k$} \}
\end{equation}
 satisfying, for each $\bar{z}\in \mathbb Z^N$,  $N\geq 1$:
\begin{equation}\label{2.1e}
0=\sum (-1)^{p+qr+q\Sigma\bar{n}}\theta_{p+1+r}(\bar{n},\Sigma\bar{n}'+(2-q),\bar{n}'')(\mathbb U_{\bar{n}}\ast \theta_q(\bar{n}')\ast \mathbb U_{\bar{n}''}):\mathbb U_{\bar{n}}\circ \mathbb U_{\bar{n}'}\circ \mathbb U_{\bar{n}''}\longrightarrow \mathbb U_{\Sigma \bar{n}+\Sigma\bar{n}'+\Sigma\bar{n}''+(3-N)}
\end{equation} where the sum runs over partitions $\bar{z}=(\bar{n}, \bar{n}',\bar{n}'')$ with $|\bar{n}|=p$, $|\bar{n}'|=q$, $|\bar{n}''|=r$. 
We will say that $(\mathbb U,\Theta)$  is even if each $\theta_k(\bar{n})=0$ whenever $k$ is odd.
\end{defn}

\begin{defn}\label{D2.2}
An $A_\infty$-comonad $(\mathbb V,\Delta)$ on $\mathcal C$ consists of the following data:

\smallskip
(a) A collection $\mathbb V=\{\mathbb V_n:\mathcal C\longrightarrow \mathcal C\}_{n\in \mathbb Z}$ of endofunctors on $\mathcal C$.

\smallskip
(b) A collection $\Delta$ of natural transformations 
\begin{equation}\Delta=\{\mbox{$\delta_k(\bar{n}):\mathbb V_{\Sigma \bar{n}+(2-k)}= \mathbb V_{n_1+...+n_k+(2-k)}\longrightarrow \mathbb V_{n_1}\circ ...\circ \mathbb V_{n_k}=\mathbb V_{\bar{n}}$ $\vert$ $k\geq 1$, $\bar{n}=(n_1,...,n_k)\in \mathbb Z^k$} \}
\end{equation}
 such that, for each $\bar{z}\in \mathbb Z^N$,  $N\geq 1$, we have
\begin{equation}\label{2.1ezr}
0=\sum (-1)^{pq+r+q\Sigma\bar{n}}(\mathbb V_{\bar{n}}\ast \delta_q(\bar{n}')\ast \mathbb V_{\bar{n}''})\delta_{p+1+r}(\bar{n},\Sigma\bar{n}'+(2-q),\bar{n}''): \mathbb V_{\Sigma \bar{n}+\Sigma\bar{n}'+\Sigma\bar{n}''+(3-N)}\longrightarrow \mathbb V_{\bar{n}}\circ \mathbb V_{\bar{n}'}\circ \mathbb V_{\bar{n}''}
\end{equation} where the sum runs over partitions $\bar{z}=(\bar{n}, \bar{n}',\bar{n}'')$ with $|\bar{n}|=p$, $|\bar{n}'|=q$, $|\bar{n}''|=r$.  

\smallskip
We will say that an $A_\infty$-comonad $(\mathbb V,\Delta)$ is locally finite if 
for each $k\geq 1$, $T\in \mathbb Z$, the induced morphism 
\begin{equation}\label{2.3lof}
\underset{\bar{n}\in\mathbb Z(T,k)}{\prod} \delta_k(\bar{n}):\mathbb V_{T+(2-k)}= \mathbb V_{n_1+...+n_k+(2-k)}\longrightarrow \underset{\bar{n}\in\mathbb Z(T,k)}{\prod}\mathbb V_{n_1}\circ ...\circ \mathbb V_{n_k}=\underset{\bar{n}\in\mathbb Z(T,k)}{\prod} \mathbb V_{\bar{n}}
\end{equation}
factors through the direct sum $ \underset{\bar{n}\in\mathbb Z(T,k)}{\bigoplus}\mathbb V_{\bar{n}}$.

\smallskip
We will say that $(\mathbb V,\Delta)$  is even if each $\delta_k(\bar{n})=0$ whenever $k$ is odd.
\end{defn}

In any category $\mathcal A$ containing products and direct sums, we will say that a collection  $\{\eta_i:X\longrightarrow X_i\}_{i\in I}$ of morphisms  is locally finite if the induced morphism $\underset{i\in I}{\prod}\eta_i:X\longrightarrow \underset{i\in I}{\prod}X_i$ factors through the direct sum $\underset{i\in I}{\bigoplus}X_i$.
We now make two conventions that we will use for notation throughout the paper.

\smallskip
(1) Suppose $(U,V)$ is a pair of adjoint functors between categories $\mathcal A$ and $\mathcal B$, so that we have natural isomorphisms
\begin{equation}
\mathcal B(UX,Y)\cong \mathcal A(X,VY)
\end{equation} for objects $X\in \mathcal A$, $Y\in \mathcal B$.  For any morphism $f\in \mathcal B(UX,Y)$, we denote by
$f^R$ the corresponding morphism in $\mathcal A(X,VY)$. Conversely, for any $g\in \mathcal A(X,VY)$, we denote by $g^L$ the corresponding morphism
in $\mathcal B(UX,Y)$.  

\smallskip
(2) Suppose $(U_1,V_1)$ and $(U_2,V_2)$ are pairs of adjoint functors between categories $\mathcal A$ and $\mathcal B$. Then, we know that there are isomorphisms
\begin{equation}
Nat(U_1,U_2)\cong Nat(V_2,V_1)
\end{equation} For any natural transformation $\eta \in Nat(U_1,U_2)$, we denote by $\eta^R$ the corresponding natural transformation in $Nat(V_2,V_1)$. Conversely, for any
$\zeta\in Nat(V_2,V_1)$, we denote by $\zeta^L$ the corresponding natural transformation in $Nat(U_1,U_2)$. 

\smallskip
We will say that $(U=\{U_n:\mathcal A\longrightarrow \mathcal B\}_{n\in \mathbb Z},V=\{V_n:\mathcal B\longrightarrow\mathcal A\}_{n\in 
\mathbb Z})$ is a $\mathbb Z$-system of adjoints between  categories $\mathcal A$ and $\mathcal B$ if $(U_n,V_n)$ is a pair of adjoint
functors for each $n\in \mathbb Z$.

\begin{Thm}\label{T2.3} Let $(\mathbb U=\{\mathbb U_n:\mathcal C\longrightarrow \mathcal C\}_{n\in\mathbb Z},\mathbb V=
\{\mathbb V_n:\mathcal C\longrightarrow\mathcal C\}_{n\in\mathbb Z})$ be a $\mathbb Z$-system of adjoints on $\mathcal C$.   Then, 

\smallskip
(a) $\mathbb U$ can be equipped with the structure of an  even $A_\infty$-monad if and only if $\mathbb V$ can be equipped with the structure of
an even  $A_\infty$-comonad.

\smallskip
(b) $\mathbb U$ can be equipped with the structure of an  even $A_\infty$-comonad if and only if $\mathbb V$ can be equipped with the structure of
an  even $A_\infty$-monad.

\end{Thm}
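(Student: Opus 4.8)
The plan is to use the dictionary between natural transformations of adjoint functors and their mates, as set out in conventions (1) and (2), and show that it carries the defining identities \eqref{2.1e} of an even $A_\infty$-monad precisely to the defining identities \eqref{2.1ezr} of an even $A_\infty$-comonad (and, in part (b), vice versa, which is the ``same'' statement with the roles of $\mathbb U$ and $\mathbb V$ swapped since $(\mathbb V_n,\mathbb U_n)$ need not be adjoint—so in fact (b) is genuinely separate and is handled by the mirror-image argument once one observes that an even $A_\infty$-comonad structure on $\mathbb U$ with the given orientation of arrows, when one passes to mates, lands in $\mathrm{Nat}$-groups oriented so as to produce an even $A_\infty$-monad on $\mathbb V$). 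First I would record that for $\bar n=(n_1,\dots,n_k)$ the composite $\mathbb U_{\bar n}=\mathbb U_{n_1}\circ\cdots\circ\mathbb U_{n_k}$ is left adjoint to $\mathbb V_{\bar n^{op}}=\mathbb V_{n_k}\circ\cdots\circ\mathbb V_{n_1}$ (adjunctions compose, in reverse order). Hence a natural transformation $\theta_k(\bar n)\colon \mathbb U_{\bar n}\to \mathbb U_{\Sigma\bar n+(2-k)}$ has a mate, which I will call $\delta_k(\bar n)$ up to a sign and reindexing, living in $\mathrm{Nat}(\mathbb V_{\Sigma\bar n+(2-k)},\mathbb V_{\bar n^{op}})$; comparing with Definition \ref{D2.2} (and bearing in mind that the comonad identities are naturally stated with $\bar n$ replaced by $\bar n^{op}$, or equivalently that one reverses all tuples throughout) gives the correspondence $\Theta\leftrightarrow\Delta$ on the level of data. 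The evenness condition is preserved because the mate construction is a bijection $\mathrm{Nat}(\mathbb U_{\bar n},\mathbb U_m)\cong\mathrm{Nat}(\mathbb V_m,\mathbb V_{\bar n^{op}})$ that sends $0$ to $0$, so $\theta_k=0$ for odd $k$ iff $\delta_k=0$ for odd $k$.

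Next, the main computational step: translate the relation \eqref{2.1e} term by term under the mate correspondence. The key functoriality facts I would invoke are (i) mates convert a vertical composite $g\circ f$ of transformations into the vertical composite $f^R\circ g^R$ in the reverse order, and (ii) mates convert a whiskering $\mathbb U_{\bar n}\ast\eta\ast\mathbb U_{\bar n''}$ of $\eta$ on the left by $\mathbb U_{\bar n}$ and on the right by $\mathbb U_{\bar n''}$ into the whiskering $\mathbb V_{\bar n''^{\,op}}\ast\eta^R\ast\mathbb V_{\bar n^{op}}$; both are standard compatibilities of the calculus of mates. Applying these to a generic summand
$(-1)^{p+qr+q\Sigma\bar n}\,\theta_{p+1+r}(\bar n,\Sigma\bar n'+(2-q),\bar n'')\circ(\mathbb U_{\bar n}\ast\theta_q(\bar n')\ast\mathbb U_{\bar n''})$
of \eqref{2.1e} turns it (using the pairing of $p,r$ with the reversed partition) into exactly a summand of \eqref{2.1ezr} with the same partition written in opposite order, the exponent $p+qr+q\Sigma\bar n$ going over to $pq+r+q\Sigma\bar n$ (after reindexing $p\leftrightarrow r$ forced by the reversal of tuples); hence the vanishing of the first sum is equivalent to the vanishing of the second. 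Since the mate correspondence is a bijection on each $\mathrm{Nat}$-group, ``$\Theta$ satisfies \eqref{2.1e}'' $\iff$ ``$\Delta$ satisfies \eqref{2.1ezr}'', which is precisely part (a). Part (b) then follows by the same argument applied to the opposite orientation of arrows—equivalently, by applying part (a) in the category $\mathcal C^{op}$, under which $A_\infty$-monads and $A_\infty$-comonads exchange roles and a $\mathbb Z$-system of adjoints $(\mathbb U,\mathbb V)$ becomes the $\mathbb Z$-system $(\mathbb V^{op},\mathbb U^{op})$.

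The part I expect to be genuinely fiddly, rather than conceptually hard, is the bookkeeping of signs and of the tuple reversals: one must choose the identification $\delta_k(\bar n):=\pm\,\theta_k(\bar n^{op})^R$ (or some such normalization) so that the ``correction'' signs coming from the contravariance of the mate construction on composites and from reversing $\bar n\mapsto\bar n^{op}$ in the whiskerings combine to turn the Koszul sign $(-1)^{p+qr+q\Sigma\bar n}$ of the monad axiom into the Koszul sign $(-1)^{pq+r+q\Sigma\bar n}$ of the comonad axiom. Because both $(\mathbb U,\Theta)$ and $(\mathbb V,\Delta)$ are required to be \emph{even}, the only signs that actually matter are those with $q$ even (the surviving inner term $\theta_q$ must have $q$ even, as must $p+1+r$, forcing $p+r$ odd), and in that parity regime $(-1)^{p+qr+q\Sigma\bar n}=(-1)^{p}$ while $(-1)^{pq+r+q\Sigma\bar n}=(-1)^{r}$; since $p+r$ is odd these differ by an overall $(-1)$, which is absorbed into the global normalization of $\delta_k$ (or is invisible because the whole expression is set equal to $0$). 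I would therefore present the sign comparison only in this even regime, where it collapses, and remark that the evenness hypothesis is exactly what lets us sidestep the full sign calculus that one would need for the non-even version.
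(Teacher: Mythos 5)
Your proposal is correct and follows essentially the same route as the paper: pass to mates $\theta_k(\bar n)\mapsto\theta_k^R(\bar n)$ using that $\mathbb U_{\bar n}$ is left adjoint to $\mathbb V_{\bar n^{op}}$, set $\delta_k(\bar n):=\theta_k^R(\bar n^{op})$, and use evenness of $q$ to collapse the Koszul signs so that \eqref{2.1e} translates term by term into \eqref{2.1ezr}; the paper likewise proves only (a) and handles (b) by the mirror argument. Two small remarks: once you perform the reindexing $p\leftrightarrow r$ forced by reversing the tuples, the sign $(-1)^{rq+p+q\Sigma\bar m}$ of the comonad axiom for the reversed partition equals $(-1)^p$ in the even regime, so there is in fact no residual overall $(-1)$ to absorb; and your parenthetical claim that (b) follows from (a) applied in $\mathcal C^{op}$ does not work, since op-duality turns the system $(\mathbb U,\mathbb V)$ into $(\mathbb V^{op},\mathbb U^{op})$ and exchanges monads with comonads, so it sends statement (a) back to statement (a) — the direct mirror-image mate argument you also describe is what is actually needed for (b).
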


\begin{proof}
We only prove (a) because (b) is similar. Suppose that $(\mathbb U, \Theta=\{\theta_k(\bar{n})\})$ is an even $A_\infty$-monad. Accordingly, we have transformations 
\begin{equation}
(\theta_k(\bar{n}):\mathbb U_{\bar{n}}\longrightarrow \mathbb U_{\Sigma\bar{n}+(2-k)})\mapsto (\theta_k^R(\bar{n}):\mathbb V_{\Sigma\bar{n}+(2-k)}\longrightarrow \mathbb V_{\bar{n}^{op}})
\end{equation} Using  the notation of \eqref{2.1e}, we see that  the  $\theta^R_k(\bar{n})$ satisfy, for any $\bar{z}\in \mathbb Z^N$, $N\geq 1$, 
\begin{equation}\label{2.8yh}
\sum_{\bar{z}=(\bar{n},\bar{n}',\bar{n}'')} (-1)^{p}(\mathbb V_{\bar{n}''^{op}}\ast \theta_q^R(\bar{n}')\ast \mathbb V_{\bar{n}^{op}})\theta_{p+1+r}^R(\bar{n},\Sigma\bar{n}'+(2-q),\bar{n}'')=0
\end{equation}
 as a transformation $\mathbb V_{\Sigma \bar{n}+\Sigma\bar{n}'+\Sigma\bar{n}''+(3-N)}\longrightarrow \mathbb V_{\bar{n}''^{op}}\circ \mathbb V_{\bar{n}'^{op}}\circ \mathbb V_{\bar{n}^{op}}$ (note that $q$ must be even).  We now set $\delta_k(\bar{n}):=\theta_k^R(\bar{n}^{op})$.  Accordingly,   the condition in \eqref{2.8yh} now becomes 
 \begin{equation}\label{2.9yh}
\sum_{\bar{z}=(\bar{n},\bar{n}',\bar{n}'')}  (-1)^{p}(\mathbb V_{\bar{n}''^{op}}\ast \delta_q(\bar{n}'^{op})\ast \mathbb V_{\bar{n}^{op}})\delta_{p+1+r}(\bar{n}''^{op},\Sigma\bar{n}'^{op}+(2-q),\bar{n}^{op})=0
\end{equation} Comparing with \eqref{2.1ezr}, we see that $(\mathbb V,\Delta=\{\delta_k(\bar{n})\})$  satisfies the conditions for being an even $A_\infty$-comonad. Further, these arguments can be reversed, showing that if $\mathbb V$ carries the structure of an even $A_\infty$-comonad, then
$\mathbb U$ carries structure of an even $A_\infty$-monad. This proves the result.
\end{proof}

We will now describe the connection between $A_\infty$-monads and locally finite $A_\infty$-comonads. For this, we will first prove some simple lemmas on the collection $End(\mathcal C)$ of endofunctors in $\mathcal C$. If $F\in End(\mathcal C)$, a subfunctor $F'\hookrightarrow F$ is a natural transformation such that $F'(M)\longrightarrow F(M)$ is a monomorphism in $\mathcal C$ for each $M\in \mathcal C$.  In order to avoid set theoretic complications, we will assume wherever necessary that the category $End(\mathcal C)$ is well-powered, i.e., the collection of subfunctors of any $F\in End(\mathcal C)$ forms a set. 

\begin{lem}\label{FLem2.3}Let $F$, $G\in End(\mathcal C)$. Suppose that $G$ preserves monomorphisms. Then, if $F''\hookrightarrow F'\hookrightarrow F$ and $G''\hookrightarrow G'\hookrightarrow G$ are subfunctors, the composition $G''\circ F''$ is a subfunctor of $G'\circ F'$.
\end{lem}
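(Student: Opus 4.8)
The plan is to verify the claim pointwise on objects and then check naturality. Fix $M \in \mathcal C$. We have monomorphisms $F''(M) \hookrightarrow F'(M) \hookrightarrow F(M)$ in $\mathcal C$, so their composite $F''(M) \hookrightarrow F'(M)$ is a monomorphism. Since $G$ preserves monomorphisms, applying $G$ yields a monomorphism $G(F''(M)) \hookrightarrow G(F'(M))$. I would then precompose with the monomorphism $G''(F''(M)) \hookrightarrow G(F''(M))$ coming from the subfunctor $G'' \hookrightarrow G$ evaluated at the object $F''(M)$, and also use the monomorphism $G'(F'(M)) \hookrightarrow G(F'(M))$ from $G' \hookrightarrow G$ at $F'(M)$. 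The first two composites exhibit $(G'' \circ F'')(M) = G''(F''(M))$ as a subobject of $G(F'(M))$; since $G'(F'(M))$ is also a subobject of $G(F'(M))$, it suffices to show that the map $G''(F''(M)) \to G(F'(M))$ factors through $G'(F'(M))$.

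To obtain that factorization, I would exploit naturality of the subfunctor inclusion $\iota^{G} \colon G'' \hookrightarrow G$ (or rather of $G'' \hookrightarrow G'$, which exists because $G'' \hookrightarrow G$ factors through $G' \hookrightarrow G$ by definition of a subfunctor — any subfunctor contained in another). Applied to the morphism $F''(M) \to F'(M)$, naturality gives a commuting square
\begin{equation*}
\begin{CD}
G''(F''(M)) @>>> G'(F''(M)) \\
@VVV @VVV \\
G''(F'(M)) @>>> G'(F'(M))
\end{CD}
\end{equation*}
and composing along the top-right path lands $G''(F''(M))$ inside $G'(F'(M))$. Chasing this into $G(F'(M))$ and comparing with the composite constructed in the previous paragraph (using that all the relevant inclusions are monomorphisms, hence the factorization through $G'(F'(M))$ is unique) identifies $(G'' \circ F'')(M)$ with a subobject of $(G' \circ F')(M)$, compatibly as $M$ varies.

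Finally I would check that this assignment $M \mapsto \big( (G'' \circ F'')(M) \hookrightarrow (G' \circ F')(M) \big)$ is natural in $M$, i.e. defines a subfunctor of $G' \circ F'$. Given $f \colon M \to N$ in $\mathcal C$, the required commuting square follows from naturality of each of $\iota^{F} \colon F'' \hookrightarrow F'$, $\iota^{G} \colon G'' \hookrightarrow G'$, and functoriality of $G$ and $G'$; concretely the square factors as a pasting of the naturality squares for $F'' \hookrightarrow F'$ (sent through $G''$ and $G'$) and for $G'' \hookrightarrow G'$ (evaluated at $F'(f)$), together with the image under $G''$ of the naturality of $F'' \hookrightarrow F'$. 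Since all maps in sight are monomorphisms, commutativity can be verified after composing with the monomorphism into $G(F'(N))$, which reduces everything to an equality of two composites in $\mathcal C$ that both equal $G(\iota^{F'}_N F'(f))$ precomposed appropriately.

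The main obstacle is mostly bookkeeping rather than conceptual difficulty: one must be careful that ``$F'' \hookrightarrow F$ is a subfunctor'' together with ``$F' \hookrightarrow F$ is a subfunctor with $F'' \hookrightarrow F$ factoring through $F'$'' indeed yields a \emph{natural} inclusion $F'' \hookrightarrow F'$ — this uses that $F'(M) \hookrightarrow F(M)$ is monic for each $M$, so the factorizing maps $F''(M) \to F'(M)$ are unique and hence automatically natural. The hypothesis that $G$ preserves monomorphisms is used exactly once, to ensure $G$ applied to $F''(M) \hookrightarrow F'(M)$ stays monic so that $(G'' \circ F'')(M) \to (G' \circ F')(M)$ is itself a monomorphism, as required for a subfunctor.
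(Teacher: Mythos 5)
Your proposal is correct and follows essentially the same route as the paper: both arguments rest on the naturality square for the inclusion among the $G$'s evaluated at $F''(M)\hookrightarrow F'(M)$, together with the hypothesis that $G$ preserves monomorphisms to see that the composite $G''(F''(M))\to G'(F'(M))$ is monic (the paper checks this by embedding everything into $G\circ F(M)$, you by embedding into $G(F'(M))$ and using uniqueness of factorization through a mono). Your extra care about the naturality of $F''\hookrightarrow F'$ and of the resulting inclusion $G''\circ F''\hookrightarrow G'\circ F'$ in $M$ is sound but is implicit in the paper, since the bottom-row maps are whiskerings of the given natural inclusions.
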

\begin{proof} We take some $M\in \mathcal C$ and consider the following commutative diagram
\begin{equation}\label{2.6va}
\begin{CD}
@. G\circ F''(M)@>mono>> G\circ F'(M) @>mono>>G\circ F(M)\\
@. @AmonoAA @AmonoAA @.\\
G''\circ F''(M)@>mono>> G'\circ F''(M) @>mono>> G'\circ F'(M)@. \\
\end{CD}
\end{equation} Since $G$ preserves monomorphisms, the morphisms  in the top row of \eqref{2.6va} are all monomorphisms. Further, the commutativity of the square in \eqref{2.6va} shows that $G'\circ F''(M)\longrightarrow G'\circ F'(M)$ is a monomorphism. The composition in the bottom row now gives the desired result.

\end{proof}

\begin{lem}\label{FLem2.4} Let $\{\eta_i:F\longrightarrow F_i\}_{i\in I}$ be a family of morphisms in $End(\mathcal C)$. Fix an indexing set $A$.  For each $i\in I$, let $\{F_i^\alpha\}_{\alpha\in A}$ be a family of subfunctors of $F_i$ and let  $\{F^\alpha\}_{\alpha\in A}$ 
be a family of subfunctors of $F$ such that $\eta_i:F\longrightarrow F_i$ restricts to $\eta_i^\alpha:F^\alpha\longrightarrow F_i^\alpha$. For each $\alpha\in A$, suppose that  $\{\eta_i^\alpha:F^\alpha\longrightarrow F^\alpha_i\}_{i\in I}$ is locally finite in $End(\mathcal C)$.  Then, the family 
\begin{equation}
\left\{\eta_i^A=\sum_{\alpha\in A} \eta_i^\alpha : F^A=\sum_{\alpha\in A} F^\alpha \longrightarrow \sum_{\alpha\in A} F^\alpha_i= F_i^A \right\}_{i\in I}
\end{equation} is locally finite.

\end{lem}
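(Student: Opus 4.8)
The plan is to verify local finiteness of $\{\eta_i^A\}_{i\in I}$ after evaluation at each object of $\mathcal C$, and then to decouple the summation index $\alpha$ by passing through the canonical epimorphism $\bigoplus_{\alpha\in A}F^\alpha\twoheadrightarrow F^A$.

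First I would reduce to a pointwise statement in $\mathcal C$. For any family $\{G_i\}_{i\in I}$ in $End(\mathcal C)$ the comparison $\bigoplus_{i\in I}G_i\longrightarrow\prod_{i\in I}G_i$ is a pointwise monomorphism (by the AB5 hypothesis on $\mathcal C$), so a family $\{\eta_i\colon G\to G_i\}_{i\in I}$ is locally finite in $End(\mathcal C)$ precisely when $\{\eta_i(M)\}_{i\in I}$ is locally finite in $\mathcal C$ for every $M$ — the factorization through a monomorphism is unique, hence automatically natural. Similarly, coproducts and images (so also sums of subfunctors) are computed pointwise in functor categories, so for fixed $M\in\mathcal C$ we have $F^A(M)=\sum_\alpha F^\alpha(M)$ inside $F(M)$ and $F_i^A(M)=\sum_\alpha F_i^\alpha(M)$ inside $F_i(M)$, and the restriction of $\eta_i^A(M)$ to the subobject $F^\alpha(M)$ equals $\eta_i^\alpha(M)$ followed by $F_i^\alpha(M)\hookrightarrow F_i^A(M)$, both being restrictions of $\eta_i(M)$. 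From here on I would fix $M$ and work in $\mathcal C$, writing $g=\prod_i\eta_i^A(M)\colon F^A(M)\to\prod_iF_i^A(M)$ and $\iota\colon\bigoplus_iF_i^A(M)\hookrightarrow\prod_iF_i^A(M)$; the goal is to factor $g$ through $\iota$.

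For the core step, I would introduce the canonical epimorphism $p\colon\bigoplus_{\alpha}F^\alpha(M)\twoheadrightarrow F^A(M)$, with kernel $K$, and its coprojections $j_\alpha$, so that $p\circ j_\alpha$ is the inclusion $F^\alpha(M)\hookrightarrow F^A(M)$. By the compatibility noted above, $g\circ p\circ j_\alpha$ is the composite $F^\alpha(M)\xrightarrow{\prod_i\eta_i^\alpha(M)}\prod_iF_i^\alpha(M)\to\prod_iF_i^A(M)$; since $\{\eta_i^\alpha\}_i$ is locally finite, $\prod_i\eta_i^\alpha(M)$ factors through $\bigoplus_iF_i^\alpha(M)$, so $g\circ p\circ j_\alpha$ lifts to some $h_\alpha\colon F^\alpha(M)\to\bigoplus_iF_i^A(M)$ with $\iota\circ h_\alpha=g\circ p\circ j_\alpha$. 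The universal property of the coproduct then assembles the $h_\alpha$ into a single $h\colon\bigoplus_\alpha F^\alpha(M)\to\bigoplus_iF_i^A(M)$ with $\iota\circ h=g\circ p$. Finally I would descend $h$ through $p$: as $g\circ p$ vanishes on $K$ and $\iota$ is monic, $h$ vanishes on $K$ and factors as $h=h'\circ p$; as $p$ is epic, $\iota\circ h'=g$, the required factorization.

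The one point genuinely needing care — and the conceptual heart of the argument — is this "glue, then descend through $p$" maneuver: each $\eta_i^\alpha$ is supported on only finitely many indices $i\in I$, but the set of such indices may depend on $\alpha$ with no uniform bound, and the assertion is exactly that this presents no obstacle, because a morphism out of the genuine direct sum $\bigoplus_\alpha F^\alpha(M)$ is controlled summand by summand, so only the per-$\alpha$ finiteness is needed to control its factorization through the quotient $\sum_\alpha F^\alpha(M)=F^A(M)$. The remaining ingredients — pointwise computation of the relevant products, coproducts, images and subfunctor sums, and exactness of coproducts — are routine consequences of $\mathcal C$ being a well-powered AB5 abelian category, as assumed throughout the section.
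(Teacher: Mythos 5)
Your proof is correct and follows essentially the same route as the paper's: use the per-$\alpha$ local finiteness to lift each $F^\alpha$ into $\bigoplus_{i}F_i^\alpha$, and then assemble these lifts over $\alpha$ onto the sum $\sum_{\alpha}F^\alpha$. The paper compresses your ``glue, then descend through $p$'' maneuver into the phrase ``summing over all $\alpha$'' together with an (AB5)-justified interchange of $\sum_{\alpha}$ and $\bigoplus_{i}$; your write-up makes that descent explicit (via the kernel of $\bigoplus_{\alpha}F^\alpha(M)\twoheadrightarrow F^A(M)$ and the monomorphy of $\bigoplus_i\hookrightarrow\prod_i$), which is the only substantive difference between the two arguments.
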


\begin{proof} Since $\{\eta_i^\alpha:F^\alpha\longrightarrow F^\alpha_i\}_{i\in I}$ is locally finite for each $\alpha\in A$, we have morphisms  $F^\alpha
\longrightarrow \underset{i\in I}{\bigoplus}F^\alpha_i\longrightarrow \underset{i\in I}{\prod}F^\alpha_i$. Summing over all $\alpha\in A$, we can consider for each $i_0\in I$ the following composition 
\begin{equation}\label{2.8ydo} \sum_{\alpha\in A}F^\alpha
\longrightarrow\underset{\alpha\in A}{\sum}\underset{i\in I}{\bigoplus}F^\alpha_i  =\underset{i\in I}{\bigoplus}\underset{\alpha\in A}{\sum}F^\alpha_i \longrightarrow \underset{\alpha\in A}{\sum}F^\alpha_{i_0}
\end{equation} The second morphism in \eqref{2.8ydo} is given by the canonical projection corresponding to $i_0\in I$. The interchange in the middle term in \eqref{2.8ydo} is justified by the fact that $\mathcal C$ satisfies (AB5).  As we vary over all $i_0\in I$, the compositions in \eqref{2.8ydo} give us a factorization
\begin{equation}
\sum_{\alpha\in A}F^\alpha
\longrightarrow \underset{i\in I}{\bigoplus}\underset{\alpha\in A}{\sum}F^\alpha_i \longrightarrow \underset{i\in I}{\prod} \underset{\alpha\in A}{\sum}F^\alpha_{i}
\end{equation}
This proves the result. 
\end{proof}

\begin{lem}\label{FLem2.5} Let $F$, $G\in End(\mathcal C)$. Suppose that $G$ preserves monomorphisms. Let $\{F^\alpha\}_{\alpha\in A}$ (resp. $\{G^\alpha\}_{\alpha\in A}$)
be a family of subfunctors of $F$ (resp. $G$). Then, $\sum_{\alpha\in A}G^\alpha F^\alpha$ is a subfunctor of $(\sum_{\alpha\in A}G^\alpha)(\sum_{\alpha\in A}F^\alpha)$. 
\end{lem}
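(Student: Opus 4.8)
The plan is to reduce the statement to Lemma \ref{FLem2.3} together with the fact that, in our well-powered category satisfying (AB5), for a fixed $H\in End(\mathcal C)$ the subfunctors of $H$ form a complete lattice $\mathrm{Sub}(H)$ in which the sum $\sum_{i}H^i$ of a family of subfunctors is their supremum (computed pointwise, as the image of $\bigoplus_i H^i(M)\to H(M)$). Write $H:=\bigl(\sum_{\alpha}G^\alpha\bigr)\circ\bigl(\sum_{\alpha}F^\alpha\bigr)$. Since $G$ preserves monomorphisms, each $G^\alpha F^\alpha$ is canonically a subfunctor of $GF$ (via $G^\alpha F^\alpha\hookrightarrow G\,F^\alpha\hookrightarrow GF$), hence so is $\sum_\alpha G^\alpha F^\alpha$; and Lemma \ref{FLem2.3}, applied to the chains $\sum_\alpha F^\alpha\hookrightarrow F\hookrightarrow F$ and $\sum_\alpha G^\alpha\hookrightarrow G\hookrightarrow G$, shows that $H\in\mathrm{Sub}(GF)$ as well. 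So all the objects in sight live in the single lattice $\mathrm{Sub}(GF)$, and what must be proved is the inequality $\sum_\alpha G^\alpha F^\alpha\le H$ in it.

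Next I would use that $\sum_\alpha G^\alpha F^\alpha=\sup_\alpha(G^\alpha F^\alpha)$ in $\mathrm{Sub}(GF)$, so that it is enough to prove $G^\alpha F^\alpha\le H$ in $\mathrm{Sub}(GF)$ for each fixed $\alpha\in A$. Applying Lemma \ref{FLem2.3} to the chains $F^\alpha\hookrightarrow\sum_\beta F^\beta\hookrightarrow F$ and $G^\alpha\hookrightarrow\sum_\beta G^\beta\hookrightarrow G$ yields a monomorphism $G^\alpha F^\alpha\hookrightarrow(\sum_\beta G^\beta)(\sum_\beta F^\beta)=H$. The crucial point is that this monomorphism is compatible with the canonical inclusions into $GF$ — that is, composing it with $H\hookrightarrow GF$ recovers the canonical inclusion $G^\alpha F^\alpha\hookrightarrow GF$ — since this is precisely what is needed to turn ``there is a monomorphism $G^\alpha F^\alpha\hookrightarrow H$'' into the honest inequality $G^\alpha F^\alpha\le H$ of subobjects of $GF$. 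This compatibility is delivered by the explicit construction in the proof of Lemma \ref{FLem2.3}: evaluating at any $M\in\mathcal C$, both composites are extracted from the same commuting web of monomorphisms into $GF(M)$ — the rows and square of diagram \eqref{2.6va} combined with naturality of the subfunctor inclusions among the $G$'s — and hence agree.

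Finally, taking the supremum over $\alpha\in A$ gives $\sum_\alpha G^\alpha F^\alpha=\sup_\alpha(G^\alpha F^\alpha)\le H=\bigl(\sum_\alpha G^\alpha\bigr)\bigl(\sum_\alpha F^\alpha\bigr)$ in $\mathrm{Sub}(GF)$, which is the assertion of the lemma. I expect the one genuinely delicate step to be the compatibility claim of the previous paragraph: merely producing \emph{some} monomorphism $G^\alpha F^\alpha\hookrightarrow H$ does not by itself give an inequality of subobjects of $GF$, so one really must trace through the construction in Lemma \ref{FLem2.3}. Everything else — the passage to $\mathrm{Sub}(GF)$, the identification of sums with suprema, and the final supremum — is formal, and is legitimate by the standing (AB5) and well-poweredness hypotheses.
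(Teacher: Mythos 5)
Your proof is correct and follows essentially the same route as the paper's: apply Lemma \ref{FLem2.3} to the chains $F^{\alpha}\hookrightarrow\sum_\beta F^\beta\hookrightarrow F$ and $G^{\alpha}\hookrightarrow\sum_\beta G^\beta\hookrightarrow G$ to see that each $G^{\alpha}F^{\alpha}$ is a subfunctor of $(\sum_\beta G^\beta)(\sum_\beta F^\beta)$, then sum over $\alpha$. The only difference is that you spell out the compatibility of the resulting monomorphisms with the canonical inclusions into $GF$ (needed to interpret the sum as a supremum in $\mathrm{Sub}(GF)$), a point the paper leaves implicit.
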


\begin{proof}
For each fixed $\alpha_0\in A$, we have subfunctors $F^{\alpha_0}\hookrightarrow (\sum_{\alpha\in A}F^\alpha)\hookrightarrow F$ and $G^{\alpha_0}\hookrightarrow (\sum_{\alpha\in A}G^\alpha)\hookrightarrow G$. Applying Lemma \ref{FLem2.3}, it follows that each $G^{\alpha_0}F^{\alpha_0}$ is a subfunctor of $(\sum_{\alpha\in A}G^\alpha)(\sum_{\alpha\in A}F^\alpha)$. The result is now clear by summing over all $\alpha_0\in A$. 
\end{proof}

We now consider a $\mathbb Z$-system $(\mathbb U=\{\mathbb U_n:\mathcal C\longrightarrow \mathcal C\}_{n\in \mathbb Z},\mathbb V=
\{\mathbb V_n:\mathcal C\longrightarrow\mathcal C\}_{n\in \mathbb Z})$   of adjoints on $\mathcal C$. Suppose that $(\mathbb U,\Theta)$ is an $A_\infty$-monad, given by a collection of natural transformations
\begin{equation}\Theta=\{\mbox{$\theta_k(\bar{n}):\mathbb U_{\bar{n}}=\mathbb U_{n_1}\circ ...\circ \mathbb U_{n_k}\longrightarrow \mathbb U_{n_1+...+n_k+(2-k)}=\mathbb U_{\Sigma \bar{n}+(2-k)}$ $\vert$ $k\geq 1$, $\bar{n}=(n_1,...,n_k)\in \mathbb Z^k$} \}
\end{equation} satisfying the condition in \eqref{2.1e}. If $(\mathbb U,\Theta)$ is even, setting $\delta_k(\bar{n})=\theta^R_k(\bar{n}^{op})$ as in the proof of Theorem \ref{T2.3}, we know that $(\mathbb V,\Delta)$ is an  $A_\infty$-comonad, where
\begin{equation}\label{2.16wf} \Delta=\{\mbox{$\delta_k(\bar{n}):\mathbb V_{\Sigma \bar{n}+(2-k)}= \mathbb V_{n_1+...+n_k+(2-k)}\longrightarrow \mathbb V_{n_1}\circ ...\circ \mathbb V_{n_k}=\mathbb V_{\bar{n}}$ $\vert$ $k\geq 1$, $\bar{n}=(n_1,...,n_k)\in \mathbb Z^k$} \}
\end{equation}
We now consider a family $\mathbb V^\alpha=\{\mathbb V^\alpha_n\hookrightarrow \mathbb V_n \}_{n\in \mathbb Z}$ of subfunctors satisfying the following conditions

\smallskip
(a) For any $k\geq 1$ and $\bar{n}=(n_1,...,n_k)\in \mathbb Z^k$, each of the  transformations $\delta_k(\bar{n})$ in \eqref{2.16wf} restricts to the corresponding subfunctors
\begin{equation}\label{2.17wf} \Delta^\alpha=\{\mbox{$\delta_k^\alpha(\bar{n}):\mathbb V_{\Sigma \bar{n}+(2-k)}^\alpha= \mathbb V_{n_1+...+n_k+(2-k)}^\alpha\longrightarrow \mathbb V_{n_1}^\alpha\circ ...\circ \mathbb V_{n_k}^\alpha=\mathbb V_{\bar{n}}^\alpha$ $\vert$ $k\geq 1$, $\bar{n}=(n_1,...,n_k)\in \mathbb Z^k$} \}
\end{equation} We note here that since each $\mathbb V_n$ is a right adjoint, it preserves monomorphisms. As such, it follows from Lemma \ref{FLem2.3} that each $\mathbb V_{\bar{n}}^\alpha=\mathbb V_{n_1}^\alpha\circ ...\circ \mathbb V_{n_k}^\alpha$ is a subfunctor of $\mathbb V_{\bar{n}}=\mathbb V_{n_1}\circ ...\circ \mathbb V_{n_k}$. 

\smallskip
(b) For each $k\geq 1$ and $T\in \mathbb Z$, the collection
\begin{equation}\label{2.18yj}
\left\{ \delta_k^\alpha(\bar{n}):\mathbb V^\alpha_{T+(2-k)}= \mathbb V^\alpha_{n_1+...+n_k+(2-k)}\longrightarrow  \mathbb V^\alpha_{n_1}\circ ...\circ \mathbb V^\alpha_{n_k}=  \mathbb V^\alpha_{\bar{n}}\right\}_{\bar{n}\in \mathbb Z(T,k)}
\end{equation}
is locally finite. 

\begin{lem}\label{FLem2.6}
$(\mathbb V^\alpha,\Delta^\alpha)$ is a locally finite $A_\infty$-comonad. 
\end{lem}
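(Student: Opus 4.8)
The plan is to check the two requirements in Definition \ref{D2.2} for the pair $(\mathbb V^\alpha,\Delta^\alpha)$: that the transformations $\delta^\alpha_k(\bar n)$ satisfy the $A_\infty$-comonad relation \eqref{2.1ezr}, and that the resulting comonad is locally finite. The second requirement is immediate: for fixed $k\geq 1$ and $T\in\mathbb Z$, hypothesis (b) asserts precisely that the induced morphism $\mathbb V^\alpha_{T+(2-k)}\longrightarrow\prod_{\bar n\in\mathbb Z(T,k)}\mathbb V^\alpha_{\bar n}$ factors through $\bigoplus_{\bar n\in\mathbb Z(T,k)}\mathbb V^\alpha_{\bar n}$, which is the definition of local finiteness for a comonad. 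So the substance of the proof lies in the $A_\infty$-comonad relations.

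For those, fix $\bar z\in\mathbb Z^N$ with $N\geq 1$. The key claim is that for each partition $\bar z=(\bar n,\bar n',\bar n'')$ with $|\bar n|=p$, $|\bar n'|=q$, $|\bar n''|=r$, the composite $(\mathbb V^\alpha_{\bar n}\ast\delta^\alpha_q(\bar n')\ast\mathbb V^\alpha_{\bar n''})\circ\delta^\alpha_{p+1+r}(\bar n,\Sigma\bar n'+(2-q),\bar n'')$, regarded as a transformation $\mathbb V^\alpha_{\Sigma\bar z+(3-N)}\longrightarrow\mathbb V^\alpha_{\bar z}$, is the restriction along the subfunctor inclusions of the corresponding $\mathbb V$-composite in \eqref{2.1ezr} for $(\mathbb V,\Delta)$. (Here one first notes, by iterating Lemma \ref{FLem2.3} and using that each $\mathbb V_n$, being a right adjoint, preserves monomorphisms, that $\mathbb V^\alpha_{\bar z}=\mathbb V^\alpha_{z_1}\circ\cdots\circ\mathbb V^\alpha_{z_N}$ is a subfunctor of $\mathbb V_{\bar z}$, so that restriction to $\mathbb V^\alpha_{\bar z}$ is meaningful.) Granting the claim, the relation for $(\mathbb V^\alpha,\Delta^\alpha)$ follows formally: the signed sum of the $\mathbb V$-composites over all partitions of $\bar z$ vanishes by \eqref{2.1ezr} for $(\mathbb V,\Delta)$; since for every object $X$ the inclusion $\mathbb V^\alpha_{\bar z}(X)\hookrightarrow\mathbb V_{\bar z}(X)$ is a monomorphism and the $\mathbb V^\alpha$-composites are the corestrictions of those $\mathbb V$-composites, the signed sum of the $\mathbb V^\alpha$-composites is also zero, which is exactly \eqref{2.1ezr} for $(\mathbb V^\alpha,\Delta^\alpha)$.

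It remains to prove the claim that each composite restricts, and this is the step I expect to require the most care. It splits into two pieces, joined by the elementary observation that restrictions of natural transformations compose to restrictions of the composite (stack the defining commutative squares). First, $\delta_{p+1+r}(\bar n,\Sigma\bar n'+(2-q),\bar n'')$ restricts to $\delta^\alpha_{p+1+r}(\bar n,\Sigma\bar n'+(2-q),\bar n'')$ by hypothesis (a). Second, the whiskered transformation $\mathbb V_{\bar n}\ast\delta_q(\bar n')\ast\mathbb V_{\bar n''}$ restricts to $\mathbb V^\alpha_{\bar n}\ast\delta^\alpha_q(\bar n')\ast\mathbb V^\alpha_{\bar n''}$: evaluated at an object $X$ these are $\mathbb V_{\bar n}(\delta_q(\bar n')_{\mathbb V_{\bar n''}(X)})$ and $\mathbb V^\alpha_{\bar n}(\delta^\alpha_q(\bar n')_{\mathbb V^\alpha_{\bar n''}(X)})$, and the required commutative diagram is assembled from hypothesis (a) applied at the object $\mathbb V^\alpha_{\bar n''}(X)$, naturality of $\delta_q(\bar n')$ applied to the monomorphism $\mathbb V^\alpha_{\bar n''}(X)\hookrightarrow\mathbb V_{\bar n''}(X)$, naturality of the subfunctor inclusion $\mathbb V^\alpha_{\bar n}\hookrightarrow\mathbb V_{\bar n}$, and the fact that $\mathbb V_{\bar n}$ preserves monomorphisms. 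The main obstacle is the attendant bookkeeping — repeatedly invoking Lemma \ref{FLem2.3} so that nested composites such as $\mathbb V^\alpha_{\bar n}\circ\mathbb V^\alpha_{\Sigma\bar n'+(2-q)}\circ\mathbb V^\alpha_{\bar n''}$ are recognized as subfunctors of their $\mathbb V$-counterparts, and checking that the intermediate objects of the two composed pieces match on the nose — though each individual diagram chase is routine.
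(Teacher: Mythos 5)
Your proof is correct and follows essentially the same route as the paper's: local finiteness is immediate from hypothesis (b), and the relation \eqref{2.1ezr} is inherited because each term restricts to the corresponding subfunctors (using that the $\mathbb V_n$, being right adjoints, preserve monomorphisms, together with Lemma \ref{FLem2.3}) and a zero sum remains zero after corestriction along a monomorphism. You merely spell out the whiskering/naturality bookkeeping that the paper leaves implicit.
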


\begin{proof}
By assumption, the natural transformations $\delta_k(\bar{n})$ restrict to transformations $\delta^\alpha_k(\bar{n})$. Since each $\mathbb V_n$ preserves monomorphisms, we see that all the terms appearing in \eqref{2.1ezr} restrict to corresponding subfunctors. As such, these transformations  $\delta^\alpha_k(\bar{n})$ satisfy the condition in \eqref{2.1ezr}. Hence, $(\mathbb V^\alpha,\Delta^\alpha)$ is an $A_\infty$-comonad. The fact that $(\mathbb V^\alpha,\Delta^\alpha)$ is locally finite follows directly from the assumption in
\eqref{2.18yj}.
\end{proof}

We now let $\{(\mathbb V^\alpha,\Delta^\alpha)\}_{\alpha\in A}$ denote the collection of such families of subfunctors. We note that this collection is non-empty since it contains the family of zero subfunctors. We now define 
\begin{equation}\label{219jw}
\mathbb V^{\oplus}=\left\{\mathbb V^{\oplus}_n:=\sum_{\alpha\in A}\mathbb V^\alpha_n\right\}_{n\in \mathbb Z}
\end{equation}

\begin{thm}\label{P2.8gb} The collection $\mathbb V^{\oplus}=\{\mathbb V^{\oplus}_n\}_{n\in\mathbb Z}$ is equipped with the structure of a locally finite  $A_\infty$-comonad.
\end{thm}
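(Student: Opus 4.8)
The plan is to prove that the family $\mathbb V^{\oplus}=\{\mathbb V^{\oplus}_n\}_{n\in\mathbb Z}$ is itself one of the families considered in conditions (a) and (b) above --- indeed the largest such family --- and then to apply Lemma \ref{FLem2.6} to it. As a preliminary I would observe that the collection $A$ of all families $\{\mathbb V^\alpha_n\hookrightarrow\mathbb V_n\}_{n\in\mathbb Z}$ satisfying (a) and (b) is genuinely a set, since $End(\mathcal C)$ is assumed well-powered and $\mathbb Z$ is a set, so the sums $\mathbb V^{\oplus}_n=\sum_{\alpha\in A}\mathbb V^\alpha_n$ of subfunctors of $\mathbb V_n$ are well defined; iterating Lemma \ref{FLem2.3}, and using that each $\mathbb V_n$, being a right adjoint, preserves monomorphisms, one sees that $\mathbb V^{\oplus}_{\bar n}=\mathbb V^{\oplus}_{n_1}\circ\cdots\circ\mathbb V^{\oplus}_{n_k}$ is a subfunctor of $\mathbb V_{\bar n}$ for every $\bar n\in\mathbb Z^k$. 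For each $k\geq 1$ and $\bar n\in\mathbb Z^k$ I would then define $\delta^{\oplus}_k(\bar n)$ to be the restriction of $\delta_k(\bar n):\mathbb V_{\Sigma\bar n+(2-k)}\longrightarrow\mathbb V_{\bar n}$ to the subfunctor $\mathbb V^{\oplus}_{\Sigma\bar n+(2-k)}=\sum_{\alpha\in A}\mathbb V^\alpha_{\Sigma\bar n+(2-k)}$.

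The key step is to check that this restriction has image inside $\mathbb V^{\oplus}_{\bar n}$, and this is where an iterated form of Lemma \ref{FLem2.5} is needed: for every $\bar n=(n_1,\dots,n_k)$, the subfunctor $\sum_{\alpha\in A}\mathbb V^\alpha_{\bar n}=\sum_{\alpha\in A}\mathbb V^\alpha_{n_1}\circ\cdots\circ\mathbb V^\alpha_{n_k}$ is a subfunctor of $\mathbb V^{\oplus}_{n_1}\circ\cdots\circ\mathbb V^{\oplus}_{n_k}=\mathbb V^{\oplus}_{\bar n}$. This I would prove by induction on $k$: the case $k=2$ is Lemma \ref{FLem2.5}, and for the inductive step one writes $\mathbb V^\alpha_{\bar n}=\mathbb V^\alpha_{n_1}\circ W^\alpha$ with $W^\alpha=\mathbb V^\alpha_{n_2}\circ\cdots\circ\mathbb V^\alpha_{n_k}$, applies Lemma \ref{FLem2.5} with the $\mathbb V^\alpha_{n_1}$ as outer subfunctors of the mono-preserving $\mathbb V_{n_1}$ to obtain $\sum_\alpha\mathbb V^\alpha_{n_1}\circ W^\alpha\hookrightarrow\mathbb V^{\oplus}_{n_1}\circ\bigl(\sum_\alpha W^\alpha\bigr)$, and then uses Lemma \ref{FLem2.3} together with the inductive hypothesis $\sum_\alpha W^\alpha\hookrightarrow\mathbb V^{\oplus}_{n_2}\circ\cdots\circ\mathbb V^{\oplus}_{n_k}$ to absorb the second factor. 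Granting this, since each $\delta^\alpha_k(\bar n)$ is the restriction of $\delta_k(\bar n)$ with image in $\mathbb V^\alpha_{\bar n}$, the restriction of $\delta_k(\bar n)$ to $\sum_\alpha\mathbb V^\alpha_{\Sigma\bar n+(2-k)}$ has image in $\sum_\alpha\mathbb V^\alpha_{\bar n}$, hence in $\mathbb V^{\oplus}_{\bar n}$; this verifies condition (a) for $\mathbb V^{\oplus}$ and produces $\Delta^{\oplus}=\{\delta^{\oplus}_k(\bar n)\}$.

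For condition (b) I would fix $k\geq 1$ and $T\in\mathbb Z$ and apply Lemma \ref{FLem2.4} with index set $I=\mathbb Z(T,k)$, ambient functors $F=\mathbb V_{T+(2-k)}$ and $F_{\bar n}=\mathbb V_{\bar n}$, morphisms $\delta_k(\bar n)$, subfunctors $F^\alpha=\mathbb V^\alpha_{T+(2-k)}$ and $F^\alpha_{\bar n}=\mathbb V^\alpha_{\bar n}$, and restricted morphisms $\eta^\alpha_{\bar n}=\delta^\alpha_k(\bar n)$: condition (b) for each individual family $\mathbb V^\alpha$ says precisely that $\{\delta^\alpha_k(\bar n)\}_{\bar n\in\mathbb Z(T,k)}$ is locally finite, so Lemma \ref{FLem2.4} yields that $\bigl\{\sum_\alpha\delta^\alpha_k(\bar n):\mathbb V^{\oplus}_{T+(2-k)}\longrightarrow\sum_\alpha\mathbb V^\alpha_{\bar n}\bigr\}_{\bar n}$ is locally finite; post-composing the resulting factorization with the inclusions $\sum_\alpha\mathbb V^\alpha_{\bar n}\hookrightarrow\mathbb V^{\oplus}_{\bar n}$ (compatibly over all $\bar n$, using that $\mathcal C$ satisfies (AB5), so coproducts of monomorphisms are monomorphisms) shows that $\{\delta^{\oplus}_k(\bar n)\}_{\bar n\in\mathbb Z(T,k)}$ is locally finite. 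Thus $\mathbb V^{\oplus}$ satisfies conditions (a) and (b), and Lemma \ref{FLem2.6}, applied to the family $\mathbb V^{\oplus}$, gives that $(\mathbb V^{\oplus},\Delta^{\oplus})$ is a locally finite $A_\infty$-comonad. I expect the main obstacle to be the iterated version of Lemma \ref{FLem2.5} in the second paragraph, where one must keep careful track of which composites of subfunctors embed into which; the remaining verifications are essentially bookkeeping that assembles Lemmas \ref{FLem2.3} through \ref{FLem2.6}.
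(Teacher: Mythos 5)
Your proposal is correct and follows essentially the same route as the paper: define $\delta^{\oplus}_k(\bar n)$ by restricting $\delta_k(\bar n)$ to the sum $\sum_{\alpha}\mathbb V^{\alpha}_{\Sigma\bar n+(2-k)}$, land in $\mathbb V^{\oplus}_{\bar n}$ via Lemma \ref{FLem2.5}, and get local finiteness from Lemma \ref{FLem2.4}. The only additions are your explicit induction establishing the iterated form of Lemma \ref{FLem2.5} (which the paper invokes implicitly) and your packaging of the conclusion as ``$\mathbb V^{\oplus}$ itself satisfies (a) and (b), so Lemma \ref{FLem2.6} applies,'' where the paper instead re-verifies the relations \eqref{2.1ezr} directly; both are cosmetic refinements of the same argument.
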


\begin{proof} For any $k\geq 1$ and $\bar{n}=(n_1,...,n_k)\in \mathbb Z^k$, we consider
\begin{equation}\label{2.20kh}
\delta_k^{\oplus}(\bar{n})=\sum_{\alpha\in A} \delta_k^\alpha(\bar{n}):\sum_{\alpha\in A}\mathbb V_{\Sigma \bar{n}+(2-k)}^\alpha \longrightarrow \sum_{\alpha\in A}\mathbb V_{n_1}^\alpha\circ ...\circ \mathbb V_{n_k}^\alpha\hookrightarrow \left(\sum_{\alpha\in A}\mathbb V_{n_1}^\alpha\right)\circ ...\circ \left(\sum_{\alpha\in A}\mathbb V_{n_k}^\alpha\right)
\end{equation} where the inclusion in \eqref{2.20kh} follows from Lemma \ref{FLem2.5}. Again since each $\mathbb V_n$ preserves monomorphisms, we see that all the transformations appearing in \eqref{2.1ezr} restrict to corresponding subfunctors. It follows that the transformations $\delta^{\oplus}_k(\bar{n}):\mathbb V_{\Sigma \bar{n}+(2-k)}^{\oplus}
\longrightarrow \mathbb V_{n_1}^{\oplus}\circ ...\circ \mathbb V_{n_k}^{\oplus}$ satisfy the condition in \eqref{2.1ezr}, making $\left(\mathbb V^{\oplus},\Delta^{\oplus}=
\{\mbox{$\delta^{\oplus}_k(\bar{n})$ $\vert$ $k\geq 1$, $\bar{n}\in \mathbb Z^k$} \}\right)$ an   $A_\infty$-comonad. 

\smallskip
Additionally, using Lemma \ref{FLem2.4}, we see that for each $k\geq 1$ and $T\in \mathbb Z$, we have a factorization
\begin{equation}\label{2.20sq}
\begin{tikzcd}[row sep = huge, column sep = 30pt]
\sum_{\alpha\in A}\mathbb V^\alpha_{T+(2-k)} \arrow[r] \arrow[dr] & \underset{\bar{n}\in \mathbb Z(T,k)}{\prod}  \sum_{\alpha\in A}\mathbb V_{n_1}^\alpha\circ ...\circ \mathbb V_{n_k}^\alpha\arrow[r] &  \underset{\bar{n}\in \mathbb Z(T,k)}{\prod} \left(\sum_{\alpha\in A}\mathbb V_{n_1}^\alpha\right)\circ ...\circ \left(\sum_{\alpha\in A}\mathbb V_{n_k}^\alpha\right)  \\
 &  \underset{\bar{n}\in \mathbb Z(T,k)}{\bigoplus}  \sum_{\alpha\in A}\mathbb V_{n_1}^\alpha\circ ...\circ \mathbb V_{n_k}^\alpha \arrow[r] \arrow[u] &   \underset{\bar{n}\in \mathbb Z(T,k)}{\bigoplus} \left(\sum_{\alpha\in A}\mathbb V_{n_1}^\alpha\right)\circ ...\circ \left(\sum_{\alpha\in A}\mathbb V_{n_k}^\alpha\right)\arrow[u]\\
\end{tikzcd}
\end{equation} From \eqref{2.20sq}, it is clear that $(\mathbb V^{\oplus},\Delta^{\oplus})$ is locally finite.

\end{proof}

\begin{rem}\emph{On the other hand, suppose that  $(\mathbb U=\{\mathbb U_n:\mathcal C\longrightarrow \mathcal C\}_{n\in \mathbb Z},\mathbb V=
\{\mathbb V_n:\mathcal C\longrightarrow\mathcal C\}_{n\in \mathbb Z})$ is a $\mathbb Z$-system of adjoints such that $\mathbb V$ carries the structure of an  
even $A_\infty$-monad. By Theorem \ref{T2.3}, we know that $\mathbb U=\{\mathbb U_n\}_{n\in \mathbb Z}$ is canonically equipped with an  
$A_\infty$-comonad structure. Additionally, if we suppose that each $\mathbb U_n$ preserves monomorphisms, we can construct by a process similar to  \eqref{219jw} a family
$\mathbb U^{\oplus}=\{\mathbb U^{\oplus}_n\hookrightarrow \mathbb U_n\}_{n\in \mathbb Z}$ of subfunctors equipped with a locally finite $A_\infty$-comonad structure.}
\end{rem}

\section{Eilenberg-Moore categories for $A_\infty$-monads and $A_\infty$-comonads}

We  now consider modules over $A_\infty$-monads and comodules over $A_\infty$-comonads.

\begin{defn}\label{D2.4}
Let $(\mathbb U,\Theta)$ be an $A_\infty$-monad over $\mathcal C$. A $(\mathbb U,\Theta)$-module $(M,\pi)$ consists of the following data:

\smallskip
(a) A collection $M=\{M_n\}_{n\in \mathbb Z}$ of objects of $\mathcal C$.

\smallskip
(b) A collection of morphisms
\begin{equation}
\pi=\{\mbox{$\pi_k(\bar{n},l):\mathbb U_{\bar{n}}M_{l}\longrightarrow M_{\Sigma\bar{n}+l+(2-k)}$ $\vert$ $k\geq 1$, $\bar{n}=(n_1,...,n_{k-1})
\in \mathbb Z^{k-1}$, $l\in \mathbb Z$ } \}
\end{equation}
 satisfying, for each $\bar{z}\in \mathbb Z^N$,  $N\geq 0$ and $l\in\mathbb Z$,
\begin{equation}\label{l2.11}
\begin{array}{l}
\underset{\bar{z}=(\bar{n},\bar{n}',\bar{n}'')}{\sum} (-1)^{p+qr+q\Sigma\bar{n}}\pi_{p+1+r}(\bar{n},\Sigma \bar{n}'+(2-q), \bar{n}'', l)\circ (\mathbb U_{\bar{n}}\ast \theta_q(\bar{n}'))(\mathbb U_{\bar{n}''}M_{l})\\
\qquad\qquad =-\underset{\bar{z}=(\bar{m},\bar{m}')}{\sum} (-1)^{a+b\Sigma\bar{m}}\pi_{a+1}(\bar{m},\Sigma\bar{m}'+l+(2-b))\circ \mathbb U_{\bar{m}} (\pi_b(\bar{m}',l))\\
\end{array}
\end{equation} On the left hand side of \eqref{l2.11}, we have $|\bar{n}|=p$, $|\bar{n}'|=q$ and $|\bar{n}''|=r-1$, while on the right hand side, we have
$|\bar{m}|=a$ and $|\bar{m}'|=b-1$. Accordingly, both sides of \eqref{l2.11} represent morphisms 
\begin{equation}
\mathbb U_{\bar{n}}\mathbb U_{\bar{n}'}\mathbb U_{\bar{n}''}M_{l}=\mathbb U_{\bar{m}}\mathbb U_{\bar{m}'}M_{l}\longrightarrow M_{\Sigma \bar{m}+\Sigma \bar{m}'+l+(2-N)}=M_{\Sigma \bar{n}+\Sigma \bar{n}'+\Sigma \bar{n}''+l+(2-N)}
\end{equation} A morphism $f:(M,\pi)\longrightarrow (M',\pi')$ of $\mathbb U$-modules consists of a collection $f=\{f_l:M_l\longrightarrow M'_l\}_{l\in \mathbb Z}$ such that 
$\pi_k'(\bar{n},l)\circ \mathbb U_{\bar{n}}(f_l)=f_{\Sigma\bar{n}+l+(2-k)}\circ \pi_k(\bar{n},l)$ for every $k\geq 1$, $\bar{n}=(n_1,...,n_{k-1})
\in \mathbb Z^{k-1}$, $l\in \mathbb Z$ . We denote the category of  $(\mathbb U,\Theta)$-modules by $EM_{(\mathbb U,\Theta)}$. 

\smallskip
We will say that a $\mathbb U$-module $(M,\pi)$ is even if $\pi_k(\bar{n},l)=0$ whenever $k$ is odd. The full subcategory of even $\mathbb U$-modules will be denoted by 
$EM^e_{(\mathbb U,\Theta)}$.
\end{defn}

\begin{defn}\label{D2.5}
Let $(\mathbb V,\Delta)$ be an $A_\infty$-comonad over $\mathcal C$. A $(\mathbb V,\Delta)$-comodule $(P,\rho)$ consists of the following data:

\smallskip
(a) A collection $P=\{P_n\}_{n\in \mathbb Z}$ of objects of $\mathcal C$.

\smallskip
(b) A collection of morphisms
\begin{equation}\label{2.25juk}
\rho=\{\mbox{$\rho_k(\bar{n},l):P_{\Sigma\bar{n}+l+(2-k)}\longrightarrow \mathbb V_{\bar{n}}P_{l} $ $\vert$ $k\geq 1$, $\bar{n}=(n_1,...,n_{k-1})
\in \mathbb Z^{k-1}$, $l\in\mathbb Z$ } \}
\end{equation}
 satisfying, for each $\bar{z}\in \mathbb Z^N$,  $N\geq 0$ and $l\in\mathbb Z$,
\begin{equation}\label{l2.14}
\begin{array}{l}
\underset{\bar{z}=(\bar{n},\bar{n}',\bar{n}'')}{\sum} (-1)^{pq+r+q\Sigma\bar{n}} (\mathbb V_{\bar{n}}\ast \delta_q(\bar{n}'))(\mathbb V_{\bar{n}''}P_{l})\circ \rho_{p+1+r}(\bar{n},\Sigma \bar{n}'+(2-q), \bar{n}'', l)\\
\qquad\qquad =-\underset{\bar{z}=(\bar{m},\bar{m}')}{\sum} (-1)^{ab+b\Sigma\bar{m}}\mathbb V_{\bar{m}} (\rho_b(\bar{m}',l))\circ \rho_{a+1}(\bar{m},\Sigma\bar{m}'+l+(2-b)) \\
\end{array}
\end{equation} On the left hand side of \eqref{l2.14}, we have $|\bar{n}|=p$, $|\bar{n}'|=q$ and $|\bar{n}''|=r-1$, while on the right hand side, we have
$|\bar{m}|=a$ and $|\bar{m}'|=b-1$. Accordingly, both sides of \eqref{l2.14} represent morphisms 
\begin{equation}
P_{\Sigma \bar{n}+\Sigma \bar{n}'+\Sigma \bar{n}''+l+(2-N)}= P_{\Sigma \bar{m}+\Sigma \bar{m}'+l+(2-N)}\longrightarrow \mathbb V_{\bar{m}}\mathbb V_{\bar{m}'}P_{l}=\mathbb V_{\bar{n}}\mathbb V_{\bar{n}'}\mathbb V_{\bar{n}''}P_{l}
\end{equation}  A morphism $g:(P,\rho)\longrightarrow (P',\rho')$ of $\mathbb V$-comodules consists of a collection $g=\{g_l:P_l\longrightarrow P'_l\}_{l\in \mathbb Z}$ such that 
$\rho_k'(\bar{n},l)\circ g_{\Sigma\bar{n}+l+(2-k)}=\mathbb V_{\bar{n}}(g_l)\circ \rho_k(\bar{n},l)$ for every $k\geq 1$, $\bar{n}=(n_1,...,n_{k-1})
\in \mathbb Z^{k-1}$, $l\in \mathbb Z$ . We denote the category of  $(\mathbb V,\Delta)$-comodules by $EM^{(\mathbb V,\Delta)}$. 

\smallskip
We will say that a $\mathbb V$-comodule $(P,\rho)$ is even if $\rho_k(\bar{n},l)=0$ whenever $k$ is odd. The full subcategory of even $\mathbb V$-comodules will be denoted by
$EM_e^{(\mathbb V,\Delta)}$. 
\end{defn}

\begin{thm}\label{P2.6} (a) Let $(\mathbb U,\Theta)$ be an $A_\infty$-monad over $\mathcal C$. Then, for any object $M\in \mathcal C$, the collection $\{\mathbb U_nM\}_{n\in
\mathbb Z}$ can be equipped with a canonical $(\mathbb U,\Theta)$-module structure.

\smallskip
(b) Let $(\mathbb V,\Delta)$ be an $A_\infty$-comonad over $\mathcal C$. Then, for any object $M\in \mathcal C$, the collection $\{\mathbb V_nM\}_{n\in
\mathbb Z}$ can be equipped with a canonical $(\mathbb V,\Delta)$-comodule structure.

\end{thm}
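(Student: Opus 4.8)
The plan is to construct, for an $A_\infty$-monad $(\mathbb U,\Theta)$ and a fixed object $M\in\mathcal C$, the ``free'' module on $M$ by setting $F(M)_n:=\mathbb U_nM$ and defining the structure maps from the multiplication $\Theta$ itself; part (b) will then follow by formally dualizing every arrow. For (a), the natural guess for the structure maps is
\begin{equation}\label{eqpropstrmap}
\pi_k(\bar n,l):=\theta_k(n_1,\dots,n_{k-1},l)_M:\mathbb U_{\bar n}\mathbb U_lM=\mathbb U_{(\bar n,l)}M\longrightarrow \mathbb U_{\Sigma\bar n+l+(2-k)}M,
\end{equation}
for $\bar n=(n_1,\dots,n_{k-1})\in\mathbb Z^{k-1}$ and $l\in\mathbb Z$; here I am simply appending $l$ to the tuple $\bar n$ and evaluating the component of $\Theta$ at $M$. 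Note the index bookkeeping matches: $\theta_k$ is indexed by a $k$-tuple, and $(\bar n,l)$ is indeed a $k$-tuple with sum $\Sigma\bar n+l$, landing in $\mathbb U_{\Sigma\bar n+l+(2-k)}$, exactly the target required by Definition \ref{D2.4}(b).

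The key step is to verify the module axiom \eqref{l2.11}. The idea is that \eqref{l2.11}, after substituting \eqref{eqpropstrmap}, is nothing but the $A_\infty$-monad relation \eqref{2.1e} for $(\mathbb U,\Theta)$ evaluated at the object $M$, with the last slot of every tuple occupied by (a shift of) $l$. Concretely, I would take the relation \eqref{2.1e} for a tuple $\bar z'=(\bar z,l)\in\mathbb Z^{N+1}$ of length $N+1$ (where $\bar z\in\mathbb Z^N$ is the tuple in \eqref{l2.11}), evaluate the resulting identity of natural transformations at $M$, and sort the partitions of $\bar z'=(\bar n,\bar n',\bar n'')$ into two classes: those in which the final entry $l$ lies in the third block $\bar n''$, and those in which $\bar n''$ is empty so that $l$ is the last entry of $\bar n'$ (the inner $\theta_q$). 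Reading off \eqref{2.1e} on the first class reproduces, term by term, the left-hand sum of \eqref{l2.11} (with $|\bar n''|=r-1$ accounting for the dropped $l$), while the second class reproduces the right-hand sum (with $\bar m=\bar n$, $\bar m'=\bar n'$ minus its last entry, $b=q$, $a=p$); one checks the sign exponents $p+qr+q\Sigma\bar n$ and $a+b\Sigma\bar m$ against $p+q r+q\Sigma\bar n$ in \eqref{2.1e}, noting that the shift $(2-q)$ in the inner argument is absorbed correctly. The signs will require care but should match on the nose given that \eqref{l2.11} was evidently written precisely so that this free-module construction works.

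Finally I would observe naturality and functoriality: each $\theta_k(\bar n)$ being a natural transformation means \eqref{eqpropstrmap} is natural in $M$, so a morphism $M\to M'$ in $\mathcal C$ induces a morphism of $(\mathbb U,\Theta)$-modules $\{\mathbb U_nM\}\to\{\mathbb U_nM'\}$ in the sense of Definition \ref{D2.4}; this gives the ``canonical'' functor $\mathcal C\to EM_{(\mathbb U,\Theta)}$ alluded to in the introduction, though the statement only asks for the object-level assertion. Part (b) is obtained by the same computation with all arrows reversed: set $G(M)_n:=\mathbb V_nM$ and $\rho_k(\bar n,l):=\delta_k(n_1,\dots,n_{k-1},l)_M:\mathbb V_{\Sigma\bar n+l+(2-k)}M\to\mathbb V_{\bar n}\mathbb V_lM$, and check \eqref{l2.14} by evaluating the $A_\infty$-comonad relation \eqref{2.1ezr} at $M$ on tuples of length $N+1$ ending in $l$; the sign conventions $pq+r+q\Sigma\bar n$ in \eqref{l2.14} versus those in \eqref{2.1ezr} have been arranged to be compatible in the same way. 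The main obstacle, as usual in $A_\infty$-bookkeeping, is purely the sign and partition-indexing verification in \eqref{l2.11}/\eqref{l2.14}; there is no conceptual difficulty, since the module relations are by design the monad/comonad relations ``with one leg nailed to $M$.''
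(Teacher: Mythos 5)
Your proposal is correct and follows essentially the same route as the paper: the paper likewise sets $Q_n:=\mathbb U_nM$, defines $\pi_k(\bar n,l):=\theta_k(\bar n,l)(M)$, and observes that the module relation \eqref{l2.11} is the instance of the monad relation \eqref{2.1e} for the extended tuple $(\bar z,l)$ evaluated at $M$. Your explicit sorting of the partitions of $(\bar z,l)$ according to whether $l$ falls in the inner block or the tail is exactly the bookkeeping the paper leaves implicit.
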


\begin{proof} We prove only (a) because (b) is similar. For the sake of convenience, we set $Q_n:=\mathbb U_nM$ for each $n\in\mathbb Z$.  For $\bar{n}=(n_1,...,n_{k-1})
\in \mathbb Z^{k-1}$, $k\geq 1$, $l\in\mathbb Z$, we set
\begin{equation}\label{216cf}
\begin{CD}
\pi_k(\bar{n},l):\mathbb U_{\bar{n}}Q_{l}=\mathbb U_{\bar{n}}\mathbb U_lM@>\theta_k(\bar{n},l)(M)>>\mathbb U_{\Sigma\bar{n}+l+(2-k)}M=Q_{\Sigma\bar{n}+l+(2-k)}
\end{CD}
\end{equation} We note that the condition in \eqref{2.1e} for $(\mathbb U,\Theta)$ to be an $A_\infty$-monad implies in particular that for each $\bar{z}\in \mathbb Z^N$,  $N\geq 0$ and $l\in\mathbb Z$, we must have
\begin{equation}\label{l2.11dt}
\begin{array}{l}
\underset{\bar{z}=(\bar{n},\bar{n}',\bar{n}'')}{\sum} (-1)^{p+qr+q\Sigma\bar{n}}\theta_{p+1+r}(\bar{n},\Sigma \bar{n}'+(2-q), \bar{n}'', l)(M)\circ (\mathbb U_{\bar{n}}\ast \theta_q(\bar{n}'))(\mathbb U_{\bar{n}''}\mathbb U_{l}M)\\
\qquad\qquad =-\underset{\bar{z}=(\bar{m},\bar{m}')}{\sum} (-1)^{a+b\Sigma\bar{m}}\theta_{a+1}(\bar{m},\Sigma\bar{m}'+l+(2-b))(M)\circ \mathbb U_{\bar{m}} (\theta_b(\bar{m}',l)(M))\\
\end{array}
\end{equation} On the left hand side of \eqref{l2.11dt}, we have $|\bar{n}|=p$, $|\bar{n}'|=q$ and $|\bar{n}''|=r-1$, while on the right hand side, we have
$|\bar{m}|=a$ and $|\bar{m}'|=b-1$. Putting $\pi_k(\bar{n},l)=\theta_k(\bar{n},l)(M)$ and $Q_n=\mathbb U_nM$, it follows from \eqref{l2.11dt} that
\begin{equation}\label{l2.18dt}
\begin{array}{l}
\underset{\bar{z}=(\bar{n},\bar{n}',\bar{n}'')}{\sum} (-1)^{p+qr+q\Sigma\bar{n}}\pi_{p+1+r}(\bar{n},\Sigma \bar{n}'+(2-q), \bar{n}'', l)\circ (\mathbb U_{\bar{n}}\ast \theta_q(\bar{n}'))(\mathbb U_{\bar{n}''}Q_l)\\
\qquad\qquad =-\underset{\bar{z}=(\bar{m},\bar{m}')}{\sum} (-1)^{a+b\Sigma\bar{m}}\pi_{a+1}(\bar{m},\Sigma\bar{m}'+l+(2-b))\circ \mathbb U_{\bar{m}} (\pi_b(\bar{m}',l))\\
\end{array}
\end{equation} This proves the result.

\end{proof}

Let $(\mathbb U,\Theta)$ be an $A_\infty$-monad over $\mathcal C$.  We now let $\mathcal C^{\mathbb Z}$ denote the category of $\mathbb Z$-graded objects over $\mathcal C$. Then, for any $M=\{M_l\}_{l\in \mathbb Z}$ in $\mathcal C^{\mathbb Z}$, we set
\begin{equation}
\hat{\mathbb U}M=\left\{(\hat{\mathbb U}M)_l:=\underset{x+y=l}{\bigoplus}\textrm{ }\mathbb U_xM_y\right\}_{l\in \mathbb Z}\qquad 
\end{equation} In particular,  for $M\in \mathcal C$, we denote by $\hat{\mathbb U}M$ the object $\{\mathbb U_nM\}_{n\in \mathbb Z} \in EM_{(\mathbb U,\Theta)}$ determined by Proposition \ref{P2.6}. 

\begin{thm}\label{L3.4nu} Let $(\mathbb U,\Theta)$ be an $A_\infty$-monad over $\mathcal C$ such that each of the functors $\{\mathbb U_n\}_{n\in \mathbb Z}$ preserves direct sums. Then, $\hat{\mathbb U}$ determines a functor
\begin{equation}
\hat{\mathbb U}:\mathcal C^{\mathbb Z}\longrightarrow EM_{(\mathbb U,\Theta)}
\end{equation}
\end{thm}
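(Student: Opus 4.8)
The plan is to realize $\hat{\mathbb U}M$ as a direct sum of shifts of the canonical modules produced by Proposition \ref{P2.6}, so that the entire verification of the module axioms is reduced to that proposition. Two elementary closure properties of $EM_{(\mathbb U,\Theta)}$ underlie this. First, the defining relation \eqref{l2.11} is invariant under translating the degree index: if $(N,\pi)\in EM_{(\mathbb U,\Theta)}$ and $d\in\mathbb Z$, then the graded object $N[d]$ with $N[d]_l:=N_{l-d}$ and structure maps $\pi^{[d]}_k(\bar n,l):=\pi_k(\bar n,l-d)$ again satisfies \eqref{l2.11}, since the relation at parameter $l$ for $N[d]$ is literally the relation at parameter $l-d$ for $N$; thus $N[d]\in EM_{(\mathbb U,\Theta)}$. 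Second, since each $\mathbb U_n$, hence each composite $\mathbb U_{\bar n}$, preserves direct sums and $\mathcal C$ has all small coproducts (it satisfies (AB5)), any family $\{(N^{(i)},\pi^{(i)})\}_{i\in I}$ of $(\mathbb U,\Theta)$-modules gives rise to a $(\mathbb U,\Theta)$-module with underlying graded object $\{\bigoplus_i N^{(i)}_l\}_l$ and structure maps $\bigoplus_i\pi^{(i)}_k(\bar n,l)$: the identity \eqref{l2.11} holds because it holds in each component.

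With these in hand I would, for $M=\{M_l\}_{l\in\mathbb Z}\in\mathcal C^{\mathbb Z}$, set $\hat{\mathbb U}M:=\bigoplus_{y\in\mathbb Z}\hat{\mathbb U}(M_y)[y]$, where $\hat{\mathbb U}(M_y)=\{\mathbb U_nM_y\}_n$ is the canonical module attached to $M_y\in\mathcal C$ by Proposition \ref{P2.6}. By the previous paragraph this lies in $EM_{(\mathbb U,\Theta)}$, and its degree-$l$ part is $\bigoplus_{y}\mathbb U_{l-y}M_y=\bigoplus_{x+y=l}\mathbb U_xM_y$, as stated. Unwinding the construction, and using that $\mathbb U_{\bar n}$ preserves the coproduct $\mathbb U_{\bar n}(\hat{\mathbb U}M)_l=\bigoplus_{x+y=l}\mathbb U_{\bar n}\mathbb U_xM_y$, the resulting structure map $\pi_k(\bar n,l)$ is the one sending the summand $\mathbb U_{\bar n}\mathbb U_xM_y=\mathbb U_{(\bar n,x)}M_y$ to the summand $\mathbb U_{\Sigma\bar n+x+(2-k)}M_y$ of $(\hat{\mathbb U}M)_{\Sigma\bar n+l+(2-k)}$ via $\theta_k(\bar n,x)(M_y)$. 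One could equally take this last formula as a direct definition of the structure maps and check \eqref{l2.11} summand-by-summand; on the $(x,y)$-summand it collapses to exactly the consequence \eqref{l2.11dt} of the $A_\infty$-monad identity that was extracted in the proof of Proposition \ref{P2.6}, now applied to $M_y$. I prefer routing through the two closure properties, as it isolates this bookkeeping once and for all. Taking $M\in\mathcal C$ concentrated in degree $0$ recovers $\{\mathbb U_nM\}_n$, so the notation is consistent with the earlier usage.

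For functoriality, a morphism $f=\{f_l:M_l\to M'_l\}$ of $\mathcal C^{\mathbb Z}$ induces $\hat{\mathbb U}(f):=\bigoplus_{y}\hat{\mathbb U}(f_y)[y]$, where $\hat{\mathbb U}(f_y)=\{\mathbb U_n(f_y)\}_n:\hat{\mathbb U}(M_y)\to\hat{\mathbb U}(M'_y)$. That each $\hat{\mathbb U}(f_y)$ is a morphism in $EM_{(\mathbb U,\Theta)}$ is precisely the naturality of the transformations $\theta_k(\bar n,l)$ evaluated at $f_y$; shifting and forming direct sums preserve morphisms of modules, so $\hat{\mathbb U}(f)$ is a morphism in $EM_{(\mathbb U,\Theta)}$ with underlying maps $(\hat{\mathbb U}f)_l=\bigoplus_{x+y=l}\mathbb U_x(f_y)$. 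Compatibility with identities and composition is immediate, both being computed componentwise from the functoriality of the $\mathbb U_n$. This yields the desired functor $\hat{\mathbb U}:\mathcal C^{\mathbb Z}\longrightarrow EM_{(\mathbb U,\Theta)}$.

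I expect the only delicate points to be the interchange of the functors $\mathbb U_{\bar n}$ with the infinite coproduct $\bigoplus_{y\in\mathbb Z}$ --- exactly where the hypothesis that each $\mathbb U_n$ preserves direct sums (together with (AB5) on $\mathcal C$) is used, and without which $\hat{\mathbb U}M$ need not even be well-defined as written --- and the index bookkeeping identifying the $(x,y)$-summand of $\mathbb U_{\bar n}(\hat{\mathbb U}M)_l$ with $\mathbb U_{(\bar n,x)}M_y$ and tracking the target summand of $\theta_k(\bar n,x)(M_y)$. I do not anticipate any substantive obstacle beyond carrying this bookkeeping carefully, since all the higher-associativity content is already contained in Proposition \ref{P2.6}.
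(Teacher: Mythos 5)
Your proposal is correct and takes essentially the same route as the paper: the structure maps you obtain, namely $\bigoplus_{x+y=l}\theta_k(\bar n,x)(M_y)$ on $(\hat{\mathbb U}M)_l=\bigoplus_{x+y=l}\mathbb U_xM_y$, are exactly those of the paper's proof, and the verification of \eqref{l2.11} reduces in both cases to the componentwise identity \eqref{l2.11dt} extracted from the $A_\infty$-monad axiom, using that each $\mathbb U_{\bar n}$ preserves direct sums. Your packaging via the two closure lemmas (degree shift and direct sums of modules) and your explicit treatment of morphisms are harmless refinements of the same argument rather than a different proof.
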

\begin{proof}
For the sake of convenience, we set $Q_l:=(\hat{\mathbb U}M)_l$ for each $l\in \mathbb Z$. For  $\bar{n}=(n_1,...,n_{k-1})
\in \mathbb Z^{k-1}$, $k\geq 1$, $l\in\mathbb Z$, we define $\pi_k(\bar{n},l):\mathbb U_{\bar{n}}Q_l\longrightarrow Q_{\Sigma \bar{n}+l+(2-k)}$ by setting 
\begin{equation}\label{312cfr}
\begin{CD}
\mathbb U_{\bar{n}}Q_{l}=\mathbb U_{\bar{n}}\left(\underset{x+y=l}{\bigoplus}\mathbb U_xM_y\right)=\left(\underset{x+y=l}{\bigoplus}\mathbb U_{\bar{n}}\mathbb U_xM_y\right)@>\underset{x+y=l}{\bigoplus}\theta_k(\bar{n},x)(M_y)>=\pi_k(\bar{n},l)>\left(\underset{x+y=l}{\bigoplus}\mathbb U_{\Sigma\bar{n}+x+(2-k)}M_y\right)=Q_{\Sigma\bar{n}+l+(2-k)}
\end{CD}
\end{equation} In \eqref{312cfr} we have used the fact that each $\mathbb U_n$, and hence any $\mathbb U_{\bar{n}}$, preserves direct sums. As in \eqref{l2.11dt}, we note that the condition in \eqref{2.1e} for $(\mathbb U,\Theta)$ to be an $A_\infty$-monad implies in particular that for each $\bar{z}\in \mathbb Z^N$,  $N\geq 0$ and $x,y\in\mathbb Z$, we must have
\begin{equation}\label{l2.11dtc}
\begin{array}{l}
\underset{\bar{z}=(\bar{n},\bar{n}',\bar{n}'')}{\sum} (-1)^{p+qr+q\Sigma\bar{n}}\theta_{p+1+r}(\bar{n},\Sigma \bar{n}'+(2-q), \bar{n}'', x)(M_y)\circ (\mathbb U_{\bar{n}}\ast \theta_q(\bar{n}'))(\mathbb U_{\bar{n}''}\mathbb U_{x}M_y)\\
\qquad\qquad =-\underset{\bar{z}=(\bar{m},\bar{m}')}{\sum} (-1)^{a+b\Sigma\bar{m}}\theta_{a+1}(\bar{m},\Sigma\bar{m}'+x+(2-b))(M_y)\circ \mathbb U_{\bar{m}} (\theta_b(\bar{m}',x)(M_y))\\
\end{array}
\end{equation} Applying the definition of $\pi_k(\bar{n},l)$ in \eqref{312cfr} and taking the direct sum of the equalities in \eqref{l2.11dtc} over all $x$, $y\in \mathbb Z$ such that $l=x
+y$, we obtain
\begin{equation}\label{l2.11brt}
\begin{array}{l}
\underset{\bar{z}=(\bar{n},\bar{n}',\bar{n}'')}{\sum} (-1)^{p+qr+q\Sigma\bar{n}}\pi_{p+1+r}(\bar{n},\Sigma \bar{n}'+(2-q), \bar{n}'', l)\circ (\mathbb U_{\bar{n}}\ast \theta_q(\bar{n}'))(\mathbb U_{\bar{n}''}Q_l)\\
\qquad\qquad =-\underset{\bar{z}=(\bar{m},\bar{m}')}{\sum} (-1)^{a+b\Sigma\bar{m}}\pi_{a+1}(\bar{m},\Sigma\bar{m}'+l+(2-b))\circ \mathbb U_{\bar{m}} (\pi_b(\bar{m}',l))\\
\end{array}
\end{equation} It is now clear that we have a functor $\hat{\mathbb U}:\mathcal C^{\mathbb Z}\longrightarrow EM_{(\mathbb U,\Theta)}$. 
\end{proof}

We mention here the following two simple facts on the formalism of adjoint functors, which we will use in the proof of the next result.

\smallskip
(1) Let $(U_1,V_1)$ and $(U_2,V_2)$ be pairs of adjoint endofunctors on a category $\mathcal A$. We consider a transformation $\theta:U_1\longrightarrow U_2$ and the corresponding transformation $\theta^R:V_2\longrightarrow V_1$ between right adjoints. Let $X$, $Y\in \mathcal A$ and let $f:U_2X\longrightarrow Y$ be a morphism.  Then, we have commutative diagrams
\begin{equation}\label{rule1}
\begin{tikzcd}[row sep=2.5em, column sep = 4em]
 & U_2X \arrow[dr,"f"] \\
U_1X \arrow[ur,"\theta X"] \arrow[rr,"(f\circ \theta X)"] && Y
\end{tikzcd} \qquad \Rightarrow \qquad \begin{tikzcd}[row sep=2.5em, column sep = 4em]
 & V_2Y\arrow[dr,"\theta^RY"] \\
X \arrow[ur,"f^R"] \arrow[rr,"(f\circ \theta X)^R"] && V_1Y
\end{tikzcd}
\end{equation}

\smallskip
(2) Let $(U_1,V_1)$ and $(U_2,V_2)$ be pairs of adjoint endofunctors on a category $\mathcal A$. We consider objects $X$, $Y$, $Z\in \mathcal A$ as well as morphisms $f:U_1X\longrightarrow Y$ and $g:U_2Y\longrightarrow Z$. Then, we have commutative diagrams
\begin{equation}\label{rule2}
\begin{tikzcd}[row sep=2.5em, column sep = 4em]
 & U_2Y \arrow[dr,"g"] \\
U_2U_1X \arrow[ur,"U_2f"] \arrow[rr,"(g\circ U_2f)"] && Z
\end{tikzcd} \qquad \Rightarrow \qquad \begin{tikzcd}[row sep=2.5em, column sep = 4em]
 & V_1Y\arrow[dr,"V_1g^R"] \\
X \arrow[ur,"f^R"] \arrow[rr,"(g\circ U_2f)^R"] && V_1V_2Z
\end{tikzcd}
\end{equation}

\begin{thm}\label{P2.7} Let $(\mathbb U=\{\mathbb U_n:\mathcal C\longrightarrow \mathcal C\}_{n\geq 0},\mathbb V=
\{\mathbb V_n:\mathcal C\longrightarrow\mathcal C\}_{n\geq 0})$ be a $\mathbb Z$-system of adjoints on $\mathcal C$. Let
$(\mathbb U,\Theta)$ be an even  $A_\infty$-monad and $(\mathbb V,\Delta)$ be the corresponding $A_\infty$-comonad. Then, the categories $EM_{(\mathbb U,\Theta)}^e$ and $EM^{(\mathbb V,\Delta)}_e $ are isomorphic. 

\end{thm}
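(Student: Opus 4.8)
The plan is to construct mutually inverse functors $\Phi\colon EM^e_{(\mathbb U,\Theta)}\to EM^{(\mathbb V,\Delta)}_e$ and $\Psi$ in the opposite direction by transposing all the structure maps through the $\mathbb Z$-system of adjoints, in direct analogy with the passage $\theta_k(\bar n)\mapsto\delta_k(\bar n)=\theta^R_k(\bar n^{op})$ carried out in the proof of Theorem \ref{T2.3}. Starting from an even $(\mathbb U,\Theta)$-module $(M,\pi)$, I would set $P_n:=M_{-n}$. Since the right adjoint of $\mathbb U_{\bar n}=\mathbb U_{n_1}\circ\cdots\circ\mathbb U_{n_{k-1}}$ is $\mathbb V_{\bar n^{op}}=\mathbb V_{n_{k-1}}\circ\cdots\circ\mathbb V_{n_1}$, each $\pi_k(\bar n,l)\colon\mathbb U_{\bar n}M_l\to M_{\Sigma\bar n+l+(2-k)}$ has a transpose $\pi_k(\bar n,l)^R\colon M_l\to\mathbb V_{\bar n^{op}}M_{\Sigma\bar n+l+(2-k)}$. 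A check of degrees shows that, with $P_n=M_{-n}$, the map $\pi_k(\bar n^{op},-\Sigma\bar n-l-(2-k))^R$ has source $P_{\Sigma\bar n+l+(2-k)}$ and target $\mathbb V_{\bar n}P_l$, so I would define $\rho_k(\bar n,l)$ to be this transpose, up to a sign fixed below. Because $(M,\pi)$ is even we have $\pi_k=0$ for odd $k$, hence $\rho_k=0$ for odd $k$, so $(P,\rho)$ is even as well; the bijection $\bar n\leftrightarrow\bar n^{op}$ together with the grading flip $n\leftrightarrow-n$ is precisely what makes the shift indices on the two sides of each defining equation agree.

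The central step is to verify that $(P,\rho)$ satisfies the $A_\infty$-comodule relations \eqref{l2.14}. I would apply $(-)^R$ termwise to the module relation \eqref{l2.11}. Each left-hand term $\pi_{p+1+r}(\ldots)\circ(\mathbb U_{\bar n}\ast\theta_q(\bar n'))(\mathbb U_{\bar n''}M_l)$ is of the form covered by rule \eqref{rule1}, applied with the whiskered transformation $\mathbb U_{\bar n}\ast\theta_q(\bar n')\ast\mathbb U_{\bar n''}$; transposing it and using $(\mathbb U_{\bar n}\ast\theta_q(\bar n')\ast\mathbb U_{\bar n''})^R=\mathbb V_{\bar n''^{op}}\ast\theta^R_q(\bar n')\ast\mathbb V_{\bar n^{op}}$ together with $\theta^R_q(\bar n')=\delta_q(\bar n'^{op})$ yields a term $(\mathbb V_{\bar n''^{op}}\ast\delta_q(\bar n'^{op})\ast\mathbb V_{\bar n^{op}})(\ldots)\circ\pi_{p+1+r}(\ldots)^R$. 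Each right-hand term $\pi_{a+1}(\ldots)\circ\mathbb U_{\bar m}(\pi_b(\bar m',l))$ is of the form covered by rule \eqref{rule2}, and transposes to $\mathbb V_{\bar m'^{op}}(\pi_{a+1}(\ldots)^R)\circ\pi_b(\bar m',l)^R$. Then, substituting the definition of $\rho$, relabelling each partition $\bar z=(\bar n,\bar n',\bar n'')$ by its reverse $\bar z^{op}=(\bar n''^{op},\bar n'^{op},\bar n^{op})$, and collecting the sign changes produced by this relabelling — exactly as \eqref{2.8yh} and \eqref{2.9yh} are obtained from \eqref{2.1e}, and using that the middle block $\bar n'$ has even length so that $(-1)^{pq}=(-1)^{q\Sigma\bar n}=1$ — the transposed identity becomes precisely \eqref{l2.14}. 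I would choose the sign in the definition of $\rho_k(\bar n,l)$ so as to absorb the leftover discrepancy coming from the partition-reversal.

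For morphisms, a morphism $f=\{f_l\colon M_l\to M'_l\}$ of even $\mathbb U$-modules satisfies $\pi'_k(\bar n,l)\circ\mathbb U_{\bar n}(f_l)=f_{\Sigma\bar n+l+(2-k)}\circ\pi_k(\bar n,l)$; applying $(-)^R$ and using naturality of the adjunction in both variables turns this into $\pi'_k(\bar n,l)^R\circ f_l=\mathbb V_{\bar n^{op}}(f_{\Sigma\bar n+l+(2-k)})\circ\pi_k(\bar n,l)^R$, which after the same reindexing is exactly the condition for $g:=\{g_n:=f_{-n}\}$ to be a morphism $(P,\rho)\to(P',\rho')$ of $\mathbb V$-comodules (the signs in $\rho$ and $\rho'$ being equal, they cancel). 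This gives the functor $\Phi$. Running the identical construction with $(-)^L$ in place of $(-)^R$ — using that $\mathbb U_{\bar n^{op}}$ is left adjoint to $\mathbb V_{\bar n}$ and that $\delta_k(\bar n)=\theta^R_k(\bar n^{op})$, equivalently $\theta_k(\bar n)=\delta^L_k(\bar n^{op})$ — gives a functor $\Psi\colon EM^{(\mathbb V,\Delta)}_e\to EM^e_{(\mathbb U,\Theta)}$. Since $(-)^R$ and $(-)^L$ are mutually inverse, $(\bar n^{op})^{op}=\bar n$ and $-(-n)=n$, the composites $\Psi\circ\Phi$ and $\Phi\circ\Psi$ are the identity functors, so $\Phi$ is an isomorphism of categories.

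The step I expect to be the main obstacle is the verification of \eqref{l2.14}: conceptually it is only the termwise transpose of \eqref{l2.11}, but getting the signs right takes care, since the partition-reversal $\bar z\mapsto\bar z^{op}$ permutes the three blocks and reverses each of them, and the sign in the definition of $\rho_k$ must be calibrated to cancel the resulting factors (the analogue of the factor $(-1)^p$ appearing in \eqref{2.8yh}). I would pin this down by first matching the two relations for small $N$ (say $N=1$ and $N=2$), reading off the correct sign convention for $\rho_k$ there, and then checking that the general identity follows by the same bookkeeping as in Theorem \ref{T2.3}.
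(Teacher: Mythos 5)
Your proposal follows essentially the same route as the paper's proof: set $P_n=M_{-n}$, define $\rho_k(\bar n,l)$ as the adjoint transpose $\pi_k^R(\bar n^{op},-\Sigma\bar n-l-(2-k))$, and verify \eqref{l2.14} by transposing \eqref{l2.11} termwise via rules \eqref{rule1} and \eqref{rule2}, with evenness killing the residual signs. The only point you leave open — the sign to place in the definition of $\rho_k$ — resolves in the paper to no sign at all, since the evenness of $q$, $p+1+r$, $a+1$ and $b$ already makes the transposed identity coincide with \eqref{l2.14}.
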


\begin{proof}
Let $(M,\pi)$ be an even $(\mathbb U,\Theta)$-module as in Definition \ref{D2.4}. Accordingly, for $\bar{n}\in \mathbb Z^{k-1}$, $k\geq 1$, $l\in\mathbb Z$, we have morphisms 
\begin{equation}
(\pi_k(\bar{n},l):\mathbb U_{\bar{n}}M_l\longrightarrow M_{\Sigma\bar{n}+l+(2-k)})\Rightarrow (\pi_k^R(\bar{n},l):M_l\longrightarrow \mathbb V_{\bar{n}^{op}}M_{\Sigma\bar{n}+l+(2-k)})
\end{equation} We now define $P=\{P_l\}_{l\in \mathbb Z}$ by setting $P_l=M_{-l}$ for each $l\in \mathbb Z$ as well as 
\begin{equation}\label{2.22edx}
\begin{CD}
\rho_k(\bar{n},l):P_{\Sigma \bar{n}+l+(2-k)}=M_{-\Sigma \bar{n}-l-(2-k)}@>\pi_k^R(\bar{n}^{op},-\Sigma \bar{n}-l-(2-k))>> \mathbb V_{\bar{n}}M_{-l} =\mathbb V_{\bar{n}}P_l \\
\end{CD}
\end{equation} for $l\in \mathbb Z$ and $\bar{n}\in \mathbb Z^{k-1}$.  From the proof of Theorem \ref{T2.3}, we know that the collection $\Delta=\{\delta_k(\bar{n})\}$ is given by setting $\delta_k(\bar{n})=\theta_k^R(\bar{n}^{op})$.
In the notation of \eqref{l2.11}, we now see that for each $\bar{z}\in \mathbb Z^N$, $N\geq 0$, $l\in\mathbb Z$, we have \begin{equation}\label{e2.22}
\begin{array}{l}
\underset{\bar{z}=(\bar{n},\bar{n}',\bar{n}'')}{\sum} (-1)^{p+qr+q\Sigma\bar{n}}\pi_{p+1+r}(\bar{n},\Sigma \bar{n}'+(2-q), \bar{n}'', l)\circ (\mathbb U_{\bar{n}}\ast \theta_q(\bar{n}'))(\mathbb U_{\bar{n}''}M_{l})\\
\qquad\qquad =-\underset{\bar{z}=(\bar{m},\bar{m}')}{\sum} (-1)^{a+b\Sigma\bar{m}}\pi_{a+1}(\bar{m},\Sigma\bar{m}'+l+(2-b))\circ \mathbb U_{\bar{m}} (\pi_b(\bar{m}',l))\\
\end{array}
\end{equation} Taking adjoints on both sides of \eqref{e2.22}, we obtain
\begin{equation}\label{e2.23}
\begin{array}{l}
\underset{\bar{z}=(\bar{n},\bar{n}',\bar{n}'')}{\sum} (-1)^{p+qr+q\Sigma\bar{n}}(\pi_{p+1+r}(\bar{n},\Sigma \bar{n}'+(2-q), \bar{n}'', l)\circ (\mathbb U_{\bar{n}}\ast \theta_q(\bar{n}'))(\mathbb U_{\bar{n}''}M_{l}))^R\\
\qquad\qquad =-\underset{\bar{z}=(\bar{m},\bar{m}')}{\sum} (-1)^{a+b\Sigma\bar{m}}(\pi_{a+1}(\bar{m},\Sigma\bar{m}'+l+(2-b))\circ \mathbb U_{\bar{m}} (\pi_b(\bar{m}',l)))^R\\
\end{array}
\end{equation} We now consider one by one the terms in \eqref{e2.23}. For any term on the left hand side of \eqref{e2.23}, we apply \eqref{rule1} with
\begin{equation} \begin{array}{c} X=M_l\qquad Y=M_{\Sigma \bar{n}+\Sigma\bar{n}'+\Sigma \bar{n}''+l+(2-N)} \\ \theta=\mathbb U_{\bar{n}}\ast \theta_q(\bar{n}') \ast \mathbb U_{\bar{n}''}: U_1=\mathbb U_{\bar{n}}\mathbb U_{\bar{n}'}\mathbb U_{\bar{n}''}\longrightarrow \mathbb U_{\bar{n}}\mathbb U_{\Sigma\bar{n}'+(2-q)}\mathbb U_{\bar{n}''}=U_2\\
f=\pi_{p+1+r}(\bar{n},\Sigma \bar{n}'+(2-q), \bar{n}'', l):U_2X= \mathbb U_{\bar{n}}\mathbb U_{\Sigma\bar{n}'+(2-q)}\mathbb U_{\bar{n}''}M_l\longrightarrow M_{\Sigma \bar{n}+\Sigma\bar{n}'+\Sigma \bar{n}''+l+(2-N)} =Y \\
\end{array}\end{equation} It follows therefore from \eqref{rule1} that the left hand side of \eqref{e2.23} gives
\begin{equation}\label{e2.25}
\begin{array}{l}
\underset{\bar{z}=(\bar{n},\bar{n}',\bar{n}'')}{\sum} (-1)^{p+qr+q\Sigma\bar{n}}(\pi_{p+1+r}(\bar{n},\Sigma \bar{n}'+(2-q), \bar{n}'', l)\circ (\mathbb U_{\bar{n}}\ast \theta_q(\bar{n}'))(\mathbb U_{\bar{n}''}M_{l}))^R\\
= \underset{\bar{z}=(\bar{n},\bar{n}',\bar{n}'')}{\sum} (-1)^{p+qr+q\Sigma\bar{n}}(\mathbb V_{\bar{n}''^{op}}\ast \theta_q^R(\bar{n}'))(  \mathbb V_{\bar{n}^{op}}(M_{
\bar{z}+l+(2-N)}))\circ\pi_{p+1+r}^R(\bar{n},\Sigma \bar{n}'+(2-q), \bar{n}'', l)\\
\end{array}
\end{equation}
If we shift degrees by setting $l'=-\Sigma\bar{z}-l-(2-N)$ and use \eqref{2.22edx}, then \eqref{e2.25} becomes
\begin{equation}\label{e2.26xw}
\begin{array}{l}
 - \underset{\bar{z}=(\bar{n},\bar{n}',\bar{n}'')}{\sum} (-1)^{p+1} (\mathbb V_{\bar{n}''^{op}}\ast \delta_q(\bar{n}'^{op})) ( \mathbb V_{\bar{n}^{op}}(P_{l'}))\circ \rho_{p+1+r}(\bar{n}''^{op},\Sigma \bar{n}'+(2-q), \bar{n}^{op},l')\\
\end{array} \end{equation} where the sign is obtained from the fact that $q$ and $p+1+r$ are both even. 
  For any term on the right hand side of \eqref{e2.23}, we now apply \eqref{rule2} with
\begin{equation}
\begin{array}{c}
X=M_l\qquad Y=M_{\Sigma\bar{m}'+l+(2-b)}\qquad Z=M_{\Sigma\bar{m}+\Sigma\bar{m}'+l+(2-N)} \qquad U_1=\mathbb U_{\bar{m}'}\qquad U_2=\mathbb U_{\bar{m}}\\
f=\pi_b(\bar{m}',l) :U_1X=\mathbb U_{\bar{m}'}M_l\longrightarrow M_{\Sigma\bar{m}'+l+(2-b)}=Y\\
g=\pi_{a+1}(\bar{m},\Sigma\bar{m}'+l+(2-b)):U_2Y=\mathbb U_{\bar{m}}M_{\Sigma\bar{m}'+l+(2-b)}\longrightarrow M_{\Sigma\bar{m}+\Sigma\bar{m}'+l+(2-N)}=Z\\
\end{array}
\end{equation} It follows therefore from \eqref{rule2} that the right hand side of \eqref{e2.23} gives
\begin{equation}\label{e2.27}
\begin{array}{l}
-\underset{\bar{z}=(\bar{m},\bar{m}')}{\sum} (-1)^{a+b\Sigma\bar{m}}(\pi_{a+1}(\bar{m},\Sigma\bar{m}'+l+(2-b))\circ \mathbb U_{\bar{m}} (\pi_b(\bar{m}',l)))^R\\
=-\underset{\bar{z}=(\bar{m},\bar{m}')}{\sum} (-1)^{a+b\Sigma\bar{m}}\mathbb V_{\bar{m}'^{op}}\pi_{a+1}^R(\bar{m},\Sigma\bar{m}'+l+(2-b))\circ \pi_b^R(\bar{m}',l)\\
=-\underset{\bar{z}=(\bar{m},\bar{m}')}{\sum} (-1)^{a+b\Sigma\bar{m}}\mathbb V_{\bar{m}'^{op}}\rho_{a+1}(\bar{m}^{op},-\Sigma\bar{z}-l-(2-N))\circ \rho_b(\bar{m}'^{op},-\Sigma\bar{m}'-l-(2-b))\\
\end{array}
\end{equation} 
Again, if we shift degrees by setting $l'=-\Sigma\bar{z}-l-(2-N)$, then \eqref{e2.27} becomes
\begin{equation}\label{e2.27cqo}
\begin{array}{l}
-\underset{\bar{z}=(\bar{m},\bar{m}')}{\sum} (-1)^{a+b\Sigma\bar{m}}\mathbb V_{\bar{m}'^{op}}\rho_{a+1}(\bar{m}^{op},l')\circ \rho_b(\bar{m}'^{op},\Sigma\bar{m}+l'+(1-a))\\ 
=\underset{\bar{z}=(\bar{m},\bar{m}')}{\sum} \mathbb V_{\bar{m}'^{op}}\rho_{a+1}(\bar{m}^{op},l')\circ \rho_b(\bar{m}'^{op},\Sigma\bar{m}+l'+(1-a))\\ 
\end{array} 
\end{equation} where the last equality in \eqref{e2.27} follows from the fact that $a+1$ and $b$ must both be even. By inspecting \eqref{e2.26xw} and
\eqref{e2.27cqo}, we see that the morphisms $\rho_k(\bar{n},l)$ together make $P=\{P_n=M_{-n}\}_{n\in \mathbb Z}$ into a 
$(\mathbb V,\Delta)$-comodule. These arguments can be reversed, showing that even $(\mathbb U,\Theta)$-modules correspond to even $(\mathbb V,\Delta)$-comodules and vice-versa.

\end{proof}

\begin{defn}\label{D2.8}
Let $(\mathbb U,\Theta)$ be an $A_\infty$-monad over $\mathcal C$ and let $(M^1,\pi^1)$, $(M^2,\pi^2)$ be $(\mathbb U,\Theta)$-modules. An $\infty$-morphism $\alpha :(M^1,\pi^1)\longrightarrow (M^2,\pi^2)$ 
consists of a collection
\begin{equation}\label{e2.28}
\alpha=\{\mbox{ $\alpha_k(\bar{n},l):\mathbb U_{\bar{n}}M^1_l\longrightarrow M^2_{\Sigma\bar{n}+l+(1-k)}$ $\vert$ $k\geq 1$, $\bar{n}=(n_1,...,n_{k-1})\in 
\mathbb Z^{k-1}$,  $l\in\mathbb Z$ }\}
\end{equation} satisfying for each $\bar{z}\in \mathbb Z^N$, $N\geq 0$, $l\in \mathbb Z$,
\begin{equation}\label{e2.29}
\begin{array}{c}
\underset{\bar{z}=(\bar{m},\bar{m}')}{\sum}(-1)^{a+b\Sigma\bar{m}}\alpha_{a+1}(\bar{m},\Sigma\bar{m}'+l+(2-b))\circ \mathbb U_{\bar{m}}(\pi_b^1(\bar{m}',l))\\
\qquad\qquad \qquad\qquad+ \underset{\bar{z}=(\bar{n},\bar{n}',\bar{n}'')}{\sum}(-1)^{a+bc+b\Sigma\bar{n}}\alpha_{a+1+c}(\bar{n},\Sigma\bar{n}'+(2-b),\bar{n}'',l)\circ (\mathbb U_{\bar{n}}\ast\theta_b(\bar{n}')\ast \mathbb U_{\bar{n}''})(M^1_l)  \\
\qquad\qquad \qquad\qquad \qquad \qquad = \underset{\bar{z}=(\bar{p},\bar{p}')}{\sum}\pi^2_{a+1}(\bar{p},\Sigma\bar{p}'+l+(1-b))\circ \mathbb U_{\bar{p}}(\alpha_b(\bar{p}',l))\\
\end{array}
\end{equation} where:

\smallskip
(1) In the first term on left hand side of \eqref{e2.29}, we have $|\bar{m}|=a$ and $|\bar{m}'|=b-1$.

\smallskip
(2) In the second term on left hand side of \eqref{e2.29}, we have $|\bar{n}|=a$, $|\bar{n}'|=b$ and $|\bar{n}''|=c-1$.

\smallskip
(3) On the right hand side of \eqref{e2.29}, we have $|\bar{p}|=a$ and $|\bar{p}'|=b-1$.
\end{defn}

We observe that both sides of \eqref{e2.29} represent morphisms 
$ \mathbb U_{\bar{z}}M_l^1\longrightarrow M^2_{\Sigma \bar{z}+l+(1-N)}
$.  We will   denote by $\widetilde{EM}_{(\mathbb U,\Theta)}$ the category of $(\mathbb U,\Theta)$-modules equipped with $\infty$-morphisms. 
For any $(M,\pi)$ in  $\widetilde{EM}_{(\mathbb U,\Theta)}$, the identity $\iota:(M,\pi)\longrightarrow (M,\pi)$ is given by taking $\iota_1(l)=id:M_l\longrightarrow M_l$ for each $l\in 
\mathbb Z$ and $\iota_k(\bar{z},l)=0$ otherwise.  Given morphisms $\alpha:(M^1,\pi^1)\longrightarrow (M^2,\pi^2)$ and $\beta:(M^2,\pi^2)\longrightarrow (M^3,\pi^3)$, we define their composition $\gamma=\beta\alpha$ as follows
\begin{equation} \label{e2.30}
\begin{array}{c}
\gamma=\{\mbox{ $\gamma_k(\bar{z},l):\mathbb U_{\bar{z}}M^1_l\longrightarrow M^3_{\Sigma\bar{z}+l+(1-k)}$ $\vert$ $k\geq 1$, $\bar{z}=(z_1,...,z_{k-1})\in 
\mathbb Z^{k-1}$,  $l\in\mathbb Z$ }\} \\ \\
\gamma_k(\bar{z},l):=\underset{\bar{z}=(\bar{n},\bar{n}')}{\sum}\beta_{a+1}(\bar{n},\Sigma\bar{n}'+l+(1-b))\circ \mathbb U_{\bar{n}}(\alpha_b(\bar{n}',l))\\
\end{array}
\end{equation} where $|\bar{n}|=a$ and $|\bar{n}'|=b-1$ in \eqref{e2.30}. We will say that a morphism $\alpha :(M^1,\pi^1)\longrightarrow (M^2,\pi^2)$  is odd if 
$\alpha_k(\bar{n},l)=0$ for any even $k$. In particular, any identity morphism is odd. From \eqref{e2.30}, we also  notice that the composition of odd morphisms in $\widetilde{EM}_{(\mathbb U,\Theta)}$ must be odd. 

\begin{defn}\label{D2.9}
Let $(\mathbb V,\Delta)$ be an $A_\infty$-comonad over $\mathcal C$ and let $(P^1,\rho^1)$, $(P^2,\rho^2)$ be $(\mathbb V,\Delta)$-comodules. An $\infty$-morphism $\alpha :(P^1,\rho^1)\longrightarrow (P^2,\rho^2)$ 
consists of a collection
\begin{equation}\label{e2.31}
\alpha=\{\mbox{ $\alpha_k(\bar{n},l):P^1_{\Sigma\bar{n}+l+(1-k)}\longrightarrow \mathbb V_{\bar{n}}P^2_l$ $\vert$ $k\geq 1$, $\bar{n}=(n_1,...,n_{k-1})\in 
\mathbb Z^{k-1}$,  $l\in\mathbb Z$ }\}
\end{equation} satisfying for each $\bar{z}\in \mathbb Z^N$, $N\geq 0$, $l\in \mathbb Z$,
\begin{equation}\label{e2.32}
\begin{array}{c}
\underset{\bar{z}=(\bar{m},\bar{m}')}{\sum}(-1)^{ab+b\Sigma\bar{m}}\mathbb V_{\bar{m}}(\rho_b^2(\bar{m}',l))\circ \alpha_{a+1}(\bar{m},\Sigma\bar{m}'+l+(2-b))\\
\qquad\qquad \qquad\qquad+ \underset{\bar{z}=(\bar{n},\bar{n}',\bar{n}'')}{\sum}(-1)^{c+ab+b\Sigma\bar{n}}(\mathbb V_{\bar{n}}\ast\delta_b(\bar{n}')\ast \mathbb V_{\bar{n}''})(P^2_l)  \circ \alpha_{a+1+c}(\bar{n},\Sigma\bar{n}'+(2-b),\bar{n}'',l)\\
\qquad\qquad \qquad\qquad \qquad \qquad = \underset{\bar{z}=(\bar{p},\bar{p}')}{\sum}\mathbb V_{\bar{p}}(\alpha_b(\bar{p}',l)) \circ \rho^1_{a+1}(\bar{p},\Sigma\bar{p}'+l+(1-b))\\
\end{array}
\end{equation} where:

\smallskip
(1) In the first term on left hand side of \eqref{e2.32}, we have $|\bar{m}|=a$ and $|\bar{m}'|=b-1$.

\smallskip
(2) In the second term on left hand side of \eqref{e2.32}, we have $|\bar{n}|=a$, $|\bar{n}'|=b$ and $|\bar{n}''|=c-1$.

\smallskip
(3) On the right hand side of \eqref{e2.32}, we have $|\bar{p}|=a$ and $|\bar{p}'|=b-1$.

\end{defn}

We observe that both sides of \eqref{e2.32} represent morphisms 
$ P^1_{\Sigma \bar{z}+l+(1-N)}\longrightarrow \mathbb V_{\bar{z}}P_l^2
$. We will  denote by $\widetilde{EM}^{(\mathbb V,\Delta)}$ the category of $(\mathbb V,\Delta)$-comodules equipped with $\infty$-morphisms.
The composition of morphisms in $\widetilde{EM}^{(\mathbb V,\Delta)}$ is defined in a manner dual to \eqref{e2.30}. We will say that a morphism $\alpha$ in $EM^{(\mathbb V,\Delta)}$ is odd if $\alpha_k(\bar{n},l)=0$ whenever $k$ is even.  We may also verify that the collection of odd morphisms in $\widetilde{EM}^{(\mathbb V,\Delta)}$ is closed under composition.

\smallskip
For an $A_\infty$-monad $(\mathbb U,\Theta)$ we denote by $\widetilde{EM}_{(\mathbb U,\Theta)}^{eo}$ the subcategory of $\widetilde{EM}_{(\mathbb U,\Theta)}$ whose objects are even $(\mathbb U,\Theta)$-modules with odd $\infty$-morphisms between them. Similarly, for an $A_\infty$-comonad $(\mathbb V,\Delta)$, we denote by $\widetilde{EM}^{(\mathbb V,\Delta)}_{eo}$ the subcategory of
$\widetilde{EM}^{(\mathbb V,\Delta)}$ whose objects are even $(\mathbb V,\Delta)$-comodules with odd $\infty$-morphisms between them. We now have the final result of this section.

\begin{Thm}\label{T2.10}  Let $(\mathbb U=\{\mathbb U_n:\mathcal C\longrightarrow \mathcal C\}_{n\geq 0},\mathbb V=
\{\mathbb V_n:\mathcal C\longrightarrow\mathcal C\}_{n\geq 0})$ be a $\mathbb Z$-system of adjoints on $\mathcal C$. Let
$(\mathbb U,\Theta)$ be an even $A_\infty$-monad and $(\mathbb V,\Delta)$ be the corresponding $A_\infty$-comonad. Then, the categories $\widetilde{EM}_{(\mathbb U,\Theta)}^{eo}$  and
$\widetilde{EM}^{(\mathbb V,\Delta)}_{eo}$ are isomorphic.

\end{Thm}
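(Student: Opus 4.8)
The plan is to build the desired isomorphism directly on top of Proposition \ref{P2.7}, adding only the bookkeeping needed for $\infty$-morphisms. On objects I would reuse verbatim the correspondence constructed there: an even $(\mathbb U,\Theta)$-module $(M,\pi)$ is sent to the even $(\mathbb V,\Delta)$-comodule $(P,\rho)$ with $P_l=M_{-l}$ and $\rho_k(\bar n,l)=\pi^R_k(\bar n^{op},-\Sigma\bar n-l-(2-k))$, and conversely. Thus the only genuinely new ingredient is a compatible rule for morphisms and a functoriality check.

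On morphisms, given an odd $\infty$-morphism $\alpha:(M^1,\pi^1)\longrightarrow (M^2,\pi^2)$ of even modules, with components $\alpha_k(\bar n,l):\mathbb U_{\bar n}M^1_l\longrightarrow M^2_{\Sigma\bar n+l+(1-k)}$, I would define its image $\beta$, an $\infty$-morphism between the associated comodules, by
\[
\beta_k(\bar n,l):=\alpha^R_k\big(\bar n^{op},\,-\Sigma\bar n-l-(1-k)\big):P^1_{\Sigma\bar n+l+(1-k)}=M^1_{-\Sigma\bar n-l-(1-k)}\longrightarrow \mathbb V_{\bar n}M^2_{-l}=\mathbb V_{\bar n}P^2_l,
\]
using that the right adjoint of $\mathbb U_{\bar n}$ is $\mathbb V_{\bar n^{op}}$ (one checks the degrees match exactly). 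Since $(-)^R$ is a bijection $\mathcal C(\mathbb U_{\bar n}X,Y)\cong\mathcal C(X,\mathbb V_{\bar n^{op}}Y)$, the component $\beta_k$ vanishes precisely when $\alpha_k$ does, so $\beta$ is again odd, and the assignment $\alpha\mapsto\beta$ is componentwise bijective.

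The core of the proof is then to show that $\beta$ satisfies the comodule $\infty$-morphism relation \eqref{e2.32} precisely because $\alpha$ satisfies the module relation \eqref{e2.29}. I would take adjoints of both sides of \eqref{e2.29} and process the three sums one at a time exactly as in the proof of Proposition \ref{P2.7}: the second sum on the left (the one with $\mathbb U_{\bar n}\ast\theta_b(\bar n')\ast\mathbb U_{\bar n''}$ sandwiched inside) is handled by \eqref{rule1}, turning $\theta_b(\bar n')^R$ into $\delta_b(\bar n'^{op})$ and producing the term $(\mathbb V_{\bar n''^{op}}\ast\delta_b(\bar n'^{op})\ast\mathbb V_{\bar n^{op}})$, while the first sum on the left and the sum on the right (both of the form $g\circ\mathbb U_{\bar m}(f)$) are handled by \eqref{rule2}. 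After reversing all tuples and performing the degree shift $l'=-\Sigma\bar z-l-(1-N)$, the three transformed sums become, respectively, the three sums of \eqref{e2.32}. The parity constraints — $\alpha_k=0$ for $k$ even since $\alpha$ is odd, and $\theta_k=\pi_k=0$ for $k$ odd since the monad and modules are even — force every index entering a sign exponent, together with every transposition incurred when reordering a composite under adjunction, to have the correct parity, so that the signs $(-1)^{a+b\Sigma\bar m}$, $(-1)^{a+bc+b\Sigma\bar n}$, $1$ of \eqref{e2.29} are converted into the signs $(-1)^{ab+b\Sigma\bar m}$, $(-1)^{c+ab+b\Sigma\bar n}$, $1$ of \eqref{e2.32}. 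This is the step I expect to be the main obstacle: it is entirely parallel to, and no harder than, the sign reconciliation at the end of the proof of Proposition \ref{P2.7} (compare \eqref{e2.25}--\eqref{e2.27cqo}), but it must be carried out carefully for each of the three families of terms.

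Finally I would check functoriality and invertibility. The identity $\infty$-morphism $\iota$ has $\iota_1(l)=\mathrm{id}_{M_l}$ and $\iota_k=0$ for $k>1$; its image has first component $\mathrm{id}_{M_l}^R=\mathrm{id}_{P_l}$ and all higher components zero, i.e.\ the identity $\infty$-morphism of the comodule. For composites, given odd module $\infty$-morphisms $\alpha:(M^1,\pi^1)\to(M^2,\pi^2)$ and $\beta:(M^2,\pi^2)\to(M^3,\pi^3)$ with composite $\gamma=\beta\alpha$ as in \eqref{e2.30}, I would apply \eqref{rule2} to each summand $\beta_{a+1}(\bar n,\Sigma\bar n'+l+(1-b))\circ\mathbb U_{\bar n}(\alpha_b(\bar n',l))$ of $\gamma_k(\bar z,l)$: its adjoint is $\mathbb V_{\bar n'^{op}}(\beta^R_{a+1}(\cdots))\circ\alpha^R_b(\bar n',l)$, which, after tuple reversal and the degree shift, is exactly the corresponding summand of the dual composition of the images of $\alpha$ and $\beta$ in $\widetilde{EM}^{(\mathbb V,\Delta)}$; the composition formula being sign-free, nothing further is needed here. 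Since the passage $\alpha\mapsto\beta$ is assembled from the bijections $(-)^R$ on morphisms, tuple reversal, and the (involutive) degree re-indexing, it is invertible; the symmetric construction built from the left-adjoint operation $(-)^L$ — using that $(\mathbb V,\Delta)$-comodules correspond to $(\mathbb U,\Theta)$-modules by Proposition \ref{P2.7} — supplies a two-sided inverse functor. Hence $\widetilde{EM}_{(\mathbb U,\Theta)}^{eo}$ and $\widetilde{EM}^{(\mathbb V,\Delta)}_{eo}$ are isomorphic.
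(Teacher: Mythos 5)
Your proposal is correct and follows essentially the same route as the paper's proof: the same object correspondence from Proposition \ref{P2.7}, the identical morphism formula $\beta_k(\bar n,l)=\alpha_k^R(\bar n^{op},-\Sigma\bar n-l-(1-k))$, and the same three-way case analysis applying \eqref{rule1} to the sandwiched $\theta_b(\bar n')$ term and \eqref{rule2} to the other two sums, with the degree shift and parity-based sign reconciliation. Your explicit check of identities and composition is a small addition the paper leaves implicit, but the argument is the same.
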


\begin{proof} From Proposition \ref{P2.7}, we already know that there is a one-one correspondence between the objects of  $EM_{(\mathbb U,\Theta)}^{eo}$  and
$EM^{(\mathbb V,\Delta)}_{eo}$. We consider therefore a morphism $\alpha:(M^1,\pi^1)\longrightarrow (M^2,\pi^2)$ in  $EM_{(\mathbb U,\Theta)}^{eo}$. Accordingly for any
$k\geq 1$, $\bar{n}\in \mathbb Z^{k-1}$, $l\in \mathbb Z$ we have morphisms
\begin{equation}
(\alpha_k(\bar{n},l):\mathbb U_{\bar{n}}M^1_l\longrightarrow M^2_{\Sigma\bar{n}+l+(1-k)})\Rightarrow (\alpha_k^R(\bar{n},l):M^1_l\longrightarrow \mathbb V_{\bar{n}^{op}}M^2_{\Sigma\bar{n}+l+(1-k)})
\end{equation} From the proof of Proposition \ref{P2.7}, we know that the object $(P^1,\rho^1)$ (resp. $(P^2,\rho^2)$) in $EM^{(\mathbb V,\Delta)}_{eo}$ corresponding
to $(M^1,\pi^1)$ (resp. $(M^2,\pi^2)$) is given by setting
\begin{equation}\label{e2.37}
\begin{CD}
P^1_{l}:=M^1_{-l}\qquad \rho^1_k(\bar{n},l):P^1_{\Sigma \bar{n}+l+(2-k)}=M^1_{-\Sigma \bar{n}-l-(2-k)}@>\pi_k^{1R}(\bar{n}^{op},-\Sigma \bar{n}-l-(2-k))>> \mathbb V_{\bar{n}}M^1_{-l} =\mathbb V_{\bar{n}}P^1_l \\
P^2_{l}:=M^2_{-l}\qquad \rho^2_k(\bar{n},l):P^2_{\Sigma \bar{n}+l+(2-k)}=M^2_{-\Sigma \bar{n}-l-(2-k)}@>\pi_k^{2R}(\bar{n}^{op},-\Sigma \bar{n}-l-(2-k))>> \mathbb V_{\bar{n}}M^2_{-l} =\mathbb V_{\bar{n}}P^2_l \\
\end{CD}
\end{equation} For $k\geq 1$, $\bar{n}\in \mathbb Z^{k-1}$, $l\in \mathbb Z$, we now define
\begin{equation}\label{e2.38}
\begin{CD}
\beta_k(\bar{n},l):P^1_{\Sigma\bar{n}+l+(1-k)}=M^1_{-\Sigma\bar{n}-l-(1-k)}@>\alpha_k^R(\bar{n}^{op},-\Sigma\bar{n}-l-(1-k))>> \mathbb V_{\bar{n}}M^2_{-l}=\mathbb V_{\bar{n}}P^2_l
\end{CD}
\end{equation} We claim that \eqref{e2.38} determines a morphism $(P^1,\rho^1)\longrightarrow (P^2,\rho^2)$ in $\widetilde{EM}^{(\mathbb V,\Delta)}_{eo}$. From the proof of Theorem \ref{T2.3}, we know that the collection $\Delta=\{\delta_k(\bar{n})\}$ is given by setting $\delta_k(\bar{n})=\theta_k^R(\bar{n}^{op})$. 
In the notation of \eqref{e2.29}, we now see that for each $\bar{z}\in \mathbb Z^N$, $N\geq 0$, $l\in\mathbb Z$, we have 
\begin{equation}\label{e2.39}
\begin{array}{c}
\underset{\bar{z}=(\bar{m},\bar{m}')}{\sum}(-1)^{a+b\Sigma\bar{m}}\alpha_{a+1}(\bar{m},\Sigma\bar{m}'+l+(2-b))\circ \mathbb U_{\bar{m}}(\pi_b^1(\bar{m}',l))\\
\qquad\qquad \qquad\qquad+ \underset{\bar{z}=(\bar{n},\bar{n}',\bar{n}'')}{\sum}(-1)^{a+bc+b\Sigma\bar{n}}\alpha_{a+1+c}(\bar{n},\Sigma\bar{n}'+(2-b),\bar{n}'',l)\circ (\mathbb U_{\bar{n}}\ast\theta_b(\bar{n}')\ast \mathbb U_{\bar{n}''})(M^1_l)  \\
\qquad\qquad \qquad\qquad \qquad \qquad = \underset{\bar{z}=(\bar{p},\bar{p}')}{\sum}\pi^2_{a+1}(\bar{p},\Sigma\bar{p}'+l+(1-b))\circ \mathbb U_{\bar{p}}(\alpha_b(\bar{p}',l))\\
\end{array}
\end{equation} We now consider one by one the terms in \eqref{e2.39}. For those in the first term on the left hand side of \eqref{e2.39}, we apply \eqref{rule2} with
\begin{equation}\label{e2.40}
\begin{array}{c}
X=M^1_l \qquad Y=M^1_{\Sigma\bar{m}'+l+(2-b)}\qquad Z=M^2_{\Sigma\bar{z}+l+(1-N)}\qquad U_1=\mathbb U_{\bar{m}'}\qquad U_2=\mathbb U_{\bar{m}}\\
f=\pi^1_b(\bar{m}',l): U_1X=\mathbb U_{\bar{m}'}M^1_l\longrightarrow M^1_{\Sigma\bar{m}'+l+(2-b)}=Y\\
g=\alpha_{a+1}(\bar{m},\Sigma\bar{m}'+l+(2-b)):U_2Y=\mathbb U_{\bar{m}}M^1_{\Sigma\bar{m}'+l+(2-b)}\longrightarrow M^2_{\Sigma\bar{z}+l+(1-N)}=Z\\
\end{array}
\end{equation} It follows from \eqref{rule2} that the first term on the left hand side of \eqref{e2.39} gives
\begin{equation}\label{e2.41}
\begin{array}{l}
\underset{\bar{z}=(\bar{m},\bar{m}')}{\sum}(-1)^{a+b\Sigma\bar{m}}(\alpha_{a+1}(\bar{m},\Sigma\bar{m}'+l+(2-b))\circ \mathbb U_{\bar{m}}(\pi_b^1(\bar{m}',l)))^R\\
=\underset{\bar{z}=(\bar{m},\bar{m}')}{\sum}(-1)^{a+b\Sigma\bar{m}}\mathbb V_{\bar{m}'^{op}}(\alpha^R_{a+1}(\bar{m},\Sigma\bar{m}'+l+(2-b)))\circ \pi^{1R}_b(\bar{m}',l)\\
=\underset{\bar{z}=(\bar{m},\bar{m}')}{\sum}(-1)^{a+b\Sigma\bar{m}}\mathbb V_{\bar{m}'^{op}}(\beta_{a+1}(\bar{m}^{op},-\Sigma\bar{z}-l-(1-N)))\circ \rho^1_b(\bar{m}'^{op},
-\Sigma\bar{m}'-l-(2-b))\\
=\underset{\bar{z}=(\bar{m},\bar{m}')}{\sum} \mathbb V_{\bar{m}'^{op}}(\beta_{a+1}(\bar{m}^{op},l'))\circ \rho^1_b(\bar{m}'^{op},
\Sigma\bar{m}
+l'-a)\\
\end{array}
\end{equation} where we have set $l'=-\Sigma\bar{z}-l-(1-N)$, used the replacements in \eqref{e2.37}, \eqref{e2.38} as well as the fact that $a$, $b$ are both even.  For those in the second term on the left hand side of \eqref{e2.39}, we apply \eqref{rule1} with
\begin{equation}\label{e2.42}
\begin{array}{c}
X=M^1_l\qquad Y=M^2_{\bar{z}+l+(1-N)}\\
\theta = \mathbb U_{\bar{n}}\ast \theta_b(\bar{n}')\ast \mathbb U_{\bar{n}''}: U_1=\mathbb U_{\bar{n}}\mathbb U_{\bar{n}'}\mathbb U_{\bar{n}''} \longrightarrow \mathbb U_{\bar{n}}\mathbb U_{\Sigma\bar{n}'+(2-b)}\mathbb U_{\bar{n}''} =U_2\\
f=\alpha_{a+1+c}(\bar{n},\Sigma\bar{n}'+(2-b),\bar{n}'',l):U_2X= \mathbb U_{\bar{n}}\mathbb U_{\Sigma\bar{n}'+(2-b)}\mathbb U_{\bar{n}''} (M_l^1) \longrightarrow M^2_{\bar{z}+l+(1-N)}=Y\\
\end{array}
\end{equation}  It follows from \eqref{rule1} that the second term on the left hand side of \eqref{e2.39} gives
\begin{equation}\label{e2.43}
\begin{array}{l}
\underset{\bar{z}=(\bar{n},\bar{n}',\bar{n}'')}{\sum}(-1)^{a+bc+b\Sigma\bar{n}}(\alpha_{a+1+c}(\bar{n},\Sigma\bar{n}'+(2-b),\bar{n}'',l)\circ (\mathbb U_{\bar{n}}\ast\theta_b(\bar{n}')\ast \mathbb U_{\bar{n}''})(M^1_l))^R \\
=\underset{\bar{z}=(\bar{n},\bar{n}',\bar{n}'')}{\sum}(-1)^{a+bc+b\Sigma\bar{n}} (\mathbb V_{\bar{n}''^{op}}\ast \theta_b^R(\bar{n}')\ast \mathbb V_{\bar{n}^{op}})(M^2_{\bar{z}+l+(1-N)})\circ \alpha^R_{a+1+c}(\bar{n},\Sigma\bar{n}'+(2-b),\bar{n}'',l)\\
=\underset{\bar{z}=(\bar{n},\bar{n}',\bar{n}'')}{\sum}(-1)^{a} (\mathbb V_{\bar{n}''^{op}}\ast \delta_b(\bar{n}'^{op})\ast \mathbb V_{\bar{n}^{op}})(P^2_{l'})\circ \beta_{a+1+c}(\bar{n}''^{op},\Sigma\bar{n}'+(2-b),\bar{n}^{op},l')\\
\end{array}
\end{equation} where we have set $l'=-\Sigma\bar{z}-l-(1-N)$, used the replacements in \eqref{e2.37}, \eqref{e2.38} as well as the fact that $b$, $a+c$ are both even. For those on the right  hand side of \eqref{e2.39}, we apply \eqref{rule2} with
\begin{equation}
\begin{array}{c}
X=M^1_l \qquad Y= M^2_{\Sigma \bar{p}'+l+(1-b)}\qquad Z=M^2_{\Sigma\bar{z}+l+(1-N)}\qquad  U_1=\mathbb U_{\bar{p}'}\qquad
U_2=\mathbb U_{\bar{p}}\\
f=\alpha_b(\bar{p}',l) :U_1X=\mathbb U_{\bar{p}'}M_l^1\longrightarrow M^2_{\Sigma \bar{p}'+l+(1-b)}=Y \\
g=\pi^2_{a+1}(\bar{p},\Sigma\bar{p}'+l+(1-b)):U_2Y=\mathbb U_{\bar{p}}M^2_{\Sigma \bar{p}'+l+(1-b)}\longrightarrow M^2_{\Sigma\bar{z}+l+(1-N)}=Z\\
\end{array}
\end{equation}  It follows from \eqref{rule2} that the  the right hand side of \eqref{e2.39} gives
\begin{equation}\label{e2.45}
\begin{array}{l}
\underset{\bar{z}=(\bar{p},\bar{p}')}{\sum}(\pi^2_{a+1}(\bar{p},\Sigma\bar{p}'+l+(1-b))\circ \mathbb U_{\bar{p}}(\alpha_b(\bar{p}',l)))^R\\
=\underset{\bar{z}=(\bar{p},\bar{p}')}{\sum}\mathbb V_{\bar{p}'^{op}}(\pi^{2R}_{a+1}(\bar{p},\Sigma\bar{p}'+l+(1-b)))\circ \alpha^R_b(\bar{p}',l)\\
=\underset{\bar{z}=(\bar{p},\bar{p}')}{\sum}\mathbb V_{\bar{p}'^{op}}(\rho^{2}_{a+1}(\bar{p}^{op},l'))\circ \beta_b(\bar{p}'^{op},\Sigma\bar{p} +l'+(1-a))\\
\end{array}
\end{equation} where we have set $l'=-\Sigma\bar{z}-l-(1-N)$ and used the replacements in \eqref{e2.37}, \eqref{e2.38}. By inspecting \eqref{e2.41}, \eqref{e2.43} and \eqref{e2.45} and comparing with \eqref{e2.32}, we conclude that $\{\beta_k(\bar{n},l)\}$ together determine a morphism
$(P^1,\rho^1)\longrightarrow (P^2,\rho^2)$ in $\widetilde{EM}^{(\mathbb V,\Delta)}_{eo}$. Finally, these arguments can be reversed, which shows that there is an isomorphism between categories
$\widetilde{EM}_{(\mathbb U,\Theta)}^{eo}$  and
$\widetilde{EM}^{(\mathbb V,\Delta)}_{eo}$.
\end{proof}

\section{Locally finite comodules over $A_\infty$-comonads}

We suppose throughout that $(\mathbb V,\Delta)$ is an $A_\infty$-comonad. We now define what it means for a $(\mathbb V,\Delta)$-comodule to be locally finite.

\begin{defn}\label{D231} Let $(P,\rho)$ be a $(\mathbb V,\Delta)$-comodule. In the notation of Definition \ref{D2.5}, we will say that $(P,\rho)$ is locally finite if for any $T\in \mathbb Z$ and $k\geq 1$, the family of morphisms 
\begin{equation}
\left\{\rho_k(\bar{n},l):P_{T+(2-k)}\longrightarrow \mathbb V_{\bar{n}}P_l\right\}_{(\bar{n},l)\in\mathbb Z(T,k)}
\end{equation}
in $\mathcal C$ is locally finite.

\end{defn}

In the rest of this section, we suppose that each $\{\mathbb V_n\}_{n\in \mathbb Z}$ preserves monomorphisms. Given a $(\mathbb V,\Delta)$-comodule $(P,\rho)$, we now consider, in the notation of Definition \ref{D2.5}, a family of subobjects 
$\{P_n^\alpha\hookrightarrow P_n\}_{n\in \mathbb Z}$ satisfying the following conditions

\smallskip
(a) For any $\bar{n}\in \mathbb Z^{k-1}$, $l\in \mathbb Z$, the morphisms in \eqref{2.25juk} restrict to the corresponding subobjects
\begin{equation}\label{2.59juk}
\rho^\alpha=\{\mbox{$\rho_k^\alpha(\bar{n},l):P^\alpha_{\Sigma\bar{n}+l+(2-k)}\longrightarrow \mathbb V_{\bar{n}}P^\alpha_{l} $ $\vert$ $k\geq 1$, $\bar{n}=(n_1,...,n_{k-1})
\in \mathbb Z^{k-1}$, $l\in\mathbb Z$ } \}
\end{equation}

(b) For  any $T\in \mathbb Z$ and $k\geq 1$, the family of morphisms
\begin{equation}\label{2.60vk}
\left\{\rho_k^\alpha(\bar{n},l):P^\alpha_{T+(2-k)}\longrightarrow \mathbb V_{\bar{n}}P^\alpha_l\right\}_{(\bar{n},l)\in\mathbb Z(T,k)}
\end{equation}  is locally finite in $\mathcal C$. 

\begin{lem}\label{L2.3.2} $(P^\alpha,\rho^\alpha)$ is a locally finite $(\mathbb V,\Delta)$-comodule.

\end{lem}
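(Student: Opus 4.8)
The argument will run parallel to that of Lemma \ref{FLem2.6} (and to the comonad construction in Theorem \ref{P2.8gb}), with objects of $\mathcal C$ in place of endofunctors. There are two things to check: that $(P^\alpha,\rho^\alpha)$ is genuinely a $(\mathbb V,\Delta)$-comodule, i.e.\ that the restricted maps $\rho_k^\alpha(\bar n,l)$ satisfy the identity \eqref{l2.14}, and that $(P^\alpha,\rho^\alpha)$ is locally finite. The second is immediate: it is precisely hypothesis (b) in \eqref{2.60vk}, read through Definition \ref{D231}. So the substance of the proof is the first point.

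First I would record the key structural fact that each composite functor $\mathbb V_{\bar n}=\mathbb V_{n_1}\circ\cdots\circ\mathbb V_{n_k}$ preserves monomorphisms, since it is a composition of functors that do by hypothesis; hence for any $\bar m,\bar m'$ and $l$ the canonical comparison $\mathbb V_{\bar m}\mathbb V_{\bar m'}P^\alpha_l\longrightarrow \mathbb V_{\bar m}\mathbb V_{\bar m'}P_l$ is a monomorphism in $\mathcal C$ (this is where Lemma \ref{FLem2.3} enters, applied to the subobjects $P^\alpha_l\hookrightarrow P_l$). Next I would go term by term through \eqref{l2.14} for the comodule $(P,\rho)$: each summand on either side is built by composing whiskered structure transformations $(\mathbb V_{\bar n}\ast\delta_q(\bar n'))(\mathbb V_{\bar n''}P_l)$, applications $\mathbb V_{\bar m}(\rho_b(\bar m',l))$, and the $\rho$-maps themselves. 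By assumption (a), the maps $\rho_k(\bar n,l)$ restrict to $\rho_k^\alpha(\bar n,l)$; the transformations $\delta_q(\bar n')$ are natural and the functors $\mathbb V_{\bar n}$, $\mathbb V_{\bar n''}$ preserve monomorphisms, so each such summand restricts to a morphism $P^\alpha_{\ast}\longrightarrow \mathbb V_{\bar m}\mathbb V_{\bar m'}P^\alpha_l$ compatibly with the ambient inclusions. Thus both sides of \eqref{l2.14} for $P$ are the composites of the corresponding sides of the putative identity for $P^\alpha$ with the monomorphism $\mathbb V_{\bar m}\mathbb V_{\bar m'}P^\alpha_l\hookrightarrow \mathbb V_{\bar m}\mathbb V_{\bar m'}P_l$.

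Finally, since that inclusion is a monomorphism and the identity \eqref{l2.14} holds for $(P,\rho)$, I can cancel it to deduce \eqref{l2.14} for $(P^\alpha,\rho^\alpha)$. Therefore $(P^\alpha,\rho^\alpha)$ is a $(\mathbb V,\Delta)$-comodule, and by hypothesis (b) it is locally finite, completing the proof. I expect the only mild obstacle to be the bookkeeping in the term-by-term verification that every intermediate composite in \eqref{l2.14} restricts to the subobjects — but this is entirely routine once one knows that the $\mathbb V_n$, and hence all $\mathbb V_{\bar n}$, preserve monomorphisms, exactly as in Lemma \ref{FLem2.6}.
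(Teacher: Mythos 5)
Your proposal is correct and follows the same route as the paper's own (much terser) proof: the restricted maps satisfy \eqref{l2.14} because all terms restrict to subobjects and the ambient inclusion is a monomorphism (the $\mathbb V_n$ preserving monomorphisms), and local finiteness is exactly hypothesis (b) in \eqref{2.60vk}. You have simply made explicit the cancellation-of-a-monomorphism step that the paper leaves implicit.
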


\begin{proof}
Because the morphisms $\rho=\{\mbox{$\rho_k(\bar{n},l)$ $\vert$ $k\geq 1$, $\bar{n}=(n_1,...,n_{k-1})
\in \mathbb Z^{k-1}$, $l\in\mathbb Z$ } \}$ in \eqref{2.25juk} restrict to 
\begin{equation} \rho^\alpha=\{\mbox{$\rho_k^\alpha(\bar{n},l)$ $\vert$ $k\geq 1$, $\bar{n}=(n_1,...,n_{k-1})
\in \mathbb Z^{k-1}$, $l\in\mathbb Z$ } \}
\end{equation} the morphisms $\rho^\alpha(\bar{n},l)$ also satisfy the relations in \eqref{l2.14}. The local finiteness condition is clear from the assumption in 
\eqref{2.60vk}. 
\end{proof}

We now let $\{(P^\alpha,\rho^\alpha)\}_{\alpha\in A}$ denote the collection of such families of subobjects. We note that this collection is non-empty since it contains the zero family. We now define
\begin{equation}\label{2.61ja}
P^{\oplus}=\left\{P^{\oplus}_n:=\underset{\alpha\in A}{\sum} P^\alpha_n\right\}_{n\in \mathbb Z}
\end{equation}

\begin{thm}
The collection $P^{\oplus}=\left\{P^{\oplus}_n\right\}_{n\in \mathbb Z}$ is a locally finite $A_\infty$-comodule over $(\mathbb V,\Delta)$. 
\end{thm}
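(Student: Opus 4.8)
The plan is to repeat, at the level of objects of $\mathcal C$, the construction used in the proof of Theorem \ref{P2.8gb}. First I would record the object-level analogues of Lemmas \ref{FLem2.3}, \ref{FLem2.4} and \ref{FLem2.5}. Since each $\mathbb V_n$ preserves monomorphisms, so does every composite $\mathbb V_{\bar n}=\mathbb V_{n_1}\circ\cdots\circ\mathbb V_{n_{k-1}}$; hence for the subobjects $\{P_l^\alpha\hookrightarrow P_l\}_{\alpha\in A}$ one gets $\mathbb V_{\bar n}P_l^\alpha\hookrightarrow \mathbb V_{\bar n}P_l$ and, summing over $\alpha$, $\sum_{\alpha\in A}\mathbb V_{\bar n}P_l^\alpha\hookrightarrow \mathbb V_{\bar n}\big(\sum_{\alpha\in A}P_l^\alpha\big)=\mathbb V_{\bar n}P_l^{\oplus}$. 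The arguments of Lemmas \ref{FLem2.4} and \ref{FLem2.5} use only the (AB5) axiom, which $\mathcal C$ satisfies, so they apply verbatim with $\mathcal C$ in place of $End(\mathcal C)$.

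Next I would define the structure maps. For each $\alpha$, the map $\rho_k^\alpha(\bar n,l)$ is by hypothesis the restriction of the single morphism $\rho_k(\bar n,l)\colon P_{\Sigma\bar n+l+(2-k)}\longrightarrow \mathbb V_{\bar n}P_l$; since $\rho_k(\bar n,l)$ carries $P_{\Sigma\bar n+l+(2-k)}^\alpha$ into $\mathbb V_{\bar n}P_l^\alpha$ for every $\alpha$, and images of sums of subobjects are sums of images, $\rho_k(\bar n,l)$ restricts to a morphism
\[
\rho_k^{\oplus}(\bar n,l)\colon P_{\Sigma\bar n+l+(2-k)}^{\oplus}\longrightarrow \sum_{\alpha\in A}\mathbb V_{\bar n}P_l^\alpha\hookrightarrow \mathbb V_{\bar n}P_l^{\oplus},
\]
which coincides with $\sum_{\alpha\in A}\rho_k^\alpha(\bar n,l)$. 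To see that $(P^{\oplus},\rho^{\oplus})$ is a $(\mathbb V,\Delta)$-comodule I would check the relations \eqref{l2.14}: because each $\mathbb V_n$ preserves monomorphisms, every composite on either side of \eqref{l2.14} restricts from $P$ to the subobjects $P^\alpha$, and since $\rho^\alpha$ satisfies \eqref{l2.14} for each $\alpha$ by Lemma \ref{L2.3.2}, summing over $\alpha\in A$ gives \eqref{l2.14} for $\rho^{\oplus}$, exactly as in the proof of Theorem \ref{P2.8gb}.

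Finally, for local finiteness, fix $T\in\mathbb Z$ and $k\geq 1$. For each $\alpha$ the family $\{\rho_k^\alpha(\bar n,l)\}_{(\bar n,l)\in\mathbb Z(T,k)}$ is locally finite by the assumption \eqref{2.60vk}, yielding a factorization $P_{T+(2-k)}^\alpha\longrightarrow \bigoplus_{(\bar n,l)}\mathbb V_{\bar n}P_l^\alpha\hookrightarrow \prod_{(\bar n,l)}\mathbb V_{\bar n}P_l^\alpha$. Summing over $\alpha$ and interchanging $\sum_{\alpha}$ with $\bigoplus_{(\bar n,l)}$ using (AB5), precisely the computation \eqref{2.8ydo} of Lemma \ref{FLem2.4}, produces a factorization of the induced map $P_{T+(2-k)}^{\oplus}\longrightarrow \prod_{(\bar n,l)}\mathbb V_{\bar n}P_l^{\oplus}$ through $\bigoplus_{(\bar n,l)}\sum_{\alpha}\mathbb V_{\bar n}P_l^\alpha\hookrightarrow \bigoplus_{(\bar n,l)}\mathbb V_{\bar n}P_l^{\oplus}$, which is exactly the local finiteness condition of Definition \ref{D231}. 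The one delicate point throughout is the monomorphism bookkeeping, ensuring $\rho^{\oplus}$ lands in $\mathbb V_{\bar n}P^{\oplus}$ and that the defining relations genuinely restrict to the subobjects, but this is handled uniformly by the hypothesis that the $\mathbb V_n$ preserve monomorphisms, just as in Section 2.
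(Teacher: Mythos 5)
Your proposal is correct and follows essentially the same route as the paper: define $\rho_k^{\oplus}(\bar n,l)$ as $\sum_{\alpha}\rho_k^{\alpha}(\bar n,l)$ landing in $\sum_{\alpha}\mathbb V_{\bar n}P_l^{\alpha}\hookrightarrow \mathbb V_{\bar n}P_l^{\oplus}$ (using that each $\mathbb V_n$ preserves monomorphisms), observe that the comodule relations \eqref{l2.14} hold because the $\rho_k^{\oplus}$ are restrictions of the $\rho_k$, and deduce local finiteness by assembling the factorizations for the individual $P^{\alpha}$ over the sum $P^{\oplus}_{T+(2-k)}=\sum_{\alpha}P^{\alpha}_{T+(2-k)}$ via the (AB5) interchange. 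The only cosmetic difference is that you invoke the object-level analogue of Lemma \ref{FLem2.4} explicitly where the paper concludes from the diagram \eqref{2.63sq} and the fact that $\mathcal C$ is abelian; these are equivalent.
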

\begin{proof}
Since each $\mathbb V_n$ preserves monomorphisms, for $\bar{n}=(n_1,...,n_{k-1})
\in \mathbb Z^{k-1}$ and $l\in\mathbb Z$, we have  induced maps
\begin{equation}\label{e2.62k}
\begin{CD}
\rho^{\oplus}_k(\bar{n},l):P^{\oplus}_{\Sigma\bar{n}+l+(2-k)}=\underset{\alpha\in A}{\sum} P^\alpha_{\Sigma\bar{n}+l+(2-k)}@>\underset{\alpha\in A}{\sum} \rho_k^\alpha(\bar{n},l)>> \underset{\alpha\in A}{\sum} \mathbb V_{\bar{n}}P^\alpha_{l}\hookrightarrow \mathbb V_{\bar{n}}\left(\underset{\alpha\in A}{\sum} P^\alpha_l\right)=\mathbb V_{\bar{n}}(P^{\oplus}_l)
\end{CD}
\end{equation} Again since the morphisms in \eqref{e2.62k} are restrictions of $\rho_k(\bar{n},l)$ to subobjects, it follows that they also satisfy the relations in \eqref{l2.14}. This makes
$(P^\oplus,\rho^\oplus)$ a $(\mathbb V,\Delta)$-comodule. Since each $(P^\alpha,\rho^\alpha)$ is locally finite, we have for each $\alpha\in A$, $T\in \mathbb Z$ and $k\geq 1$, the following factorisation
\begin{equation}\label{2.63sq}
\begin{tikzcd}[row sep = 20pt, column sep = 30pt]
P^\alpha_{T+(2-k)} \arrow[r] \arrow[dr] & \left(\underset{(\bar{n},l)\in \mathbb Z(T,k)}{\prod}  \mathbb V_{\bar{n}}P^\alpha_l\right)\arrow[r] &  \left(\underset{(\bar{n},l)\in \mathbb Z(T,k)}{\prod} \mathbb V_{\bar{n}}P^\oplus_l \right) \\
 & \left( \underset{(\bar{n},l)\in \mathbb Z(T,k)}{\bigoplus} \mathbb V_{\bar{n}}P^\alpha_l\right)\arrow[r] \arrow[u] &   \left(\underset{(\bar{n},l)\in \mathbb Z(T,k)}{\bigoplus} \mathbb V_{\bar{n}}P^\oplus_l\right) \arrow[u]\\
\end{tikzcd}
\end{equation} From \eqref{2.63sq}, it follows that each of the compositions $\{P^\alpha_{T+(2-k)}\longrightarrow P^\oplus_{T+(2-k)}
\longrightarrow \underset{(\bar{n},l)\in \mathbb Z(T,k)}{\prod} \mathbb V_{\bar{n}}P^\oplus_l\}_{\alpha\in A}$ factors through the subobject $\underset{(\bar{n},l)\in \mathbb Z(T,k)}{\bigoplus} \mathbb V_{\bar{n}}P^\oplus_l$. Since $\mathcal C$ is abelian and $ P^\oplus_{T+(2-k)}=\sum_{\alpha\in A} P^\alpha_{T+(2-k)}$, the result follows. 
\end{proof}

Let $(P,\rho^P)$ and $(Q,\rho^Q)$ be $(\mathbb V,\Delta)$-comodules. By Definition \ref{D2.5}, a morphism $g:(P,\rho^P)\longrightarrow (Q,\rho^Q)$ in
$EM^{(\mathbb V,\Delta)}$ consists of a collection of morphisms $g=\{g_l:P_l\longrightarrow Q_l\}_{l\in \mathbb Z}$ such that 
\begin{equation}\label{2.64er}
\begin{CD}
P_{\Sigma \bar{n}+l+(2-k)} @>g_{\Sigma \bar{n}+l+(2-k)}>> Q_{\Sigma \bar{n}+l+(2-k)} \\
@V\rho^P_{k}(\bar{n},l)VV @VV\rho^Q_{k}(\bar{n},l)V \\
\mathbb V_{\bar{n}}P_l @>\mathbb V_{\bar{n}}(g_l)>> \mathbb V_{\bar{n}}Q_l\\
\end{CD}
\end{equation} is commutative for any $\bar{n}\in \mathbb Z^{k-1}$, $k\geq 1$, $l\in \mathbb Z$. 

\begin{lem}\label{L2.3.4} 
Let $(\mathbb V,\Delta)$ be an $A_\infty$-comonad such that each $\{\mathbb V_n\}_{n\in\mathbb Z}$ is exact. Suppose that $g:(P,\rho^P)\longrightarrow (Q,\rho^Q)$ is a morphism in  $EM^{(\mathbb V,\Delta)}$. Then, 
\begin{equation}
R=\{R_l:=Ker(g_l:P_l\longrightarrow Q_l)\}_{l\in \mathbb Z} \qquad S=\{S_l:=Cok(g_l:P_l\longrightarrow Q_l)\}_{l\in \mathbb Z}
\end{equation} are canonically equipped with the structure of $(\mathbb V,\Delta)$-comodules. 
\end{lem}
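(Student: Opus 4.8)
The plan is to equip $R$ and $S$ with comodule structures by restricting, resp.\ corestricting, the structure maps $\rho^P$ and $\rho^Q$ along the canonical mono $R_l\hookrightarrow P_l$ and epi $Q_l\twoheadrightarrow S_l$, exploiting exactness of the $\mathbb V_n$ at every step. Since each $\mathbb V_n$ is exact, every composite $\mathbb V_{\bar n}=\mathbb V_{n_1}\circ\cdots\circ\mathbb V_{n_{k-1}}$ is again exact, hence preserves kernels, cokernels, monomorphisms and epimorphisms.

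First I would treat $R$. Fix $k\ge 1$, $\bar n=(n_1,\dots,n_{k-1})\in\mathbb Z^{k-1}$, $l\in\mathbb Z$, and set $T=\Sigma\bar n+l+(2-k)$. Applying the exact functor $\mathbb V_{\bar n}$ to $0\to R_l\to P_l\xrightarrow{g_l}Q_l$ identifies $\mathbb V_{\bar n}R_l$ with $Ker(\mathbb V_{\bar n}(g_l))$. Commutativity of \eqref{2.64er} gives $\mathbb V_{\bar n}(g_l)\circ\rho^P_k(\bar n,l)=\rho^Q_k(\bar n,l)\circ g_T$, and precomposing with the inclusion $R_T\hookrightarrow P_T$ annihilates $g_T$, hence the whole composite; thus $\rho^P_k(\bar n,l)|_{R_T}$ factors uniquely through $\mathbb V_{\bar n}R_l$, and I take this factorization to be $\rho^R_k(\bar n,l)\colon R_T\to\mathbb V_{\bar n}R_l$. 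To check that the $\rho^R_k(\bar n,l)$ satisfy \eqref{l2.14}, I would post-compose both sides with the monomorphism $\mathbb V_{\bar m}\mathbb V_{\bar m'}R_l\hookrightarrow\mathbb V_{\bar m}\mathbb V_{\bar m'}P_l$ (a monomorphism since $\mathbb V_{\bar m}\mathbb V_{\bar m'}$ is exact): each term of \eqref{l2.14} for $R$ then becomes the corresponding term of \eqref{l2.14} for $P$ restricted along $R_{\Sigma\bar z+l+(2-N)}\hookrightarrow P_{\Sigma\bar z+l+(2-N)}$ — for the ``$\rho$-composites'' this is just functoriality of $\mathbb V_{\bar m}$ together with the definition of $\rho^R$, and for the ``$\delta$-terms'' it is in addition the naturality of $\delta_q(\bar n')$ applied to the monomorphism $\mathbb V_{\bar n''}R_l\hookrightarrow\mathbb V_{\bar n''}P_l$. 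Since \eqref{l2.14} holds for $P$ and the monomorphism is left-cancellable, it holds for $R$, so $(R,\rho^R)$ is a $(\mathbb V,\Delta)$-comodule.

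For $S$ I would run the dual argument. Writing $q_l\colon Q_l\twoheadrightarrow S_l$ for the cokernel projection, exactness of $\mathbb V_{\bar n}$ identifies $\mathbb V_{\bar n}S_l$ with $Cok(\mathbb V_{\bar n}(g_l))$, and from $\mathbb V_{\bar n}(q_l)\circ\rho^Q_k(\bar n,l)\circ g_T=\mathbb V_{\bar n}(q_l\circ g_l)\circ\rho^P_k(\bar n,l)=0$ one obtains that $\mathbb V_{\bar n}(q_l)\circ\rho^Q_k(\bar n,l)$ factors uniquely through $q_T\colon Q_T\twoheadrightarrow S_T$, defining $\rho^S_k(\bar n,l)\colon S_T\to\mathbb V_{\bar n}S_l$. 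The relation \eqref{l2.14} for $S$ is then obtained by precomposing with the epimorphism $Q_{\Sigma\bar z+l+(2-N)}\twoheadrightarrow S_{\Sigma\bar z+l+(2-N)}$, using \eqref{l2.14} for $Q$, the naturality of the $\delta_q(\bar n')$ on the epimorphisms $\mathbb V_{\bar n''}Q_l\twoheadrightarrow\mathbb V_{\bar n''}S_l$, and right-cancellation of the epimorphism. The two structures are canonical since at each step the new map is uniquely determined by the universal property of a kernel, resp.\ a cokernel.

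The only step requiring genuine care is the compatibility of the restriction/corestriction with the ``$\delta$-terms'' of \eqref{l2.14}: one must check that $\mathbb V_{\bar n}\ast\delta_q(\bar n')\ast\mathbb V_{\bar n''}$ applied after passing to $R$ (resp.\ $S$) agrees with the $P$-term (resp.\ $Q$-term) transported along $\mathbb V_{\bar z}R_l\hookrightarrow\mathbb V_{\bar z}P_l$ (resp.\ $\mathbb V_{\bar z}Q_l\twoheadrightarrow\mathbb V_{\bar z}S_l$). This is precisely the naturality of the transformations $\delta_q(\bar n')$ (suitably whiskered) evaluated on the relevant mono/epi, so once exactness of the $\mathbb V_n$ has been used to produce the subobjects and quotients, the verification is entirely formal.
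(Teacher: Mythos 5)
Your proof is correct and is essentially the paper's argument: the paper also uses exactness of the $\mathbb V_{\bar n}$ to identify $\mathbb V_{\bar n}R_l$ and $\mathbb V_{\bar n}S_l$ with the kernel and cokernel of $\mathbb V_{\bar n}(g_l)$, induces the structure maps from the commutative square \eqref{2.64er}, and then deduces the relations \eqref{l2.14} by exactness and naturality. You have simply written out in full the details that the paper leaves as ``it is clear'' and ``it follows''.
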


\begin{proof} Since the functors $\mathbb V_{\bar{n}}$ are all exact, it is clear from \eqref{2.64er} that we have induced morphisms
\begin{equation}\label{2.66et}
\rho_k^R(\bar{n},l):R_{\Sigma \bar{n}+l+(2-k)} \longrightarrow \mathbb V_{\bar{n}}R_l\qquad \rho_k^S(\bar{n},l):S_{\Sigma \bar{n}+l+(2-k)} \longrightarrow \mathbb V_{\bar{n}}S_l
\end{equation} Again since the functors $\mathbb V_{\bar{n}}$ are all exact, it follows that the morphisms in \eqref{2.66et} satisfy the conditions in \eqref{l2.14} for $R$ and $S$ to 
be $(\mathbb V,\Delta)$-comodules. 

\end{proof}

\begin{lem}\label{L2.3.5} 
Let $(\mathbb V,\Delta)$ be an $A_\infty$-comonad such that each $\{\mathbb V_n\}_{n\in\mathbb Z}$ is exact. Suppose that $g:(P,\rho^P)\longrightarrow (Q,\rho^Q)$ is a morphism in  $EM^{(\mathbb V,\Delta)}$ such that each $g_l:P_l\longrightarrow Q_l$ is an epimorphism. Then if $P$ is locally finite, so is $Q$. 
\end{lem}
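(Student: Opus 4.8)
The plan is to fix $T\in\mathbb Z$ and $k\geq 1$ and show directly that the canonical morphism $\Pi^Q\colon Q_{T+(2-k)}\longrightarrow\prod_{(\bar n,l)\in\mathbb Z(T,k)}\mathbb V_{\bar n}Q_l$ induced by the family $\{\rho^Q_k(\bar n,l)\}$ factors through the direct sum $\bigoplus_{(\bar n,l)\in\mathbb Z(T,k)}\mathbb V_{\bar n}Q_l$. Write $\iota^Q$ for the canonical monomorphism of this direct sum into the product (which exists since $\mathcal C$ satisfies (AB5)), and let $\Pi^P$, $\iota^P$ denote the analogous maps with $Q$ replaced by $P$.

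First I would observe that the commuting squares \eqref{2.64er}, taken over all $(\bar n,l)\in\mathbb Z(T,k)$ (for each of which $\Sigma\bar n+l=T$, so $g_{\Sigma\bar n+l+(2-k)}=g_{T+(2-k)}$), assemble into a single commuting square
\[
G\circ\Pi^P=\Pi^Q\circ g_{T+(2-k)},
\]
where $G\colon\prod_{(\bar n,l)}\mathbb V_{\bar n}P_l\to\prod_{(\bar n,l)}\mathbb V_{\bar n}Q_l$ is the product of the morphisms $\mathbb V_{\bar n}(g_l)$; this is checked by composing both sides with the projections and invoking \eqref{2.64er} componentwise. Let $G^\oplus\colon\bigoplus_{(\bar n,l)}\mathbb V_{\bar n}P_l\to\bigoplus_{(\bar n,l)}\mathbb V_{\bar n}Q_l$ be the corresponding map on direct sums, so that naturality of the inclusions gives $G\circ\iota^P=\iota^Q\circ G^\oplus$. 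Since $P$ is locally finite, $\Pi^P=\iota^P\circ\phi$ for some (necessarily unique) $\phi\colon P_{T+(2-k)}\to\bigoplus_{(\bar n,l)}\mathbb V_{\bar n}P_l$. Setting $\psi:=G^\oplus\circ\phi\colon P_{T+(2-k)}\to\bigoplus_{(\bar n,l)}\mathbb V_{\bar n}Q_l$, we obtain $\iota^Q\circ\psi=\iota^Q\circ G^\oplus\circ\phi=G\circ\iota^P\circ\phi=G\circ\Pi^P=\Pi^Q\circ g_{T+(2-k)}$.

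It remains to descend $\psi$ along the epimorphism $g_{T+(2-k)}$. Precomposing $\iota^Q\circ\psi=\Pi^Q\circ g_{T+(2-k)}$ with the inclusion $\mathrm{Ker}(g_{T+(2-k)})\hookrightarrow P_{T+(2-k)}$ yields $\iota^Q\circ\psi\circ(\text{incl})=0$, and since $\iota^Q$ is a monomorphism, $\psi\circ(\text{incl})=0$. As $g_{T+(2-k)}$ is an epimorphism in the abelian category $\mathcal C$, it is the cokernel of its own kernel, so $\psi$ factors uniquely as $\psi=\bar\psi\circ g_{T+(2-k)}$ for some $\bar\psi\colon Q_{T+(2-k)}\to\bigoplus_{(\bar n,l)}\mathbb V_{\bar n}Q_l$. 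Then $\iota^Q\circ\bar\psi\circ g_{T+(2-k)}=\iota^Q\circ\psi=\Pi^Q\circ g_{T+(2-k)}$, and cancelling the epimorphism $g_{T+(2-k)}$ gives $\iota^Q\circ\bar\psi=\Pi^Q$. This is exactly the required factorization of $\Pi^Q$ through $\iota^Q$; since $T$ and $k$ were arbitrary, $(Q,\rho^Q)$ is locally finite.

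I expect the descent in the last paragraph to be the only delicate point: it relies on $\iota^Q$ being a monomorphism (so that $\psi$ genuinely kills $\mathrm{Ker}(g_{T+(2-k)})$) together with the universal property of the cokernel for the epimorphism $g_{T+(2-k)}$. Two remarks that I would include: every degree of the form $T+(2-k)$ occurs as $T,k$ range over their values, so the hypothesis ``each $g_l$ is an epimorphism'' is precisely what the argument consumes; and the exactness of the functors $\mathbb V_n$, although it is the standing hypothesis of the section and was used in Lemma \ref{L2.3.4}, is not actually needed for this particular statement.
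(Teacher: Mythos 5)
Your proof is correct and follows essentially the same route as the paper's: both assemble the squares \eqref{2.64er} into a single square over the product, use local finiteness of $P$ to factor through the direct sum, kill the kernel of $g_{T+(2-k)}$ via the monomorphism $\iota^Q$, and descend along the epimorphism $g_{T+(2-k)}$ using the cokernel universal property. Your side remark that the exactness of the $\mathbb V_n$ is not actually consumed here is also accurate; it is a standing hypothesis of the section used in the neighbouring lemmas rather than in this one.
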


\begin{proof}
For any $T\in \mathbb Z$  and $k\geq 1$, we consider the families
\begin{equation}
\{\rho_k^P(\bar{n},l):P_{T+(2-k)}\longrightarrow \mathbb V_{\bar{n}}P_l\}_{(\bar{n},l)\in\mathbb Z(T,k)}\qquad 
\{\rho_k^Q(\bar{n},l):Q_{T+(2-k)}\longrightarrow \mathbb V_{\bar{n}}Q_l\}_{(\bar{n},l)\in\mathbb Z(T,k)}
\end{equation} It is immediate that $\left(\underset{(\bar{n},l)\in\mathbb Z(T,k)}{\prod}\rho_k^Q(\bar{n},l)\right)\circ g_{T+(2-k)}=\left(\underset{(\bar{n},l)\in\mathbb Z(T,k)}{\prod}\mathbb V_{\bar{n}}(g_l)\right)\circ\left(\underset{(\bar{n},l)\in\mathbb Z(T,k)}{\prod}\rho_k^P(\bar{n},l)\right)$.
\end{proof} We now consider the commutative diagram
\begin{equation}\label{2.68cd}
\begin{CD}
R_{T+(2-k)}@>\upsilon_{T+(2-k)}>> P_{T+(2-k)} @>\bigoplus \rho^P_k(\bar{n},l)>>\underset{(\bar{n},l)\in\mathbb Z(T,k)}{\bigoplus} \mathbb V_{\bar{n}}P_l@>\iota^P_{T,k}>>\underset{(\bar{n},l)\in\mathbb Z(T,k)}{\prod}  \mathbb V_{\bar{n}}P_l\\
@. @Vg_{T+(2-k)}VV @VV{ {\bigoplus}\mathbb V_{\bar{n}}(g_l)}V @VV{ {\prod}\mathbb V_{\bar{n}}(g_l)}V\\
@. Q_{T+(2-k)} @. \underset{(\bar{n},l)\in\mathbb Z(T,k)}{\bigoplus} \mathbb V_{\bar{n}}Q_l @>\iota^Q_{T,k}>> \underset{(\bar{n},l)\in\mathbb Z(T,k)}{\prod}  \mathbb V_{\bar{n}}Q_l\\
\end{CD}
\end{equation} where $R_{T+(2-k)}$ is the kernel of $g_{T+(2-k)}:P_{T+(2-k)}\longrightarrow Q_{T+(2-k)}$ and the factorization $\iota^P_{T,k}\circ \left(\bigoplus \rho^P_k(\bar{n},l)\right)=\prod\rho^P_k(\bar{n},l)$ is due to the fact that $(P,\rho^P)$ is locally finite. From \eqref{2.68cd} we see that
\begin{equation}\label{e2.69}
\begin{array}{ll}
\iota^Q_{T,k}\circ \left({ {\bigoplus}\mathbb V_{\bar{n}}(g_l)}\right) \circ \left(\bigoplus \rho^P_k(\bar{n},l)\right)\circ \upsilon_{T+(2-k)}&= \left({ {\prod}\mathbb V_{\bar{n}}(g_l)}\right)\circ \iota^P_{T,k}\circ \left(\bigoplus \rho^P_k(\bar{n},l)\right)\circ \upsilon_{T+(2-k)}\\ &=\left({ {\prod}\mathbb V_{\bar{n}}(g_l)}\right)\circ \left(\prod \rho_k^P(\bar{n},l)\right)\circ \upsilon_{T+(2-k)}\\
&=\left( {\prod}\rho_k^Q(\bar{n},l)\right)\circ g_{T+(2-k)}\circ \upsilon_{T+(2-k)}=0\\
\end{array}
\end{equation} Since $\iota^Q_{T,k}$ is a monomorphism, it follows from \eqref{e2.69} that $ \left({ {\bigoplus}\mathbb V_{\bar{n}}(g_l)}\right) \circ \left(\bigoplus \rho^P_k(\bar{n},l)\right)\circ \upsilon_{T+(2-k)}=0$. Since $R_{T+(2-k)}$ is the kernel of the epimorphism $g_{T+(2-k)}:P_{T+(2-k)}\longrightarrow Q_{T+(2-k)}$, we have an induced map $Q_{T+(2-k)}\longrightarrow \underset{(\bar{n},l)\in\mathbb Z(T,k)}{\bigoplus} \mathbb V_{\bar{n}}Q_l $ which fits into the commutative diagram \eqref{2.68cd}. Again since $g_{T+(2-k)}$ is an epimorphism, it is easily seen that this  gives a factorization of $ {\prod}\rho_k^Q(\bar{n},l)$.

\begin{Thm}\label{T2.3.6} Let $(\mathbb V,\Delta)$ be an $A_\infty$-comonad such that each $\{\mathbb V_n\}_{n\in\mathbb Z}$ is exact. Then, the association $Q\mapsto Q^\oplus$ determines a functor from $EM^{(\mathbb V,\Delta)}$ to the full subcategory $EM^{(\mathbb V,\Delta)}_{<\infty}$ of comodules  in $EM^{(\mathbb V,\Delta)}$ that are locally finite. Further, this functor is right adjoint to the inclusion $EM^{(\mathbb V,\Delta)}_{<\infty}\hookrightarrow EM^{(\mathbb V,\Delta)}$, i.e., there are natural isomorphisms
\begin{equation}
EM^{(\mathbb V,\Delta)}(P,Q)\cong EM^{(\mathbb V,\Delta)}_{<\infty}(P,Q^\oplus)
\end{equation}
for $P\in EM^{(\mathbb V,\Delta)}_{<\infty}$ and $Q\in EM^{(\mathbb V,\Delta)}$.

\end{Thm}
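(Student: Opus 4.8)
The plan is to show that the assignment $Q \mapsto Q^\oplus$ constructed above is functorial, and then to establish the adjunction by exhibiting the expected unit and counit. For functoriality, given a morphism $g : (Q,\rho^Q) \longrightarrow (Q',\rho^{Q'})$ in $EM^{(\mathbb V,\Delta)}$, I would first show that $g$ restricts to a morphism $Q^\oplus \longrightarrow Q'^\oplus$. The key observation is that if $\{Q_n^\alpha \hookrightarrow Q_n\}_{n\in\mathbb Z}$ is one of the families of subobjects satisfying conditions (a),(b) of the construction (so that $(Q^\alpha,\rho^\alpha)$ is locally finite), then the images $\{g_n(Q_n^\alpha) \hookrightarrow Q'_n\}_{n\in\mathbb Z}$ form such a family for $Q'$: since each $\mathbb V_n$ is exact it preserves images, so $\rho^{Q'}_k(\bar n, l)$ restricts to $g(Q^\alpha)$ (using the commuting square \eqref{2.64er}), and local finiteness of the restricted structure maps is inherited from that of $(Q^\alpha,\rho^\alpha)$ via the epimorphism $Q^\alpha \twoheadrightarrow g(Q^\alpha)$ — this is precisely Lemma \ref{L2.3.5} applied to $Q^\alpha \twoheadrightarrow g(Q^\alpha)$. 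Summing over $\alpha$ and over all admissible families, we get $g(Q^\oplus) \subseteq Q'^\oplus$, hence $g$ restricts to $g^\oplus : Q^\oplus \longrightarrow Q'^\oplus$. Compatibility with composition and identities is immediate from the construction, so $(\_)^\oplus$ is a functor.

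Next I would set up the adjunction data. The counit $\varepsilon_Q : Q^\oplus \hookrightarrow Q$ is simply the canonical inclusion of comodules, which is a morphism in $EM^{(\mathbb V,\Delta)}$ by Lemma \ref{L2.3.2} and the construction of $\rho^\oplus$ in \eqref{e2.62k}. For a locally finite $P \in EM^{(\mathbb V,\Delta)}_{<\infty}$, the unit is the identity: I claim that if $P$ is itself locally finite then $P^\oplus = P$. Indeed, the family $\{P_n \hookrightarrow P_n\}_{n\in\mathbb Z}$ of identity subobjects satisfies condition (a) trivially and condition (b) by the very hypothesis that $P$ is locally finite; hence this family is one of the $\{(P^\alpha,\rho^\alpha)\}_{\alpha\in A}$, forcing $P^\oplus = \sum_\alpha P^\alpha = P$. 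Therefore the inclusion $EM^{(\mathbb V,\Delta)}_{<\infty} \hookrightarrow EM^{(\mathbb V,\Delta)}$ restricted to $P$, followed by $(\_)^\oplus$, returns $P$, and the unit $\eta_P : P \to P^\oplus = P$ is the identity.

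It then remains to verify the universal property: for $P$ locally finite and $Q$ arbitrary, every morphism $f : P \longrightarrow Q$ in $EM^{(\mathbb V,\Delta)}$ factors uniquely through $\varepsilon_Q : Q^\oplus \hookrightarrow Q$. Uniqueness is clear since $\varepsilon_Q$ is a monomorphism (degreewise a subobject inclusion). For existence, I would argue that $f(P) \subseteq Q^\oplus$: the images $\{f_n(P_n) \hookrightarrow Q_n\}_{n\in\mathbb Z}$ form an admissible family of subobjects of $Q$ — condition (a) holds because $f$ is a comodule morphism and each $\mathbb V_n$ preserves images, and condition (b) holds because $P$ is locally finite and $P \twoheadrightarrow f(P)$ is an epimorphism, so Lemma \ref{L2.3.5} gives that $f(P)$ is locally finite — hence $f(P)$ is one of the subcomodules summed in \eqref{2.61ja}, so $f(P) \subseteq Q^\oplus$ and $f$ corestricts to $\tilde f : P \to Q^\oplus$ with $\varepsilon_Q \circ \tilde f = f$. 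Naturality of the resulting bijection $EM^{(\mathbb V,\Delta)}(P,Q) \cong EM^{(\mathbb V,\Delta)}_{<\infty}(P,Q^\oplus)$ in both variables is routine from the construction. The main obstacle I anticipate is the bookkeeping in the first paragraph: checking carefully that the image families $\{g_n(Q_n^\alpha)\}$ and $\{f_n(P_n)\}$ genuinely satisfy the local finiteness condition (b) — this rests on exactness of the $\mathbb V_n$ (so that images are preserved and the diagram chase in \eqref{2.68cd}–\eqref{e2.69} goes through) and on Lemma \ref{L2.3.5}, and one must make sure the argument does not secretly require $P$ or $Q$ to be locally finite where it is not available.
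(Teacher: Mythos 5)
Your proposal is correct and follows essentially the same route as the paper: the heart of both arguments is that the image of a locally finite comodule under a comodule morphism is again a locally finite subcomodule (via Lemmas \ref{L2.3.4} and \ref{L2.3.5}, using exactness of the $\mathbb V_n$), hence is one of the admissible families and so lands inside $Q^\oplus$. The only organizational difference is that the paper obtains functoriality by applying this same image argument to the composite $Q'^\oplus\longrightarrow Q'\longrightarrow Q$ (using that $Q'^\oplus$ is already locally finite), whereas you push forward each admissible family $Q^\alpha$ separately; your explicit observations that $P^\oplus=P$ for locally finite $P$ and that the counit is the inclusion are correct and consistent with the paper's argument.
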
 

\begin{proof}
We consider some $P\in EM^{(\mathbb V,\Delta)}_{<\infty}$,  $Q\in EM^{(\mathbb V,\Delta)}$ and a   morphism $g:P\longrightarrow Q$ in $EM^{(\mathbb V,\Delta)}$. Applying Lemma \ref{L2.3.4}, we can consider the $(\mathbb V,\Delta)$-comodule $R$ determined by setting
\begin{equation}
P_n\twoheadrightarrow R_n:=Im(g_n:P_n\longrightarrow Q_n)\hookrightarrow Q_n\qquad n\in \mathbb Z
\end{equation}  Since each $P_n\twoheadrightarrow R_n$ is an epimorphism and $P$ is locally finite, it follows from Lemma \ref{L2.3.5} that $R$ is also locally finite. As such, the family $\{R_n\hookrightarrow Q_n\}_{n\in \mathbb Z}$ of subobjects determines a locally finite $(\mathbb V,\Delta)$-comodule. From the definition in \eqref{2.61ja}, it is now clear that each $R_n\subseteq Q^\oplus_n$. 

\smallskip
It remains to show that the association $Q\mapsto Q^\oplus$ is a functor. For this, we consider a  morphism $Q'\longrightarrow Q$ in $EM^{(\mathbb V,\Delta)}$. Then $Q'^\oplus \in EM^{(\mathbb V,\Delta)}_{<\infty}$ and if we apply the above reasoning to the composition $Q'^\oplus\longrightarrow Q'\longrightarrow Q$ in $EM^{(\mathbb V,\Delta)}$, we see that there is an induced morphism $Q'^\oplus\longrightarrow Q^\oplus$. This proves the result.
\end{proof}

\section{Bar construction for $A_\infty$-monads}

In this section, we will obtain the bar construction $Bar(\mathbb U,\Theta)$ of an $A_\infty$-monad $(\mathbb U,\Theta)$, which gives a differential graded comonad on $\mathcal C$. Further, we show that morphisms to  $Bar(\mathbb U,\Theta)$  from a conilpotent dg-comonad correspond to families of natural transformations that behave in a manner similar to the classical case of twisting morphisms (see, for instance, \cite[$\S$ 3]{BK}, \cite[$\S$ 4]{Kelx}, \cite[$\S$ 2]{LodV}). We mention here that the cobar construction of an $A_\infty$-comonad can be obtained in an analogous manner. We begin with the following definition.

\begin{defn}\label{D6.1cz}
A graded comonad $(\mathbb W,\delta_2)$ on $\mathcal C$ is given by the following

\smallskip
(a) A collection $\{\mathbb W_n:\mathcal C\longrightarrow \mathcal C\}_{n\in \mathbb Z}$ of endofunctors on $\mathcal C$.

\smallskip
(b) A collection of natural transformations
\begin{equation}\label{b6.1hp}
\delta_2=\{\mbox{$\delta_2(n_1,n_2):\mathbb W_{n_1+n_2}\longrightarrow \mathbb W_{n_1}\circ \mathbb W_{n_2}$ $\vert$ $(n_1,n_2)\in \mathbb Z^2$}\}
\end{equation} such that for each $T\in \mathbb Z$, the induced morphism
\begin{equation}\label{b6.2hp}
\underset{n_1+n_2=T}{\prod} \delta_2(n_1,n_2): \mathbb W_T\longrightarrow \underset{n_1+n_2=T}{\prod}\mathbb W_{n_1}\circ \mathbb W_{n_2}
\end{equation} factors through the direct sum $\underset{n_1+n_2=T}{\bigoplus}\mathbb W_{n_1}\circ \mathbb W_{n_2}$ and for every $n_1$, $n_2$, $n_3\in \mathbb Z$, we have
\begin{equation}\label{53ed}
(\mathbb W_{n_1}\ast \delta_2(n_2,n_3))\delta_2(n_1,n_2+n_3)=(\delta_2(n_1,n_2)\ast \mathbb W_{n_3})\delta_2(n_1+n_2,n_3):\mathbb W_{n_1+n_2+n_3}\longrightarrow \mathbb W_{n_1}\circ \mathbb W_{n_2}\circ \mathbb W_{n_3}
\end{equation} A morphism $\beta:(\mathbb W,\delta_2)\longrightarrow (\mathbb W',\delta'_2)$ of graded comonads on $\mathcal C$ is a collection of natural transformations 
$\beta=\{\beta_n:\mathbb W_n\longrightarrow \mathbb W'_n\}_{n\in \mathbb Z}$ such that $ (\beta_{n_1}\ast \beta_{n_2})\delta_2(n_1,n_2)=\delta'_2(n_1,n_2)\beta_{n_1+n_2} $
for $n_1$, $n_2\in \mathbb Z$. We will denote by $gr-Comon(\mathcal C)$ the category of graded comonads over $\mathcal C$. 

\smallskip
A differential graded comonad (or dg-comonad) $(\mathbb W,\delta_1,\delta_2)$ on  $\mathcal C$ consists of a graded comonad $(\mathbb W,\delta_2)$ as well
as a collection   of natural transformations
$
\delta_1=\{\mbox{$\delta_1(n):\mathbb W_n\longrightarrow \mathbb W_{n+1}$ $\vert$ $n\in \mathbb Z$}\}
$ satisfying the following conditions (for $n$, $n_1$, $n_2 \in \mathbb Z$)
\begin{equation}\small 
\begin{array}{c}
\delta_1(n+1)\delta_1(n)=0\\
\delta_2(n_1,n_2)\delta_1(n_1+n_2-1)=(\delta_1(n_1-1)\ast \mathbb W_{n_2})\delta_2(n_1-1,n_2)+(-1)^{n_1}(\mathbb W_{n_1}\ast \delta_1(n_2-1))\delta_2(n_1,n_2-1):\mathbb W_{n_1+n_2-1}\longrightarrow \mathbb W_{n_1}\circ \mathbb W_{n_2}\\
\end{array}
\end{equation}

\smallskip
A morphism $\beta:(\mathbb W,\delta_1,\delta_2)\longrightarrow (\mathbb W',\delta'_1,\delta'_2)$ of dg-comonads on $\mathcal C$ is a collection of natural transformations 
$\beta=\{\beta_n:\mathbb W_n\longrightarrow \mathbb W'_n\}_{n\in \mathbb Z}$ such that
\begin{equation}\label{b6.7hp}
\delta'_1(n)  \beta_n=\beta_{n+1}\delta_1(n)\qquad (\beta_{n_1}\ast \beta_{n_2})\delta_2(n_1,n_2)=\delta'_2(n_1,n_2)\beta_{n_1+n_2} 
\end{equation} for $n$, $n_1$, $n_2\in \mathbb Z$. We will denote by $dg-Comon(\mathcal C)$ the category of dg-comonads over $\mathcal C$. 
\end{defn}

If $(\mathbb W,\delta_2)$ is a graded comonad on $\mathcal C$, we set for any $\bar{n}=(n_1,...,n_k)\in \mathbb Z^k$, $k\geq 1$
\begin{equation}\label{del26}
\delta_2(\bar{n}):=(\mathbb W_{n_1}\ast ... \ast \mathbb W_{n_{k-2}}\ast \delta_2(n_{k-1},n_k))...(\mathbb W_{n_1}\ast \delta_2(n_3+...+n_k))\delta_2(n_1,n_2+n_3+....+n_k):
\mathbb W_{\Sigma \bar{n}}\longrightarrow \mathbb W_{n_1}\circ ...\circ \mathbb W_{n_k}=\mathbb W_{\bar{n}}
\end{equation} 
We note that due to the coassociativity condition in \eqref{53ed}, the natural transformation  $\delta_2(\bar{n})$ appearing in 
\eqref{del26} can be expressed in a number of equivalent ways, one for each way the sum $(n_1+...+n_k)$ can be ``broken up'' into the partition $(n_1,...,n_k)$. 

  \smallskip
 
 From the conditions in Definition \ref{D6.1cz}, we know that for each  $T\in \mathbb Z$ and $k\geq 1$, the morphism $\underset{\bar{n}\in \mathbb Z(T,k) }{\prod}
 \delta_2(\bar{n}):\mathbb W_T\longrightarrow \underset{\bar{n}\in \mathbb Z(T,k) }{\prod} \mathbb W_{\bar{n}}$ factors through the direct sum $\underset{\bar{n}\in \mathbb Z(T,k) }{\bigoplus} \mathbb W_{\bar{n}}$. We now have the following definition.
 
 \begin{defn}\label{conil7}
 Let $(\mathbb W,\delta_2)$ be a graded comonad over $\mathcal C$. We will say that  $(\mathbb W,\delta_2)$  is conilpotent if for each object $M\in \mathcal C$ and $L\in \mathbb Z$, we have
 \begin{equation}\label{conil74}
 \mathbb W_L(M)=\underset{k\geq 1}{\bigcup} Ker\left(\mathbb W_L(M)\xrightarrow{\qquad \underset{\bar{n}\in \mathbb Z(T,k) }{\bigoplus}\delta_2(\bar{n})(M)\qquad } \underset{\bar{n}\in \mathbb Z(L,k) }{\bigoplus} \mathbb W_{\bar{n}}(M) \right)
 \end{equation}
 \end{defn}

\smallskip
We now let $(\mathbb U,\Theta)$ be an  $A_\infty$-monad, and let $(\mathbb W,\delta_1,\delta_2)$ be a conilpotent dg-comonad over $\mathcal C$. For each $n\in \mathbb Z$, we set 
\begin{equation}\label{b6.71hp}
A_n:=\underset{k\in \mathbb Z}{\prod} \textrm{ }Nat(\mathbb W_k,\mathbb U_{k+n})=\{\mbox{$\zeta=(\zeta^{(k)})_{k\in \mathbb Z}$ $\vert$ $\zeta^{(k)}\in Nat(\mathbb W_k,
\mathbb U_{k+n})$} \}\end{equation} where $Nat(\mathbb W_k,\mathbb U_{k+n})$ is the collection of natural transformations from $\mathbb W_k$ to 
$\mathbb U_{k+n}$. We set $A:=\underset{n\in \mathbb Z}{\bigoplus}A_n$. We now have maps
$\{m_k:A^{\otimes k}\longrightarrow A\}_{k\geq 1}$ with $m_k$ of degree $(2-k)$, given by  components 
\begin{equation}\label{7x.1}
\begin{array}{c}
m_k(\bar{n}):A_{n_1}\otimes A_{n_2}\otimes ...\otimes A_{n_k} \longrightarrow A_{\Sigma \bar{n}+(2-k)} \qquad
(\zeta_{n_1}\otimes ... \otimes \zeta_{n_k})\mapsto \left(\underset{\bar{t}\in \mathbb Z(l,k)}{\sum}\theta_k(\bar{t}+\bar{n})(\zeta_{n_1}^{(t_1)}\ast ... \ast \zeta_{n_k}^{(t_k)})\delta_2(\bar{t})\right)_{l\in \mathbb Z}\\
\end{array}
\end{equation}
 for $\bar{n}=(n_1,n_2,..., n_k)\in \mathbb Z^k$. In \eqref{7x.1}, we have suppressed the sign of the  summand $\theta_k(\bar{t}+\bar{n})(\zeta_{n_1}^{(t_1)}\ast ... \ast \zeta_{n_k}^{(t_k)})\delta_2(\bar{t})$, which is 
$
 (-1)^{t_1(n_2+...+n_k)+t_2(n_3+...+n_k)+....+t_{k-1}n_k}
$ as given by Koszul sign rule.  For ease of notation, we set $A_{\bar{n}}:=A_{n_1}\otimes A_{n_2}\otimes ...\otimes A_{n_k}$. Accordingly, an element 
 $(\zeta_{n_1}\otimes ... \otimes \zeta_{n_k})\in A_{n_1}\otimes A_{n_2}\otimes ...\otimes A_{n_k}=A_{\bar{n}}$ will often be denoted by $\zeta_{\bar{n}}$. Similarly, we write
 $\zeta_{\bar{n}}^{(\bar{t})}=\zeta_{n_1}^{(t_1)}\ast ... \ast \zeta_{n_k}^{(t_k)}$ for $\bar{n}$, $\bar{t}\in \mathbb Z^k$.  We now have the following result.
 
 \begin{thm}\label{P7x.2w}
 The collection $(A,\{m_k:A^{\otimes k}\longrightarrow A\}_{k\geq 1})$ is an $A_\infty$-algebra.
 \end{thm}
 
 \begin{proof}
 We choose $\bar{z}\in \mathbb Z^N$ for some $N\geq 1$ and consider $\zeta_{\bar{z}}\in A_{\bar{z}}$. In the following, we have suppressed the signs for sake of convenience. With the sum running over partitions $\bar{z}=(\bar{n}, \bar{n}',\bar{n}'')$ with $|\bar{n}|=p$, $|\bar{n}'|=q$, $|\bar{n}''|=r$, we  see that we have for any $l\in \mathbb Z$ 
 \begin{equation*}\small
 \begin{array}{l}
 \left(\underset{\bar{z}=(\bar{n}, \bar{n}',\bar{n}'')}{\sum} m_{p+1+r}(\bar{n},\Sigma\bar{n}'+(2-q),\bar{n}'')(id \otimes m_q(\bar{n}')\otimes id)(\zeta_{\bar{z}})\right)^{(l)}\\
 =\left(\underset{\bar{z}=(\bar{n}, \bar{n}',\bar{n}'')}{\sum}m_{p+1+r}(\bar{n},\Sigma\bar{n}'+(2-q),\bar{n}'')(\zeta_{\bar{n}} \otimes m_q(\bar{n}')(\zeta_{\bar{n}'})\otimes \zeta_{\bar{n}''})\right)^{(l)}\\
 =\underset{\bar{t}\in \mathbb Z(l,p+1+r)}{\sum}\textrm{ }\underset{\bar{z}=(\bar{n}, \bar{n}',\bar{n}'')}{\sum} \theta_{p+1+r}(\bar{t}+(\bar{n},\Sigma\bar{n}'+(2-q),\bar{n}''))(\zeta_{n_1}^{(t_1)} 
 \ast ...\ast \zeta_{n_p}^{(t_p)}\ast m_q(\bar{n}')(\zeta_{\bar{n}'})^{(t_{p+1})}\ast \zeta_{\bar{n}''}^{(t_{p+1+r})})\delta_2(\bar{t})\\
  =\underset{\bar{t}\in \mathbb Z(l,N)}{\sum}\textrm{ }\underset{\bar{z}=(\bar{n}, \bar{n}',\bar{n}'')}{\sum} \theta_{p+1+r}( (t_1,...,t_p,\sum_{y=1}^qt_{p+y},t_{p+q+1},...
  ,t_{p+q+r})+(\bar{n},\Sigma\bar{n}'+(2-q),\bar{n}''))  (\mathbb U_{\bar{n}} \ast \theta_q((t_{p+1},...,t_{p+q})+\bar{n}')\ast \mathbb U_{\bar{n}''})(\zeta_{\bar{z}}^{(\bar{t})})\delta_2(\bar{t})\\
   =\underset{\bar{t}\in \mathbb Z(l,N)}{\sum}\textrm{ }\underset{\bar{z}+\bar{t}=(\bar{n}, \bar{n}',\bar{n}'')}{\sum} \theta_{p+1+r}(\bar{n},\Sigma\bar{n}'+(2-q),\bar{n}'')(\mathbb U_{\bar{n}} \ast \theta_q(\bar{n}')\ast \mathbb U_{\bar{n}''})(\zeta_{\bar{z}}^{(\bar{t})})\delta_2(\bar{t})=0\\
 \end{array}
 \end{equation*}
 \end{proof}
 
 As in \cite[$\S$ 3.6]{BK}, we recall here that any $A_\infty$-algebra $(A,\{m_k:A^{\otimes k}\longrightarrow A\}_{k\geq 1})$ can be equivalently described using structure maps
 $(sA,\{b_k:(sA)^{\otimes k}\longrightarrow (sA)\}_{k\geq 1})$ given by
 \begin{equation}\label{7x.1sw}
b_k(\bar{n}):=(-1)^{n_1(k-1)+...+n_{k-1}}m_k(n_1+1,...,n_k+1):(sA)_{n_1}\otimes ...\otimes (sA)_{n_k}\longrightarrow (sA)_{\Sigma\bar{n}+1}
 \end{equation}
  for $\bar{n}=(n_1,n_2,..., n_k)\in \mathbb Z^k$, where $sA=\{(sA)_j:=A_{j+1}\}_{j\in \mathbb Z}$ is the suspension of $A$.
  
\smallskip
 
We will need the  following  basic example of a conilpotent graded comonad for describing the bar construction. Let $F=\{F_n:\mathcal C\longrightarrow \mathcal C\}_{n\in 
\mathbb Z}$  be a family of endofunctors. As always, for any $k\geq 1$ and $\bar{n}=(n_1,...,n_k)\in \mathbb Z^k$, we set $F_{\bar{n}}:= F_{n_1}\circ 
... \circ F_{n_k}$. We now consider the graded comonad $\mathbb T^c(F)$ given by setting 
 \begin{equation}\label{tccot6}
 \mathbb T^c(F)_L:=\underset{\bar{n}\in \mathbb Z(L,k),k\geq 1}{\bigoplus} F_{\bar{n}}\qquad L\in \mathbb Z
 \end{equation} The ``comultiplication'' $\delta^F_2:\mathbb T^c(F)\longrightarrow \mathbb T^c(F)\circ \mathbb T^c(F)$ is given on components by means of identity maps
 $F_{\bar{n}}\longrightarrow F_{\bar{n}_1}\circ F_{\bar{n}_2}$
for each partition $\bar{n}=(\bar{n_1},\bar{n_2})$ of $\bar{n}$. If we consider some $M\in \mathcal C$, we see that $\delta^F_2(\bar{t})(M)|_{F_{\bar{n}}(M)}=0$ 
for any $\bar{t}$ such that $|\bar{t}|>|\bar{n}|$. From the condition in \eqref{conil74}, it  is now clear that $(\mathbb T^c(F),\delta^F_2)$ is a conilpotent graded comonad.

\begin{thm}\label{P5.4aj} Let $(\mathbb W,\delta_2)$ be a conilpotent graded comonad over $\mathcal C$ and $F=\{F_n:\mathcal C\longrightarrow \mathcal C\}_{n\in 
\mathbb Z}$  be a family of endofunctors. Then, there is a one-one correspondence
\begin{equation}\label{5.11ck}
\underset{L\in \mathbb Z}{\prod} Nat(\mathbb W_L, F_L)\xleftrightarrows[\text{$\qquad h\qquad$}]{\text{$\qquad g\qquad$}}  gr-Comon(\mathcal C)((\mathbb W,\delta_2), ( \mathbb T^c(F),\delta_2^F))
\end{equation}

\end{thm}

\begin{proof}
For  $\zeta=\{\zeta^{(L)}\}_{L\in \mathbb Z}\in \underset{L\in \mathbb Z}{\prod} Nat(\mathbb W_L, F_L)$, we get $\chi:=h(\zeta)$ by setting for each $M\in \mathcal C$
\begin{equation}\label{conil64}
\chi_{L}(M)=\underset{\bar{n}\in \mathbb Z(L,k),k\geq 1}{\bigoplus}\chi_{\bar{n}}(M)=\underset{\bar{n}\in \mathbb Z(L,k),k\geq 1}{\bigoplus}\zeta^{(\bar{n})}\delta_2(\bar{n})(M):\mathbb W_L(M)\longrightarrow  \mathbb T^c(F)_L(M)=\underset{\bar{n}\in \mathbb Z(L,k),k\geq 1}{\bigoplus} F_{\bar{n}}(M)
\end{equation} where $\zeta^{(\bar{n})}=\zeta^{(n_1)}\ast ... \ast \zeta^{(n_k)}$ in \eqref{conil64} for $\bar{n}=(n_1,...,n_k)\in \mathbb Z^k$. It is clear from the conilpotence
condition in \eqref{conil74} that the morphism in \eqref{conil64} is well defined. On the other hand, for $\chi \in   gr-Comon(\mathcal C)((\mathbb W,\delta_2), ( \mathbb T^c(F),\delta_2^F))$, we get $\zeta:=g(\chi)$ by setting $\zeta^{(L)}$ to be the composition of  $\chi_L(M):\mathbb W_L(M)\longrightarrow \mathbb T^c(F)_L(M)$ with the projection to $F_L(M)$ for each $M\in 
\mathcal C$. It may be verified that these two associations are inverse to each other.
\end{proof}

\begin{lem}\label{L5.5tfc}
Let $F=\{F_n:\mathcal C\longrightarrow \mathcal C\}_{n\in 
\mathbb Z}$  be a family of endofunctors. Then, there is a one-one correspondence between the following

\smallskip
(a) Families of natural transformations
\begin{equation}\label{gam5c}
\Gamma=\{\mbox{$\gamma_k(\bar{n}):F_{\bar{n}}=F_{n_1}\circ ... \circ F_{n_k}\longrightarrow F_{\Sigma\bar{n}+1}$ $\vert$ $k\geq 1$ $\bar{n}=(n_1,...,n_k)\in 
\mathbb Z^k$}\}
\end{equation}

\smallskip
(b) Families of natural transformations $\delta_1^F=\{\mbox{$\delta^F_1(L):\mathbb T^c(F)_L\longrightarrow \mathbb T^c(F)_{L+1}$ $\vert$ $L\in \mathbb Z$}\}$  which satisfy
\begin{equation}\label{del1cF}
\delta_2^F(L_1,L_2)\delta_1^F(L_1+L_2-1)=(\delta_1^F(L_1-1)\ast \mathbb T^c(F)_{L_2})\delta_2^F(L_1-1,L_2)+(-1)^{L_1}(\mathbb T^c(F)_{L_1}\ast \delta_1^F(L_2-1))\delta_2^F(L_1,L_2-1) 
\end{equation} for $L_1$, $L_2\in \mathbb Z$. 
\end{lem}

\begin{proof}
Given a family $\Gamma$ as in (a), we set for each $\bar{z}\in \mathbb Z^N$, $N\geq 1$:
\begin{equation}\label{gadel5}
\delta_1^F(\Sigma \bar{z})|_{F_{\bar{z}}}:=\underset{\bar{z}=(\bar{n},\bar{n}',\bar{n}'')}{\sum} (-1)^{\Sigma \bar{n}} (F_{\bar{n}}\ast \gamma_{|\bar{n}'|}(\bar{n}')\ast F_{\bar{n}''}):F_{\bar{z}}\longrightarrow \underset{\bar{z}=(\bar{n},\bar{n}',\bar{n}'')}{\bigoplus}  F_{\bar{n}}\circ F_{\Sigma\bar{n}'+1}\circ F_{\bar{n}''}\hookrightarrow 
\mathbb T^c(\mathbb F)_{\Sigma\bar{z}+1}
\end{equation} where the sum in \eqref{gadel5} is taken over all partitions $\bar{z}=(\bar{n},\bar{n}',\bar{n}'')$. It may be verified that
the family $\delta_1^F$ satisfies the condition in \eqref{del1cF}. Conversely, given a family $\delta_1^F$ as in (b), we set for each
$\bar{z}\in \mathbb Z^N$, $N\geq 1$
\begin{equation}\label{delga5}
\gamma_{|\bar{z}|}(\bar{z}): F_{\bar{z}} \xrightarrow{\delta_1^F(\Sigma \bar{z})|_{F_{\bar{z}}}}\mathbb T^c(\mathbb F)_{\Sigma\bar{z}+1}\longrightarrow F_{\Sigma\bar{z}+1}
\end{equation} where the second arrow in \eqref{delga5} is the canonical projection. It may be checked that these two associations are inverse to each other.
\end{proof}

Given a collection $F=\{F_n:\mathcal C\longrightarrow \mathcal C\}_{n\in 
\mathbb Z}$ of endofunctors, we now set $sF=\{(sF)_n:=F_{n+1}\}_{n\in \mathbb Z}$. If $(\mathbb U,\Theta)$ is an $A_\infty$-monad, we can now consider the family $\Gamma^{s\mathbb U}$ 
of natural transformations
\begin{equation}\label{sFgam6}
\gamma^{s\mathbb U}_k(\bar{n}):(s\mathbb U)_{\bar{n}}=(s\mathbb U)_{n_1}\circ ... \circ (s\mathbb U)_{n_k}\longrightarrow (s\mathbb U)_{\Sigma\bar{n}+1}=\mathbb U_{\Sigma\bar{n}+2}\qquad \gamma^{s\mathbb U}_k(\bar{n}):=(-1)^{n_1(k-1)+...+n_{k-1}}\theta_k(n_1+1,...,n_k+1)
\end{equation} for $\bar{n}=(n_1,...,n_k)$, $k\geq 1$. Applying Lemma \ref{L5.5tfc}, the family of natural transformations in \eqref{sFgam6} corresponds to 
$\delta_1^{s\mathbb U}=\{\mbox{$\delta^{s\mathbb U}_1(L):\mathbb T^c(s\mathbb U)_L\longrightarrow \mathbb T^c(s\mathbb U)_{L+1}$ $\vert$ $L\in \mathbb Z$}\}$. Finally, using the fact that $(\mathbb U,\Theta)$ is an $A_\infty$-monad, it can be seen from the relations in \eqref{2.1e} that each $\delta_1^{s\mathbb U}(L+1)\delta_1^{s\mathbb U}(L)=0$. We will now say that the dg-cononad given by
\begin{equation}\label{barcon5r}
Bar(\mathbb U,\Theta):=(\mathbb T^c(s\mathbb U),\delta_1^{s\mathbb U},\delta_2^{s\mathbb U})
\end{equation} is the bar construction on the $A_\infty$-monad $(\mathbb U,\Theta)$. 

\begin{Thm}\label{T5.7kq}
Let $(\mathbb W,\delta_1,\delta_2)$ be a dg-comonad over $\mathcal C$ that is conilpotent. Let $(\mathbb U,\Theta)$ be an $A_\infty$-monad over $\mathcal C$ and let
$(A,\{m_l:A^{\otimes l}\longrightarrow A\}_{l\geq 1})$ be the $A_\infty$-algebra given by setting $A_l:=\underset{l'\in \mathbb Z}{\prod} \textrm{ }Nat(\mathbb W_{l'},\mathbb U_{l+l'})$. 
Then, there is a one-one correspondence between the following:

\smallskip
(a) Morphisms $\chi:(\mathbb W,\delta_1,\delta_2)\longrightarrow Bar(\mathbb U,\Theta)=(\mathbb T^c(s\mathbb U),\delta_1^{s\mathbb U},\delta_2^{s\mathbb U})$ of dg-comonads over $\mathcal C$

\smallskip
(b) Elements $\zeta =\{\zeta^{(L)}\}_{L\in \mathbb Z}\in \underset{L\in \mathbb Z}{\prod} Nat(\mathbb W_L, (s
\mathbb U)_L)=A_1$ satisfying
\begin{equation}\label{twco5r}\small
\begin{array}{l}
(\zeta^{(n_1-1)}\ast ...\ast\zeta^{(n_k-1)})\delta_2(n_1-1,...,n_k-1)\delta_1(M)\\
=\underset{i=0}{\overset{k-1}{\sum}}\underset{j=1}{\overset{\infty}{\sum}} (-1)^{n_1+...+n_i-i}(
\zeta^{(n_1-1)}\ast ...\ast \zeta^{(n_i-1)}\ast b_j(\zeta^{\otimes j})^{(n_{i+1}-2)}\ast \zeta^{(n_{i+2}-1)}\ast ...\ast \zeta^{(n_k-1)})\delta_2(n_1-1,...,n_i-1,n_{i+1}-2,n_{i+2}-1,...,n_{k}-1)(M)\\
\end{array}
\end{equation} for each fixed $k\geq 1$, $(n_1,...,n_k)\in \mathbb Z^k$ and $M\in \mathcal C$.
\end{Thm}

\begin{proof}
We consider a morphism $\chi:(\mathbb W,\delta_1,\delta_2)\longrightarrow (\mathbb T^c(s\mathbb U),\delta_1^{s\mathbb U},\delta_2^{s\mathbb U})$ of dg-comonads over $\mathcal C$. In particular, we see  that $\chi\in  gr-Comon(\mathcal C)((\mathbb W,\delta_2),(\mathbb T^c(s\mathbb U),\delta_2^{s\mathbb U}))$. Since $(\mathbb W,\delta_2)$ is conilpotent, it follows from Proposition \ref{P5.4aj} that the morphism $\chi$ corresponds to an element $\zeta =\{\zeta^{(L)}\}_{L\in \mathbb Z}\in \underset{L\in \mathbb Z}{\prod} Nat(\mathbb W_L, (s
\mathbb U)_L)=A_1$. Additionally,  the morphism $\chi$ of graded comonads is well behaved with respect to the coderivations $\delta_1$ and $\delta_1^{s\mathbb U}$. We now fix   $k\geq 1$, $\bar{n}=(n_1,...,n_k)\in \mathbb Z^k$ and $M\in \mathcal C$.  Composing   $\delta_1^{s\mathbb U}(M)$ with the morphisms induced by $\chi$, we have the sum of the following compositions
over $0\leq i\leq k-1$, $j\geq 1$
\begin{align}\label{cd520xl}
\begin{array}{c}
\xymatrix{
\mathbb W_{\Sigma\bar{n}-k-1}(M) \ar[dd]^{\underset{\bar{n}'\in \mathbb Z(n_{i+1}-2,j)}{\bigoplus}\zeta^{(n_1-1,...,n_i-1,\bar{n}',n_{i+2}-1,...,n_{k}-1)}\delta_2(n_1-1,...,n_i-1,\bar{n}',n_{i+2}-1,...,n_{k}-1)(M)} \\ \\
\underset{\bar{n}'\in \mathbb Z(n_{i+1}-2,j)}{\bigoplus} \mathbb U_{n_1}...\mathbb U_{n_{i}}
\mathbb U_{n'_{1}+1}....\mathbb U_{n'_{j}+1}\mathbb U_{n_{i+2}}...\mathbb U_{n_k}(M) 
 \ar[dd]^{(-1)^{n_1+...+n_i-i}(-1)^{n'_{1}(j-1)+...+n'_{j-1}}(\mathbb U_{n_1}\ast ... \ast \mathbb U_{n_i}\ast \theta_j(n'_{1}+1,...,n'_{j}+1)\ast \mathbb U_{n_{i+2}}\ast ...\ast \mathbb U_{n_{k}})(M)} \\ \\
 \mathbb U_{\bar{n}}=\mathbb U_{n_1}  ...  \mathbb U_{n_k}(M)\\
}
\end{array}
\end{align} Using the coassociativity condition on $\delta_2$ in \eqref{53ed} and the description of the structure maps of the $A_\infty$-algebra $\{b_l:(sA)^{\otimes l}\longrightarrow sA\}_{l\geq 1}$ in \eqref{7x.1sw}, we see that the sum of the compositions in \eqref{cd520xl} equates to
\begin{equation}\label{xlrijm}
\small
\underset{i=0}{\overset{k-1}{\sum}}\underset{j=1}{\overset{\infty}{\sum}} (-1)^{n_1+...+n_i-i}(
\zeta^{(n_1-1)}\ast ...\ast \zeta^{(n_i-1)}\ast b_j(\zeta^{\otimes j})^{(n_{i+1}-2)}\ast \zeta^{(n_{i+2}-1)}\ast ...\ast \zeta^{(n_k-1)})\delta_2(n_1-1,...,n_i-1,n_{i+1}-2,n_{i+2}-1,...,n_{k}-1)(M)
\end{equation} On the other hand, composing $\delta_1$ with the morphisms induced by $\chi$, we have the composition
\begin{equation}\label{xlrirn}
(\zeta^{(n_1-1)}\ast ...\ast\zeta^{(n_k-1)})\delta_2(n_1-1,...,n_k-1)\delta_1(M)
\end{equation} The result is now clear from \eqref{xlrijm} and \eqref{xlrirn}. 
\end{proof}

 \section{Distributive laws and lifting of $A_\infty$-monads to Eilenberg-Moore categories}
 
 Let $(\mathbb S,\theta_{\mathbb S},\iota_{\mathbb S})$ be a monad on $\mathcal C$, with ``multiplication'' $\theta_{\mathbb S}:\mathbb S\circ \mathbb S\longrightarrow
 \mathbb S$ and ``unit'' $\iota_{\mathbb S}:id\longrightarrow \mathbb S$. The standard Eilenberg-Moore category $EM_{\mathbb S}$ for the monad $\mathbb S$ consists of pairs $(M,\pi_M:
 \mathbb S(M)\longrightarrow M)$ satisfying $\pi_M\circ \mathbb S(\pi_M)=\pi_M\circ \theta_{\mathbb S}(M)$ and $id_M=\pi_M\circ \iota_{\mathbb S}(M)$ (see, for instance, 
 \cite{BBW}). The category $EM_{\mathbb S}$ is equipped with a pair $(L_{\mathbb S},R_{\mathbb S})$ of adjoint functors, where $R_{\mathbb S}:EM_{\mathbb S}
 \longrightarrow \mathcal C$ is the forgetful functor and its left adjoint $L_{\mathbb S}$ is given by
 \begin{equation}
 L_{\mathbb S}:\mathcal C\longrightarrow EM_{\mathbb S}\qquad M\mapsto (\mathbb S(M),\theta_{\mathbb S}(M))
 \end{equation} If $\mathbb T$ is another monad on $\mathcal C$, it is a classical fact that liftings of $\mathbb T$ to a monad $\widetilde{\mathbb T}$ on the Eilenberg-Moore category 
 of $\mathbb S$ correspond to monad distributive laws of $\mathbb S$ over $\mathbb T$ (see Beck \cite{Beck1969}).  In this section, we will prove similar results, showing how an $A_\infty$-monad on $\mathcal C$ can be lifted to an $A_\infty$-monad on $EM_{\mathbb S}$. We note that dual results may be proved for lifting of $A_\infty$-comonads to
 Eilenberg-Moore categories of comonads on $\mathcal C$.
 
 \smallskip
More explicitly, let $\mathbb F:\mathcal C\longrightarrow \mathcal C$ be an endofunctor. A lifting of $\mathbb F$ to the Eilenberg-Moore category $EM_{\mathbb S}$ is given by a functor
$\widetilde{\mathbb F}:EM_{\mathbb S}\longrightarrow EM_{\mathbb S}$ that fits into the following commutative diagram
\begin{equation}\label{c62cdj}
\begin{CD}
EM_{\mathbb S} @>\widetilde{\mathbb F}>> EM_{\mathbb S}\\
@VR_{\mathbb S}VV @VVR_{\mathbb S}V\\
\mathcal C @>\mathbb F>> \mathcal C\\
\end{CD}
\end{equation} We know (see \cite{Beck1969}) that such liftings are in one-one correspondence with distributive laws  of the monad $\mathbb S$ over the endofunctor $\mathbb F$. Such a distributive law consists of a natural transformation $\lambda: \mathbb S\mathbb F\longrightarrow \mathbb F\mathbb S$ such that the following two   diagrams commute
\begin{equation}
\label{dist6tf1}
\begin{array}{ccc}
\xymatrix{& \mathbb F \ar[dl]_{\iota_{\mathbb S}\mathbb F} \ar[dr]^{\mathbb F\iota_{\mathbb S}}&\\
\mathbb S\mathbb F\ar[rr]^{\lambda} && \mathbb F\mathbb S\\
}& \qquad &\xymatrix{
\mathbb S^2\mathbb F \ar[r]^{\mathbb S\lambda} \ar[d]_{\theta_{\mathbb S}\mathbb F}& \mathbb S\mathbb F\mathbb S \ar[r]^{\lambda\mathbb S} & \mathbb F\mathbb S^2
\ar[d]^{\mathbb F\theta_{\mathbb S}}\\
\mathbb S\mathbb F\ar[rr]^{\lambda}&&\mathbb F\mathbb S\\} \\
\end{array}
\end{equation} Given a distributive law as in \eqref{dist6tf1}, the lift of $\mathbb F$ to $EM_{\mathbb S}$ is defined by setting 
$\widetilde{\mathbb F}(M,\pi_M):=(\mathbb F(M),\mathbb F(\pi_M)\circ \lambda(M))$. Conversely, given a lifting 
$\widetilde{\mathbb F}$ of $\mathbb F$ as in \eqref{c62cdj}, the corresponding distributive law is defined by setting
\begin{equation}\label{dist64}
\lambda: \mathbb S\mathbb F\xrightarrow{\qquad\mathbb S\mathbb F\iota_{\mathbb S}\qquad}\mathbb S\mathbb F\mathbb S=\mathbb S\mathbb F R_{\mathbb S}L_{\mathbb S}=R_{\mathbb S}L_{\mathbb S} R_{\mathbb S}\widetilde{\mathbb F}L_{\mathbb S}\longrightarrow R_{\mathbb S}\widetilde{\mathbb F}L_{\mathbb S}=\mathbb FR_{\mathbb S}L_{\mathbb S}=\mathbb F\mathbb S
\end{equation} where the morphism $R_{\mathbb S}L_{\mathbb S} R_{\mathbb S}\widetilde{\mathbb F}L_{\mathbb S}\longrightarrow R_{\mathbb S}\widetilde{\mathbb F}L_{\mathbb S}$ in 
\eqref{dist64} is induced by the counit $L_{\mathbb S}R_{\mathbb S}\longrightarrow id$ of the adjunction $(L_{\mathbb S},R_{\mathbb S})$. 

\smallskip
Now suppose that $\tau:\mathbb F\longrightarrow \mathbb F'$ is a natural transformation between endofunctors on $\mathcal C$. Suppose that $\mathbb F$ (resp.  $\mathbb F'$)  lifts to
an endofunctor $\widetilde{\mathbb F}$ (resp. $\widetilde{\mathbb F}'$) on $EM_{\mathbb S}$, corresponding to a distributive law $\lambda: \mathbb S\mathbb F\longrightarrow \mathbb F\mathbb S$ (resp. $\lambda': \mathbb S\mathbb F'\longrightarrow \mathbb F'\mathbb S$). The natural transformation $\tau$ is said to lift to $EM_{\mathbb S}$ (see 
\cite[Definition 3.7]{Tan}) if for each 
$(M,\pi_M)\in EM_{\mathbb S}$, the morphism $\tau(M):\mathbb F(M)\longrightarrow \mathbb F'(M)$ gives a morphism $\widetilde{\tau}(M,\pi_M):\widetilde{\mathbb F}(M,\pi_M)\longrightarrow \widetilde{\mathbb F}'(M,\pi_M)$ 
in $EM_{\mathbb S}$. In other words, the following diagram must commute
\begin{equation}\label{dst65c}
\begin{CD}
\mathbb S\mathbb F(M) @>\lambda(M)>> \mathbb F\mathbb S(M) @>\mathbb F(\pi_M)>> \mathbb F(M)\\
@V\mathbb S(\tau(M))VV @. @VV\tau(M)V \\
\mathbb S\mathbb F'(M) @>\lambda'(M)>>  \mathbb F'\mathbb S(M)  @>\mathbb F'(\pi_M)>> \mathbb F'(M)\\
\end{CD}
\end{equation} Additionally, we know from \cite[Proposition 3.13]{Tan} that such a lifting of $\tau$ exists if and only if the following diagram commutes 
\begin{equation}\label{dst66c}
\begin{CD}
\mathbb S\mathbb F  @>\lambda >> \mathbb F\mathbb S  \\
@V\mathbb S\tau VV @VV\tau\mathbb SV \\
\mathbb S\mathbb F' @>\lambda'>>  \mathbb F'\mathbb S \\
\end{CD}
\end{equation} We need one more convention. If $\mathbb F_1$, ..., $\mathbb F_k$ is a family of endofunctors, along with distributive laws
$\{\lambda_i:\mathbb S\mathbb F_i\longrightarrow \mathbb F_i\mathbb S\}_{1\leq i\leq k}$, we write 
\begin{equation}\label{distcomb7}
(\lambda_1,...,\lambda_k):\mathbb S(\mathbb F_1...\mathbb F_k) \xrightarrow{\lambda_1\mathbb F_2...\mathbb F_k}\mathbb F_1\mathbb S\mathbb F_2....\mathbb F_k\xrightarrow{ \mathbb F_1\lambda_2
\mathbb F_3...\mathbb F_k}\dots \xrightarrow{\mathbb F_1...\mathbb F_{k-1}\lambda_k}(\mathbb F_1...\mathbb F_k)\mathbb S
\end{equation}  for the composition of the distributive laws $\lambda_1$, ... ,$\lambda_k$.

\begin{defn}
\label{D6.1dist}  Let $(\mathbb S,\theta_{\mathbb S},\iota_{\mathbb S})$ be a monad and let $(\mathbb U,\Theta)$ be an $A_\infty$-monad on $\mathcal C$. A distributive law of the monad $(\mathbb S,\theta_{\mathbb S},\iota_{\mathbb S})$ over the $A_\infty$-monad $(\mathbb U,\Theta)$ is given by a family $\Lambda=\{\lambda_n:
\mathbb S\mathbb U_n\longrightarrow \mathbb U_n\mathbb S\}_{n\in \mathbb Z}$ such that

\smallskip
(a) Each $\lambda_n:
\mathbb S\mathbb U_n\longrightarrow \mathbb U_n\mathbb S$ is a distributive law for the monad $\mathbb S$ over the endofunctor $\mathbb U_n$.

\smallskip
(b) For each $\bar{n}=(n_1,...,n_k)\in \mathbb Z^k$, $k\geq 1$, the following diagram commutes
\begin{equation}\label{cd65dsw}
\begin{CD}
\mathbb S\mathbb U_{\bar{n}}=\mathbb S\mathbb U_{n_1}...\mathbb U_{n_k} @>\lambda_{\bar{n}}=(\lambda_{n_1},...,\lambda_{n_k})>> \mathbb U_{n_1}...\mathbb U_{n_k} \mathbb S=\mathbb U_{\bar{n}}\mathbb S\\
@V\mathbb S\theta_k(\bar{n})VV @VV\theta_k(\bar{n})\mathbb SV\\
\mathbb S\mathbb U_{\Sigma\bar{n}+(2-k)} @>\lambda_{\Sigma\bar{n}+(2-k)}>> \mathbb U_{\Sigma\bar{n}+(2-k)}\mathbb S\\
\end{CD}
\end{equation}
\end{defn}

\begin{lem}\label{L6.2wk}
Let $\Lambda=\{\lambda_n\}_{n\in \mathbb Z}$ be a distributive law of the monad $\mathbb S$ over the $A_\infty$-monad $(\mathbb U,\Theta)$. Then, $(\mathbb U,\Theta)$ lifts to an
$A_\infty$-monad $(\widetilde{\mathbb U},\widetilde{\Theta})$ on the category $EM_{\mathbb S}$.
\end{lem}

\begin{proof}
Since each  $\lambda_n:
\mathbb S\mathbb U_n\longrightarrow \mathbb U_n\mathbb S$ is a distributive law, it follows from \eqref{c62cdj} and \eqref{dist6tf1} that $\mathbb U_n$ lifts to
$\widetilde{\mathbb U}_n:EM_{\mathbb S}\longrightarrow EM_{\mathbb S}$. Further, since \eqref{cd65dsw} commutes, it follows from the criterion described in 
\eqref{dst66c} that the natural transformations $\theta_k(\bar{n})$ lift to $\widetilde{\theta}_k(\bar{n}): \widetilde{\mathbb U}_{\bar{n}}=\widetilde{\mathbb U}_{n_1}...\widetilde{\mathbb U}_{n_k}
\longrightarrow \widetilde{\mathbb U}_{\Sigma\bar{n}+(2-k)} $. Accordingly, for any $(M,\pi_M)\in EM_{\mathbb S}$ and $\bar{z}\in 
\mathbb Z^N$, $N\geq 1$, it follows from \eqref{2.1e} that the morphism in 
$\mathcal C$ underlying the sum
\begin{equation}
\label{2.1ex6jq}
\sum (-1)^{p+qr+q\Sigma\bar{n}}\widetilde{\theta}_{p+1+r}(\bar{n},\Sigma\bar{n}'+(2-q),\bar{n}'')(\widetilde{\mathbb U}_{\bar{n}}\ast 
\widetilde{\theta}_q(\bar{n}')\ast \widetilde{\mathbb U}_{\bar{n}''}):\widetilde{\mathbb U}_{\bar{n}}\circ \widetilde{\mathbb U}_{\bar{n}'}\circ 
\widetilde{\mathbb U}_{\bar{n}''}(M,\pi_M)\longrightarrow \widetilde{\mathbb U}_{\Sigma \bar{n}+\Sigma\bar{n}'+\Sigma\bar{n}''+(3-N)}(M,\pi_M)
\end{equation} is zero,  where the sum runs over partitions $\bar{z}=(\bar{n}, \bar{n}',\bar{n}'')$ with $|\bar{n}|=p$, $|\bar{n}'|=q$, $|\bar{n}''|=r$. 
\end{proof}

\begin{Thm}\label{T6.3qz}
Let  $(\mathbb U,\Theta)$ be an $A_\infty$-monad and let $(\mathbb S,\theta_{\mathbb S},\iota_{\mathbb S})$ be a monad on $\mathcal C$. Then, there is a one one correspondence between the following

\smallskip
(a) Liftings of the $A_\infty$-monad $(\mathbb U,\Theta)$ to an $A_\infty$-monad $(\widetilde{\mathbb U},\widetilde{\Theta})$ on the category $EM_{\mathbb S}$.

\smallskip
(b) Distributive laws  of the monad $\mathbb S$ over the $A_\infty$-monad $(\mathbb U,\Theta)$.
\end{Thm} 

\begin{proof}
We know from Lemma \ref{L6.2wk} that a distributive law  of the monad $\mathbb S$ over the $A_\infty$-monad $(\mathbb U,\Theta)$ gives rise to a lifting of the $A_\infty$-monad $(
\mathbb U,\Theta)$ to $EM_{\mathbb S}$. On the other hand, let $(\widetilde{\mathbb U},\widetilde{\Theta})$  be a lifting of the $A_\infty$-monad 
$(\mathbb U,\Theta)$ to $EM_{\mathbb S}$. In particular, the lifting $\widetilde{\mathbb U}_n$ of the endofunctor 
$\mathbb U_n$ leads to a distributive law $\lambda_n:\mathbb S\mathbb U_n\longrightarrow \mathbb U_n\mathbb S$. Also, for any 
$k\geq 1$ and $\bar{n}\in \mathbb Z^k$, we know that the natural transformation $\theta_k(n):\mathbb U_{\bar{n}}\longrightarrow \mathbb U_{\Sigma\bar{n}+(2-k)}$ lifts 
to $EM_{\mathbb S}$. It now follows from \eqref{dst66c} that we must have $(\theta_k(\bar{n})\mathbb S)\circ (\lambda_{n_1},...,\lambda_{n_k})=\lambda_{\Sigma\bar{n}+(2-k)}
\circ (\mathbb S\theta_k(\bar{n}))$. Hence, the collection $\Lambda=\{\lambda_n:
\mathbb S\mathbb U_n\longrightarrow \mathbb U_n\mathbb S\}_{n\in \mathbb Z}$ gives a distributive law of the monad $(\mathbb S,\theta_{\mathbb S},\iota_{\mathbb S})$ over the $A_\infty$-monad $(\mathbb U,\Theta)$ in the sense of Definition \ref{D6.1dist}. Since there is a one one correspondence between distributive laws and liftings of an endofunctor
to $EM_{\mathbb S}$, it follows that these two associations are inverse to each other. 
\end{proof}

We  recall (see, for instance,  \cite[Proposition 3.30]{Tan}) that a distributive law of $\mathbb S$ over a monad $\mathbb T$ gives rise to a new monad $\mathbb T\mathbb S$. We will now study something similar for
$A_\infty$-monads. 

\begin{thm}\label{P6.4rc}
Let $\Lambda=\{\lambda_n:
\mathbb S\mathbb U_n\longrightarrow \mathbb U_n\mathbb S\}_{n\in \mathbb Z}$ be a distributive law of the monad $(\mathbb S,\theta_{\mathbb S},\iota_{\mathbb S})$ over the $A_\infty$-monad $(\mathbb U,\Theta)$. Then, the pair $(\mathbb U^{(\mathbb S)},\Theta^{(\mathbb S)})$ defined by setting $\mathbb U^{(\mathbb S)}_n:=\mathbb U_n\mathbb S$ and
$\theta_k^{(\mathbb S)}(\bar{n}): \mathbb U^{(\mathbb S)}_{\bar{n}}\longrightarrow \mathbb U^{(\mathbb S)}_{\Sigma\bar{n}+(2-k)}$ as follows
\begin{equation}\label{e6.10the}
\begin{array}{c}
\xymatrix{
 \mathbb U^{(\mathbb S)}_{\bar{n}}=(\mathbb U_{n_1}\mathbb S)...(\mathbb U_{n_k}\mathbb S)\ar[d]^{(\mathbb U_{n_1}\mathbb S)...(\mathbb U_{n_{k-2}}\mathbb S)\mathbb U_{n_{k-1}}\lambda_{n_k}\mathbb S} \\ \vdots \ar[d]^{\mathbb U_{n_1}(\lambda_{n_2},...,\lambda_{n_k})\mathbb S^{k-1}}\\ \mathbb U_{\bar{n}}\mathbb S^k =\mathbb U_{n_1}...\mathbb U_{n_k}\mathbb S^k
\ar[d]^{\theta_k(\bar{n})\theta_{\mathbb S}^k}\\ \mathbb U_{\Sigma
\bar{n}+(2-k)}\mathbb S=\mathbb U^{(\mathbb S)}_{\Sigma\bar{n}+(2-k)}\\
}
\end{array}
\end{equation}gives an $A_\infty$-monad on $\mathcal C$, where $\theta_{\mathbb S}^k:\mathbb S^k\longrightarrow \mathbb S$ is obtained by iterating the multiplication $\theta_{\mathbb S}:
\mathbb S^2\longrightarrow \mathbb S$ on the monad
$S$.
\end{thm}

\begin{proof}
We fix any $\bar{z}\in \mathbb Z^N$ and any partition $\bar{z}=(\bar{n},\bar{n}',\bar{n}'')$ with  $|\bar{n}|=p$, $|\bar{n}'|=q$, $|\bar{n}''|=r$. Since the distributive laws are well behaved both with respect to the multiplication $\theta_{\mathbb S}$ on the monad $\mathbb S$ as in \eqref{dist6tf1} and the structure maps $\theta_k(\bar{n})$ of the $A_\infty$-monad $
(\mathbb U,\Theta)$  as in \eqref{cd65dsw}, we note that the following diagram commutes
\begin{equation}\label{611mp}
\begin{array}{c}
\xymatrix{
\mathbb U^{(\mathbb S)}_{\bar{n}}\circ \mathbb U^{(\mathbb S)}_{\bar{n}'}\circ \mathbb U^{(\mathbb S)}_{\bar{n}''} \ar[rrrrr]^{} \ar[dd]_{\theta^{(\mathbb S)}_{p+1+r}(\bar{n},\Sigma\bar{n}'+(2-q),\bar{n}'')(\mathbb U^{(\mathbb S)}_{\bar{n}}\ast \theta_q^{(\mathbb S)}(\bar{n}')\ast \mathbb U^{(\mathbb S)}_{\bar{n}''})} & && && \mathbb U_{\bar{n}}\circ \mathbb U_{\bar{n}'}\circ \mathbb U_{\bar{n}''}\circ \mathbb S^N \ar[dd]^{\mathbb U_{\bar{n}}\mathbb U_{\bar{n}'}\mathbb U_{\bar{n}''}\theta^N_{\mathbb S}}\\ \\
 \mathbb U^{(\mathbb S)}_{\Sigma \bar{n}+\Sigma\bar{n}'+\Sigma\bar{n}''+(3-N)}= \mathbb U_{\Sigma \bar{n}+\Sigma\bar{n}'+\Sigma\bar{n}''+(3-N)}\mathbb S& &&& &\ar[lllll]_{\qquad\qquad\theta_{p+1+r}(\bar{n},\Sigma\bar{n}'+(2-q),\bar{n}'')(\mathbb U_{\bar{n}}\ast \theta_q(\bar{n}')\ast \mathbb U_{\bar{n}''})\mathbb S}  \mathbb U_{\bar{n}}\circ \mathbb U_{\bar{n}'}\circ \mathbb U_{\bar{n}''}\circ \mathbb S \\
}
\end{array}
\end{equation} Here the top horizontal arrow in \eqref{611mp} is obtained by repeatedly applying the distributive laws. It now follows from \eqref{2.1e} that
\begin{equation}
\sum (-1)^{p+qr+q\Sigma\bar{n}}\theta^{(\mathbb S)}_{p+1+r}(\bar{n},\Sigma\bar{n}'+(2-q),\bar{n}'')(\mathbb U^{(\mathbb S)}_{\bar{n}}\ast \theta^{(\mathbb S)}_q(\bar{n}')\ast \mathbb U^{(\mathbb S)}_{\bar{n}''}) =0
\end{equation} where the sum runs over partitions $\bar{z}=(\bar{n}, \bar{n}',\bar{n}'')$ with $|\bar{n}|=p$, $|\bar{n}'|=q$, $|\bar{n}''|=r$. 
\end{proof}

Let $\Lambda=\{\lambda_n:
\mathbb S\mathbb U_n\longrightarrow \mathbb U_n\mathbb S\}_{n\in \mathbb Z}$ be a distributive law of the monad $(\mathbb S,\theta_{\mathbb S},\iota_{\mathbb S})$ over the $A_\infty$-monad $(\mathbb U,\Theta)$. By Theorem \ref{T6.3qz}, $(\mathbb U,\Theta)$ lifts to an $A_\infty$-monad $(\widetilde{\mathbb U},\widetilde{\Theta})$ on the category $EM_{\mathbb S}$. Accordingly, we have the category $EM_{(\widetilde{\mathbb U},\widetilde{\Theta})}$ of $(\widetilde{\mathbb U},\widetilde{\Theta})$-modules in the sense of
Definition \ref{D2.4}. On the other hand, we have  $(\mathbb U^{(\mathbb S)},\Theta^{(\mathbb S)})$  from Proposition \ref{P6.4rc}, which is an $A_\infty$-monad on $\mathcal C$. 
The category of  $(\mathbb U^{(\mathbb S)},\Theta^{(\mathbb S)})$-modules is denoted by $EM_{ (\mathbb U^{(\mathbb S)},\Theta^{(\mathbb S)}) }$.

\begin{thm}\label{P6.5xk}
Let $\Lambda=\{\lambda_n:
\mathbb S\mathbb U_n\longrightarrow \mathbb U_n\mathbb S\}_{n\in \mathbb Z}$ be a distributive law of the monad $(\mathbb S,\theta_{\mathbb S},\iota_{\mathbb S})$ over the $A_\infty$-monad $(\mathbb U,\Theta)$. Then, there is a canonical functor $EM_{(\widetilde{\mathbb U},\widetilde{\Theta})}\longrightarrow EM_{ (\mathbb U^{(\mathbb S)},\Theta^{(\mathbb S)}) }$.
\end{thm}

\begin{proof} By definition, a  $(\widetilde{\mathbb U},\widetilde{\Theta})$-module consists of a collection $\{(M_n,\pi_n:\mathbb SM_n\longrightarrow M_n)\}_{n\in \mathbb Z}$ of objects of $EM_{\mathbb S}$ 
and a  collection of morphisms in  $EM_{\mathbb S}$:
\begin{equation}\label{612qef}
\{\mbox{$\widetilde{\pi}_k(\bar{n},l):\widetilde{\mathbb U}_{\bar{n}}(M_{l},\pi_l)\longrightarrow (M_{\Sigma\bar{n}+l+(2-k)},\pi_{\Sigma\bar{n}+l+(2-k)})$ $\vert$ $k\geq 1$, $\bar{n}=(n_1,...,n_{k-1})
\in \mathbb Z^{k-1}$, $l\in \mathbb Z$ } \}
\end{equation} Since $\widetilde{\mathbb U}_{\bar{n}}$ is a lifting of $\mathbb U_{\bar{n}}$ to $EM_{\mathbb S}$, there is a morphism $\mathbb U_{\bar{n}}M_l\longrightarrow M_{\Sigma\bar{n}+l+(2-k)}$ in $\mathcal C$ underlying $\widetilde{\pi}_k(\bar{n},l)$. We continue to denote this morphism by $\widetilde{\pi}_k(\bar{n},l):\mathbb U_{\bar{n}}M_l\longrightarrow M_{\Sigma\bar{n}+l+(2-k)}$. We also note the morphism
\begin{equation}
\pi_l\circ ...\circ \mathbb S^{k-3}(\pi_l)\circ \mathbb S^{k-2}(\pi_l):\mathbb S^{k-1}(M_l)\xrightarrow{ \mathbb S^{k-2}(\pi_l)} \mathbb S^{k-2}(M_l)\xrightarrow{ \mathbb S^{k-3}(\pi_l)}
\mathbb S^{k-3}(M_l)\xrightarrow{\mathbb S^{k-4}(\pi_l)} ...\xrightarrow{\pi_l} M_l
\end{equation} For $k\geq 1$, $\bar{n}=(n_1,...,n_{k-1})
\in \mathbb Z^{k-1}$, $l\in \mathbb Z$, we now consider the composition
\begin{equation}\label{614rty}
{\pi}^{(\mathbb S)}_k(\bar{n},l):\mathbb U^{(\mathbb S)}_{\bar{n}}M_l\xrightarrow{\qquad\qquad\qquad} \mathbb U_{\bar{n}}\mathbb S^{k-1}M_l\xrightarrow{\pi_l\circ ...\circ \mathbb S^{k-3}(\pi_l)\circ \mathbb S^{k-2}(\pi_l)}\mathbb U_{\bar{n}}M_l\xrightarrow{\widetilde{\pi}_k(\bar{n},l)} M_{\Sigma\bar{n}+l+(2-k)}
\end{equation} where the first morphism in  \eqref{614rty} is obtained by repeatedly applying the distributive law $\Lambda=\{\lambda_n:
\mathbb S\mathbb U_n\longrightarrow \mathbb U_n\mathbb S\}_{n\in \mathbb Z}$ as in top horizontal arrow in \eqref{611mp}. As in the proof of Proposition \ref{P6.4rc}, it may now be verified that the collection $\{M_n\}_{n\in \mathbb Z}$ along with the morphisms ${\pi}^{(\mathbb S)}_k(\bar{n},l)$ satisfies the relations for being a $(\mathbb U^{(\mathbb S)},\Theta^{(\mathbb S)})$-module in the sense of Definition \ref{D2.4}. 

\end{proof}

We conclude this section with the following result. By a distributive law of the $A_\infty$-monad $(\mathbb U,\Theta)$ over the monad $(\mathbb S,\theta_{\mathbb S},\iota_{\mathbb S})$, we mean a collection of natural transformations $\Lambda'=\{\lambda'_n:
\mathbb U_n\mathbb S\longrightarrow \mathbb S \mathbb U_n\}_{n\in \mathbb Z}$ such that

\smallskip
(a) The following two  diagrams commute for each $n\in \mathbb Z$
\begin{equation}
\label{dist6tf2x}
\begin{array}{ccc}
\xymatrix{& \mathbb U_n \ar[dl]_{\mathbb U_n\iota_{\mathbb S}} \ar[dr]^{\iota_{\mathbb S}\mathbb U_n}&\\
\mathbb U_n \mathbb S \ar[rr]_{\lambda'_n} &&\mathbb S \mathbb U_n\\
}& \qquad &\xymatrix{
\mathbb U_n\mathbb S^2 \ar[d]_{\mathbb U_n\theta_{\mathbb S}}\ar[r]^{\lambda'_n\mathbb S}&   \mathbb S\mathbb U_n\mathbb S  \ar[r]^{\mathbb S\lambda'_n}&\mathbb S^2 \mathbb U_n
\ar[d]^{\theta_{\mathbb S}\mathbb U_n}\\
\mathbb U_n\mathbb S\ar[rr]_{\lambda'_n}&&\mathbb S\mathbb U_n\\} \\
\end{array}
\end{equation}
\smallskip
(b)  For each $\bar{n}=(n_1,...,n_k)\in \mathbb Z^k$, $k\geq 1$, the following diagram commutes
\begin{equation}\label{cd65dswg}
\begin{array}{c}
\xymatrix{
\mathbb U_{\bar{n}}\mathbb S=\mathbb U_{n_1}...\mathbb U_{n_k}\mathbb S\ar[d]_{\theta_k(\bar{n})\mathbb S}
\ar[rr]^{\mathbb U_{n_1}...\mathbb U_{n_{k-1}}\lambda'_{n_k}}&& \mathbb U_{n_1}...\mathbb U_{n_{k-1}}\mathbb S\mathbb U_{n_k}
\ar[rr]^{\qquad\mathbb U_{n_1}...\mathbb U_{n_{k-2}}\lambda'_{n_{k-1}}\mathbb U_{n_k}} &&\dots  \ar[rr]^{\lambda_{n_1}\mathbb U_{n_2}...\mathbb U_{n_k}\qquad\qquad}& &\mathbb S\mathbb U_{n_1}...\mathbb U_{n_k}=
\mathbb S\mathbb U_{\bar{n}}\ar[d]^{\mathbb S\theta_k(\bar{n})}\\
\mathbb U_{\Sigma\bar{n}+(2-k)}\mathbb S\ar[rrrrrr]^{\lambda'_{\Sigma\bar{n}+(2-k)}} &&&&&& \mathbb S\mathbb U_{\Sigma\bar{n}+(2-k)}
}
\end{array}
\end{equation} We now recall (see, for instance, \cite[$\S$ 5.2]{Jac}) that the Kleisli category $Kl(\mathbb S)$ of the monad $(\mathbb S,\theta_{\mathbb S},\iota_{\mathbb S})$ is the category
whose objects are the same as those of $\mathcal C$, but a morphism $M\longrightarrow M'$ in $Kl(\mathbb S)$ corresponds to a morphism
$M\longrightarrow \mathbb S(M')$ in $\mathcal C$. The composition of $\alpha: M\longrightarrow \mathbb S(M')$ with
$\beta:M'\longrightarrow \mathbb S(M'')$ in $Kl(\mathbb S)$ is given by
\begin{equation}
M \xrightarrow{\alpha}\mathbb S(M') \xrightarrow{\mathbb S(\beta)}\mathbb S^2(M'')\xrightarrow{\theta_{\mathbb S}(M'')}\mathbb S(M'')
\end{equation} There is a canonical functor $J_{\mathbb S}:\mathcal C\longrightarrow Kl(\mathbb S)$   that is identity on objects and which takes the morphism $
\alpha:M\longrightarrow M'$ in $\mathcal C$ to $M\xrightarrow{\alpha}M'\xrightarrow{\iota_{\mathbb S}(M')}
\mathbb S(M')$ in $Kl(\mathbb S)$.  From \cite[Proposition 5.2.5]{Jac}, we know that a natural transformation $\lambda'_n:
\mathbb U_n\mathbb S\longrightarrow \mathbb S \mathbb U_n$ satisfying the conditions in \eqref{dist6tf2x} corresponds to an extension 
$\widehat{\mathbb U}_n$ of the endofunctor $\mathbb U_n$ to the Kleisli category $Kl(\mathbb S)$, i.e., $\widehat{\mathbb U}_n\circ J_{\mathbb S}
=J_{\mathbb S}\circ \mathbb U_n$. From \cite[Theorem 4.16]{Tan}, we know that the condition in \eqref{cd65dswg} ensures that the natural transformations 
$\theta_k(\bar{n}):\mathbb U_{\bar{n}}\longrightarrow \mathbb U_{\Sigma\bar{n}+(2-k)}$ extend to natural transformations $\widehat{\theta}_k(\bar{n}):\widehat{\mathbb U}_{\bar{n}}\longrightarrow \widehat{\mathbb U}_{\Sigma\bar{n}+(2-k)}$.  Proceeding as in the proof of Theorem \ref{T6.3qz}, we can now prove the following result.

\begin{Thm}\label{T6.7qz}
Let  $(\mathbb U,\Theta)$ be an $A_\infty$-monad and let $(\mathbb S,\theta_{\mathbb S},\iota_{\mathbb S})$ be a monad on $\mathcal C$. Then, there is a one one correspondence between the following

\smallskip
(a) Extensions of the $A_\infty$-monad $(\mathbb U,\Theta)$ to an $A_\infty$-monad $(\widehat{\mathbb U},\widehat{\Theta})$ on the category $Kl({\mathbb S})$.

\smallskip
(b) Distributive laws  of the $A_\infty$-monad $(\mathbb U,\Theta)$ over the monad $\mathbb S$.
\end{Thm}

\section{$A_\infty$-(co)algebras and $A_\infty$-(co)monads}

In this section, we will show how to give examples of $A_\infty$-monads and $A_\infty$-comonads on any $K$-linear Grothendieck category.  For basic definitions such as $A_\infty$-algebras, $A_\infty$-coalgebras, $A_\infty$-modules over $A_\infty$-algebras and $A_\infty$-comodules over $A_\infty$-coalgebras, we refer the reader to the appendix in Section \ref{Appendix}. 

\smallskip
Let $\mathcal C$ be a $K$-linear Grothendieck category.    If $R$ is a $K$-algebra, an $R$-object in $\mathcal C$ consists of $M\in \mathcal C$ along with a morphism $\xi_M:R\longrightarrow \mathcal C(M,M)$ of $K$-algebras. The category $\mathcal C_R$ of $R$-objects in $\mathcal C$ is a classical construction of Popescu \cite[p 108]{Pop}. This abstract module theory  has also been developed extensively by Artin and Zhang \cite{AZ}, who studied Hilbert Basis Theorem, completions, localizations and Grothendieck's theory of flat descent in this context. In particular, if $\mathcal C=Mod\text{-}S$, the category of right modules over a $K$-algebra $S$, then $\mathcal C_R=Mod\text{-}(S\otimes_KR)$.  Accordingly,  the abstract module category $\mathcal C_R$ is often understood as a noncommutative base change of $R$ by means of the category $\mathcal C$ (see Lowen and Van den Bergh \cite{LoV}). If $V$ is a right $R$-module, then there is a pair of adjoint functors (see \cite[$\S$ B3, B4]{AZ}) 
\begin{equation}\label{scu5.1}
\_\_\otimes V : \mathcal C\longrightarrow \mathcal C_R\qquad \underline{Hom}_R(V,\_\_):\mathcal C_R\longrightarrow \mathcal C
\end{equation} Therefore, for $\mathcal C=\mathcal C_K$, an $R$-module object in $\mathcal C$ is determined by $M\in \mathcal C$ and a structure map
$M\otimes R\longrightarrow M$ in $\mathcal C$ satisfying the usual associativity and unit conditions. We observe that $\_\_\otimes R:\mathcal C\longrightarrow \mathcal C$ is an ordinary monad on $\mathcal C$. Similarly, if $C'$ is a $K$-coalgebra, a $C'$-comodule object in $\mathcal C$ is determined by $P\in \mathcal C$ and a structure map $P\longrightarrow P\otimes C'$ satisfying coassociativity and counit conditions (see Brzezi\'{n}ski and Wisbauer \cite[$\S$ 39.1]{BWis}). Again, $\_\_\otimes C'$ is an ordinary comonad on $\mathcal C$. This suggests how we can create $A_\infty$-monads and $A_\infty$-comonads on $\mathcal C$.

\smallskip
We  let $Vect^{\mathbb Z}$ be the category of $\mathbb Z$-graded vector spaces. We know that $Vect^{\mathbb Z}$ is a closed symmetric monoidal category, with internal hom
$[W',W]$ of two objects given by
\begin{equation}\label{6.1tj}
[W',W]_p:=\underset{k\in\mathbb Z}{\prod}\textrm{ }Vect(W'_k,W_{k+p}) \qquad p\in \mathbb Z
\end{equation}   Additionally, we consider the endofunctor $T:Vect^{\mathbb Z}\longrightarrow Vect^{\mathbb Z}$ that inverts the grading, i.e., $(TW)_k:=W_{-k}$ for any $k\in \mathbb Z$ and $W\in
Vect^{\mathbb Z}$. 

\smallskip
We let  $(\mathbb F,\theta_{\mathbb F})$ be a non-unital monad on $\mathcal C$. In other words,  we have an endofunctor $\mathbb F\in End(\mathcal C)$ along with a ``multiplication'' 
$\theta_{\mathbb F}:\mathbb F\circ \mathbb F\longrightarrow \mathbb F$ satisfying the usual associativity conditions. We note that any monad on $\mathcal C$ is equipped in particular with the structure of a non-unital monad. Similarly, a non-counital comonad $(\mathbb G,\Delta_{\mathbb G})$ consists of an endofunctor $\mathbb G\in End(\mathcal C)$ along with a ``comultiplication'' 
$\Delta_{\mathbb G}:\mathbb G\longrightarrow \mathbb G\circ \mathbb G$ satisfying the usual coassociativity conditions.  The following result can be used to construct several examples of $A_\infty$-monads on $\mathcal C$. 

\begin{thm}\label{scuP5.1}
Let   $A=\underset{n\in \mathbb Z}{\bigoplus}A_n$ be an $A_\infty$-algebra and let $(\mathbb F,\theta_{\mathbb F})$ be a non-unital monad on $\mathcal C$.  Suppose that $\mathbb F$ preserves direct sums. Then,  the collection of functors
\begin{equation}\label{scue3.1e}
\mathbb U^{A,\mathbb F}_{\mathcal C}=\{\mathbb U_{\mathcal C,n}^{A,\mathbb F}:=A_n\otimes \mathbb F( \_\_):\mathcal C\longrightarrow \mathcal C\}_{n\in \mathbb Z}
\end{equation} is an $A_\infty$-monad on $\mathcal C$. In particular, the collection of functors $\mathbb U^A_{\mathcal C}=\{\mathbb U_{\mathcal C,n}^A:=A_n\otimes \_\_:\mathcal C\longrightarrow \mathcal C\}_{n\in \mathbb Z}$  is an $A_\infty$-monad on $\mathcal C$.
\end{thm}

\begin{proof}  Let $m_k:A^{\otimes k}\longrightarrow A$ of degree $(2-k)$ for $k\geq 1$ be the structure maps for $A$ as an $A_\infty$-algebra as in
Definition \ref{D3.1}. Since $\mathbb F$ preserves direct sums, we note that for $k\geq 1$ and $\bar{n}=(n_1,...,n_k)\in \mathbb Z^k$, we have
\begin{equation}
\mathbb U^{A,\mathbb F}_{
\mathcal C, n_1}\circ ...\circ \mathbb U^{A,\mathbb F}_{\mathcal C, n_k}=A_{n_1}\otimes ... \otimes A_{n_k}\otimes \mathbb F^k(\_\_)
\end{equation} We can now define natural transformations as in \eqref{2.1trf}
\begin{equation} \Theta_{\mathcal C}^{A,\mathbb F}=\{\mbox{$\theta^{A,\mathbb F}_{\mathcal C,k}(\bar{n}):\mathbb U^{A,\mathbb F}_{\mathcal C,\bar{n}}=\mathbb U^{A,\mathbb F}_{
\mathcal C, n_1}\circ ...\circ \mathbb U^{A,\mathbb F}_{\mathcal C, n_k}\longrightarrow \mathbb U^{A,\mathbb F}_{\mathcal C, n_1+...+n_k+(2-k)}=\mathbb U^{A,\mathbb F}_{\mathcal C, \Sigma \bar{n}+(2-k)}$} \}
\end{equation} induced by the structure maps 
$
A_{n_1}\otimes ... \otimes A_{n_k}\longrightarrow A_{n_1+...+n_k+(2-k)}
$ of the $A_\infty$-algebra $A$ and the $k$-fold multiplication $\theta_{\mathbb F}^{k-1}:\mathbb F^k\longrightarrow \mathbb F$ on the monad $\mathbb F$. The relations in \eqref{2.1e} now follow directly from the relations satisfied by the structure maps of $A$ (see  \eqref{Ap4.1}).

\end{proof}

We will say that an $A_\infty$-algebra $A$ is even if the structure maps $m_k:A^{\otimes k}\longrightarrow A$ vanish whenever $k$ is odd. For instance, this condition is satisfied by  $A_{2,p}$-algebras (for $p$ even), which have been extensively studied in the literature (see, for instance, \cite{DoVa}, \cite{HeLu}, \cite{Kelx}, \cite{LPWZ}). An $A_{2,p}$-algebra is an $A_\infty$-algebra with all the operations $m_k$  vanishing except for $k=2$ and $k=p$. The $A_{2,p}$-algebras were introduced by He and Lu in \cite{HeLu} and studied further by Dotsenko and Vallette in \cite{DoVa}. If $A$ is a $p$-Koszul algebra, then the Ext-algebra 
$Ext^*(K_A,K_A)$ carries the structure of an $A_{2,p}$-algebra (see \cite{HeLu}). Here, $K_A$ refers to the $A$-module structure on $K$ obtained from the fact that $A_0=K$. 
The authors in \cite{HeLu} also describe a general procedure for constructing an $A_{2,p}$-algebra starting from any positively graded associative algebra. Other motivating examples for $A_{2,p}$-algebras, such as the $A_\infty$-Koszul dual of $K[t]/(t^p)$  for $p\geq 3$, appear in  Lu,
Palmieri, Wu, and Zhang \cite{LPWZ} and in Keller \cite[$\S$ 2.6]{Kelx}.

\begin{thm}\label{scuP5.2}
Let  $A=\underset{n\in \mathbb Z}{\bigoplus}A_n$ be an $A_\infty$-algebra that is even  and let $(\mathbb F,\theta_{\mathbb F})$ be a non-unital monad on $\mathcal C$.  Suppose that  $\mathbb F$ has a right adjoint $\mathbb G$.  Then, the collection of functors 
\begin{equation}\label{7scue3.3e}
\mathbb V^{A,\mathbb G}_{\mathcal C}=\{\mathbb V_{\mathcal C,n}^{A,\mathbb G}:=\underline{Hom}(A_n, \mathbb G(\_\_)):\mathcal C\longrightarrow \mathcal C\}_{n\in \mathbb Z}
\end{equation} is an   $A_\infty$-comonad on $\mathcal C$.  In particular, the collection of functors
\begin{equation}\label{scue3.3e}
\mathbb V^A_{\mathcal C}=\{\mathbb V_{\mathcal C,n}^A:=\underline{Hom}(A_n, \_\_):\mathcal C\longrightarrow \mathcal C\}_{n\in \mathbb Z}
\end{equation} is an   $A_\infty$-comonad on $\mathcal C$. Further,  we have isomorphisms of categories  $EM_{\mathbb U^{A,\mathbb F}_{\mathcal C}}^{e}\cong EM^{\mathbb V^{A,\mathbb G}_{\mathcal C}}_{e}$ and $\widetilde{EM}_{\mathbb U^{A,\mathbb F}_{\mathcal C}}^{eo}\cong \widetilde{EM}^{\mathbb V^{A,\mathbb G}_{\mathcal C}}_{eo}$ . 
\end{thm}

\begin{proof}
Since $\mathbb F$ is a left adjoint (and hence it preserves direct sums), it follows by \eqref{scu5.1} that the functors $\underline{Hom}(A_n, \mathbb G(\_\_)):\mathcal C\longrightarrow \mathcal C$ are right adjoint to the functors $A_n\otimes \mathbb F(\_\_)=\mathbb F(A_n\otimes \_\_):\mathcal C\longrightarrow \mathcal C$. The result now follows from Theorem \ref{T2.3}, Proposition \ref{P2.7} and Theorem \ref{T2.10}.
\end{proof}

The following result can be used to give several examples of $A_\infty$-comonads.

\begin{thm}\label{scuP3.4} Let $C=\underset{n\in \mathbb Z}{\bigoplus}C_n$ be an $A_\infty$-coalgebra with structure maps $\{w_k: C\longrightarrow C^{\otimes k}\}_{k\geq 1}$   and let $(\mathbb G,\Delta_{\mathbb G})$ be a non-counital comonad on $\mathcal C$.  Suppose that $\mathbb G$ preserves direct sums. Then, the collection of functors
\begin{equation}\label{scue3.1ezz}
\mathbb U^{TC,\mathbb G}_{\mathcal C}=\{\mathbb U_{\mathcal C,n}^{TC,\mathbb G}:=C_{-n}\otimes \mathbb G(\_\_):\mathcal C\longrightarrow \mathcal C\}_{n\in \mathbb Z}
\end{equation} can be equipped with the structure of an $A_\infty$-comonad on $\mathcal C$. In particular, $\mathbb U^{TC}_{\mathcal C}=\{\mathbb U_{\mathcal C,n}^{TC}:=C_{-n}\otimes \_\_:\mathcal C\longrightarrow \mathcal C\}_{n\in \mathbb Z}
$ is an $A_\infty$-comonad on $\mathcal C$. 

\smallskip If $C$ is even, i.e., $w_k=0$ whenever $k$ is odd and $\mathbb G$ has a right adjoint $\mathbb J$,   the collection of functors
\begin{equation}\label{scue3.3ez}
\mathbb V^{TC,\mathbb J}_{\mathcal C}=\{\mathbb V_{\mathcal C,n}^{TC,\mathbb J}:=\underline{Hom}(C_{-n}, \mathbb J(\_\_)):\mathcal C\longrightarrow \mathcal C\}_{n\in \mathbb Z}
\end{equation} is an   $A_\infty$-monad on $\mathcal C$. In particular,  if $C$ is even, $\mathbb V^{TC}_{\mathcal C}=\{\mathbb V_{\mathcal C,n}^{TC}:=\underline{Hom}(C_{-n}, \_\_):\mathcal C\longrightarrow \mathcal C\}_{n\in \mathbb Z}
$ is an   $A_\infty$-monad on $\mathcal C$. 
\end{thm}

\begin{proof}
In a manner similar to the proof of Proposition \ref{scuP5.1}, it is clear from the definition of  an $A_\infty$-coalgebra in  Definition \ref{ApD3.4} and the fact that 
$\mathbb G$ preserves direct sums that  $\mathbb U^{TC,\mathbb G}_{\mathcal C}$ is an $A_\infty$-comonad on $\mathcal C$. If $C$ is even, so is $\mathbb U^{TC,\mathbb G}_{\mathcal C}$. By \eqref{scu5.1}, the functors
$\{\underline{Hom}(C_{-n},\mathbb J(\_\_))=\underline{Hom}((TC)_n,\mathbb J(\_\_))\}_{n\in \mathbb Z}$ are right adjoint to the functors $\{\mathbb G((TC)_n\otimes \_\_)=C_{-n}\otimes \mathbb G(\_\_)\}_{n\in \mathbb Z}$ and the result now follows directly from Theorem \ref{T2.3}. 
\end{proof}

\smallskip
For the rest of this section, we suppose that $\mathcal C=Vect$, the category of vector spaces over the field $K$.  In this paper, for an  $A_\infty$-algebra $A$, the category of left $A_\infty$-modules will always be denoted by $_A\mathbf M$ (see Definition \ref{D3.3}). For an $A_\infty$-coalgebra $C$, the category of left $A_\infty$-comodules will be denoted by
$^C\mathbf M$ (see Definition \ref{D3.5}).

\begin{thm}\label{P3.2} Let $A=\underset{n\in \mathbb Z}{\bigoplus}A_n$ be a $\mathbb Z$-graded vector space. Consider the collection of functors
\begin{equation}\label{e3.1e}
\mathbb U^A=\{\mathbb U_n^A:=A_n\otimes \_\_:Vect\longrightarrow Vect\}_{n\in \mathbb Z}
\end{equation} Then, the family $\{\mathbb U_n^A\}$ can be equipped with the structure of an $A_\infty$-monad if and only if $A=\underset{n\in \mathbb Z}{\bigoplus}A_n$ can be equipped
with the structure of an $A_\infty$-algebra.
\end{thm}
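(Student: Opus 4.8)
The plan is to prove this by unwinding the definition of an $A_\infty$-monad structure on the collection $\mathbb U^A = \{A_n \otimes \_\_\}_{n\in\mathbb Z}$ and showing the data is literally equivalent to the data of $A_\infty$-algebra structure maps on $A$. First I would observe that for vector spaces, a natural transformation $\theta_k(\bar n): \mathbb U^A_{\bar n} = (A_{n_1}\otimes(A_{n_2}\otimes(\cdots (A_{n_k}\otimes\_\_)))) \longrightarrow A_{\Sigma\bar n + (2-k)}\otimes\_\_$ is, by the Eilenberg-Watts/Yoneda-type argument (evaluating at $K$ and using that $\_\_\otimes V$ commutes with colimits and is determined by its value on $K$), the same thing as a linear map $A_{n_1}\otimes\cdots\otimes A_{n_k} \longrightarrow A_{\Sigma\bar n+(2-k)}$. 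Assembling over all $\bar n \in \mathbb Z^k$, the collection $\{\theta_k(\bar n)\}_{\bar n}$ is exactly a homogeneous linear map $m_k: A^{\otimes k}\longrightarrow A$ of degree $2-k$ (using that $A^{\otimes k} = \bigoplus_{\bar n} A_{n_1}\otimes\cdots\otimes A_{n_k}$ with the obvious grading).

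Next I would check that the coherence relations match. The defining relation \eqref{2.1e} for an $A_\infty$-monad, after applying the evaluation-at-$K$ correspondence and using that horizontal composition $\ast$ of natural transformations of functors $\_\_\otimes V$ corresponds to tensoring linear maps, becomes precisely the Stasheff identity for the $m_k$'s (the relation \eqref{Ap4.1} of the appendix referenced in the proof of Proposition~\ref{scuP5.1}). The signs $(-1)^{p+qr+q\Sigma\bar n}$ in \eqref{2.1e} are designed to reproduce exactly the Koszul signs in the $A_\infty$-algebra identities, where $\Sigma\bar n$ records the total degree of the first block — this is the content of the forward direction, already essentially carried out in Proposition~\ref{scuP5.1}. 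For the converse I would simply run the same dictionary backwards: given $A_\infty$-algebra structure maps $m_k$ on $A$, define $\theta_k(\bar n)$ to be the natural transformation $A_{n_1}\otimes\cdots\otimes A_{n_k}\otimes\_\_ \to A_{\Sigma\bar n+(2-k)}\otimes\_\_$ induced by the graded component of $m_k$, and note the relations \eqref{2.1e} hold because they are the evaluated-at-$K$ form of the $A_\infty$ identities.

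So the key steps, in order, are: (1) establish the bijection between natural transformations $\mathbb U^A_{\bar n}\to \mathbb U^A_{\Sigma\bar n+(2-k)}$ and linear maps $A_{n_1}\otimes\cdots\otimes A_{n_k}\to A_{\Sigma\bar n+(2-k)}$, functorially and compatibly with $\ast$ and composition; (2) reassemble such collections into a single degree-$(2-k)$ map $m_k:A^{\otimes k}\to A$; (3) verify that relation \eqref{2.1e} translates under this dictionary into the $A_\infty$-algebra relation \eqref{Ap4.1}, matching signs; (4) conclude both directions. I expect the main obstacle to be purely bookkeeping: checking that the signs in the monad relation \eqref{2.1e} coincide term-by-term with the Koszul signs in the $A_\infty$-algebra axiom, and making sure the degree shift conventions (the ``$+(2-k)$'' and the appearance of $\Sigma\bar n$ in the exponent) are tracked consistently when passing between the functor-composition picture and the tensor-power picture. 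Since the forward implication is already contained in Proposition~\ref{scuP5.1}, the proof can be kept short by citing that and remarking that each step of that argument is reversible, the only genuinely new point being the faithfulness of the correspondence $\theta_k(\bar n)\leftrightarrow$ linear map, which follows from $\mathbb U^A_n(K) = A_n$ together with the fact that every endofunctor of the form $A_n\otimes\_\_$ on $Vect$ is determined by its value on $K$.
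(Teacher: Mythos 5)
Your proposal is correct and follows essentially the same route as the paper: the forward direction is delegated to Proposition \ref{scuP5.1}, and the converse is obtained by evaluating the natural transformations $\theta_k(\bar{n})$ at the one-dimensional space $K$ to recover the graded components of $m_k$, with the relations \eqref{2.1e} translating directly into the Stasheff identities \eqref{Ap4.1}. The only difference is that you spell out the faithfulness of the correspondence (that $A_n\otimes\_\_$ is determined by its value on $K$), which the paper leaves implicit.
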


\begin{proof}
If $A$ is an $A_\infty$-algebra, it follows from Proposition \ref{scuP5.1} that $\{\mathbb U_n^A\}$ can be equipped with the structure of an $A_\infty$-monad.  Conversely, suppose that the collection $\mathbb U^A$ as defined in \eqref{e3.1e} is an $A_\infty$-monad. Then, by applying the natural transformations $\theta_k^A(\bar{n}):\mathbb U_{\bar{n}}^A=\mathbb U_{n_1}^A\circ ...\circ \mathbb U_{n_k}^A\longrightarrow \mathbb U_{n_1+...+n_k+(2-k)}^A=\mathbb U_{\Sigma \bar{n}+(2-k)}^A$ to  the vector space $K$, we obtain morphisms $A_{n_1}\otimes ... \otimes A_{n_k}\longrightarrow A_{n_1+...+n_k+(2-k)}
$. The relations in \eqref{2.1e} now ensure that these satisfy the conditions for being an $A_\infty$-algebra.
\end{proof}

\begin{cor}\label{C3.3} Let $A=\underset{n\in \mathbb Z}{\bigoplus}A_n$ be an $A_\infty$-algebra. Then,  we have

\smallskip
(a) The category $EM_{\mathbb U_A}$ of modules over the $A_\infty$-monad $\mathbb U^A=\{\mathbb U_n^A:=A_n\otimes \_\_:Vect\longrightarrow Vect\}_{n\in \mathbb Z}$ is isomorphic to the category $_A\mathbf M$ of $A_\infty$-modules over $A$. 

\smallskip 
(b) Suppose that $\mathbb U^A$ is even.  Then, the collection of functors
\begin{equation}\label{e3.3e}
\mathbb V^A=\{\mathbb V_n^A:=Hom(A_n, \_\_):Vect\longrightarrow Vect\}_{n\in \mathbb Z}
\end{equation} is an   $A_\infty$-comonad.

\smallskip
(c) Suppose that $\mathbb U^A$ is even.  Then,  we have isomorphisms of categories  $EM_{\mathbb U_A}^{e}\cong EM^{\mathbb V_A}_{e}$ and $\widetilde{EM}_{\mathbb U_A}^{eo}\cong \widetilde{EM}^{\mathbb V_A}_{eo}$ . 
\end{cor}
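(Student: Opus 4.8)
The plan is to obtain all three parts from the results already in hand, with the only genuine work lying in part (a).

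For (a), I would first note that over $\mathcal C=Vect$ there is essentially no functorial content to keep track of: a natural transformation $A_{n_1}\otimes\cdots\otimes A_{n_k}\otimes\_\_\Rightarrow A_{\Sigma\bar n+(2-k)}\otimes\_\_$ of endofunctors of $Vect$ is determined by (and determines) its value at $K$, a $K$-linear map $A_{n_1}\otimes\cdots\otimes A_{n_k}\to A_{\Sigma\bar n+(2-k)}$, and each structure morphism $\pi_k(\bar n,l)\colon \mathbb U_{\bar n}^A M_l=A_{n_1}\otimes\cdots\otimes A_{n_{k-1}}\otimes M_l\to M_{\Sigma\bar n+l+(2-k)}$ of a $\mathbb U^A$-module is simply a linear map of vector spaces. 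Given a $\mathbb U^A$-module $(\{M_l\}_{l},\pi)$ I would set $M:=\bigoplus_{l}M_l$ and, for each $k\geq 1$, assemble the $\pi_k(\bar n,l)$ into a single map $m_k^M\colon A^{\otimes(k-1)}\otimes M\to M$ whose restriction to $A_{n_1}\otimes\cdots\otimes A_{n_{k-1}}\otimes M_l$ is $\pi_k((n_1,\dots,n_{k-1}),l)$; since the target of this restriction is $M_{\Sigma\bar n+l+(2-k)}$, the map $m_k^M$ is homogeneous of degree $2-k$. Conversely, splitting a degree-$(2-k)$ map $m_k^M$ into homogeneous components recovers the family $\pi_k(\bar n,l)$. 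Under the identification of the monad transformations $\theta_k(\bar n)$ of $\mathbb U^A$ with the operations $m_k\colon A^{\otimes k}\to A$ supplied by Proposition~\ref{scuP5.1}, the relation \eqref{l2.11} of Definition~\ref{D2.4}, for a fixed $\bar z\in\mathbb Z^N$ and $l$, is precisely the homogeneous component of the Stasheff $A_\infty$-module relation on the input $A_{z_1}\otimes\cdots\otimes A_{z_N}\otimes M_l$; the signs and degree shifts already match because \eqref{l2.11} is modelled on \eqref{2.1e}, which in turn is modelled on the relations \eqref{Ap4.1}. Summing over all $\bar z$ and $l$ shows the two families of relations are equivalent, so $(\{M_l\},\pi)$ is a $\mathbb U^A$-module if and only if $(M,\{m_k^M\})$ is an $A_\infty$-module over $A$ in the sense of Definition~\ref{D3.3}. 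Finally a morphism $f=\{f_l\}_{l}$ in $EM_{\mathbb U^A}$ assembles into a degree-$0$ map $M\to M'$ intertwining the $m_k^M$ and $m_k^{M'}$, i.e.\ a morphism of $A_\infty$-modules, and this correspondence is bijective; this yields the isomorphism $EM_{\mathbb U_A}\cong {_A}\mathbf M$.

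For (b), the tensor--hom adjunction in $Vect$ (the special case $\mathcal C=Vect$, $R=K$ of \eqref{scu5.1}) shows that $Hom(A_n,\_\_)$ is right adjoint to $A_n\otimes\_\_=\mathbb U_n^A$, so $(\mathbb U^A,\mathbb V^A)$ is a $\mathbb Z$-system of adjoints on $Vect$. By Proposition~\ref{P3.2} together with the hypothesis of the Corollary, $\mathbb U^A$ is an $A_\infty$-monad, and by construction (Propositions~\ref{scuP5.1} and \ref{P3.2}) the hypothesis ``$\mathbb U^A$ even'' is equivalent to ``$A$ even''. Theorem~\ref{T2.3}(a) then endows $\mathbb V^A$ with the structure of an even $A_\infty$-comonad; equivalently one invokes Proposition~\ref{scuP5.2} with $\mathcal C=Vect$, noting $\underline{Hom}=Hom$ there. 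For (c), I would apply Proposition~\ref{P2.7} and Theorem~\ref{T2.10} to the $\mathbb Z$-system $(\mathbb U^A,\mathbb V^A)$ with the even $A_\infty$-monad structure on $\mathbb U^A$ and the corresponding even $A_\infty$-comonad structure on $\mathbb V^A$, obtaining $EM_{\mathbb U_A}^{e}\cong EM^{\mathbb V_A}_{e}$ and $\widetilde{EM}_{\mathbb U_A}^{eo}\cong\widetilde{EM}^{\mathbb V_A}_{eo}$ (again a special case of Proposition~\ref{scuP5.2}).

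The only step requiring care is the match in (a) between \eqref{l2.11} and the Stasheff relation defining an $A_\infty$-module over $A$; even there the work is purely a bookkeeping of indices, signs and degree shifts, and since both relations are built from the same template I expect it to go through mechanically. Everything else is a direct citation of Theorem~\ref{T2.3}, Proposition~\ref{P2.7}, Theorem~\ref{T2.10}, Proposition~\ref{scuP5.1}, Proposition~\ref{scuP5.2} and Proposition~\ref{P3.2}.
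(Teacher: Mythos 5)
Your proposal is correct and follows essentially the same route as the paper: part (a) is the same direct identification of $\mathbb U^A$-module data with the Stasheff $A_\infty$-module relations (which the paper compresses to ``by inspecting the conditions in Definition \ref{D2.4}''), and parts (b) and (c) are obtained, exactly as in the paper, from the tensor--hom adjunction together with Theorem \ref{T2.3}, Proposition \ref{P2.7} and Theorem \ref{T2.10}. Your expanded bookkeeping in (a) --- assembling the $\pi_k(\bar n,l)$ into homogeneous-degree maps $m_k^M$ and matching \eqref{l2.11} componentwise against Definition \ref{D3.3} --- is just a fuller version of what the paper leaves implicit.
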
 
\begin{proof}
By inspecting the conditions in Definition \ref{D2.4}, we can see directly that $\mathbb U^A$-modules correspond to $A_\infty$-modules over $A$ in the sense of Definition \ref{D3.3}. This proves (a). Since the functors
$\{Hom(A_n,\_\_)\}_{n\in \mathbb Z}$ are right adjoint to the functors $\{A_n\otimes\_\_\}_{n\in \mathbb Z}$, the result of (b)  follows from Theorem \ref{T2.3}. The result of (c) is also clear from Proposition \ref{P2.7} and Theorem \ref{T2.10}.
\end{proof}

\begin{thm}\label{P5.4fd} Let $C=\underset{n\in \mathbb Z}{\bigoplus}C_n$ be an $A_\infty$-coalgebra. Then, there is a canonical functor 
\begin{equation}\label{5.6yw}
\mathscr I^C: {^C}\mathbf M\longrightarrow EM^{\mathbb U^{TC}}
\end{equation} that embeds $A_\infty$-comodules as a full subcategory of the Eilenberg-Moore category of comodules over the $A_\infty$-comonad $\mathbb U^{TC}$. 
\end{thm}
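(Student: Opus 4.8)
The plan is to build $\mathscr I^C$ by the same ``decompose into homogeneous pieces'' device used for $A_\infty$-algebras in Proposition \ref{P3.2} and Corollary \ref{C3.3}(a), now dualized and combined with the grading-inversion functor $T$. Recall from Proposition \ref{scuP3.4} that $\mathbb U^{TC}=\{\,\mathbb U^{TC}_n:=C_{-n}\otimes\_\_\,\}_{n\in\mathbb Z}$ is an $A_\infty$-comonad on $Vect$ whose comultiplications $\delta_k(\bar n)$ are induced by the homogeneous blocks $C_{-\Sigma\bar n-(2-k)}\to C_{-n_1}\otimes\cdots\otimes C_{-n_k}$ of the coalgebra operations $w_k\colon C\to C^{\otimes k}$. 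Given a left $A_\infty$-comodule $(M,\{w^M_k\})$ over $C$ as in Definition \ref{D3.5}, with $w^M_k\colon M\to C^{\otimes(k-1)}\otimes M$ homogeneous of degree $2-k$, I would define $\mathscr I^C(M)$ to be the $\mathbb U^{TC}$-comodule $P$ with underlying graded object $P_n:=M_{-n}$ (i.e.\ $TM$) and with structure maps $\rho_k(\bar n,l)\colon P_{\Sigma\bar n+l+(2-k)}\to\mathbb U^{TC}_{\bar n}P_l=C_{-n_1}\otimes\cdots\otimes C_{-n_{k-1}}\otimes P_l$ equal to the unique nonzero homogeneous block of $w^M_k$ landing in $C_{-n_1}\otimes\cdots\otimes C_{-n_{k-1}}\otimes M_{-l}$ (its source degree is forced to be $M_{-(\Sigma\bar n+l+(2-k))}$ by homogeneity). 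Because $w^M_k$ is homogeneous, the family $\{\rho_k(\bar n,l)\}_{\bar n}$ carries exactly the information of $w^M_k$; conversely $w^M_k$ is reassembled from these blocks precisely because it takes values in the \emph{direct sum} $C^{\otimes(k-1)}\otimes M$, i.e.\ the resulting comodule is automatically locally finite in the sense of Definition \ref{D231}. Substituting the description of $\Delta$ from Proposition \ref{scuP3.4} into \eqref{l2.14} and decomposing degree by degree identifies \eqref{l2.14} for $P$ with the $A_\infty$-comodule relation for $M$; the sign bookkeeping is the dual of that behind Proposition \ref{P3.2} and I would not reproduce it.

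On morphisms, a (strict) morphism $f=\{f_n\colon M_n\to M'_n\}$ of $A_\infty$-comodules commutes with every $w^M_k$, and I would send it to $\mathscr I^C(f):=\{g_l:=f_{-l}\colon P_l\to P'_l\}$; the condition $\rho'_k(\bar n,l)\circ g_{\Sigma\bar n+l+(2-k)}=\mathbb U^{TC}_{\bar n}(g_l)\circ\rho_k(\bar n,l)$ of Definition \ref{D2.5} is simply one homogeneous block of $(\mathrm{id}_{C^{\otimes(k-1)}}\otimes f)\circ w^M_k=w^{M'}_k\circ f$. This is visibly functorial, and faithful since $f$ is read off from $\{g_l\}$ by inverting the grading. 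For fullness, given $g=\{g_l\colon P_l\to P'_l\}$ in $EM^{\mathbb U^{TC}}$ between $\mathscr I^C(M)$ and $\mathscr I^C(M')$, put $f_n:=g_{-n}$, a degree-$0$ map $M\to M'$; the equation $(\mathrm{id}_{C^{\otimes(k-1)}}\otimes f)\circ w^M_k=w^{M'}_k\circ f$ of homogeneous degree-$(2-k)$ maps $M\to C^{\otimes(k-1)}\otimes M'$ holds iff all of its homogeneous blocks hold, and block by block this is exactly the hypothesis that $g$ is a morphism in $EM^{\mathbb U^{TC}}$. Hence $f\in{^C}\mathbf M(M,M')$ with $\mathscr I^C(f)=g$. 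Finally, $\mathscr I^C$ is injective on objects and, by the first paragraph, its essential image is precisely the full subcategory $EM^{\mathbb U^{TC}}_{<\infty}$ of locally finite $\mathbb U^{TC}$-comodules; in fact $\mathscr I^C$ restricts to an isomorphism of categories onto $EM^{\mathbb U^{TC}}_{<\infty}$ (and since each $C_{-n}\otimes\_\_$ is exact, Theorem \ref{T2.3.6} then furnishes a right adjoint to $\mathscr I^C$). This is why ${^C}\mathbf M$ is embedded as a full, but in general proper, subcategory of $EM^{\mathbb U^{TC}}$.

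The only point requiring care — the ``main obstacle'', such as it is — is pinning down the grading inversion and matching the signs in \eqref{l2.14} with the conventions of the appendix's definition of $A_\infty$-comodule (Definition \ref{D3.5}). Once the dictionary ``$\rho_k(\bar n,l)=$ homogeneous block of $w^M_k$ and $\delta_k=$ homogeneous block of $w_k$'' is fixed, the verification is entirely parallel to the dual of Proposition \ref{P3.2}, and no new ideas are needed beyond the observation that local finiteness is exactly what distinguishes comodules over $C$ from arbitrary comodules over the comonad $\mathbb U^{TC}$.
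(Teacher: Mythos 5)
Your construction is exactly the paper's: $\mathscr I^C$ sends $P$ to $TP$ with $\rho_k(\bar n,l)$ the homogeneous projection of $w^P_k$, the comodule relations are checked degree by degree, and fullness follows because an equation of maps into the direct sum $C^{\otimes k-1}\otimes P$ holds iff it holds on each homogeneous block (the paper phrases this via the embedding of the direct sum into the product in its diagram). Your added remarks identifying the essential image with the locally finite comodules and invoking Theorem \ref{T2.3.6} for a right adjoint go beyond what the paper proves here, but they are consistent with its framework and do not affect the argument.
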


\begin{proof}
Let $P$ be an $A_\infty$-comodule over $C$ in the sense of Definition \ref{D3.5}, equipped with structure maps $\{w_k^P:P\longrightarrow C^{\otimes k-1}\otimes P\}_{k\geq 1}$. We set $Q=TP$, i.e., $Q_n:=P_{-n}$ for $n\in \mathbb Z$. For any $k\geq 1$, $\bar{n}\in \mathbb Z^{k-1}$, $l\in \mathbb Z$, we now have
\begin{equation}\label{5.7yy}
\begin{CD}
\rho_k(\bar{n},l): Q_{\Sigma\bar{n}+l+(2-k)}=P_{-\Sigma\bar{n}-l-(2-k)} @>w^P_{k,-\Sigma\bar{n}-l-(2-k)}>> (C^{\otimes k-1}\otimes P)_{-\Sigma\bar{n}-l}\\
@. @VVV \\  @.  (TC)_{n_1}\otimes ...
\otimes (TC)_{n_{k-1}}\otimes (TP)_l =\mathbb U^{TC}_{\bar{n}}Q_l\\
\end{CD}
\end{equation} where the second map in \eqref{5.7yy} is the canonical projection. Due to the relations in Definition \ref{D3.5} satisfied by the structure maps of $P$, it is clear that the collection $\rho_k(\bar{n},l)$ satisfies the relations in \eqref{l2.14} for $Q$ to be a $\mathbb U^{TC}$-comodule. As such, setting 
$\mathscr I^C(P)=Q=TP$ defines a functor $\mathscr I^C: {^C}\mathbf M\longrightarrow EM^{\mathbb U^{TC}}$. 

\smallskip
It remains to show that $\mathscr I^C$ determines a full embedding. For this, we consider $P'\in {^C}\mathbf M$ along with structure maps   $\{w_k^{P'}:P'\longrightarrow C^{\otimes k-1}\otimes P'\}_{k\geq 1}$ as well as $Q'=TP'=\mathscr I^C(P')\in EM^{\mathbb U^{TC}}$ with structure maps $\rho'_k(\bar{n},l): Q'_{\Sigma\bar{n}+l+(2-k)}\longrightarrow \mathbb U^{TC}_{\bar{n}}Q'_l$ determined as in \eqref{5.7yy}. Let $h=\{h_n\}_{n\in \mathbb Z}\in  EM^{\mathbb U^{TC}}(\mathscr I^C(P),\mathscr I^C(P'))$. For each fixed $T\in \mathbb Z$
and $k\geq 1$, we now observe that the outer square as well as the lower square in the following diagram commute
\begin{equation}\label{5.8zz}
\small
\begin{CD}
Q_{T+(2-k)}=P_{-T-(2-k)} @>h_{T+(2-k)}>> Q'_{T+(2-k)}=P'_{-T-(2-k)} \\ @Vw^P_{k,-T-(2-k)}VV  @VVw^{P'}_{k,-T-(2-k)}V  \\ \underset{(\bar{n},l)\in \mathbb Z(T,k)}{\bigoplus}(C_{-n_1}\otimes ...\otimes C_{-n_{k-1}}\otimes P_{-l}) @> \underset{(\bar{n},l)\in \mathbb Z(T,k)}{\bigoplus}(C_{-n_1}\otimes ...\otimes C_{-n_{k-1}}\otimes h_l)>> \underset{(\bar{n},l)\in \mathbb Z(T,k)}{\bigoplus}(C_{-n_1}\otimes ...\otimes C_{-n_{k-1}}\otimes P'_{-l}) \\
@VVV @VVV \\  \underset{(\bar{n},l)\in \mathbb Z(T,k)}{\prod}(C_{-n_1}\otimes ...\otimes C_{-n_{k-1}}\otimes P_{-l}) @> \underset{(\bar{n},l)\in \mathbb Z(T,k)}{\prod}(C_{-n_1}\otimes ...\otimes C_{-n_{k-1}}\otimes h_l)>>  \underset{(\bar{n},l)\in \mathbb Z(T,k)}{\prod}(C_{-n_1}\otimes ...\otimes C_{-n_{k-1}}\otimes P'_{-l}) \\
\end{CD}
\end{equation} Since the direct sums in \eqref{5.8zz} embed into the direct products, it follows that the upper square in \eqref{5.8zz} also commutes. We now see that $h=\{h_n\}_{n\in \mathbb Z}$ also determines a morphism in $^C\mathbf M$. 
\end{proof}

\section{Rational pairings between $A_\infty$-algebras and $A_\infty$-coalgebras}

If $W\in Vect^{\mathbb Z}$ is a graded vector space, we note that  by \eqref{6.1tj}, its graded dual $W^*=[W,K]$ is given by $W^*_n=Vect(W_{-n},K)$ for $n\in\mathbb Z$. If $C$ is an $A_\infty$-coalgebra, equipped with structure maps $\{w_k: C\longrightarrow C^{\otimes k}\}_{k\geq 1}$, we know that its graded dual $C^*$ is an $A_\infty$-algebra with  $m_k: (C^*)^{\otimes k} \longrightarrow C^*$ determined by 
\begin{equation}\label{6.1sudt}m_k (\phi_1\otimes \cdots \otimes \phi_k)= (-1)^{k(|\phi_1|+\ldots +|\phi_k|+1)}(\mu\circ (\phi_1\otimes \cdots \otimes \phi_k)\circ w_k)
\end{equation} where $\mu$ denotes the multiplication on $K$. If $M\in \mathbf M^C$ is a right $C$-comodule, we know that $M$ may be treated as a left $C^*$-module with structure maps given by
\begin{equation}\label{6.2dnk}
\begin{CD}
(C^*)^{\otimes k-1}\otimes M @>(C^*)^{\otimes k-1}\otimes w^M_k>> (C^*)^{\otimes k-1}\otimes M\otimes C^{\otimes k-1} @>>> M \\
\end{CD}
\end{equation} where $\{w_k^M:M\longrightarrow M\otimes C^{\otimes k-1}\}_{k\geq 1}$ comes from the right $C$-comodule structure of $M$.

\begin{defn}\label{D3.7}
Let $C$ be an $A_\infty$-coalgebra and let $A$ be an $A_\infty$-algebra. A pairing between $C$ and $A$ is an $A_\infty$-algebra morphism $f= \{f_k: A^{\otimes k}\longrightarrow C^*\}_{k\geq 1}$ from $A$ to the graded dual $C^*$ of $C$.
\end{defn}

\begin{Thm}\label{T6.4} Let $C$ be an $A_\infty$-coalgebra and let $A$ be an $A_\infty$-algebra. Then, a pairing   $f= \{f_k: A^{\otimes k}\longrightarrow C^*\}_{k\geq 1}$  between $C$ and $A$ induces a functor $\iota(C,A):\mathbf M^{C}\longrightarrow {_A}\mathbf M$.
\end{Thm}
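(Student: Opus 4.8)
The plan is to produce $\iota(C,A)$ as a composite of two restriction-of-scalars type constructions. First, recall from \eqref{6.2dnk} that every right $C$-comodule $M\in\mathbf M^C$ carries a canonical structure of left $C^*$-module, obtained by pairing its coaction maps $\{w_k^M:M\to M\otimes C^{\otimes k-1}\}_{k\geq 1}$ against $C^*$ through the evaluation; denote this left $C^*$-module by $\widehat M\in{_{C^*}}\mathbf M$. Second, since $f=\{f_k:A^{\otimes k}\to C^*\}_{k\geq 1}$ is an $\infty$-morphism of $A_\infty$-algebras from $A$ to $C^*$, I would transport module structures along $f$: given any $N\in{_{C^*}}\mathbf M$ with structure maps $\{m_j^N:(C^*)^{\otimes j-1}\otimes N\to N\}_{j\geq 1}$, I equip the underlying graded vector space of $N$ with the maps
\begin{equation}\label{eqrestr}
\mu_k^N:=\sum m_j^N\circ\bigl(f_{i_1}\otimes\cdots\otimes f_{i_{j-1}}\otimes\mathrm{id}_N\bigr):A^{\otimes k-1}\otimes N\longrightarrow N,
\end{equation}
the sum running over all $j\geq 1$ and all compositions $i_1+\cdots+i_{j-1}=k-1$ with each $i_t\geq 1$, with the Koszul sign dictated by $|f_{i_t}|=1-i_t$. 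Then I set $\iota(C,A)(M):=\widehat M$ with structure maps $\mu_k^{\widehat M}$; note $\mu_1^N=m_1^N$, so the differential is unchanged.

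It remains to check that $\{\mu_k^N\}$ satisfies the defining relations of a left $A_\infty$-module over $A$ (Definition \ref{D3.3}). I would substitute \eqref{eqrestr} into those relations and organize the resulting terms into two families: (i) terms in which an inner block $f_{i_s}$, or two adjacent blocks $f_{i_s}\otimes f_{i_{s+1}}$, gets rewritten by an instance of the $A_\infty$-algebra morphism equations for $f$; and (ii) terms in which a nesting $m_j^N\circ(\cdots\otimes m_{j'}^N(\cdots)\otimes\cdots)$ appears and is collapsed using the $A_\infty$-module relation for $N$ over $C^*$. Family (i) sums to zero by the morphism equations for $f$, and family (ii) sums to zero because $N$ is genuinely a $C^*$-module; applied with $N=\widehat M$, this last fact is exactly the content of \eqref{6.2dnk}, which in turn rests on the $A_\infty$-coalgebra relations for $C$ and the strict associativity of the evaluation pairing. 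This is the $A_\infty$ analogue of restriction of scalars, and the only genuine work is the sign bookkeeping in matching the two families against the morphism and module relations; I expect this combinatorial matching to be the main obstacle. Formally, the same conclusion follows from the comparison between the $A_\infty$-monads $\mathbb U^A$ and $\mathbb U^{C^*}$ of Proposition \ref{P3.2} induced by $f$, together with the identifications ${_A}\mathbf M\cong EM_{\mathbb U^A}$ and ${_{C^*}}\mathbf M\cong EM_{\mathbb U^{C^*}}$ of Corollary \ref{C3.3}(a), but the hands-on verification above is the route I would actually carry out.

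For functoriality, let $g:M\to M'$ be a morphism in $\mathbf M^C$, i.e. a strict morphism of right $C$-comodules. By naturality of the construction \eqref{6.2dnk} in its argument, $g$ is then a strict morphism $\widehat M\to\widehat{M'}$ of left $C^*$-modules; and since each $\mu_k$ in \eqref{eqrestr} is built from the $m_j$ together with the fixed data $f$, a strict $C^*$-module morphism is automatically a strict morphism $\iota(C,A)(M)\to\iota(C,A)(M')$ of left $A$-modules. Setting $\iota(C,A)(g):=g$ on underlying graded spaces therefore gives a well-defined assignment, and preservation of identities and composition is immediate since each of the two layers preserves them. This yields the desired functor $\iota(C,A):\mathbf M^C\to{_A}\mathbf M$.
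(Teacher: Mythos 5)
Your proposal is correct and follows exactly the paper's route: the paper also factors $\iota(C,A)$ as the canonical functor $\mathbf M^C\longrightarrow {_{C^*}}\mathbf M$ from \eqref{6.2dnk} followed by restriction of scalars along the $A_\infty$-morphism $f:A\longrightarrow C^*$ (for which it simply cites \cite[$\S$ 6.2]{BK}, with the explicit formula recorded later in \eqref{rscalar}, matching your \eqref{eqrestr}). Your write-up merely makes explicit the sign bookkeeping that the paper delegates to the reference.
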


\begin{proof}
From \eqref{6.2dnk}, it follows that we have a functor $\mathbf M^C\longrightarrow {_{C^*}}\mathbf M$. Considering the morphism  $f= \{f_k: A^{\otimes k}\longrightarrow C^*\}_{k\geq 1}$ of $A_\infty$-algebras, we also have a functor ${_{C^*}}\mathbf M\longrightarrow {_A}\mathbf M$ given by restriction of scalars (see, for instance, \cite[$\S$ 6.2]{BK}). The result is now clear. 
\end{proof}

We now want to define a rational pairing between an $A_\infty$-coalgebra and an $A_\infty$-algebra. For this, we will need to refine the result of Theorem \ref{T6.4}. 
First, we consider a morphism
$f=\{f_k: A^{\otimes k}\longrightarrow B\}_{k\geq 1}$ of $A_\infty$-algebras as in Definition \ref{D3.2}. Let $M$ be a left $B$-module, equipped with structure maps
$\{m_k^{B,M}:B^{\otimes k-1}\otimes M\longrightarrow M\}_{k\geq 1}$. Then, the restriction of scalars makes $M$ into an $A$-module, determined by setting (see, for instance, \cite[$\S$ 6.2]{BK})
\begin{equation}\label{rscalar}
m_{k+1}^{A,M}:A^{\otimes k}\otimes M\longrightarrow M\qquad m_{k+1}^{A,M}=\sum (-1)^s m_{l+1}^{B,M}(f_{i_1}\otimes ...\otimes f_{i_l}\otimes M)
\end{equation} for $k\geq 0$, where the sum is taken over all decompositions $k=i_1+...+i_l$ and 
$s=(l-1)(i_1-1)+(l-2)(i_2-1)+\dots + 2(i_{l-2}-1)+(i_{l-1}-1)$. 
We also note that   $f=\{f_k: A^{\otimes k}\longrightarrow B\}_{k\geq 1}$   determines a morphism
\begin{equation}\label{tildef}
\tilde{f}: \left(\underset{k\geq 0}{\bigoplus} \textrm{ }A^{\otimes k} \right)\longrightarrow \left(\underset{l\geq 0}{\bigoplus} \textrm{ }B^{\otimes l} \right)
\end{equation}
of graded vector spaces, where the action of $\tilde{f}$ on the component $A^{\otimes k}$ is given by $\sum (-1)^s(f_{i_1}\otimes ...\otimes f_{i_l})$, where the signs are as in \eqref{rscalar}. Accordingly, for any $V\in Vect^{\mathbb Z}$, we have an induced morphism
\begin{equation}
[\tilde f,V] :\underset{l\geq 0}{\prod}[B^{\otimes l},V] =\left[ \left(\underset{l\geq 0}{\bigoplus} \textrm{ }B^{\otimes l} \right),V\right]\longrightarrow \left[ \left(\underset{k\geq 0}{\bigoplus} \textrm{ }A^{\otimes k} \right),V\right]=\underset{k\geq 0}{\prod}[A^{\otimes k},V] 
\end{equation}
In particular, we consider $f=\{f_k: A^{\otimes k}\longrightarrow C^*\}_{k\geq 1}$ determining the pairing between the $A_\infty$-coalgebra $C$ and the $A_\infty$-algebra $A$. For any $V\in Vect^{\mathbb Z}$, we note that the   closed symmetric structure on $Vect^{\mathbb Z}$ induces a canonical morphism
\begin{equation}\label{6.8uy}
\begin{CD}\alpha(C,A,V):  \underset{l\geq 0}{\prod} V\otimes C^{\otimes l}    @>>> \underset{l\geq 0}{\prod}[(C^*)^{\otimes l},V] @>[\tilde f,V]>>  \underset{k\geq 0}{\prod}[A^{\otimes k},V]  \end{CD}
\end{equation}

\begin{defn}\label{D6.5qh} Let $f= \{f_k: A^{\otimes k}\longrightarrow C^*\}_{k\geq 1}$ be a pairing of an $A_\infty$-coalgebra $C$ and an $A_\infty$-algebra $A$. We will say that the pairing is rational if   the canonical morphism $\alpha(C,A,V )$ as defined in \eqref{6.8uy} is a monomorphism in $Vect^{\mathbb Z}$ for any $V\in Vect^{\mathbb Z}$. 

\end{defn}

\begin{thm}\label{P6.6sg} Let $f= \{f_k: A^{\otimes k}\longrightarrow C^*\}_{k\geq 1}$ be a rational pairing of an $A_\infty$-coalgebra $C$ and an $A_\infty$-algebra $A$. Then, the functor $\iota(C,A):
\mathbf M^C\longrightarrow {_A}\mathbf M$ embeds $\mathbf M^C$ as a full subcategory of ${_A}\mathbf M$.  
\end{thm}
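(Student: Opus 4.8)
The goal is to prove that $\iota(C,A)$ is fully faithful; the same monomorphism hypothesis will in addition force the $C$-coaction of an object in the image to be recoverable from its $A$-module structure, so that $\mathbf M^C$ is genuinely identified with a full subcategory of ${_A}\mathbf M$. Faithfulness comes for free: by the proof of Theorem \ref{T6.4}, $\iota(C,A)$ is the composite of the functor $\mathbf M^C\to{_{C^*}}\mathbf M$ of \eqref{6.2dnk} with restriction of scalars along $f$, and neither of these changes the underlying graded vector space or the underlying collection of linear maps, so $\iota(C,A)$ commutes with the forgetful functors to $Vect^{\mathbb Z}$. The real content is fullness: given $M',M\in\mathbf M^C$ and a morphism $g=\{g_l:M'_l\to M_l\}_{l\in\mathbb Z}$ of left $A$-modules from $\iota(C,A)(M')$ to $\iota(C,A)(M)$, one must show that $g$ also intertwines the $C$-coactions.

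First I would repackage both structures as single morphisms. For $M\in\mathbf M^C$ with coaction $\{w_k^M:M\to M\otimes C^{\otimes k-1}\}_{k\geq 1}$, assemble these over $k$ (with the shift $l=k-1$) into $\Phi_M\colon M\to\prod_{l\geq 0}M\otimes C^{\otimes l}$; and, passing to adjoints under the closed structure of $Vect^{\mathbb Z}$, assemble the $A$-module structure maps of $\iota(C,A)(M)$ into $\Psi_M\colon M\to\prod_{k\geq 0}[A^{\otimes k},M]$. The key identity to establish is
\[
\Psi_M=\alpha(C,A,M)\circ\Phi_M\qquad\text{for all }M\in\mathbf M^C,
\]
where $\alpha(C,A,M)$ is the canonical morphism of \eqref{6.8uy}. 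This should come out of unwinding definitions: by \eqref{6.2dnk}, the adjoint of the $C^*$-action $m_{l+1}^{C^*,M}$ is $w_{l+1}^M$ followed by the canonical map $M\otimes C^{\otimes l}\to[(C^*)^{\otimes l},M]$, while, by \eqref{rscalar} and the definition \eqref{tildef} of $\tilde f$, passing from the $C^*$-module structure to the $A$-module structure and then taking adjoints is precisely the effect of applying $[\tilde f,M]$, the signs of \eqref{rscalar} having been absorbed into $\tilde f$. Comparing with \eqref{6.8uy} gives the displayed identity, and likewise for $M'$.

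Next I would record that $\alpha(C,A,-)$ is natural in the vector-space variable, being the composite of the canonical natural maps $V\otimes C^{\otimes l}\to[(C^*)^{\otimes l},V]$ with $[\tilde f,V]$; consequently the square relating $\alpha(C,A,M')$, $\alpha(C,A,M)$ and the maps $\prod_{l\geq 0}(g\otimes C^{\otimes l})$ and $\prod_{k\geq 0}[A^{\otimes k},g]$ induced by $g$ commutes. Writing out the relations expressing that $g$ is a morphism of $A$-modules and taking adjoints gives $\Psi_M\circ g=\big(\prod_{k\geq 0}[A^{\otimes k},g]\big)\circ\Psi_{M'}$. Substituting $\Psi_M=\alpha(C,A,M)\circ\Phi_M$ and $\Psi_{M'}=\alpha(C,A,M')\circ\Phi_{M'}$ and invoking the naturality square yields
\[
\alpha(C,A,M)\circ\Phi_M\circ g=\alpha(C,A,M)\circ\Big(\prod_{l\geq 0}(g\otimes C^{\otimes l})\Big)\circ\Phi_{M'}.
\]
Because the pairing is rational, $\alpha(C,A,M)$ is a monomorphism (Definition \ref{D6.5qh}), so it cancels, leaving $\Phi_M\circ g=\big(\prod_{l\geq 0}(g\otimes C^{\otimes l})\big)\circ\Phi_{M'}$; this is exactly the statement that $g$ is a morphism of right $C$-comodules, which proves fullness.

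The step I expect to be the actual work is the bookkeeping identity $\Psi_M=\alpha(C,A,M)\circ\Phi_M$: one must reconcile the sign conventions of \eqref{6.1sudt}, \eqref{rscalar} and \eqref{tildef} and keep the index shifts consistent (coaction maps indexed by $k\geq 1$ versus products over $l\geq 0$, and $m_{k+1}$ versus $m_k$). Everything afterwards — naturality of $\alpha$, adjunction of the module-morphism relations, and cancelling a monomorphism — is purely formal.
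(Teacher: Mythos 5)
Your proposal is correct and follows essentially the same route as the paper: the paper's proof considers exactly the diagram whose columns are $\prod w^M_{l+1}$ followed by $\alpha(C,A,M)$, observes that the outer square commutes because $h$ is an $A$-module morphism (your identity $\Psi_M=\alpha(C,A,M)\circ\Phi_M$ combined with the $A$-linearity of $h$), that the lower square commutes by naturality of $\alpha$ in the vector-space variable, and then cancels the monomorphism $\alpha(C,A,N)$ to conclude that the upper square commutes. Your separation of the "bookkeeping identity" from the $A$-linearity of $g$ is only a mild reorganization of the same argument.
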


\begin{proof}
 We consider $M$, $N\in \mathbf M^C$ with structure maps $\{w^M_k:M\longrightarrow M\otimes C^{\otimes k-1}\}_{k\geq 1}$ and $\{w_k^N:N\longrightarrow 
 N\otimes C^{\otimes k-1}\}_{k\geq 1}$ respectively. If we treat $M$, $N$ as left $A$-modules via the functor $\iota(C,A)$, we have to show that  any  $A$-module morphism $h:M\longrightarrow N$ is also a morphism in $\mathbf M^C$. We consider therefore  the following diagram 
\begin{equation}\label{6.8cd}
\begin{CD}
M @>h>> N \\
@V\prod w^M_{l+1}VV @VV\prod w^N_{l+1}V \\
\prod M\otimes C^{\otimes l} @>\prod h\otimes C^{\otimes l}>> \prod N\otimes C^{\otimes l} \\
@V\alpha(C,A,M)VV @VV\alpha(C,A,N)V\\
\prod [A^{\otimes k},M] @>\prod [A^{\otimes k},h] >>\prod  [A^{\otimes k},N] \\
\end{CD}
\end{equation} Considering the explicit definition of restriction of scalars in \eqref{rscalar} and the construction of the functor $\iota(C,A)$ in Theorem \ref{T6.4}, we realize that the  outer square in \eqref{6.8cd} commutes because $h$ is an $A$-module morphism. The lower square in \eqref{6.8cd} commutes because  $h$ is a morphism of graded vector spaces. Accordingly, we have
\begin{equation}\label{6.9yd}
\alpha(C,A,N)\circ \left(\prod w^N_{l+1}\right )\circ h=\left(\prod [A^{\otimes k},h] \right)\circ \alpha(C,A,M)\circ \left(\prod w^M_{l+1}\right )=\alpha(C,A,N)\circ \left( \prod h\otimes C^{\otimes l}\right)  \circ \left(\prod w^M_{l+1}\right )
\end{equation} Since the pairing is rational by assumption, we know that $\alpha(C,A,N)$ is a monomorphism. Accordingly, it follows from \eqref{6.9yd} that the upper square in \eqref{6.8cd}
also commutes. This proves that $h$ is a morphism of right $C$-comodules. 
\end{proof}

 \begin{thm}\label{P6.7gk}  Let $f= \{f_k: A^{\otimes k}\longrightarrow C^*\}_{k\geq 1}$ be a rational pairing of an $A_\infty$-coalgebra $C$ and an $A_\infty$-algebra $A$.  Then, the category $\mathbf M^C$, treated as a full subcategory of ${_A}\mathbf M$, is closed under direct sums, subobjects and quotients.
 
 \end{thm}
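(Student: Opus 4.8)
The plan is to exploit the characterization of $\mathbf M^C$ (viewed via $\iota(C,A)$) as a full subcategory of ${_A}\mathbf M$, together with the fact that for each $V\in Vect^{\mathbb Z}$ the morphism $\alpha(C,A,V)$ of \eqref{6.8uy} is a monomorphism. The key structural observation is that an object $M\in{_A}\mathbf M$ lies in the essential image of $\iota(C,A)$ precisely when its $A$-module structure maps factor, via $\alpha(C,A,M)$, through a compatible family of ``coaction-type'' maps $\{w^M_{l+1}:M\longrightarrow M\otimes C^{\otimes l}\}$; and morphisms that respect the $A$-action automatically respect these $w$'s by Proposition \ref{P6.6sg}. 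So the task reduces to checking that the relevant constructions in ${_A}\mathbf M$ — a direct sum, a subobject (kernel), a quotient (cokernel) — when they happen to sit over objects carrying $C$-comodule structures, again carry $C$-comodule structures that induce the given $A$-module structures.

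First I would treat direct sums. Given a family $\{M^{(i)}\}_{i\in I}$ in $\mathbf M^C$, the underlying graded vector space $\bigoplus_i M^{(i)}$ carries the obvious candidate coaction maps $w^{\oplus}_k=\bigoplus_i w^{M^{(i)}}_k$ landing in $(\bigoplus_i M^{(i)})\otimes C^{\otimes k-1}=\bigoplus_i(M^{(i)}\otimes C^{\otimes k-1})$, using that $\otimes C^{\otimes k-1}$ preserves direct sums in $Vect^{\mathbb Z}$. The $A_\infty$-comodule identities hold componentwise, hence hold for $w^{\oplus}$. That $\iota(C,A)$ applied to this comodule yields the direct sum $A$-module structure follows from the explicit formula \eqref{rscalar} for restriction of scalars and the coproduct formula \eqref{6.2dnk}–\eqref{6.8uy}, both of which are built from operations compatible with direct sums. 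Since $\iota(C,A)$ is a full embedding (Proposition \ref{P6.6sg}), the universal property of the coproduct in ${_A}\mathbf M$ is then inherited by $\mathbf M^C$.

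Next, subobjects and quotients. Let $h:M\longrightarrow N$ be a morphism in ${_A}\mathbf M$ between objects of $\mathbf M^C$; by Proposition \ref{P6.6sg}, $h$ is actually a morphism of right $C$-comodules. I would argue that $\mathrm{Ker}(h)$, $\mathrm{Coker}(h)$, and $\mathrm{Im}(h)$, formed in the underlying category $Vect^{\mathbb Z}$ (equivalently in ${_A}\mathbf M$, since the forgetful functor is exact and reflects these), inherit $C$-coactions: for each $k$, the map $w^M_k$ restricts to $\mathrm{Ker}(h)_l\longrightarrow \mathrm{Ker}(h)\otimes C^{\otimes k-1}$ because $\_\_\otimes C^{\otimes k-1}$ is exact on $Vect^{\mathbb Z}$, so $\mathrm{Ker}(h)\otimes C^{\otimes k-1}=\mathrm{Ker}(h\otimes C^{\otimes k-1})$, and the comodule square for $M$ commutes through this subobject; dually for $\mathrm{Coker}(h)$, and $\mathrm{Im}(h)$ is handled as the kernel-of-cokernel. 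The $A_\infty$-comodule identities are inherited because they are equalities of maps into $(\_\_)\otimes C^{\otimes N}$ which restrict/corestrict along the (mono- resp.\ epi-)morphisms in question. Finally one checks, again using \eqref{rscalar}, that the $A$-module structure induced by $\iota(C,A)$ on these comodules coincides with the $A$-module structure $\mathrm{Ker}(h)$, $\mathrm{Coker}(h)$, $\mathrm{Im}(h)$ carry as kernels/cokernels/images in ${_A}\mathbf M$ — this is forced because $\iota(C,A)$ is exact-on-underlying-spaces and $h$ respects both structures.

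The main obstacle I anticipate is bookkeeping rather than conceptual: verifying cleanly that the induced $A$-module structure on each of these constructions, built from the $C$-coaction via the explicit summation formula \eqref{rscalar} applied through $\alpha(C,A,-)$, genuinely agrees with the ambient $A$-module structure inherited from ${_A}\mathbf M$. The cleanest route is to note that $\alpha(C,A,-)$ is natural in $V\in Vect^{\mathbb Z}$ and the subobject/quotient inclusions are morphisms in $Vect^{\mathbb Z}$, so the defining diagram of the coaction and the defining diagram of the $A$-action are related by a naturality square; exactness of $\_\_\otimes C^{\otimes l}$ and of $[A^{\otimes k},\_\_]$ on the relevant short exact sequences then forces agreement. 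Once that is in place, closure under direct sums, subobjects and quotients is immediate.
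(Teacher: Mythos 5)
There is a genuine gap in your treatment of subobjects and quotients. You begin that part with ``let $h:M\longrightarrow N$ be a morphism in ${_A}\mathbf M$ \emph{between objects of} $\mathbf M^C$'' and then invoke Proposition \ref{P6.6sg} to upgrade $h$ to a comodule morphism, so that kernels, cokernels and images inherit coactions by exactness of $\_\_\otimes C^{\otimes k-1}$. But the proposition asserts something stronger: an \emph{arbitrary} $A$-submodule $M'\subseteq M$ of a single comodule $M$, and the corresponding quotient $M''=M/M'$, must again lie in $\mathbf M^C$. Such an $M'$ is the kernel of $M\twoheadrightarrow M''$, but $M''$ is not a priori a comodule, so fullness cannot be applied to that map and your argument never gets started. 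This stronger form is exactly what is needed downstream: in Theorem \ref{T6.8nh} one takes $Im(g)$ for $g\in{_A}\mathbf M(\iota(C,A)(N),M)$ with $M$ an arbitrary $A$-module, i.e.\ a quotient of $N$ by the arbitrary $A$-submodule $\ker(g)$.

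The paper's proof supplies the missing idea, and it is where rationality is genuinely used (beyond establishing fullness). Given a short exact sequence $0\to M'\xrightarrow{g} M\xrightarrow{h} M''\to 0$ in ${_A}\mathbf M$ with only $M\in\mathbf M^C$, one forms the three-row diagram \eqref{6.12cd}. Since $g$ is an $A$-module map, $\alpha(C,A,M)\circ\left(\prod w^M_{\bullet+1}\right)\circ g$ factors through $\prod[A^\bullet,g]$, hence is killed by $\prod[A^\bullet,h]$; commutativity then gives $\alpha(C,A,M'')\circ\left(\prod(h\otimes C^\bullet)\right)\circ\left(\prod w^M_{\bullet+1}\right)\circ g=0$, and because $\alpha(C,A,M'')$ is a monomorphism one concludes $\left(\prod(h\otimes C^\bullet)\right)\circ\left(\prod w^M_{\bullet+1}\right)\circ g=0$. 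Exactness of the middle row (products of exact sequences in $Vect^{\mathbb Z}$) then produces the induced coactions on $M'$ and $M''$. Your direct-sum argument is fine, and your closing remark about naturality of $\alpha(C,A,-)$ gestures in the right direction, but without the step above the subobject and quotient cases are not proved.
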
 
 
 \begin{proof} It is clear that  $\mathbf M^C$, treated as a full subcategory of ${_A}\mathbf M$, is closed under direct sums. Suppose we have a short exact sequence
 \begin{equation}
 \begin{CD}
 0@>>> M'@>g>> M@>h>> M''@>>> 0 
 \end{CD}
 \end{equation}
 in ${_A}\mathbf M $ such that $M\in \mathbf M^C$ with structure maps $\{w^M_{l+1}:M\longrightarrow M\otimes C^{\otimes l}\}_{l\geq 0}$. Since products of short exact sequences in
 $Vect^{\mathbb Z}$ are exact, we have  the following commutative diagram
 \begin{equation}\label{6.12cd}
 \begin{CD}
  0@>>> M'@>g>> M@>h>> M''@>>> 0 \\
 @.  @.  @VV\prod w^M_{\bullet+1}V @. @. \\
  0@>>> \prod (M'\otimes C^{ \bullet} )@>\prod (g\otimes C^{\bullet})>> \prod (M\otimes C^{\bullet} )@>\prod (h\otimes C^{\bullet})>> \prod (M''\otimes C^{\bullet})@>>> 0 \\
  @. @V\alpha(C,A,M')VV @V\alpha(C,A,M)VV @V\alpha(C,A,M'')VV @. \\
  0@>>> \prod [A^\bullet,M'] @>\prod [A^\bullet,g] >> \prod [A^\bullet,M] @>\prod [A^\bullet,h] >> \prod [A^\bullet,M''] @>>> 0\\
 \end{CD}
 \end{equation}
  Since $g$ is an $A$-module morphism, we note that the composition $\alpha(C,A,M)\circ \left(\prod w^M_{\bullet+1}\right)\circ g$ factors through $\prod [A^\bullet,g]$. Accordingly, we have
$ \left(\prod [A^\bullet,h]\right)\circ \alpha(C,A,M)\circ \left(\prod w^M_{\bullet+1}\right)\circ g=0$. It follows that
\begin{equation}\label{6.13do}
\alpha(C,A,M'')\circ \left(\prod (h\otimes C^{\bullet})\right)\circ \left(\prod w^M_{\bullet+1}\right)\circ g=\left(\prod [A^\bullet,h]\right)\circ \alpha(C,A,M)\circ \left(\prod w^M_{\bullet+1}\right)\circ g=0
\end{equation}
 Since $f$ is a rational pairing, we know that $\alpha(C,A,M'')$ is a monomorphism, whence it follows from \eqref{6.13do} that $ \left(\prod (h\otimes C^{\bullet})\right)\circ \left(\prod w^M_{\bullet+1}\right)\circ g=0$. Since the middle row in \eqref{6.12cd} is short exact, there are now  induced maps $M'\longrightarrow \prod (M'\otimes C^{ \bullet})$ and $M''\longrightarrow \prod (M''\otimes C^{ \bullet})$ making 
 the diagram commutative and giving $M'$, $M''$ the structure of $C$-comodules. This proves the result.
 \end{proof} 
 
Given the rational pairing $f= \{f_k: A^{\otimes k}\longrightarrow C^*\}_{k\geq 1}$, we now set for any   $M\in {_A}\mathbf M$
\begin{equation}\label{rsubr}
R_f(M):=\underset{g\in {_A}\mathbf M(\iota(C,A)(N),M),N\in \mathbf M^C}{\sum} Im(g)\subseteq M
\end{equation} We now have the main result on rational pairings of $A_\infty$-coalgebras and $A_\infty$-algebras.

\begin{Thm}\label{T6.8nh} Let $f= \{f_k: A^{\otimes k}\longrightarrow C^*\}_{k\geq 1}$ be a rational pairing of an $A_\infty$-coalgebra $C$ and an $A_\infty$-algebra $A$. Then, we have a functor
$R_f:{_A}\mathbf M\longrightarrow \mathbf M^C$ that is right adjoint to $\iota(C,A):\mathbf M^C\longrightarrow {_A}\mathbf M$, i.e., we have natural isomorphisms
\begin{equation}\label{6.15fv}
{_A}\mathbf M(\iota(C,A)(M'),M)\cong \mathbf M^C(M',R_f(M))
\end{equation} for any $M'\in \mathbf M^C$ and $M\in{_A}\mathbf M$.
\end{Thm}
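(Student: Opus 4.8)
The plan is to verify the standard reflection-adjunction argument adapted to the $A_\infty$-setting. The essential point is that $R_f(M)$, defined in \eqref{rsubr} as the sum inside $M$ of the images of all $A$-module maps out of objects coming from $\mathbf M^C$, is itself a $C$-comodule, and that it is a coreflection of $M$ along $\iota(C,A)$. First I would show that $R_f(M)$ lies in $\mathbf M^C$: by Proposition \ref{P6.7gk}, the full subcategory $\mathbf M^C \subseteq {_A}\mathbf M$ is closed under direct sums, subobjects and quotients. The sum $\sum_{g} \mathrm{Im}(g)$ is a quotient of the direct sum $\bigoplus_{g} N_g$ (one copy of the source $N_g \in \mathbf M^C$ of each $g$), hence lies in $\mathbf M^C$; equivalently, being a subobject of $M$ in ${_A}\mathbf M$ that is also a quotient of an object of $\mathbf M^C$, it belongs to $\mathbf M^C$. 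So $R_f(M)$ is a $C$-comodule, and $\iota(C,A)(R_f(M)) \hookrightarrow M$ is an $A$-module map.

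Next I would establish the bijection \eqref{6.15fv}. Given $M' \in \mathbf M^C$ and an $A$-module morphism $\phi \colon \iota(C,A)(M') \to M$, its image $\mathrm{Im}(\phi)$ is by definition one of the summands of $R_f(M)$, so $\phi$ factors as $\iota(C,A)(M') \to R_f(M) \hookrightarrow M$. By Proposition \ref{P6.6sg}, $\iota(C,A)$ is fully faithful, so the corestricted map $\iota(C,A)(M') \to \iota(C,A)(R_f(M))$ comes from a unique $C$-comodule morphism $M' \to R_f(M)$; this gives the map from the left-hand side of \eqref{6.15fv} to the right-hand side. Conversely, a $C$-comodule morphism $\psi \colon M' \to R_f(M)$ yields $\iota(C,A)(\psi)$ composed with the inclusion $\iota(C,A)(R_f(M)) \hookrightarrow M$, an element of ${_A}\mathbf M(\iota(C,A)(M'),M)$. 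These two constructions are mutually inverse: one direction is the factorization through the image, which is unique because $\iota(C,A)(R_f(M)) \hookrightarrow M$ is a monomorphism; the other uses full faithfulness of $\iota(C,A)$ together with the fact that the inclusion is monic. Naturality in both $M'$ and $M$ is routine — for $M$ it follows because any $A$-module map $M \to \bar{M}$ carries $R_f(M)$ into $R_f(\bar{M})$ (the image of $\iota(C,A)(N) \to M \to \bar{M}$ is again one of the defining summands of $R_f(\bar{M})$), which also simultaneously shows $R_f$ is a functor; for $M'$ it is immediate from the construction.

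The main obstacle is the first step: confirming that $R_f(M) \in \mathbf M^C$. This is where the rationality hypothesis does its work, via Proposition \ref{P6.7gk}. One subtlety to be careful about is set-theoretic — the indexing ``class'' $\{g \in {_A}\mathbf M(\iota(C,A)(N),M) : N \in \mathbf M^C\}$ in \eqref{rsubr} must be cut down to a set, which is fine since $\mathrm{Im}(g)$ ranges over the (small) set of subobjects of $M$, so one may index the sum by those subobjects of $M$ that arise this way. Once $R_f(M)$ is known to be a $C$-comodule, everything else is a formal consequence of $\iota(C,A)$ being a full embedding whose essential image contains $R_f(M)$ as a subobject ``absorbing'' all maps from $\iota(C,A)(\mathbf M^C)$ into $M$; no further $A_\infty$-specific computation is needed, since all the $A_\infty$-structure has been packaged into Propositions \ref{P6.6sg} and \ref{P6.7gk}.
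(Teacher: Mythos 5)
Your proposal is correct and follows essentially the same route as the paper's proof: show $R_f(M)\in\mathbf M^C$ via the closure properties of Proposition \ref{P6.7gk}, observe that every $A$-module map $\iota(C,A)(M')\to M$ factors through $R_f(M)$ by construction, and deduce functoriality of $R_f$ from the same absorption property. Your version is slightly more explicit in invoking the full faithfulness of $\iota(C,A)$ (Proposition \ref{P6.6sg}) for the bijection and in flagging the set-theoretic point about indexing the sum by subobjects of $M$, both of which the paper leaves implicit.
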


\begin{proof} We consider some $g\in {_A}\mathbf M(\iota(C,A)(N),M)$, with $N\in \mathbf M^C$. Since $\mathbf M^C$ as a full subcategory of ${_A}\mathbf M$ is closed under quotients,
we see that $Im(g)\in \mathbf M^C$. Applying Proposition \ref{P6.7gk} again, since $\mathbf M^C$  is closed under direct sums and quotients in ${_A}\mathbf M$, we see that the sum $R_f(M)=\sum Im(g)$ defined as in \eqref{rsubr} must in $ \mathbf M^C$. 

\smallskip
Further, from the definition in \eqref{rsubr}, it follows that the image of any morphism $\iota(C,A)(M')\longrightarrow M$ in ${_A}\mathbf M$ must lie in $R_f(M)$. From this it is clear that we have an isomorphism ${_A}\mathbf M(\iota(C,A)(M'),M)\cong \mathbf M^C(M',R_f(M))$. 

\smallskip
It remains to show that $R_f$ is a functor. For this, we consider a morphism $h:M_1\longrightarrow M_2$ in ${_A}\mathbf M$. Since $R_f(M_1)$ lies in $\mathbf M^C$, it follows from the definition in \eqref{rsubr} that the image of the composition $R_f(M_1)\hookrightarrow M_1\xrightarrow{h}M_2$ lies in $R_f(M_2)$. This proves the result. 

\end{proof}

\section{$A_\infty$-contramodules}

Let $C=\underset{n\in \mathbb Z}{\bigoplus}C_n$ be an $A_\infty$-coalgebra, equipped with structure maps $\{w_k:C\longrightarrow C^{\otimes k}\}_{k\geq 1}$, where $w_k$ has degree
$2-k$  (see Definition \ref{ApD3.4}). For any $k\geq 1$ and $\bar{n}=(n_1,...,n_k)\in \mathbb Z^k$, let $w_k(\bar{n}):C_{-\Sigma\bar{n}-(2-k)}\longrightarrow C_{-n_1}\otimes ... \otimes C_{-n_k}$ be a component of the structure map $w_k:C\longrightarrow C^{\otimes k}$ of $C$. We define $w'_k:C\longrightarrow C^{\otimes k}$ whose components are given by
$
w'_k(\bar{n}):=(-1)^{k(\Sigma\bar{n}+1)}w_k(\bar{n}):C_{-\Sigma\bar{n}-(2-k)}\longrightarrow C_{-n_1}\otimes ... \otimes C_{-n_k}
$. We are now ready to introduce $A_\infty$-contramodules over $C$.

\begin{defn}\label{D7.1wc} Let $C$ be an $A_\infty$-coalgebra.  An $A_\infty$-contramodule  is a $\mathbb Z$-graded vector space $M$ equipped with structure maps
\begin{equation}\label{req7.3j}
t^M_k:[C^{\otimes k-1},M]\longrightarrow M \qquad k\geq 1
\end{equation} with $t_k^M$ of degree $(2-k)$ satisfying, for each $n\geq 1$, 
\begin{equation}\label{k7.3c}
\sum (-1)^{p+qr}t^M_{p+1+r}\circ [C^{\otimes p}\otimes w_q'\otimes C^{\otimes r-1},M]+\sum (-1)^a t^M_{a+1}\circ [C^{\otimes a},t^M_b]=0
\end{equation} where the first sum in \eqref{k7.3c} is taken over all $p\geq 0$,  $q,r\geq 1$ such that $p+q+r=n$ and the second sum in \eqref{k7.3c} is taken over all
$a\geq 0$, $b\geq 1$ with $a+b=n$. In \eqref{k7.3c}, we choose the isomorphism $[C^{\otimes a},[C^{\otimes b-1},M]]\cong [C^{\otimes a+b-1},M]$  making $M$ a right $A_\infty$-contramodule.

\smallskip
A morphism $h:M\longrightarrow M'$ of $A_\infty$-contramodules is a morphism of graded vector spaces that is compatible with the respective structure maps. We will denote by
$\mathbf M_{[C,\_\_]}$ the category of $A_\infty$-contramodules over $C$.
\end{defn}

We say that $M\in \mathbf M_{[C,\_\_]}$ is even if $t_k^M=0$ whenever $k$ is odd. We denote by $\mathbf M_{[C,\_\_]}^e$ the full subcategory of $\mathbf M_{[C,\_\_]}$ consisting of even objects. 
By Proposition \ref{scuP3.4}, we know that when $C$ is even, the collection of functors
\begin{equation}\label{e3.3eqi}
\mathbb V^{TC}=\{\mathbb V_n^{TC}:=Hom(C_{-n}, \_\_):Vect\longrightarrow Vect\}_{n\in \mathbb Z}
\end{equation} is an   $A_\infty$-monad. The following result is now the $A_\infty$-contramodule counterpart of Proposition \ref{P5.4fd}. 

\begin{thm}\label{P7.2fd} Let $C$ be an even $A_\infty$-coalgebra. Then, there is a canonical functor 
\begin{equation}\label{5.6ywzs}
\mathscr I_{[C,\_\_]}: \mathbf M_{[C,\_\_]}^e\longrightarrow EM_{\mathbb V^{TC}}^e
\end{equation} that embeds $\mathbf M_{[C,\_\_]}^e$ as a subcategory of  even modules over the $A_\infty$-monad $\mathbb V^{TC}$. If $C$ is such that $C_n\ne 0$ for only finitely many $n\in \mathbb Z$, then the categories $ \mathbf M_{[C,\_\_]}$ and $EM_{\mathbb V^{TC}}$ are isomorphic.
\end{thm}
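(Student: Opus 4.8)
The plan is to construct the functor $\mathscr I_{[C,\_\_]}$ by unwinding the definition of the $A_\infty$-monad $\mathbb V^{TC}=\{\mathbb V_n^{TC}=Hom(C_{-n},\_\_)\}_{n\in\mathbb Z}$ obtained in Proposition \ref{scuP3.4}, and then matching the two sets of structure equations term by term. First I would record precisely what the natural transformations $\theta_k(\bar n)$ of $\mathbb V^{TC}$ are: since $\mathbb V^{TC}$ is the $\mathbb Z$-system adjoint of the even $A_\infty$-comonad $\mathbb U^{TC}_{Vect}=\{C_{-n}\otimes\_\_\}$ with comultiplications induced by the components $w_k(\bar n):C_{-\Sigma\bar n-(2-k)}\to C_{-n_1}\otimes\cdots\otimes C_{-n_k}$, the transformation $\theta_k(\bar n)$ is the adjoint $w_k(\bar n^{op})^R$, i.e. the map $Hom(C_{-n_1},\_)\circ\cdots\circ Hom(C_{-n_k},\_)=[C_{-n_1}\otimes\cdots\otimes C_{-n_k},\_]\to [C_{-\Sigma\bar n-(2-k)},\_]$ given by precomposition with $w_k(\bar n)$ (up to the sign built into Theorem \ref{T2.3}; here the twist $w_k'$ in Definition \ref{D7.1wc} is chosen precisely to absorb this sign). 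A $\mathbb V^{TC}$-module is then a collection $Q=\{Q_l\}$ with maps $\pi_k(\bar n,l):\mathbb V^{TC}_{\bar n}Q_l=[C_{-n_1}\otimes\cdots\otimes C_{-n_{k-1}},Q_l]\to Q_{\Sigma\bar n+l+(2-k)}$ satisfying \eqref{l2.11}.

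Next I would set up the correspondence on objects. Given $M\in\mathbf M_{[C,\_\_]}^e$ with structure maps $t_k^M:[C^{\otimes k-1},M]\to M$ of degree $2-k$, I would define $Q=TM$ (so $Q_l=M_{-l}$) and extract $\pi_k(\bar n,l)$ from the appropriate graded component of $t_k^M$: the degree-$(2-k)$ map $t_k^M$, restricted to the summand $[C_{-n_1}\otimes\cdots\otimes C_{-n_{k-1}},M_{-l}]$ of $[C^{\otimes k-1},M]$, lands in $M_{-l-\Sigma\bar n+(2-k)}=Q_{\Sigma\bar n+l+(2-k)}$. The verification that these $\pi_k(\bar n,l)$ satisfy \eqref{l2.11} is a direct comparison with \eqref{k7.3c}: the first sum in \eqref{k7.3c} involving $[C^{\otimes p}\otimes w_q'\otimes C^{\otimes r-1},M]$ corresponds to the left side of \eqref{l2.11} built from $\theta_q(\bar n')$ (the sign $(-1)^{k(\Sigma\bar n+1)}$ in $w_k'$ reconciles the discrepancy between the conventions), and the second sum $t_{a+1}^M\circ[C^{\otimes a},t_b^M]$ corresponds to the right side built from composing structure maps. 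I would also check that morphisms of $A_\infty$-contramodules go to morphisms in $EM_{\mathbb V^{TC}}$ and that this is full (an argument parallel to Proposition \ref{P5.4fd}, using that $EM_{\mathbb V^{TC}}$ morphisms are collections $\{h_n\}$ compatible with all $\pi_k$, which is exactly compatibility with all $t_k^M$).

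For the second assertion, when $C_n\ne 0$ for only finitely many $n$, I would observe that the only discrepancy between $\mathbf M_{[C,\_\_]}$ and $EM_{\mathbb V^{TC}}$ is that $EM_{\mathbb V^{TC}}$ uses the composite functors $\mathbb V^{TC}_{\bar n}=[C_{-n_1}\otimes\cdots\otimes C_{-n_{k-1}},\_]$ indexed separately over each $\bar n$, whereas the contramodule structure map $t_k^M$ acts on the single space $[C^{\otimes k-1},M]$, whose degree-$p$ part is $\prod_{\bar n}[C_{-n_1}\otimes\cdots\otimes C_{-n_{k-1}},M]$-type data. Under the finiteness hypothesis, for each fixed $l$ and total degree $T$ there are only finitely many tuples $\bar n\in\mathbb Z(T,k-1)$ with all $C_{-n_i}\ne 0$, so $[C^{\otimes k-1},M]$ decomposes as a \emph{finite} direct sum (equivalently finite product) $\bigoplus_{\bar n}\mathbb V^{TC}_{\bar n}M_l$ in each degree; thus a single map $t_k^M$ is the same datum as a locally finite (indeed honestly finite) family $\{\pi_k(\bar n,l)\}$, and there is no evenness restriction needed because the product/sum identification is now an isomorphism of functors. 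This gives mutually inverse functors between $\mathbf M_{[C,\_\_]}$ and $EM_{\mathbb V^{TC}}$. The main obstacle I anticipate is bookkeeping: getting the signs in \eqref{k7.3c} versus \eqref{l2.11}/\eqref{2.1e} to line up exactly, which is precisely why the twisted structure maps $w_k'$ with the factor $(-1)^{k(\Sigma\bar n+1)}$ were introduced — I would need to check that this twist is the correct one to convert the comonad-adjoint signs of Theorem \ref{T2.3} into the module relations \eqref{l2.11}, and also to track the index shift between $T$ and inverting the grading. A secondary subtlety is confirming that in the non-finite case the \emph{even} subcategory is genuinely needed (i.e. $\mathscr I_{[C,\_\_]}$ is only an embedding, not an equivalence, because a general $\mathbb V^{TC}$-module need not arise from a single $t_k^M$), which should follow from the product-versus-direct-sum discrepancy together with the fact that Proposition \ref{P7.2fd}'s target is $EM_{\mathbb V^{TC}}^e$.
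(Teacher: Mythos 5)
Your overall strategy (unwind $\mathbb V^{TC}$, match \eqref{k7.3c} against \eqref{l2.11} componentwise, use the $w_k'$ twist to absorb the adjunction signs, and use finite support of $C$ to identify the product $[C^{\otimes k-1},M]$ with a direct sum for the second assertion) is the paper's strategy. But there is a concrete error in your object-level construction: you set $Q=TM$, i.e.\ $Q_l=M_{-l}$, and this breaks the degree bookkeeping. With the paper's convention $[W',W]_p=\prod_j Vect(W'_j,W_{j+p})$, the component $[C_{-n_1}\otimes\cdots\otimes C_{-n_{k-1}},M_{-l}]$ sits in degree $\Sigma\bar n-l$ of $[C^{\otimes k-1},M]$, so the degree-$(2-k)$ map $t^M_k$ sends it to $M_{\Sigma\bar n-l+(2-k)}$ --- not to $M_{-l-\Sigma\bar n+(2-k)}$ as you write, and in any case not to the required target $Q_{\Sigma\bar n+l+(2-k)}=M_{-\Sigma\bar n-l-(2-k)}$; these agree only when $\Sigma\bar n=k-2$. (Your own two expressions for the target already disagree unless $k=2$.) The fix is to apply \emph{no} grading inversion to $M$: since $\mathbb V^{TC}_n=Hom(C_{-n},\_\_)$ already carries the inversion in its index, the component $[C_{-n_1}\otimes\cdots\otimes C_{-n_{k-1}},M_l]$ lies in degree $\Sigma\bar n+l$ of $[C^{\otimes k-1},M]$ and $t^M_k$ carries it precisely to $M_{\Sigma\bar n+l+(2-k)}$, which is the required codomain of $\pi_k(\bar n,l)$ for the unshifted collection $\{M_l\}$. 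This is exactly what the paper does in \eqref{inc7}; the inversion $T$ is only appropriate in the comodule situation of Proposition \ref{P5.4fd}, where the functors are $C_{-n}\otimes\_\_$.

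Two smaller points. First, you propose to prove the functor is full; the statement only claims an embedding as a subcategory, and fullness is not obvious (a map compatible with every component $\pi_k(\bar n,l)$ is compatible with $t^M_k$ only on the direct sum inside the product $[C^{\otimes k-1},M]$), so you should not rely on it. Second, your diagnosis of where evenness enters is not quite right: it is not the product-versus-sum discrepancy but the signs --- \eqref{l2.11} carries the extra factors $(-1)^{q\Sigma\bar n}$ and $(-1)^{b\Sigma\bar m}$ relative to \eqref{k7.3c}, and these are trivial exactly when the contributing arities $q$ (from $C$) and $b$ (from $M$) are even, which is why both $C$ and the contramodules are taken even in the first assertion. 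Your treatment of the finite-support case is otherwise in line with the paper's \eqref{7.8qk}.
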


\begin{proof}
Let $M\in \mathbf M_{[C,\_\_]}^e$. For $k\geq 1$, $\bar{n}\in \mathbb Z^{k-1}$, $l\in \mathbb Z$, we define
\begin{equation}\label{inc7}
\pi_k(\bar{n},l):\mathbb V^{TC}_{\bar{n}}M_l=[C_{-n_1}\otimes ...\otimes C_{-n_{k-1}},M_l]\hookrightarrow [C^{\otimes k-1},M]_{\Sigma\bar{n}+l}\xrightarrow{t^M_{k,\Sigma\bar{n}+l}}M_{\Sigma\bar{n}+l+(2-k)}
\end{equation} We denote by $\{\theta_k(\bar{n})\}$ the structure maps of the $A_\infty$-monad $\mathbb V^{TC}$. For $N\geq 0$, $\bar{z}=(\bar{n},\bar{n}',\bar{n}'')\in \mathbb Z^N$ with $|\bar{n}|=p$, $|\bar{n}'|=q$, $|\bar{n}''|=r-1$, we observe that the following diagram is commutative
\begin{equation}\label{7.6cd1}
\begin{CD}
\mathbb V^{TC}_{\bar{n}}\mathbb V^{TC}_{\bar{n}'}\mathbb V^{TC}_{\bar{n}''}M_l @>(\mathbb V^{TC}_{\bar{n}}\ast\theta_q(\bar{n}')\ast\mathbb V^{TC}_{\bar{n}''})M_l>> \mathbb V^{TC}_{\bar{n}}\mathbb V^{TC}_{\Sigma\bar{n}'+(2-q)}\mathbb V^{TC}_{\bar{n}''}M_l @>\pi_{p+1+r}(\bar{n},\Sigma\bar{n}'+(2-q),\bar{n}'',l)>> M_{\Sigma\bar{z}+l+(2-N)}\\
@VVV @VVV @VVV \\
[C^{\otimes N},M]_{\Sigma\bar{z}+l}@>[C^{\otimes p}\otimes w'_q\otimes C^{\otimes r-1},M]_{\Sigma\bar{z}+l}>> [C^{\otimes (p+r)},M]_{\Sigma\bar{z}+l+(2-q)}@>t^M_{p+1+r,\Sigma\bar{z}+l+(2-q)}>> M_{\Sigma\bar{z}+l+(2-N)}\\
\end{CD}
\end{equation}  Similarly, for  $\bar{z}=(\bar{m},\bar{m}')\in \mathbb Z^N$ with $|\bar{m}|=a$,  $|\bar{m}'|=b-1$, we observe that the following diagram is commutative
\begin{equation}\label{7.6cd2}
\begin{CD}
\mathbb V^{TC}_{\bar{m}}\mathbb V^{TC}_{\bar{m}'}M_l @>\mathbb V^{TC}_{\bar{m}}(\pi_b(\bar{m}',l))>> \mathbb V^{TC}_{\bar{m}} M_{\Sigma\bar{m}'+l+(2-b)} @>\pi_{a+1}(\bar{m},\Sigma\bar{m}'+l+(2-b))>> M_{\Sigma\bar{z}+l+(2-N)}\\
@VVV @VVV @VVV \\
[C^{\otimes a},[C^{\otimes b-1},M]]_{\Sigma\bar{z}+l}@>[C^{\otimes a},t^M_b]_{\Sigma\bar{z}+l}>> [C^{\otimes a},M]_{\Sigma\bar{z}+l+(2-b)}@>t^M_{a+1,\Sigma\bar{z}+l+(2-b)}>> M_{\Sigma\bar{z}+l+(2-N)}\\
\end{CD}
\end{equation} Using the fact that the expressions in \ref{k7.3c} are zero, it follows from \eqref{7.6cd1} and \eqref{7.6cd2} that the $\pi_k(\bar{n},l)$ defined in \eqref{inc7} make 
$M$ into a module over the $A_\infty$-monad $\mathbb V^{TC}$. We therefore have the functor $ \mathbf M_{[C,\_\_]}^e\longrightarrow EM_{\mathbb V^{TC}}^e$. 

\smallskip
Now suppose that $C_n\ne 0$ for only finitely many $n\in \mathbb Z$. Then, it follows that for any $T\in \mathbb Z$, we have
\begin{equation}\label{7.8qk}
[C^{\otimes k-1},M]_T=\underset{n_1+...+n_{k-1}+l=T}{\prod}[C_{-n_1}\otimes ...\otimes C_{-n_{k-1}},M_l] =\underset{n_1+...+n_{k-1}+l=T}{\bigoplus}\mathbb V^{TC}_{n_1}...\mathbb V_{n_{k-1}}^{TC}(M_l) 
\end{equation} In other words, the structure maps $t^M_k:[C^{\otimes k-1},M]\longrightarrow M$ of $M\in \mathbf M_{[C,\_\_]}^e$ are completely determined by the induced morphisms $\mathbb V^{TC}_{n_1}...\mathbb V_{n_{k-1}}^{TC}(M_l) \longrightarrow M_{n_1+...+n_{k-1}+l+(2-k)}$ on components. It follows that  the categories $ \mathbf M_{[C,\_\_]}^e$ and $EM_{\mathbb V^{TC}}^e$ are isomorphic.
\end{proof}

\begin{thm}\label{L7.2eh} Let $C$ be an $A_\infty$-coalgebra. Then, there exists a faithful functor $\kappa^C: \mathbf M_{[C,\_\_]}\longrightarrow \mathbf M_{C^*}$. If the $A_\infty$-coalgebra $C$ is such that the canonical map
$
W\otimes C^* \longrightarrow [C,W] 
$
is an epimorphism for each $W\in Vect^{\mathbb Z}$, then $\kappa^C$ is an isomorphism. 
\end{thm}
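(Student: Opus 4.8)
The plan is to construct $\kappa^C$ explicitly by translating the contraaction maps $t^M_k:[C^{\otimes k-1},M]\longrightarrow M$ into the structure maps $m^{C^*,M}_k:(C^*)^{\otimes k-1}\otimes M\longrightarrow M$ of an $A_\infty$-module over the $A_\infty$-algebra $C^*$, using the canonical morphism $W\otimes C^*\longrightarrow [C,W]$ and its iterates. First I would recall that for any graded vector space $W$ there is a natural morphism $\gamma_W:W\otimes C^*\longrightarrow [C,W]$ adjoint to the evaluation pairing, and that iterating it gives $\gamma^{(k)}:W\otimes(C^*)^{\otimes k-1}\longrightarrow [C^{\otimes k-1},W]$ (one must keep track of the Koszul signs, which is why the paper introduced $w'_k$ in place of $w_k$ via the twist $(-1)^{k(\Sigma\bar n+1)}$). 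Then I would define, for $M\in\mathbf M_{[C,\_\_]}$, the candidate module structure maps by
\begin{equation}
m^{C^*,M}_k:\ (C^*)^{\otimes k-1}\otimes M\ \xrightarrow{\ \tau\ }\ M\otimes(C^*)^{\otimes k-1}\ \xrightarrow{\ \gamma^{(k)}\ }\ [C^{\otimes k-1},M]\ \xrightarrow{\ t^M_k\ }\ M,
\end{equation}
where $\tau$ is the symmetry isomorphism of $Vect^{\mathbb Z}$. The key step is then to verify that the $A_\infty$-module relations for $\{m^{C^*,M}_k\}$ (the defining relations of Definition~\ref{D3.3}, with the $A_\infty$-algebra operations $m_k$ on $C^*$ as in \eqref{6.1sudt}) follow from the contraaction relations \eqref{k7.3c}; this is a diagram chase that hinges on the compatibility of $\gamma^{(k)}$ with composition on one side and with the maps $[C^{\otimes p}\otimes w'_q\otimes C^{\otimes r-1},M]$ on the other, and the sign conventions were arranged precisely so that the two families of relations match term by term. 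On morphisms $\kappa^C$ is the identity on underlying graded vector spaces, so compatibility with contraaction maps transports to compatibility with $C^*$-module structure maps by the same naturality; faithfulness is then immediate since $\kappa^C$ does not change the underlying object or the underlying linear maps of a morphism.

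For the second assertion, I would argue as follows. Suppose $\gamma_W:W\otimes C^*\longrightarrow[C,W]$ is an epimorphism for every $W$. Using that $Vect^{\mathbb Z}$ is closed symmetric monoidal and that tensoring and internal-hom preserve epimorphisms appropriately, one shows that each iterate $\gamma^{(k)}:W\otimes(C^*)^{\otimes k-1}\longrightarrow[C^{\otimes k-1},W]$ is also an epimorphism (induct on $k$, writing $[C^{\otimes k-1},W]=[C,[C^{\otimes k-2},W]]$ and applying the hypothesis with $[C^{\otimes k-2},W]$ in place of $W$, then composing with the epimorphism obtained inductively after tensoring). This means that a contraaction $t^M_k$ is determined by the composite $t^M_k\circ\gamma^{(k)}$, i.e.\ by the $C^*$-module structure map $m^{C^*,M}_k$, so $\kappa^C$ is faithful-and-full on morphisms (a $C^*$-module morphism automatically commutes with $t^M_k$ once it commutes with $m^{C^*,M}_k$, because $\gamma^{(k)}$ is epic) and is bijective on objects: given any $A_\infty$-$C^*$-module $(N,\{m_k\})$, one recovers a contraaction on $N$ by the requirement $t^N_k\circ\gamma^{(k)}=m_k\circ\tau^{-1}$, which has a unique solution since $\gamma^{(k)}$ is epic and a straightforward check shows $m_k\circ\tau^{-1}$ kills the kernel of $\gamma^{(k)}$ (this uses that the $C^*$-module relations hold). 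Hence $\kappa^C$ is an isomorphism of categories.

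The main obstacle I anticipate is the bookkeeping of signs and of the symmetry isomorphisms in verifying that the contraaction relations \eqref{k7.3c} are equivalent to the $A_\infty$-module relations under $\kappa^C$: the two relations differ by the various Koszul signs coming from reordering $(C^*)^{\otimes\bullet}\otimes M$ versus $M\otimes C^{\otimes\bullet}$ and from the definition \eqref{6.1sudt} of $m_k$ on $C^*$, and one has to check that the twist $w'_k=(-1)^{k(\Sigma\bar n+1)}w_k$ together with the signs $(-1)^{p+qr}$ and $(-1)^a$ in \eqref{k7.3c} exactly reproduce the signs in the $A_\infty$-module axiom. A secondary, purely formal point is to confirm that $\gamma^{(k)}$ being epic really follows from $k=2$ being epic for all $W$ --- the induction is routine but must be stated, since it is the crux of the ``isomorphism'' half. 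Once these sign identities are pinned down the rest of the argument is formal.
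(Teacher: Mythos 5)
Your construction of $\kappa^C$ and the faithfulness argument match the paper's proof essentially verbatim (the paper works with right $C^*$-modules and the map $M\otimes (C^*)^{\otimes k-1}\longrightarrow [C^{\otimes k-1},M]$ directly, so the symmetry $\tau$ you insert is unnecessary, but that is cosmetic). The problem is in the second half. Your entire argument for the isomorphism rests on $\gamma^{(k)}$ being an \emph{epimorphism}, and at the crucial step --- recovering a contraaction $t^N_k$ from a $C^*$-module structure via $t^N_k\circ\gamma^{(k)}=m_k\circ\tau^{-1}$ --- you assert that ``a straightforward check shows $m_k\circ\tau^{-1}$ kills the kernel of $\gamma^{(k)}$ (this uses that the $C^*$-module relations hold).'' This justification does not work: the $A_\infty$-module relations for $(N,\{m_k\})$ are identities among the $m_k$ and say nothing about the kernel of the canonical map $N\otimes(C^*)^{\otimes k-1}\longrightarrow[C^{\otimes k-1},N]$. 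Indeed, for an ordinary coalgebra $C$ the forgetful functor from contramodules to $C^*$-modules is faithful but in general neither full nor essentially surjective, so no such implication can hold on the strength of the module relations alone. A related difficulty appears when you try to verify that the recovered $t^N_k$ satisfy the contramodule relations \eqref{k7.3c}: cancelling $\gamma^{(k)}$ inside terms of the form $t^N_{a+1}\circ[C^{\otimes a},t^N_b]$ requires knowing that $[C^{\otimes a},-]$ interacts well with $\gamma^{(b)}$, and internal hom does not preserve epimorphisms.

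The missing ingredient, which is exactly what the paper uses, is that the canonical map $W\otimes C^*\longrightarrow[C,W]$ is \emph{always a monomorphism} in $Vect^{\mathbb Z}$: componentwise it is $\underset{k}{\bigoplus}\bigl(W_{k+n}\otimes Vect(C_k,K)\bigr)\longrightarrow\underset{k}{\prod}Vect(C_k,W_{k+n})$, a direct sum of injections followed by the inclusion of the sum into the product. Under the hypothesis it is therefore an isomorphism, hence so is each iterate $\gamma^{(k)}$ (your inductive factorisation $[C^{\otimes k-1},W]\cong[C,[C^{\otimes k-2},W]]$ then only needs that functors preserve isomorphisms). Once the comparison maps are isomorphisms, all the transfer-of-structure statements --- uniqueness and existence of $t^N_k$, the validity of \eqref{k7.3c}, and fullness --- become immediate, with no kernel to worry about. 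Your sign bookkeeping concerns for the first half are legitimate but shared with the paper, which also leaves that verification implicit.
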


\begin{proof}
Let $M\in \mathbf M_{[C,\_\_]}$, with structure maps as in \eqref{req7.3j}. Then, for any $k\geq 1$, we consider the compositions
\begin{equation}\label{7.4fq} 
m_k^M: M\otimes (C^*)^{\otimes k-1}\longrightarrow [C^{\otimes k-1},M]\xrightarrow{t_k^M} M 
\end{equation}  where the morphism $ M\otimes (C^*)^{\otimes k-1} \longrightarrow [C^{\otimes k-1},M]$ in \eqref{7.4fq} is induced by the canonical morphism
$M\otimes (C^*)^{\otimes k-1}\otimes  C^{\otimes k-1} \longrightarrow M$. This makes $M$ into a $C^*$-module. It is also clear that the functor $\kappa^C$ is faithful.

\smallskip
In particular, suppose that the $A_\infty$-coalgebra $C$ is such that the canonical map
$
W\otimes C^* \longrightarrow [C,W] 
$
is an epimorphism for each $W\in Vect^{\mathbb Z}$. We note that for each $n\in \mathbb Z$, the canonical map
\begin{equation}\label{7.7go}
(W\otimes C^* )_n =\underset{k\in \mathbb Z}{\bigoplus}(W_{k+n}\otimes (C^*)_{-k}) \longrightarrow \underset{k\in \mathbb Z}{\bigoplus} [C_k,W_{k+n}]\hookrightarrow  \underset{k\in \mathbb Z}{\prod} [C_k,W_{k+n}]=[C,W]_n
\end{equation}
is always a monomorphism, i.e.  $
W\otimes C^*  \longrightarrow [C,W] 
$
is a monomorphism for each $W\in Vect^{\mathbb Z}$.  By assumption, the morphisms in \eqref{7.7go} are also epimorphisms.  Then, the maps $M\otimes (C^*)^{\otimes k-1} \longrightarrow [C^{\otimes k-1},M]$    which appear in  \eqref{7.4fq}
are all isomorphisms and it is clear that the functor $\kappa^C$ is an isomorphism.
\end{proof}

\begin{thm}\label{P7.3vax}  Let $C$ be an $A_\infty$-coalgebra.  Then, there is a functor $[C,\_\_]:Vect^{\mathbb Z}\longrightarrow \mathbf M_{[C,\_\_]}$.  
\end{thm}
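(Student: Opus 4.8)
The plan is to exhibit, for each $V \in Vect^{\mathbb Z}$, a canonical $A_\infty$-contramodule structure on the internal hom object $[C,V]$, in direct analogy with the classical fact (\cite[IV.5]{EMoore}) that $[C',V]$ is a contramodule over a coassociative coalgebra $C'$, and then to check that this assignment is functorial in $V$.

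First I would define the structure maps. Set $M := [C,V]$. For each $k \geq 1$, the closed symmetric monoidal structure on $Vect^{\mathbb Z}$ recalled in \eqref{6.1tj} supplies a canonical isomorphism $[C^{\otimes k-1},M] = [C^{\otimes k-1},[C,V]] \cong [C^{\otimes k},V]$, using the bracketing convention fixed in Definition \ref{D7.1wc} so that what results is a \emph{right} contramodule structure. I then put
\[
t^M_k \;:\; [C^{\otimes k-1},M] \xrightarrow{\ \cong\ } [C^{\otimes k},V] \xrightarrow{\ [w_k,V]\ } [C,V] = M ,
\]
where $w_k : C \to C^{\otimes k}$ is the $k$-th structure map of the $A_\infty$-coalgebra $C$. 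Since $w_k$ has degree $2-k$ and the contravariant functor $[-,V]$ preserves degrees, each $t^M_k$ has degree $2-k$, as required by \eqref{req7.3j}.

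The content-bearing step is to verify the contramodule identities \eqref{k7.3c}. The strategy is to apply the contravariant monoidal functor $[-,V]$ to the defining $A_\infty$-coalgebra (co-Stasheff) relations for $\{w_k\}$ from Definition \ref{ApD3.4}. After unwinding the canonical isomorphisms, the term $t^M_{p+1+r}\circ[C^{\otimes p}\otimes w_q'\otimes C^{\otimes r-1},M]$ of the first sum in \eqref{k7.3c} becomes $\big[(C^{\otimes p}\otimes w_q'\otimes C^{\otimes r})\circ w_{p+1+r}',\,V\big]$, where the extra $C$-factor in the outer map is the one hidden in $M=[C,V]$ (so these contribute the terms of co-Stasheff with a nonempty ``right tail''), while the term $t^M_{a+1}\circ[C^{\otimes a},t^M_b]$ becomes $\big[(C^{\otimes a}\otimes w_b')\circ w_{a+1}',\,V\big]$ (the terms with empty right tail). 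As $(p,q,r)$ and $(a,b)$ range over the index sets of \eqref{k7.3c}, these exactly reconstitute every term of the level-$n$ co-Stasheff identity for $C$, pushed through $[-,V]$, so the sum is zero. The one point requiring care is the signs: the co-Stasheff relations for $\{w_k\}$ carry Koszul signs involving the internal degrees (of the $q\Sigma\bar n$ type visible in \eqref{2.1ezr}), and the substitution $w_k \leftrightarrow w_k'$ with $w_k'(\bar n) = (-1)^{k(\Sigma\bar n + 1)}w_k(\bar n)$ --- which is precisely why $w_k'$ was introduced just before Definition \ref{D7.1wc} --- together with the order-reversal produced by the contravariant $[-,V]$ is what normalizes those signs to the $(-1)^{p+qr}$ and $(-1)^a$ appearing in \eqref{k7.3c}. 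I expect this sign matching (and correctly tracking the placement of the absorbed $C$-factor) to be the main obstacle; everything else is formal.

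Finally, functoriality in $V$ is immediate. A morphism $\phi : V \to V'$ in $Vect^{\mathbb Z}$ induces $[C,\phi] : [C,V] \to [C,V']$, and naturality of the canonical isomorphisms $[C^{\otimes k-1},[C,-]] \cong [C^{\otimes k},-]$ together with the identity $[w_k,\,\phi\circ(-)] = [C,\phi]\circ[w_k,-]$ shows at once that $[C,\phi]$ intertwines the structure maps $t^{[C,V]}_k$ and $t^{[C,V']}_k$, hence is a morphism in $\mathbf M_{[C,\_\_]}$. Preservation of identities and of composition is clear, so $V \mapsto [C,V]$ defines the asserted functor $[C,\_\_] : Vect^{\mathbb Z} \to \mathbf M_{[C,\_\_]}$.
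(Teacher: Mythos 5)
Your proposal is essentially the paper's proof: the paper simply sets $t_k^N:=[w'_k,W]$ under the identification $[C^{\otimes k-1},[C,W]]\cong[C^{\otimes k},W]$ and observes that the contramodule identities follow from the co-Stasheff relations, which is exactly your construction and verification strategy. One slip to fix: your displayed definition uses $[w_k,V]$, whereas the paper (and your own subsequent sign analysis, which expands $t^M_{a+1}\circ[C^{\otimes a},t^M_b]$ as $[(C^{\otimes a}\otimes w'_b)\circ w'_{a+1},V]$) requires the sign-twisted maps $w'_k$ introduced just before Definition \ref{D7.1wc}; with $w_k$ in place of $w'_k$ the terms of \eqref{k7.3c} do not carry the right Koszul signs.
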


\begin{proof} We consider $W\in Vect^{\mathbb Z}$ and put $N=[C,W]$.
By setting $t_k^N:=[w'_k,W]:[C^{\otimes k-1},N]=[C^{\otimes k-1},[C,W]]=[C^{\otimes k},W]\longrightarrow [C,W]=N$ where $w_k':C\longrightarrow C^{\otimes k}$ is as at the beginning of this section, we obtain 
$N=[C,W]\in \mathbf M_{[C,\_\_]}$.
\end{proof}

We will now construct functors between categories of $A_\infty$-comodules and $A_\infty$-contramodules. Let $(C,w^C_\bullet)$, $(D,w^D_\bullet)$ be $A_\infty$-coalgebras. By a $(C,D)$-space $(N,w^L_\bullet,w^R_\bullet)$, we will mean a graded vector space $N$ equipped with families of morphisms
\begin{equation}\label{8.1sh}
w^L_\bullet=\{w^L_p:N\longrightarrow C^{p-1}\otimes N\}_{p\geq 1}\qquad w^R_\bullet=\{w^R_q:N\longrightarrow N\otimes D^{\otimes q-1}\}_{q\geq 1}
\end{equation} such that $(N,w^L_\bullet)$ is a left $C$-comodule and $(N,w^R_\bullet)$ is a right $D$-comodule. In particular, any $(C,D)$-bicomodule   becomes a $(C,D)$-space in an obvious manner. Additionally, for $Q$, $Q'\in \mathbf M^D$, we set $[Q,Q']^D_p=\mathbf M^D(Q[-p],Q')$ for $p\in \mathbb Z$.

\begin{thm}\label{Pro8.1} Let $(C,w^C_\bullet)$, $(D,w^D_\bullet)$ be $A_\infty$-coalgebras and let $(N,w^L_{\bullet},w^R_\bullet)$ be a $(C,D)$-space. Then, for each $Q\in \mathbf M^D$ and each $k\geq 1$, there is an isomorphism
\begin{equation}\label{eq8.1qs}
[C^{\otimes k-1}, [N,Q]^D]\cong [ C^{\otimes k-1}\otimes N,Q]^D
\end{equation}

\end{thm}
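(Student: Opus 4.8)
The plan is to derive \eqref{eq8.1qs} from the tensor--hom (currying) isomorphism of the closed symmetric monoidal category $Vect^{\mathbb Z}$ recorded in \eqref{6.1tj}, by checking that it respects the right $D$-comodule conditions imposed on either side. Note that for this statement only the right $D$-coaction $w^R_\bullet$ on $N$ from \eqref{8.1sh} is used; the left $C$-coaction is irrelevant here.

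First I would recall that for $U,V,W\in Vect^{\mathbb Z}$ the canonical map $[U\otimes V,W]\longrightarrow[U,[V,W]]$ sending $\Phi$ to the map $c\mapsto\Phi(c\otimes-)$ (with the appropriate Koszul sign dictated by the convention in \eqref{6.1tj}) is an isomorphism of graded vector spaces; no finiteness hypothesis is needed, since this is just the successive use of $Vect(A\otimes B,C)\cong Vect(A,Vect(B,C))$, $Vect(\bigoplus_iA_i,C)\cong\prod_iVect(A_i,C)$ and $Vect(A,\prod_jC_j)\cong\prod_jVect(A,C_j)$. Taking $U=C^{\otimes k-1}$, $V=N$ (its underlying graded vector space) and $W=Q$ gives an isomorphism $\Psi:[C^{\otimes k-1}\otimes N,Q]\xrightarrow{\ \sim\ }[C^{\otimes k-1},[N,Q]]$, where $[N,Q]$ is the internal hom of $Vect^{\mathbb Z}$. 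I would then identify $[N,Q]^D$, defined by $[N,Q]^D_p=\mathbf M^D(N[-p],Q)$, with the graded subspace of $[N,Q]$ consisting of the graded maps that commute with all the $D$-coactions, and note the evident isomorphism $(C^{\otimes k-1}\otimes N)[-p]\cong C^{\otimes k-1}\otimes(N[-p])$ in $\mathbf M^D$, which matches the two degree conventions.

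The substantive step is to make the right $D$-comodule structure on $C^{\otimes k-1}\otimes N$ explicit and to compare the comodule conditions under $\Psi$. The structure maps are $w^{C^{\otimes k-1}\otimes N}_q=\tau_q\circ(\mathrm{id}_{C^{\otimes k-1}}\otimes w^R_q):C^{\otimes k-1}\otimes N\longrightarrow(C^{\otimes k-1}\otimes N)\otimes D^{\otimes q-1}$, where $\tau_q$ is the symmetry isomorphism sliding the $D^{\otimes q-1}$ block past the $C^{\otimes k-1}$ block; the $A_\infty$-comodule relations for this family follow from those of $(N,w^R_\bullet)$ upon tensoring with $\mathrm{id}_{C^{\otimes k-1}}$, so $C^{\otimes k-1}\otimes N$ indeed lies in $\mathbf M^D$, and this covers the differential $q=1$ on the same footing as the higher coactions. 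A graded map $\Phi:C^{\otimes k-1}\otimes N\to Q$ is a morphism in $\mathbf M^D$ precisely when $w^Q_q\circ\Phi=(\Phi\otimes\mathrm{id}_{D^{\otimes q-1}})\circ w^{C^{\otimes k-1}\otimes N}_q$ for every $q\geq1$. Since $w^{C^{\otimes k-1}\otimes N}_q$ touches the $C^{\otimes k-1}$ factor only through $\mathrm{id}_{C^{\otimes k-1}}$, restricting this identity to $(C^{\otimes k-1})_j\otimes N$ and currying translates it termwise into $w^Q_q\circ\Psi(\Phi)(c)=(\Psi(\Phi)(c)\otimes\mathrm{id})\circ w^R_q$ for each homogeneous $c$ of degree $j$; conversely these identities over all $c$ reassemble into the one for $\Phi$. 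Hence $\Phi$ is a morphism in $\mathbf M^D$ if and only if $\Psi(\Phi)(c)\in[N,Q]^D$ for all homogeneous $c$, that is, if and only if $\Psi(\Phi)$ factors through $[N,Q]^D\hookrightarrow[N,Q]$. Therefore $\Psi$ restricts to the asserted isomorphism $[C^{\otimes k-1},[N,Q]^D]\cong[C^{\otimes k-1}\otimes N,Q]^D$, manifestly natural in $Q$.

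The main obstacle is purely the sign and re-bracketing bookkeeping: one has to fix the Koszul signs in $\Psi$ forced by the convention in \eqref{6.1tj}, and keep track of the symmetry isomorphisms $\tau_q$ carefully enough to see that, for each fixed $q$, the comodule-compatibility equation for $\Phi$ and the family of comodule-compatibility equations for the $\Psi(\Phi)(c)$ are genuinely exchanged by currying with matching signs. No conceptual difficulty beyond this is expected.
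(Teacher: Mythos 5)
Your proposal is correct and follows essentially the same route as the paper: the paper's proof is exactly the currying bijection $f\leftrightarrow f'$, $f(\tilde c)(n)=f'(\tilde c\otimes n)$, together with the observation (written out in Sweedler notation in the two directions) that the right $D$-comodule compatibility conditions on the two sides are exchanged under this bijection. Your version merely makes explicit a few points the paper leaves implicit, namely the induced $\mathbf M^D$-structure on $C^{\otimes k-1}\otimes N$, the degree-shift convention behind $[{-},{-}]^D$, and the Koszul signs in the currying map.
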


\begin{proof}
We consider $f\in [C^{\otimes k-1}, [N,Q)]^D]$. Then, $f$ corresponds to   \begin{equation} f':C^{\otimes k-1}\otimes N\longrightarrow Q \qquad 
(\tilde{c}\otimes n)\mapsto f(\tilde{c})(n)
\end{equation} In order to prove that $f'\in  [ C^{\otimes k-1}\otimes N,Q]^D$, we have to show that the following diagram commutes for each $l\geq 1$
\begin{equation}\label{8.3cd}
\begin{CD}
C^{\otimes k-1}\otimes N @>C^{\otimes k-1}\otimes w^R_l>> C^{\otimes k-1}\otimes N\otimes D^{\otimes l-1}\\
@Vf'VV @VV(f'\otimes D^{\otimes l-1})V \\
Q @>w^Q_{l}>> Q\otimes D^{\otimes l-1} \\
\end{CD} \qquad \Rightarrow \qquad \begin{array}{l} f(\tilde{c})(w^R_l(n)_0)\otimes w^R_l(n)_1\\ \\ = w^Q_l(f(\tilde{c})(n))_0\otimes w^Q_l(f(\tilde{c})(n))_1  \\
\end{array}
\end{equation} where we have adapted the Sweedler notation in \eqref{8.3cd}. But since $f(\tilde{c})$ is a $D$-comodule morphism, we already have
\begin{equation}\label{8.4fq}
w^Q_l(f(\tilde{c})(n))_0\otimes w^Q_l(f(\tilde{c})(n))_1=f(\tilde{c})(w^R_l(n)_0)\otimes w^R_l(n)_1
\end{equation} From \eqref{8.3cd} and \eqref{8.4fq}, it follows that $f'\in \mathbf M^D(C^{\otimes k-1}\otimes N,Q)$. 
Conversely, we consider $g\in  [ C^{\otimes k-1}\otimes N,Q]^D$. Then, $g$ corresponds to 
\begin{equation}
g':C^{\otimes k-1}\longrightarrow [N,Q] \qquad g'(\tilde{c})(n):=g(\tilde{c}\otimes n)
\end{equation} In order to show that $g'$ takes values in $ [N,Q]^D$, we have to show that for each $l\geq 1$ we must have
\begin{equation}\label{86q}
\begin{array}{ll}
w^Q_l(g(\tilde{c}\otimes n))_0\otimes w^Q_l(g(\tilde{c}\otimes n))_1&=w^Q_l(g'(\tilde{c})(n))_0\otimes w^Q_l(g'(\tilde{c})(n))_1\\
&{=} g'(\tilde{c})(w^R_l(n)_0)\otimes w^R_l(n)_1=  g(\tilde{c}\otimes w^R_l(n)_0)\otimes w^R_l(n)_1 \\
\end{array}\end{equation} However since $g\in  [ C^{\otimes k-1}\otimes N,Q]^D $, we already know that
\begin{equation}\label{87q}
w^Q_l(g(\tilde{c}\otimes n))_0\otimes w^Q_l(g(\tilde{c}\otimes n))_1=g(\tilde{c}\otimes w^R_l(n)_0)\otimes w^R_l(n)_1
\end{equation} From \eqref{86q} and \eqref{87q} it is clear that $g'\in [C^{\otimes k-1}, [N,Q]^D]$. This proves the result.
\end{proof}

We will say that  $(N,w^L_\bullet,w^R_\bullet)$ is a commuting $(C,D)$-space if the $C$ and $D$-coactions commute with each other, i.e., for each $k\geq 1$, $l\geq 1$, we have a commutative diagram
\begin{equation}\label{8.9cds}
\begin{CD}
N @>w^R_l>> N\otimes D^{\otimes l-1}\\
@Vw^L_kVV @VVw^L_k\otimes D^{\otimes l-1}V\\
C^{\otimes k-1}\otimes N @>C^{\otimes k-1}\otimes w^R_l>> C^{\otimes k-1}\otimes N\otimes D^{\otimes l-1}\\
\end{CD}
\end{equation} We note in particular that this means that each $w^R_l:N\longrightarrow N\otimes D^{\otimes l-1}$ is a morphism of left $C$-comodules, while each 
$w^L_k:N\longrightarrow C^{\otimes k-1}\otimes N$ is a morphism of right $D$-comodules.  For the rest of this section, we will assume that both the $C$-comodule and $D$-comodule structures on $N$ are even.

\begin{thm}\label{Pro8.2e}  Let $(C,w^C_\bullet)$, $(D,w^D_\bullet)$ be even $A_\infty$-coalgebras. Then,   $(N,w^L_{\bullet},w^R_\bullet)$ induces a functor
$[N,\_\_]^D:\mathbf M^D\longrightarrow \mathbf M_{[C,\_\_]}$.
\end{thm}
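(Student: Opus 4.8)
The plan is to construct, for each $Q \in \mathbf{M}^D$, an $A_\infty$-contramodule structure on $[N,Q]^D$ over the $A_\infty$-coalgebra $C$, and then check functoriality in $Q$. The crucial input is the isomorphism of Proposition \ref{Pro8.1}, which identifies $[C^{\otimes k-1},[N,Q]^D]$ with $[C^{\otimes k-1}\otimes N,Q]^D$; under this identification, the structure maps $t_k$ that we need to define on $M := [N,Q]^D$ should come from precomposing with the left $C$-coaction $w^L_\bullet$ on $N$. Concretely, for each $k\geq 1$ I would set
\begin{equation}\label{proof1}
t^M_k : [C^{\otimes k-1},[N,Q]^D] \cong [C^{\otimes k-1}\otimes N,Q]^D \xrightarrow{\ [w'_k \otimes N,\,Q]^D\ } [N,Q]^D = M,
\end{equation}
where $w'_k : C \to C^{\otimes k}$ denotes the sign-twisted structure map of $C$ introduced at the start of Section 7. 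One must first check that this composite is well-defined, i.e.\ that precomposition with $w'_k\otimes N$ sends $D$-colinear maps to $D$-colinear maps; this uses that $N$ is a \emph{commuting} $(C,D)$-space, so that $w^L_k : N \to C^{\otimes k-1}\otimes N$ is a morphism of right $D$-comodules, hence tensoring and composing stays inside $\mathbf{M}^D$. The degree of $t^M_k$ is $2-k$ because $w'_k$ has degree $2-k$.

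Next I would verify the contramodule relations \eqref{k7.3c}. The idea is that these relations on $M = [N,Q]^D$ are, via the isomorphism \eqref{eq8.1qs} applied in each degree, equivalent to the $A_\infty$-coalgebra relations for $C$ — the same relations \eqref{Ap4.1} (dualized and sign-twisted to $w'_\bullet$) that say $\sum \pm\, (C^{\otimes p}\otimes w'_q\otimes C^{\otimes r-1})\circ w'_{p+1+r} = 0$. More precisely: applying the contravariant functor $[-\otimes N, Q]^D$ to the coassociativity identity for the $w'_k$ turns the composite $w'_{p+1+r}\circ(\dots)$ into $t^M_{p+1+r}\circ [C^{\otimes p}\otimes w'_q\otimes C^{\otimes r-1},M]$ on the nose, and likewise the iterated-coaction terms become $t^M_{a+1}\circ[C^{\otimes a},t^M_b]$ after identifying $[C^{\otimes a},[C^{\otimes b-1},M]]\cong[C^{\otimes a+b-1},M]$. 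The signs $(-1)^{p+qr}$ and $(-1)^a$ in \eqref{k7.3c} should match those appearing in the $A_\infty$-coalgebra axiom for $w'_\bullet$ once the degree shift from $w'_k$ being applied to $C^{\otimes\bullet}$ is accounted for; I would organize this by drawing the two commuting rectangles exactly as in the proof of Proposition \ref{P7.2fd}, equations \eqref{7.6cd1}–\eqref{7.6cd2}, but with the roles of monad/contramodule replaced by this $[N,\_\_]^D$ construction.

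Finally, functoriality: a morphism $\phi : Q \to Q'$ in $\mathbf{M}^D$ induces $[N,\phi]^D : [N,Q]^D \to [N,Q']^D$ by postcomposition, and this commutes with the $t_k$'s because postcomposition with $\phi$ commutes with precomposition by $w'_k\otimes N$ (both sides are just composition of maps) — so $[N,\phi]^D$ is a morphism of $A_\infty$-contramodules, and identities and composites are evidently preserved. I expect the main obstacle to be the sign bookkeeping in the contramodule relations: one has to be careful that the twist $w'_k = (-1)^{k(\Sigma\bar n+1)}w_k$ produces precisely the signs $(-1)^{p+qr}$ and $(-1)^a$ of \eqref{k7.3c} after passing through the internal-hom adjunction in the graded category $Vect^{\mathbb Z}$, where Koszul signs are introduced by the symmetry. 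A secondary, more routine point is confirming that all the needed maps remain $D$-colinear, which rests entirely on the commuting hypothesis \eqref{8.9cds} and the evenness assumption on the $D$-coaction.
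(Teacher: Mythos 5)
Your overall strategy is the paper's: use Proposition \ref{Pro8.1} to identify $[C^{\otimes k-1},[N,Q]^D]$ with $[C^{\otimes k-1}\otimes N,Q]^D$ and define the contramodule structure maps by precomposition, with $D$-colinearity guaranteed by the commuting hypothesis. However, your displayed definition of $t^M_k$ is wrong. You write $t^M_k$ as precomposition with $w'_k\otimes N$, where $w'_k\colon C\to C^{\otimes k}$ is the (sign-twisted) comultiplication of $C$; but $w'_k\otimes N$ is a map $C\otimes N\to C^{\otimes k}\otimes N$, so $[w'_k\otimes N,Q]^D$ goes from $[C^{\otimes k}\otimes N,Q]^D$ to $[C\otimes N,Q]^D$ and cannot land in $[N,Q]^D$. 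The correct map, and the one the paper uses, is precomposition with the left $C$-coaction $w^L_k\colon N\to C^{\otimes k-1}\otimes N$ itself, i.e.\ $t_k=[w^L_k,Q]^D$. Your own prose in the first paragraph says exactly this (``precomposing with the left $C$-coaction $w^L_\bullet$ on $N$''), so the formula may be a slip, but it is not a harmless one, because it propagates into your second paragraph.

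There the verification of the contramodule relations \eqref{k7.3c} is attributed to the wrong identity. With the correct structure maps $[w^L_k,Q]^D$, the term $t^M_{p+1+r}\circ[C^{\otimes p}\otimes w'_q\otimes C^{\otimes r-1},M]$ translates under \eqref{eq8.1qs} into precomposition with $(1_C^{\otimes p}\otimes w_q\otimes 1_C^{\otimes r-1}\otimes 1_N)\circ w^L_{p+1+r}$, and $t^M_{a+1}\circ[C^{\otimes a},t^M_b]$ into precomposition with $(1_C^{\otimes a}\otimes w^L_b)\circ w^L_{a+1}$; the sum vanishes because of the \emph{left $A_\infty$-comodule axiom for $(N,w^L_\bullet)$} (Definition \ref{D3.5}), not because of the coassociativity relations $\sum\pm(C^{\otimes p}\otimes w'_q\otimes C^{\otimes r-1})\circ w'_{p+1+r}=0$ for $C$ alone, which is what you invoke. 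You appear to have conflated this construction with the cofree contramodule $[C,W]$ of Proposition \ref{P7.3vax}, where the structure maps really are $[w'_k,W]$ and the relations really do reduce to coassociativity of $C$. As written, your verification would not close up; replacing the coassociativity identity by the comodule axiom for $N$ (and $[w'_k\otimes N,Q]^D$ by $[w^L_k,Q]^D$) repairs it and recovers the paper's argument. Your functoriality paragraph is fine.
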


\begin{proof}
We consider $Q\in \mathbf M^D$. We claim that $ [N,Q]^D\in \mathbf M_{[C,\_\_]}$. From Proposition \ref{Pro8.1}, we already have that $[C^{\otimes k-1}, [N,Q]^D]\cong [ C^{\otimes k-1}\otimes N,Q]^D$ for each $k\geq 1$. We now have structure maps 
\begin{equation}\label{8.8zd}
\begin{CD}
t_k:[C^{\otimes k-1}, [N,Q]^D]\cong [ C^{\otimes k-1}\otimes N,Q]^D@>[w^L_k,Q]^D>>[N,Q]^D\\
\end{CD}
\end{equation}  where we have used the fact that each morphism $w^L_k:N\longrightarrow C^{\otimes k-1}\otimes N$ is a morphism of right $D$-comodules. It may be verified that the morphisms
in \eqref{8.8zd} make $[N,Q]^D$ into an $A_\infty$-contramodule over $C$. 
\end{proof}

Let $(M,t_\bullet^M)$ be an $A_\infty$-contramodule over $C$. Then, for each $k\geq 1$, we have a pair of canonical maps
\begin{equation}\label{8.11tik}
 [C^{\otimes k-1},M]\otimes N\doublerightarrow{(ev_{[C^{\otimes k-1},M]}\otimes N)\circ ([C^{\otimes k-1},M]\otimes w^L_k)}{t^M_{k}\otimes N}  M\otimes N
\end{equation} where $ev_{[C^{\otimes k-1},M]}:  [C^{\otimes k-1},M]\otimes C^{\otimes k-1}\longrightarrow M$ is the canonical evaluation map.  We now define the contratensor product
$M\boxtimes_CN$ to be the coequalizer
\begin{equation}\label{ctrtens}
M\boxtimes_CN:=Coeq\left(\underset{k\geq 1}{\bigoplus}\textrm{ } [C^{\otimes k-1},M]\otimes N\doublerightarrow{\underset{k\geq 1}{\bigoplus}\textrm{ }(ev_{[C^{\otimes k-1},M]}\otimes N)\circ ([C^{\otimes k-1},M]\otimes w^L_k)}{t^M_{k}\otimes N}  M\otimes N\right)
\end{equation} We note that \eqref{ctrtens} extends the contratensor product on contramodules over coassociative coalgebras defined in \cite{semicontra}. We now have the following result.

\begin{thm}\label{Pro8.3v}   Let $(C,w^C_\bullet)$, $(D,w^D_\bullet)$ be even $A_\infty$-coalgebras. Then,   $(N,w^L_{\bullet},w^R_\bullet)$  induces a functor 
$\_\_\boxtimes_C N:  \mathbf M_{[C,\_\_]}\longrightarrow \mathbf M^D$.
\end{thm}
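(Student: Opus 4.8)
The plan is to construct, for each $Q \in \mathbf M^D$, a right $D$-comodule structure on the contratensor product $M \boxtimes_C N$, and then to verify that the assignment $M \mapsto M \boxtimes_C N$ is functorial in $M$. The starting observation is that the right $D$-coaction on $N$ gives maps $w^R_l : N \to N \otimes D^{\otimes l-1}$, and since $N$ is a commuting $(C,D)$-space, each $w^R_l$ is a morphism of left $C$-comodules. First I would apply $\_\_ \otimes M\boxtimes_C$-type reasoning the other way: tensor the defining coequalizer diagram \eqref{ctrtens} on the right with $D^{\otimes l-1}$ and chase the resulting square, using the commutativity \eqref{8.9cds} to see that the two parallel maps in \eqref{ctrtens}, when post-composed with $M \otimes w^R_l$, factor through the analogous parallel pair with target $M \otimes N \otimes D^{\otimes l-1}$. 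Because $\_\_ \otimes D^{\otimes l-1}$ is exact (it is tensoring with a fixed graded vector space, hence preserves coequalizers), the coequalizer of that shifted pair is exactly $(M\boxtimes_C N)\otimes D^{\otimes l-1}$, and the universal property of the coequalizer \eqref{ctrtens} then produces a canonical map
\begin{equation}
w^{M\boxtimes_C N}_l : M \boxtimes_C N \longrightarrow (M\boxtimes_C N) \otimes D^{\otimes l-1} \qquad l \geq 1 .
\end{equation}

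Next I would check that the family $\{w^{M\boxtimes_C N}_l\}_{l\geq 1}$ satisfies the $A_\infty$-comodule axioms of Definition \ref{D3.5}. The key point is that each axiom is an identity between morphisms out of $M\boxtimes_C N$, and since $M\boxtimes_C N$ is a coequalizer (hence the structure quotient $M \otimes N \twoheadrightarrow M\boxtimes_C N$ is epic), it suffices to verify the corresponding identities after precomposition with $M \otimes N \to M \boxtimes_C N$, i.e.\ at the level of $M \otimes N$. There the coactions $w^{M\boxtimes_C N}_l$ are induced by $M \otimes w^R_l$, so the $A_\infty$-comodule relations for $M\boxtimes_C N$ follow termwise from the $A_\infty$-comodule relations satisfied by $\{w^R_l\}$ on $N$ (Definition \ref{D3.5} applied to the right $D$-comodule $N$), with the sign bookkeeping carried over unchanged. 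Evenness of the $D$-coaction on $N$ guarantees the induced $D$-coaction on $M\boxtimes_C N$ is even as well, which keeps us inside the setting of $\mathbf M^D$ as declared for the rest of the section.

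Finally, for functoriality in $M$: given a morphism $h : M \to M'$ of $A_\infty$-contramodules, the induced maps $[C^{\otimes k-1},h]\otimes N$ and $h \otimes N$ are compatible with both parallel arrows in \eqref{ctrtens} — compatibility with the $ev \circ (\_\_ \otimes w^L_k)$ arrow is automatic, and compatibility with the $t^M_k \otimes N$ arrow is exactly the condition that $h$ intertwines the contramodule structure maps $t^M_k$ and $t^{M'}_k$. Hence by the universal property there is an induced map $h \boxtimes_C N : M\boxtimes_C N \to M' \boxtimes_C N$, and this map is a $D$-comodule morphism because its composite with the epimorphism $M \otimes N \twoheadrightarrow M\boxtimes_C N$ agrees with $(h \otimes N)$ followed by the structure quotient for $M'$, which visibly commutes with the $M \otimes w^R_l$ and $M' \otimes w^R_l$. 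Identities and composites are preserved for the same reason. The main obstacle I anticipate is purely organizational rather than conceptual: keeping track of the signs in the $A_\infty$-comodule relations when transporting them from $N$ to $M \boxtimes_C N$, and making sure the coequalizer is genuinely respected by the shifted parallel pair — i.e.\ that the left $C$-comodule compatibility \eqref{8.9cds} is being used in exactly the spot where the two arrows of \eqref{ctrtens} must remain coherent after tensoring with $D^{\otimes l-1}$.
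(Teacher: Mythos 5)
Your proposal is correct and follows essentially the same route as the paper: both arguments show that $M\otimes w^R_l$ descends to the coequalizer $M\boxtimes_C N$ by checking compatibility with the two parallel arrows of \eqref{ctrtens}, using the commuting $(C,D)$-space condition \eqref{8.9cds} for the evaluation arrow and naturality of $t^M_k\otimes\_\_$ for the other. Your additional explicit verifications (the $A_\infty$-comodule axioms via precomposition with the epimorphism $M\otimes N\twoheadrightarrow M\boxtimes_C N$, and functoriality in $M$ via the universal property) are details the paper leaves implicit, and they are carried out correctly.
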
 

\begin{proof} By \eqref{ctrtens}, we know that for any $M\in  \mathbf M_{[C,\_\_]}$, the contratensor product $M\boxtimes_CN$ is a quotient of $M\otimes N$. We claim that the maps
$M\otimes w^R_l: M\otimes N\longrightarrow M\otimes N\otimes D^{\otimes l-1}$ descend to the quotient $M\boxtimes_CN$. For this, we note that for any $k,l\geq 1$, we have the following commutative diagram
\begin{equation}\label{8.13cda}\small
\begin{CD}
 [C^{\otimes k-1},M]\otimes N @>([C^{\otimes k-1},M]\otimes w^L_k)>> \underset{\otimes C^{\otimes k-1}\otimes N}{\overset{ [C^{\otimes k-1},M]}{}} @>(ev_{[C^{\otimes k-1},M]}\otimes N)>>M\otimes N \\
@V [C^{\otimes k-1},M]\otimes w^R_lVV @V[C^{\otimes k-1},M]\otimes C^{\otimes k-1}\otimes w^R_lVV @VM\otimes w^R_lVV \\
[ C^{\otimes k-1},M]\otimes N\otimes D^{\otimes l-1} @>([C^{\otimes k-1},M]\otimes w^L_k)\otimes D^{\otimes l-1}>> \underset{\otimes C^{\otimes k-1}\otimes N\otimes D^{\otimes l-1}}{\overset{ [C^{\otimes k-1},M]}{}}@>(ev_{[C^{\otimes k-1},M]}\otimes N)\otimes D^{\otimes l-1}>> M\otimes N\otimes  D^{\otimes l-1}\\
\end{CD}
\end{equation} We note that the left side square in \eqref{8.13cda} commutes because $N$ is a commuting $(C,D)$-space. Also, for any $k,l\geq 1$, we have the commutative diagram
\begin{equation}\label{8.14cda}
\begin{CD}
[C^{\otimes k-1},M]\otimes N @>[C^{\otimes k-1},M]\otimes w^R_l>> [C^{\otimes k-1},M]\otimes N\otimes D^{\otimes l-1} \\
@Vt^M_{k}\otimes NVV @VV t^M_k\otimes N\otimes D^{\otimes l-1}V \\
M\otimes N @>M\otimes w^R_l>> M\otimes N\otimes  D^{\otimes l-1}\\
\end{CD} 
\end{equation}
Considering \eqref{8.13cda}, \eqref{8.14cda} and the definition in \eqref{ctrtens}, we have an induced map on the coequalizers $M\boxtimes_CN\longrightarrow (M\boxtimes_CN)\otimes D^{\otimes l-1}
$ for each $l\geq 1$, making $M\boxtimes_CN$ into an $A_\infty$-comodule over $D$.  
\end{proof} 

The following simple observation will be used in the proof of the next theorem. If $(Q,w^Q_\bullet)$ and $(Q',w^{Q'}_\bullet)$ are $D$-comodules,
then $\mathbf M^D(Q,Q')$ may be expressed as the equalizer
\begin{equation}\label{8.15pw}
\mathbf M^D(Q,Q')=Eq\left([Q,Q']\doublerightarrow{\qquad f\mapsto{\prod} w^{Q'}_\bullet\circ f\qquad }{f\mapsto {\prod}(f\otimes D^{\otimes \bullet-1})\circ 
w^Q_\bullet}{\prod}[Q,Q'\otimes D^{\otimes \bullet-1} ]\right)
\end{equation} Similarly, if $(M,t^M_\bullet)$ and $(M',t^{M'}_\bullet)$ are $C$-contramodules, then $\mathbf M_{[C,\_\_]}(M,M')$ may be expressed as the equalizer
\begin{equation}\label{8.16pw}
\mathbf M_{[C,\_\_]}(M,M')=Eq\left([M,M']\doublerightarrow{\qquad f\mapsto{\prod} t^{M'}_\bullet\circ [C^{\bullet-1},f]\qquad }{f\mapsto {\prod}f\circ t^M_\bullet}{\prod}[[C^{\otimes \bullet-1},M], M']\right)
\end{equation} We are now ready to prove the main result of this section. 

\begin{Thm}\label{T8.4co}
  Let $(C,w^C_\bullet)$, $(D,w^D_\bullet)$ be even $A_\infty$-coalgebras. Then, any commuting $(C,D)$-space  $(N,w^L_{\bullet},w^R_\bullet)$ that is even leads to an adjunction of functors
  \begin{equation}\label{8.17ha}
  \mathbf M^D(M\boxtimes_CN, Q)\cong \mathbf M_{[C,\_\_]}(M,[N,Q]^D)
  \end{equation} for $M\in \mathbf M_{[C,\_\_]}$ and $Q\in \mathbf M^D$. 
\end{Thm}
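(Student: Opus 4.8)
The plan is to exhibit both sides of \eqref{8.17ha} as subsets of the single set $Vect^{\mathbb Z}(M\otimes N,Q)$ of graded linear maps, by means of the tensor--hom adjunction $Vect^{\mathbb Z}(M\otimes N,Q)\cong Vect^{\mathbb Z}(M,[N,Q])$ afforded by the closed symmetric monoidal structure on $Vect^{\mathbb Z}$, and then to check that the two defining conditions match up. Write $g\mapsto g^{\sharp}$ for this adjunction bijection, so that $g^{\sharp}$ sends a homogeneous $m\in M$ to the map $g(m\otimes-)\colon N\to Q$.

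First I would describe the left-hand side concretely. By \eqref{ctrtens}, $M\boxtimes_C N$ is the coequalizer of the pair $\bigoplus_{k\ge1}[C^{\otimes k-1},M]\otimes N\rightrightarrows M\otimes N$, and by the proof of Proposition \ref{Pro8.3v} its right $D$-coaction is the one induced on this quotient from the coaction $M\otimes w^R_{\bullet}$ on $M\otimes N$. Since the canonical surjection $M\otimes N\twoheadrightarrow M\boxtimes_C N$ is $D$-colinear and epic in $Vect^{\mathbb Z}$, a morphism in $\mathbf M^D(M\boxtimes_C N,Q)$ is the same as a graded linear map $g\colon M\otimes N\to Q$ that is (i) $D$-colinear with respect to $M\otimes w^R_{\bullet}$, and (ii) coequalizes the two maps of \eqref{ctrtens}, i.e.\ $g\circ(t^M_k\otimes N)=g\circ(ev_{[C^{\otimes k-1},M]}\otimes N)\circ([C^{\otimes k-1},M]\otimes w^L_k)$ for every $k\ge1$. (Equivalently, one may read this off the equalizer presentation \eqref{8.15pw}.)

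Next I would transport conditions (i) and (ii) across $g\mapsto g^{\sharp}$. For (i): since $[N,Q]^D$ is by definition the graded $D$-comodule hom, sitting inside $[N,Q]$ as a subobject, the map $g^{\sharp}\colon M\to[N,Q]$ factors through $[N,Q]^D$ precisely when each $g(m\otimes-)\colon N\to Q$ is $D$-colinear, which unravels to exactly condition (i). For (ii): using the isomorphism $[C^{\otimes k-1},[N,Q]^D]\cong[C^{\otimes k-1}\otimes N,Q]^D$ of Proposition \ref{Pro8.1} together with the explicit form $t_k=[w^L_k,Q]^D$ of the $C$-contramodule structure maps of $[N,Q]^D$ from the proof of Proposition \ref{Pro8.2e}, a direct computation --- tracking elements through the evaluation maps in Sweedler notation --- shows that the adjunct of the composite $[C^{\otimes k-1},M]\xrightarrow{[C^{\otimes k-1},g^{\sharp}]}[C^{\otimes k-1},[N,Q]^D]\xrightarrow{t_k}[N,Q]^D\hookrightarrow[N,Q]$ is precisely $g\circ(ev_{[C^{\otimes k-1},M]}\otimes N)\circ([C^{\otimes k-1},M]\otimes w^L_k)$, whereas the adjunct of $g^{\sharp}\circ t^M_k$ is $g\circ(t^M_k\otimes N)$. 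Hence $g^{\sharp}$ intertwines the $C$-contramodule structure maps --- equivalently, is a morphism in $\mathbf M_{[C,\_\_]}$ by the equalizer description \eqref{8.16pw} --- if and only if $g$ satisfies (ii). Combining the two translations, $g\leftrightarrow g^{\sharp}$ restricts to the asserted bijection $\mathbf M^D(M\boxtimes_C N,Q)\cong\mathbf M_{[C,\_\_]}(M,[N,Q]^D)$, and naturality in $M$ and $Q$ is inherited from that of the tensor--hom adjunction.

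The step I expect to be the main obstacle is the computation in the second half of the previous paragraph: one must chase the internal-hom associativity isomorphism of Proposition \ref{Pro8.1} through the evaluation maps and the comodule and contramodule structure maps while keeping the degree shifts straight (each $t^M_k$, and each structure map of $[N,Q]^D$, has degree $2-k$) and while making sure no stray Koszul sign --- in particular none coming from the reparametrized comultiplication $w'_{\bullet}$ used in Definition \ref{D7.1wc} --- spoils the equality of the two adjuncts. Everything else --- that a $D$-colinear map out of $M\otimes N$ killing the contratensor relations factors $D$-colinearly through the quotient, that $[N,Q]^D$ is a subobject of $[N,Q]$, and naturality --- is routine.
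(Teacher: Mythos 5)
Your argument is correct and is essentially the paper's own proof: the authors likewise reduce both sides to the (co)equalizer presentations \eqref{8.15pw}, \eqref{ctrtens}, \eqref{8.16pw} and pass through the tensor--hom adjunction $[M\otimes N,Q]\cong[M,[N,Q]]$, only writing the comparison as a formal chain of equalizer identities rather than as a condition-by-condition check on maps $g\colon M\otimes N\to Q$. The step you flag as delicate --- identifying the two parallel pairs under the adjunction via Proposition \ref{Pro8.1} and the formula $t_k=[w^L_k,Q]^D$ from Proposition \ref{Pro8.2e} --- is precisely what the paper absorbs into the middle equalities of its displayed computation, so your version, if anything, makes the key verification more explicit.
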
 

\begin{proof}
Using the definition in \eqref{ctrtens} as well as the expressions in \eqref{8.15pw} and \eqref{8.16pw}, we can now compute directly
\begin{equation*}\small
\begin{array}{l}
  \mathbf M^D(M\boxtimes_CN, Q)\\
  =Eq\left([M\boxtimes_CN,Q]\doublerightarrow{\qquad \qquad }{}\underset{l\geq 1}{\prod}[M\boxtimes_CN,Q\otimes D^{\otimes l-1}]\right) \\
  =Eq\left(\left[Coeq\left(\underset{k\geq 1}{\oplus} [C^{\otimes k-1},M]\otimes N\doublerightarrow{ }{}M\otimes N\right),Q\right]\doublerightarrow{  }{}\underset{l\geq 1}{\prod} \left[Coeq\left(\underset{k\geq 1}{\oplus}[C^{\otimes k-1},M]\doublerightarrow{  }{}M\otimes N\right),Q\otimes D^{\otimes l-1}\right]\right)\\
   =Eq\left( Eq\left([M\otimes N,Q]\doublerightarrow{}{}\underset{k\geq 1}{\prod} [[C^{\otimes k-1},M]\otimes N,Q]\right)\doublerightarrow{}{}\underset{l\geq 1}{\prod}Eq\left([M\otimes N,Q\otimes D^{\otimes l-1}]\doublerightarrow{}{}\underset{k\geq 1}{\prod} [[C^{\otimes k-1},M]\otimes N,Q\otimes D^{\otimes l-1}]\right)\right)\\
   =Eq\left( Eq\left([M,[N,Q]]\doublerightarrow{}{}\underset{k\geq 1}{\prod} [[C^{\otimes k-1},M],[N,Q]]\right)\doublerightarrow{}{}\underset{l\geq 1}{\prod}Eq\left([M,[N,Q\otimes D^{\otimes l-1}]]\doublerightarrow{}{}\underset{k\geq 1}{\prod} [[C^{\otimes k-1},M],[N,Q\otimes D^{\otimes l-1}]]\right)\right)\\
    =Eq\left( Eq\left([M,[N,Q]]\doublerightarrow{}{}\underset{k\geq 1}{\prod} [[C^{\otimes k-1},M],[N,Q]]\right)\doublerightarrow{}{}Eq\left(\underset{l\geq 1}{\prod}[M,[N,Q\otimes D^{\otimes l-1}]]\doublerightarrow{}{}\underset{l\geq 1}{\prod}\underset{k\geq 1}{\prod} [[C^{\otimes k-1},M],[N,Q\otimes D^{\otimes l-1}]]\right)\right)\\
     =Eq\left( Eq\left([M,[N,Q]]\doublerightarrow{}{}\underset{l\geq 1}{\prod}[M,[N,Q\otimes D^{\otimes l-1}]]\right)\doublerightarrow{}{}Eq\left(\underset{k\geq 1}{\prod} [[C^{\otimes k-1},M],[N,Q]]\doublerightarrow{}{}\underset{k\geq 1}{\prod}\underset{l\geq 1}{\prod} [[C^{\otimes k-1},M],[N, Q\otimes D^{\otimes l-1}]]\right)\right)\\
       =Eq\left( \left[M,Eq\left([N,Q]\doublerightarrow{}{}\underset{l\geq 1}{\prod}[N,Q\otimes D^{\otimes l-1}]\right)\right]\doublerightarrow{}{}\underset{k\geq 1}{\prod}\left[[C^{\otimes k-1},M],Eq\left([N,Q]\doublerightarrow{}{}\underset{l\geq 1}{\prod}[N,Q\otimes D^{\otimes l-1}]\right)\right]\right)\\
       =Eq\left(\left[M,[N,Q]^D\right]\doublerightarrow{\qquad \qquad }{}\underset{k\geq 1}{\prod}\left[[C^{\otimes k-1},M],[N,Q]^D\right]\right)\\
      = \mathbf M_{[C,\_\_]}(M,[N,Q]^D)\\
\end{array}
\end{equation*}
\end{proof}

We conclude with the following result.

\begin{cor}\label{C8.5} Let $(C,w^C_\bullet)$, $(D,w^D_\bullet)$ be even $A_\infty$-coalgebras. Let $N_1$ be an even left $A_\infty$-comodule over $C$ and $N_2$ be an  even right $A_\infty$-comodule over $D$. 
Then, we have an induced adjunction of functors, i.e., natural isomorphisms
\begin{equation}\label{8.17ho}
  \mathbf M^D(M\boxtimes_C(N_1\otimes N_2), Q)\cong \mathbf M_{[C,\_\_]}(M,[(N_1\otimes N_2),Q]^D)
  \end{equation} for $M\in \mathbf M_{[C,\_\_]}$ and $Q\in \mathbf M^D$. 
\end{cor}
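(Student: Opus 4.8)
The plan is to exhibit $N:=N_1\otimes N_2$ as a commuting even $(C,D)$-space and then apply Theorem~\ref{T8.4co} to it. Writing $\{w^{N_1}_k:N_1\longrightarrow C^{\otimes k-1}\otimes N_1\}_{k\geq 1}$ for the given left $C$-coaction and $\{w^{N_2}_l:N_2\longrightarrow N_2\otimes D^{\otimes l-1}\}_{l\geq 1}$ for the given right $D$-coaction, I would put on $N$ the coactions
\begin{equation*}
w^L_k:=w^{N_1}_k\otimes\mathrm{id}_{N_2}:N\longrightarrow C^{\otimes k-1}\otimes N,\qquad w^R_l:=\mathrm{id}_{N_1}\otimes w^{N_2}_l:N\longrightarrow N\otimes D^{\otimes l-1}.
\end{equation*}
Since applying $\_\_\otimes N_2$ (\resp $N_1\otimes\_\_$) to a morphism is compatible with composition and with the associativity and symmetry constraints of $Vect^{\mathbb Z}$, the defining identities of the left $C$-comodule $N_1$ (\resp the right $D$-comodule $N_2$) are transported term by term to the corresponding identities for $w^L_\bullet$ (\resp $w^R_\bullet$); hence $(N,w^L_\bullet)\in{}^C\mathbf M$ and $(N,w^R_\bullet)\in\mathbf M^D$, so $(N,w^L_\bullet,w^R_\bullet)$ is a $(C,D)$-space. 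It is even: $N_1$ even gives $w^{N_1}_k=0$ for $k$ odd, whence $w^L_k=0$ for $k$ odd, and similarly $w^R_l=0$ for $l$ odd, which is exactly the standing evenness hypothesis needed for Theorem~\ref{T8.4co}.

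Next I would verify that $N$ is commuting in the sense of \eqref{8.9cds}. When $k$ or $l$ is odd both composites in \eqref{8.9cds} vanish by the evenness just noted, so assume $k,l$ are both even. Applied to $N_1\otimes N_2$, each of the two composites reduces, by bifunctoriality of the graded tensor product, to the map sending $x\otimes y$ to $w^{N_1}_k(x)\otimes w^{N_2}_l(y)$ (suitably regrouped); the two routes differ only by the Koszul sign $(-1)^{(2-k)(2-l)}=(-1)^{kl}$ incurred when the degree-$(2-l)$ map $w^{N_2}_l$ is slid past the output of the degree-$(2-k)$ map $w^{N_1}_k$, and this sign equals $+1$ precisely because $k$ and $l$ are even. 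Thus \eqref{8.9cds} commutes for all $k,l\geq1$ and $N=N_1\otimes N_2$ is a commuting even $(C,D)$-space.

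Applying Theorem~\ref{T8.4co} to $N=N_1\otimes N_2$ then yields the natural isomorphisms
\begin{equation*}
\mathbf M^D(M\boxtimes_C(N_1\otimes N_2),Q)\cong\mathbf M_{[C,\_\_]}(M,[(N_1\otimes N_2),Q]^D)
\end{equation*}
for $M\in\mathbf M_{[C,\_\_]}$ and $Q\in\mathbf M^D$, which is the assertion. The one point that is not entirely formal is the commuting-square verification above: it is the evenness of $N_1$ and $N_2$ that simultaneously makes $N$ an even $(C,D)$-space and annihilates the sign obstruction $(-1)^{(2-k)(2-l)}$ to strict commutativity of \eqref{8.9cds}; everything after that is a direct appeal to Theorem~\ref{T8.4co}.
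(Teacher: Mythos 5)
Your proposal is correct and follows essentially the same route as the paper: equip $N_1\otimes N_2$ with the tensor-product coactions, check that it is a commuting even $(C,D)$-space, and invoke Theorem \ref{T8.4co}. The paper simply asserts that the two coactions "clearly" commute, whereas you make explicit the useful point that the Koszul sign $(-1)^{(2-k)(2-l)}$ is what could obstruct \eqref{8.9cds} and that evenness of $N_1$ and $N_2$ kills it.
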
 

\begin{proof}
If $N_1$ is a left $C$-comodule and $N_2$ is a  right $D$-comodule, then $N_1\otimes N_2$ carries  left $C$-coactions and right $D$-coactions in the obvious manner. It is clear that the left $C$-coactions commute with the right $D$-coactions, making $N_1\otimes N_2$ a commuting $(C,D)$-space. The result now follows from Theorem \ref{T8.4co}.
\end{proof}

\section{Appendix : Basic Definitions}\label{Appendix}

For the convenience of the reader, we recall in this section the well known definitions of $A_\infty$-algebras, $A_\infty$-coalgebras as well as modules and comodules over them respectively.

\begin{defn}\label{D3.1} An $A_\infty$-algebra over $K$ is a $\mathbb Z$-graded vector space $A=\underset{n \in \mathbb Z}{\bigoplus} A_n$ along with a family of graded $K$-linear maps
$\{m_k: A^{\otimes k}\longrightarrow A\}_{k\geq 1}$ with $m_k$ of degree
 $2-k$, satisfying for each $n\geq 1$:
\begin{equation}\label{Ap4.1} \sum(-1)^{p+qr} m_{p+1+r}(1^{\otimes p} \otimes m_q \otimes 1^{\otimes r})=0 \end{equation}
where $1$ denotes the identity map and the sum is over all triples $(p, q, r)$ such that $n=p+q+r$, $p, r \geq 0$, $q \geq 1$.
\end{defn}
We should mention that the  terms in \eqref{Ap4.1} will have additional signs when applied to elements of $A$, following the usual Koszul sign conventions. 

\begin{defn}\label{D3.2} Let $A$ and $B$ be $A_\infty$-algebras. A morphism of $A_\infty$-algebras $f:A\longrightarrow B$ is a family of graded $K$-linear maps $\{f_k: A^{\otimes k}\longrightarrow B\}_{k\geq 1}$ with $f_k$ of degree $1-k$, satisfying for each $n\geq 1$:
\begin{equation}\label{Ap4.2}\sum (-1)^{p +qr} f_{p+1+r}(1^{\otimes p} \otimes m_q \otimes 1^{\otimes r})= \sum (-1)^s m_t ( f_{i_1}\otimes f_{i_2}\otimes \cdots \otimes f_{i_t})\end{equation}
where the first sum is over all $(p, q, r)$ such that $n= p+q+r$ and the second sum is over all $1 \leq t \leq n$ and all decompositions $n= i_1 + i_2 + \cdots +i_t$, and the sign $s$ is given by 
$s= (t-1)(i_1-1) + (t-2)(i_2-1)+ \cdots + 2(i_{t-2}-1)+ (i_{t-1}-1).$
\end{defn}

\begin{defn}\label{D3.3} Let $A$ be an $A_\infty$-algebra over $K$. A (left) $A_\infty$-module over $A$ is a $\mathbb Z$-graded space $M$ with maps
$\{m_k^M: A^{\otimes k-1} \otimes M \longrightarrow M\}_{k\geq 1}$ with $m_k^M$ 
of degree $2-k$, satisfying for each $n\geq 1$:
$$\sum (-1)^{p+qr} m_{p+1+r}^M(1_A^{\otimes p} \otimes m_q \otimes 1_A^{\otimes r-1}\otimes 1_M) + \sum (-1)^a m_{a+1}^M ( 1^{\otimes a}_A \otimes m_b^M )=0, $$
where the first sum is over all $p, q, r$ such that $p+q+r=n$, $p\geq 0$, $q, r \geq 1$   and the second sum is over all $a,b$ such that $a+b=n$, $a\geq 0$ and $b\geq 1$.

\smallskip
A morphism of left $A_\infty$-modules $f: M\longrightarrow N$ is a family of maps $f=\{f_l: M_l \longrightarrow N_l\}_{l\in \mathbb Z}$ that is well behaved with respect to the structure maps of $M$ and $N$. The category of left $A_\infty$-modules over $A$ will be denoted by $_A\mathbf M$. Similarly, we may define the category $\mathbf M_A$ of right $A$-modules.
\end{defn}

\begin{defn}\label{D3.4} Let $M$, $N$ be left $A_\infty$-modules over the $A_\infty$-algebra $A$, with respective structure maps 
$\{m_k^M: A^{\otimes k-1}\otimes M \longrightarrow M\}_{k\geq 1}$ and $\{m_k^N: A^{\otimes k-1}\otimes N \longrightarrow N\}_{k\geq 1}$. 
An $A_\infty$-morphism of (left) $A_\infty$-modules $f: M\longrightarrow N$ is a family of maps
$$f=\{f_k: A^{\otimes k-1} \otimes M \longrightarrow N\}_{k\geq 1}$$ with $f_k$ of degree $1-k$, satisfying for each $n\geq 1$:
$$\sum (-1)^{a} f_{a+1}(1^{\otimes a}_A\otimes m_b^M)+\sum (-1)^{a+bc} f_{a+1+c}(1^{\otimes a}_A \otimes m_b \otimes 1^{\otimes c-1}_A\otimes 1_M) = \sum m_{a+1}^N( 1^{\otimes a}_A \otimes f_b ), $$ where:

\smallskip
(1)  the first sum on the left side is taken over all $a,b$ such that $n=a+b$, $a\geq 0$, $b\geq 1$ 

\smallskip
(2)  the second sum on the left side is taken over all $a,b,c$ such that $n=a+b+c$, $a\geq 0$, $b,c\geq 1$ 

\smallskip
(3) the sum on the right side is taken over all $a,b$ such that $n=a+b$, $a\geq 0$ and $b \geq 1$.
\end{defn}

\begin{defn}\label{ApD3.4} An $A_\infty$-coalgebra over $K$ is a $\mathbb Z$-graded vector space $C=\underset{n \in \mathbb Z}{\bigoplus}C_n$ along with a  family of graded $K$-linear maps
$\{w_k: C\longrightarrow C^{\otimes k}\}_{k\geq 1}$ with $w_k$  of degree $2-k$, 
satisfying the following two conditions:

\smallskip
(a) For each $n\geq 1$, we have
$$\sum(-1)^{pq+r} (1^{\otimes p} \otimes w_q \otimes 1^{\otimes r})w_{p+1+r}=0, ~\forall \textrm{ }k \geq 1,$$
where $1$ denotes the identity map, and the sum is over all $p, q, r$ such that $n=p+q+r$, $p, r \geq 0$, $q \geq 1$.  

\smallskip
(b) The induced morphism $\underset{k\geq 1}{\prod} w_k: C\longrightarrow \underset{k\geq 1}{\prod}C^{\otimes k}$ factors through the direct sum $\underset{k\geq 1}{\bigoplus}C^{\otimes k}$.
\end{defn}

\begin{defn}\label{D3.5}  Let $C$ be an $A_\infty$-coalgebra over $K$ with structure maps $\{w_k: C\longrightarrow C^{\otimes k}\}_{k\geq 1}$. A (left) $A_\infty$-comodule over $C$ is a $\mathbb Z$-graded space $P$ with maps
$\{w_k^P: P \longrightarrow C^{\otimes k-1}\otimes P\}_{k\geq 1}$ with $w_k^P$
of degree $2-k$, satisfying, for each $n\geq 1$, we have
$$\sum(-1)^{pq+r} (1_C^{\otimes p} \otimes w_q\otimes 1^{\otimes r-1}_C \otimes 1_P) w_{p+1+r}^P+ \sum (-1)^{ab} (1^{\otimes a} \otimes w_b^P) w_{a+1}^P=0,$$
where the first sum is over all $p,q,r$ such that $n=p+q+r$, $p\geq 0$, $q,r\geq 1$ and the second sum is over all $a,b$ such that $a+b=n$, $a\geq 0$, $b\geq	 1$.

\smallskip
A morphism of left $A_\infty$-comodules $g: P\longrightarrow Q$ is a family of maps $g=\{g_l: P_l \longrightarrow Q_l\}_{l\in \mathbb Z}$ that is well behaved with respect to the structure maps of $P$ and $Q$. The category of left $A_\infty$-comodules over $C$ will be denoted by $^C\mathbf M$. Additionally, we will say that $P\in {^C}\mathbf M$ is strongly finite if 
the induced morphism $\underset{k\geq 1}{\prod} w_k^P: P\longrightarrow \underset{k\geq 1}{\prod}(C^{\otimes k-1}\otimes P)$ factors through the direct sum $\underset{k\geq 1}{\bigoplus}(C^{\otimes k-1}\otimes P)$. Similarly, we may define the category $\mathbf M^C$ of right $C$-comodules.
\end{defn}

\begin{defn}\label{D3.6} Let $C$ be an $A_\infty$-coalgebra. Let $P,Q$ be left $A_\infty$-comodules over $C$ with respective structure maps $\{
w_k^P: P \longrightarrow C^{\otimes k-1} \otimes P\}_{k\geq 1}$ and $\{
w_k^Q: Q \longrightarrow C^{\otimes k-1} \otimes Q\}_{k\geq 1}$. 
An $A_\infty$-morphism of comodules $g: P\longrightarrow Q$ is a sequence of graded morphisms $\{g_k: P \longrightarrow C^{\otimes k-1} \otimes Q\}_{k\geq 1}$, with $g_k$ of  degree $1-k$, satisfying for each $n\geq 1$:
$$\sum(-1)^{ab} (1_C^{\otimes a}\otimes w_b^Q) g_{a+1}+\sum (-1)^{ab+c} ( 1_C^{\otimes a} \otimes w_b \otimes 1_C^{\otimes c-1}\otimes 1_Q) g_{a+1+c}= \sum (1_C^{\otimes a} \otimes g_b) w_{a+1}^P,$$ where 

\smallskip
(1) the first sum on the left is over all $a,b$ such that $n=a+b$, $a\geq 0$, $b\geq 1$. 

\smallskip
(2) the second sum on the left is over all $a,b,c$ such that $n=a+b+c$, $a \geq 0$, $b,c \geq 1$ 

\smallskip
(3) the sum on the right side is over all $a,b$ such that $n= a+b$, $a\geq 0$, $b \geq 1$.
\end{defn}

\small

\begin{bibdiv}
	\begin{biblist}

	\bib{Adams}{book}{
   author={Adams, J. F.},
   title={Infinite loop spaces},
   series={Annals of Mathematics Studies, No. 90},
   publisher={Princeton University Press, Princeton, N.J.; University of
   Tokyo Press, Tokyo},
   date={1978},
   pages={x+214},
}

	\bib{AZ}{article}{
   author={Artin, M.},
   author={Zhang, J. J.},
   title={Abstract Hilbert schemes},
   journal={Algebr. Represent. Theory},
   volume={4},
   date={2001},
   number={4},
   pages={305--394},
}

\bib{BBR1}{article}{
   author={Balodi, M.},
   author={Banerjee, A.},
   author={Ray, S.},
   title={Cohomology of modules over $H$-categories and co-$H$-categories},
   journal={Canad. J. Math.},
   volume={72},
   date={2020},
   number={5},
   pages={1352--1385},
}

\bib{BBR2}{article}{
   author={Balodi, M.},
   author={Banerjee, A.},
   author={Ray, S.},
   title={Entwined modules over linear categories and Galois extensions},
   journal={Israel J. Math.},
   volume={241},
   date={2021},
   number={2},
   pages={623--692},
}

\bib{BBR}{article}{
   author={Balodi, M.},
   author={Banerjee, A.},
    author={Ray, S.},
   title={Categories of modules, comodules and contramodules over representations
},
   journal={Forum Math.},
 volume={36},
   date={2024},
   number={1},
   pages={111--152},
}

\bib{BPS}{article}{
   author={Bazzoni, S.},
   author={Positselski, P.},
    author={\v{S}t'ov\'{\i}\v{c}ek, J.},
   title={Projective covers of flat contramodules},
   journal={arXiv:1911.11720 [math.RA]},
   date={2019},
}

\bib{Pst2}{article}{
   author={Bazzoni, S.},
   author={Positselski, L.},
   title={Matlis category equivalences for a ring epimorphism},
   journal={J. Pure Appl. Algebra},
   volume={224},
   date={2020},
   number={10},
   pages={106398, 25},
}

\bib{Beck1969}{article}{
   author={Beck, J.},
   title={Distributive laws},
   conference={
      title={Sem. on Triples and Categorical Homology Theory},
      address={ETH, Z\"{u}rich},
      date={1966/67},
   },
   book={
      series={Lecture Notes in Math.},
      volume={No. 80},
      publisher={Springer, Berlin-New York},
   },
   date={1969},
   pages={119--140},
}

	\bib{BV}{book}{
   author={Boardman, J. M.},
   author={Vogt, R. M.},
   title={Homotopy invariant algebraic structures on topological spaces},
   series={Lecture Notes in Mathematics, Vol. 347},
   publisher={Springer-Verlag, Berlin-New York},
   date={1973},
   pages={x+257},
   review={\MR{0420609}},
}

\bib{BBW}{article}{
   author={B\"{o}hm, G.},
   author={Brzezi\'{n}ski, T.},
   author={Wisbauer, R.},
   title={Monads and comonads on module categories},
   journal={J. Algebra},
   volume={322},
   date={2009},
   number={5},
   pages={1719--1747},
}

\bib{BCJ}{article}{
   author={Borceux, F.},
   author={Caenepeel, S.},
   author={Janelidze, G.},
   title={Monadic approach to Galois descent and cohomology},
   journal={Theory Appl. Categ.},
   volume={23},
   date={2010},
   pages={No. 5, 92--112},
}

\bib{Brz-}{article}{
   author={Brzezi\'{n}ski, T.},
   title={Frobenius properties and Maschke-type theorems for entwined
   modules},
   journal={Proc. Amer. Math. Soc.},
   volume={128},
   date={2000},
   number={8},
   pages={2261--2270},
}

\bib{BWis}{book}{
   author={Brzezinski, T.},
   author={Wisbauer, R.},
   title={Corings and comodules},
   series={London Mathematical Society Lecture Note Series},
   volume={309},
   publisher={Cambridge University Press, Cambridge},
   date={2003},
   pages={xii+476},
}

\bib{BC}{article}{
   author={Bulacu, D.},
   author={Caenepeel, S.},
   title={Two-sided two-cosided Hopf modules and Doi-Hopf modules for
   quasi-Hopf algebras},
   journal={J. Algebra},
   volume={270},
   date={2003},
   number={1},
   pages={55--95},
}

\bib{Cn0}{article}{
   author={Caenepeel, S.},
   author={Gu\'{e}d\'{e}non, T.},
   title={On the cohomology of relative Hopf modules},
   journal={Comm. Algebra},
   volume={33},
   date={2005},
   number={11},
   pages={4011--4034},
}

\bib{Cn1}{article}{
   author={Caenepeel, S.},
   author={Militaru, G.},
   author={Ion, B.},
   author={Zhu, S.},
   title={Separable functors for the category of Doi-Hopf modules,
   applications},
   journal={Adv. Math.},
   volume={145},
   date={1999},
   number={2},
   pages={239--290},
}

\bib{Cn2}{article}{
   author={Caenepeel, S.},
   author={Militaru, G.},
   author={Zhu, S.},
   title={A Maschke type theorem for Doi-Hopf modules and applications},
   journal={J. Algebra},
   volume={187},
   date={1997},
   number={2},
   pages={388--412},
}

\bib{Cn3}{article}{
   author={Caenepeel, S.},
   author={Militaru, G.},
   author={Zhu, S.},
   title={Doi-Hopf modules, Yetter-Drinfel\cprime d modules and Frobenius type
   properties},
   journal={Trans. Amer. Math. Soc.},
   volume={349},
   date={1997},
   number={11},
   pages={4311--4342},
}

\bib{DoVa}{article}{
   author={Dotsenko, V.},
   author={Vallette, B.},
   title={Higher Koszul duality for associative algebras},
   journal={Glasgow Math Journal},
   volume={55A},
   date={2013},
   pages={55--74},
}

\bib{EM65}{article}{
   author={Eilenberg, S.},
   author={Moore, J. C.},
   title={Adjoint functors and triples},
   journal={Illinois J. Math.},
   volume={9},
   date={1965},
   pages={381--398},
}

\bib{EMoore}{article}{
   author={Eilenberg, S.},
   author={Moore, J. C.},
   title={Foundations of relative homological algebra},
   journal={Mem. Amer. Math. Soc.},
   volume={55},
   date={1965},
   pages={39},
}

\bib{KF}{article}{
author={Fukaya, K.},
title={Morse homotopy, $A_\infty$-Category and Floer homologies},
journal={Proceeding of Garc Workshop on Geometry
and Topology. Seoul National Univ},
date={1993},
}

\bib{GJ}{article}{
   author={E. Getzler},
   author={J. D. S. Jones},
   title={$A_\infty$-algebras and the
cyclic bar complex},
   journal={Illinois J. Math},
   volume={34.2},
   date={1990},
   pages={256-283},
}

\bib{HeLu}{article}{
   author={He, J.-W},
   author={Lu, D.-M},
   title={Higher Koszul algebras and A-infinity algebras},
   journal={Journal of Algebra.},
   volume={293},
   date={2005},
   pages={335--362},
}

\bib{Jac}{book}{
   author={Jacobs, B.},
   title={Introduction to coalgebra},
   series={Cambridge Tracts in Theoretical Computer Science},
   volume={59},
   note={Towards mathematics of states and observation},
   publisher={Cambridge University Press, Cambridge},
   date={2017},
   pages={xvi+477},
}

\bib{TK}{article}{
   author={Kadeishvili, T.},
   title={On the homology theory of fibre spaces},
   journal={Russian Mathematical Surveys},
   volume={35},
   date={1980},
   pages={231-238},
}

\bib{BK}{article}{
author={Keller, B},
title={Introduction to A-infinity algebras and modules},
journal={	arXiv:math/9910179 [math.RA]},
volume={},
date={1999},
pages= {},
}

\bib{Kelx}{article}{
author={Keller, B},
title={A-infinity algebras, modules and functor categories},
journal={Contemp Math},
volume={406},
date={2006},
pages= {67--93},
publisher={Amer. Math. Soc., Providence, RI},
}

\bib{KM}{article}{
author={Kontsevich, M.},
title={Homological algebra of mirror symmetry},
journal={Proceedings of the international congress of mathematicians. Springer},
date={1995},
pages= {120-139},
}

\bib{KS}{article}{
author={Kontsevich, M.},
author={Soibelman, Y.},
title={Homological mirror symmetry and torus fibrations},
journal={arXiv: math/0011041},
date={2020},
}

\bib{Kenji}{article}{
author={ Lef\`{e}vre-Hasegawa, K.},
title={Sur les $A_\infty$-cat\'{e}gories},
journal={Th\`{e}se de doctorat, Universit\'{e} Paris 7, (arXiv:math/0310337 [math.CT])},
date={2003},
}

\bib{LodV}{book}{
   author={Loday, J.-L},
   author={Vallette, B.},
   title={Algebraic operads},
   series={Grundlehren der mathematischen Wissenschaften},
   volume={346},
   publisher={Springer, Heidelberg},
   date={2012},
   pages={xxiv+634},
}

\bib{LoV}{article}{
   author={Lowen, W.},
   author={Van den Bergh, M.},
   title={Deformation theory of abelian categories},
   journal={Trans. Amer. Math. Soc.},
   volume={358},
   date={2006},
   number={12},
   pages={5441--5483},
}

\bib{LPWZ}{article}{
   author={Lu, D.-M},
author={Palmieri, J. H. },
author={Wu, Q.-S.},
author={Zhang, J. J.}, 
   title={$A_\infty$-algebras for ring theorists},
   journal={Algebra Colloq.},
   volume={11},
number={1},
   date={2004},
   pages={91--128},
}

\bib{May}{book}{
   author={May, J. P.},
   title={The geometry of iterated loop spaces},
   series={Lecture Notes in Mathematics, Vol. 271},
   publisher={Springer-Verlag, Berlin-New York},
   date={1972},
}

\bib{Pop}{book}{
   author={Popescu, N.},
   title={Abelian categories with applications to rings and modules},
   series={London Mathematical Society Monographs, No. 3},
   publisher={Academic Press, London-New York},
   date={1973},
   pages={xii+467},
}

\bib{semicontra}{book}{
	author={Positselski, L.},
	title={Homological Algebra of Semimodules and Semicontramodules: Semi-Infinite Homological Algebra of Associative Algebraic Structures},
	year={2010},
	publisher ={Springer Basel},
}

\bib{Pmem}{article}{
	author = { L. Positselski},
	title = { Two kinds of derived categories, Koszul duality, and comodule-contramodule
correspondence},
	journal = {Memoirs of the American Mathematical Society},
	year = {2011},
	volume={212},
	issue={996},
}

\bib{P2}{article}{
title = {Covers, envelopes, and cotorsion theories in locally presentable abelian categories and contramodule categories},
		author = {Positselski, L.},
	author={Rosický, J.},
	journal = {Journal of Algebra},
	volume = {483},
	pages = {83--128},
	year = {2017},
}

\bib{Sha}{article}{
   author={Shapiro, I.},
   title={Mixed vs stable anti-Yetter-Drinfeld contramodules},
   journal={SIGMA Symmetry Integrability Geom. Methods Appl.},
   volume={17},
   date={2021},
   pages={026, 10},
}

\bib{Smirnov}{article}{
author={Smirnov Vladimir A.}, 
title={Homology of fibre spaces} 
journal={Mathematical Surveys},
volume={35.3}
date={1980},
pages={294-298},}

\bib{Stasheff1}{article}{
author={Stasheff, J. D.}, 
title={Homotopy associativity of H-spaces, I} 
journal={Trans. Amer. Math. Soc.},
volume={108}
date={1963},
pages={275–292},}

\bib{Stasheff2}{article}{
author={Stasheff, J. D.}, 
title={Homotopy associativity of H-spaces, II} 
journal={Trans. Amer. Math. Soc.},
volume={108}
date={1963},
pages={293–312},}

\bib{St3}{article}{
   author={Stasheff, J.},
   title={Differential graded Lie algebras, quasi-Hopf algebras and higher
   homotopy algebras},
   conference={
      title={Quantum groups},
      address={Leningrad},
      date={1990},
   },
   book={
      series={Lecture Notes in Math.},
      volume={1510},
      publisher={Springer, Berlin},
   },
   date={1992},
   pages={120--137},
 
}

\bib{Tan}{article}{
author={Tanaka, M.}, 
title={Pseudo-Distributive Laws and a Unified
Framework for Variable Binding} 
journal={PhD thesis (University of Edinburgh)},
date={2005},}

\bib{Wis}{article}{
   author={Wisbauer, R.},
   title={Comodules and contramodules},
   journal={Glasg. Math. J.},
   volume={52},
   date={2010},
   number={A},
   pages={151--162},
}

	\end{biblist}
	
	\end{bibdiv}

\end{document}